\definecolor{couleur_cite}{rgb}{0.05,.4,0.05}
\definecolor{couleur_link}{rgb}{0.05,0.05,0.4}
\newtheorem{theorem}{Theorem}[section]
\newtheorem{lemma}[theorem]{Lemma}
\newtheorem{prop}[theorem]{Proposition}
\newtheorem{cor}[theorem]{Corollary}
\newtheorem{definition}[theorem]{Definition}
\theoremstyle{remark}
\newtheorem{remark}{Remark}
\newcommand{\R}{\mathbb R}
\newcommand{\C}{\mathbb C}
\newcommand{\N}{\mathbb N}
\newcommand{\Z}{\mathbb Z}
\newcommand{\Q}{\mathbb Q}
\newcommand{\A}{\mathbb A}
\newcommand{\p}{\mathfrak p}
\newcommand{\g}{\mathfrak g}
\newcommand{\gk}{\mathfrak k}
\newcommand{\ga}{\mathfrak a}
\newcommand{\gn}{\mathfrak n}
\newcommand{\gh}{\mathfrak h}
\newcommand{\cO}{\mathcal O}
\newcommand{\cA}{\mathcal A}
\newcommand{\cM}{\mathcal M}
\newcommand{\cP}{\mathcal P}
\newcommand{\cC}{\mathcal C}
\newcommand{\cU}{\mathcal U}
\newcommand{\cZ}{\mathcal Z}
\newcommand{\bG}{{\bf G}}
\newcommand{\bP}{{\bf P}}
\newcommand{\bH}{{\bf H}}
\newcommand{\bM}{{\bf M}}
\newcommand{\bN}{{\bf N}}
\newcommand{\bR}{{\bf R}}
\newcommand{\bU}{{\bf U}}
\newcommand{\bL}{{\bf L}}
\newcommand{\bA}{{\bf A}}
\newcommand{\bV}{{\bf V}}
\newcommand{\tpl}{\mu_{\text{Pl}, \tau}}
\newcommand{\SL}{\text{SL} }
\newcommand{\SO}{\text{SO} }
\newcommand{\Oo}{\text{O} }
\newcommand{\Sp}{\text{Sp} }
\newcommand{\GL}{\text{GL} }
\newcommand{\Ad}{\textup{Ad}}
\newcommand{\tr}{\text{tr}}
\newcommand{\Hom}{\text{Hom}}
\newcommand{\reg}{T}
\newcommand{\be}{\begin{equation}}
\newcommand{\ee}{\end{equation}}
\newcommand{\bes}{\begin{equation*}}
\newcommand{\ees}{\end{equation*}}
\newcommand{\ba}{\begin{eqnarray}}
\newcommand{\ea}{\end{eqnarray}}
\newcommand{\bas}{\begin{eqnarray*}}
\newcommand{\eas}{\end{eqnarray*}}
\title{Concentration properties of theta lifts on orthogonal groups}
\author{Farrell Brumley}
\address{LAGA - Institut Galil\'ee\\
99 avenue Jean Baptiste Cl\'ement\\
93430 Villetaneuse\\
France}
\email{brumley@math.univ-paris13.fr}
\author{Simon Marshall}
\address{Department of Mathematics\\
University of Wisconsin -- Madison\\
480 Lincoln Drive\\
Madison\\
WI 53706, USA}
\email{marshall@math.wisc.edu}
\thanks{F.B. benefited from a CNRS IEA grant MELA-PERA during the final editing. S.M. is supported by NSF grants DMS-1902173 and DMS-1954479.}
\begin{document}

\begin{abstract}

Let $n>m\geqslant 1$ be integers with $n+m\geqslant 4$ even.  We prove the existence of Maass forms with large sup norms on anisotropic $\Oo(n,m)$, by combining a counting argument with a new period relation showing that a certain orthogonal period on $\Oo(n,m)$ distinguishes theta lifts from $\Sp_{2m}$.  This generalizes a method of Rudnick and Sarnak in the rank one case, when $m = 1$.  Our lower bound is naturally expressed as a ratio of the Plancherel measures for the groups $\Oo(n,m)$ and $\Sp_{2m}(\R)$, up to logarithmic factors, and strengthens the lower bounds of our previous paper \cite{BM} for such groups. In the case of odd-dimensional hyperbolic spaces, the growth exponent we obtain improves on a result of Donnelly, and is optimal under the purity conjecture of Sarnak.

\end{abstract}

\maketitle
\setcounter{tocdepth}{1}
\tableofcontents

\section{Introduction}
\label{sec:intro}

A well-known principle in quantum chaos suggests that, on a closed compact negatively curved Riemannian manifold, eigenfunctions should not manifest extreme localisation properties. Nevertheless, it is an emerging theme in arithmetic analysis that {\it intermediate} localisation properties can be realized in the setting of congruence manifolds, often for reasons related to functoriality. Indeed, one can access concentration behavior of eigenfunctions through their periods, and the non-vanishing of the latter, according to the relative Langlands program, distinguish functorial lifts from various source manifolds. A quantitative analysis can sometimes bootstrap non-vanishing to lower bounds.

In this paper we investigate atypical concentration behavior for sparse subsequences of arithmetic eigenfunctions on compact congruence manifolds associated with indefinite orthogonal groups. These spaces fall under the purview of our previous paper \cite{BM}, where they were shown to admit exceptional sequences of eigenfunctions having large sup norm, in the sense that $\|f_\lambda\|_\infty/\|f_\lambda\|_2=\Omega(\lambda^\delta)$ for some ineffective (in the logical sense) $\delta>0$, where $\lambda^2$ is the Laplacian eigenvalue. By contrast with the amplification method of \textit{loc. cit}, our methods in this article make direct use of the theta correspondence and automorphic period relations -- techniques which allow for refined (and effective) estimates.

More precisely, for integers $n\geqslant m\geqslant 1$, let $\mathbb{H}^{n,m}$ be the hyperbolic Grassmannian of signature $(n,m)$, parametrizing negative definite $m$-dimensional subspaces in $\R^{n,m}$. This is a simply connected symmetric space of non-compact type, of rank $m$ and dimension $d=nm$.  It is irreducible unless $n = m = 2$, and carries an action of $\Oo(n,m)$ by isometries. Let $\Gamma<\Oo(n,m)$ be a uniform lattice and consider the quotient $Y=\Gamma\backslash\mathbb{H}^{n,m}$. Then $Y$ is a compact locally symmetric space with non-positive sectional curvature. We shall always assume $\Gamma$ to be congruence, so that $Y$ enjoys additional arithmetic symmetries. These symmetries may be exploited through the algebra of Hecke correspondences, the joint eigenfunctions of which are called \textit{Hecke--Maass forms}. 

Our main result, Theorem \ref{sup-thm}, states that when $n>m\geqslant 1$ and $n+m\geqslant 4$ is even, one can find in every $O(1)$ spectral window a Hecke--Maass form on $Y$ whose $L^\infty$ norm grows by an explicit comparison of spectral density functions. Specifically, the minorant has the following form: it is the square-root of the ratio of the Plancherel measure on $\mathbb{H}^{n,m}$ with that of the Siegel upper half-space $\mathcal{H}^m$. The novelty here is the exact form of the lower bound in higher rank, which sheds light on the Sarnak purity conjecture \cite{Sa} when the spectral parameters approach root hyperplanes. Moreover, in rank 1, where the minorant is converted without loss in terms of the Laplacian eigenvalue $\lambda^2$, our lower bound yields $\|f_\lambda\|_\infty/\|f_\lambda\|_2=\Omega(\lambda^{(n-2)/2}/(\log\lambda)^{r/2})$, for an explicit constant $r\geqslant 1$. This improves upon the bound of Donnelly \cite{Donnelly} when $n$ is odd, and is best possible (up to logarithmic factors) under the purity conjecture.


The proof of Theorem \ref{sup-thm} is based on the foundational article \cite{RS}. The main idea can be expressed very simply. Namely, if $\mathbf{H}\subset\mathbf{G}$ are reductive groups defined over a number field, one can show the existence of Hecke--Maass forms on $\mathbf{G}$ whose $\mathbf{H}$-periods are larger than average, provided
\begin{enumerate}[label=(\alph*)]
\item\label{1sketch} one knows the size of the average $\mathbf{H}$-period;
\smallskip
\item\label{2sketch} one can show that $\mathbf{H}$-periods \textit{distinguish} a functorial lift from some other group $\mathbf{G}'$;
\smallskip
\item\label{3sketch} one can bound the number of contributing forms on the group $\mathbf{G}'$.
\end{enumerate}
Here, by `distinguish' we mean that the non-vanishing of an $\mathbf{H}$-period characterizes the image of a functorial lift from $\mathbf{G}'$. In favorable situations, the bound in \ref{3sketch} will show that this distinction is a rare occurrence, so \ref{2sketch} is in effect saying that most terms in \ref{1sketch} vanish. It follows that those forms which survive distinction must compensate for this by having unusually large $\mathbf{H}$ periods. When $\mathbf{H}$ is compact at infinity, this gives large point evaluations, whose average size is 1.

For Theorem \ref{sup-thm}, we apply this to $\bG=\Oo(V)$, for a non-degenerate anisotropic quadratic space $V$ over a totally real number field, with signature $(n,m)$ at a fixed real place and definite at the others. We take $\bH=\Oo(U^\perp)\times\Oo(U)$, where $U$ is an $m$-dimensional quadratic subspace that is negative definite at the distinguished real place, and $\mathbf{G}'=\Sp_{2m}$.  The functorial lift between $\bG'$ and $\bG$ will be the theta lift.

In a follow-up paper, we shall again exploit this machinery to study the exceptional behavior of higher dimensional periods, showing the existence of Hecke--Maass forms on hyperbolic congruence manifolds with large geodesic Fourier coefficients in the resonance range. 


\section{Main results}

Let $Y=\Gamma\backslash\mathbb{H}^{n,m}$ be a compact congruence quotient. Under certain conditions on the signature $(n,m)$, we would like to establish the existence of eigenfunctions on $Y$ whose point evaluation is quantifiably larger than average.

\subsection{Statement of main theorem}\label{sec:exceptional}

We begin by recalling the spectral parametrization of Maass forms on a general compact locally symmetric space of non-positive curvature, and the average size of values at fixed points.  

Let $G$ be a real connected\footnote{We remark that, while our main theorem concerns Maass forms on the disconnected group $\Oo(n,m)$, we show in Section \ref{sec:points} that the notion of spectral parameter on this group is the same as for the connected Lie group ${\rm SO}(n,m)^0$ when $n > m$.} semi-simple Lie group and $K$ a maximal compact subgroup. Let $\g=\mathfrak{p}\oplus\mathfrak{k}$ be the corresponding Cartan decomposition of the Lie algebra $\g$ of $G$. We make a choice of a maximal abelian subspace $\mathfrak{a}$ of $\mathfrak{p}$. Let $W=N_K(\mathfrak{a})/Z_K(\mathfrak{a})$ denote the Weyl group.

The quotient $S=G/K$ is a Riemannian globally symmetric space. Let $\Gamma\subset G$ be a uniform lattice and consider the locally symmetric space $\Gamma\backslash S$. Recall that a \textit{Maass form} $f\in L^2(\Gamma\backslash S)$ is a joint eigenfunction of $\mathbb{D}(S)$, the commutative algebra of left $G$-invariant differential operators on $S$. Given a Maass form $f$, let $\chi\in {\rm Hom}_{\C-\textrm{alg}}(\mathbb{D}(S),\C)$ be the associated eigencharacter, so that $Df =\chi(D)f$ for all $D\in\mathbb{D}(S)$. Via the Harish-Chandra isomorphism $\gamma_{\rm HC}:\mathbb{D}(S)\overset{\sim}{\longrightarrow} S(\mathfrak{a})^W$ onto the Weyl group invariants of the symmetric algebra of $\mathfrak{a}$ with complex coefficients, we have $\chi(D)=\nu(\gamma_{\rm HC}(D))$ for some $\nu\in\mathfrak{a}_\C^*$, unique up to $W$-conjugacy, called the \textit{spectral parameter} of $f$. 

Let $\{f_i\}_{i\geqslant 0}$ be an orthonormal basis of $L^2(\Gamma\backslash S)$ consisting of Maass forms, and denote by $\mu_i$ the spectral parameter of $f_i$. In Proposition \ref{local-Weyl} we show that there is $Q \geqslant 1$ such that for any finite collection of points $x_1,\ldots ,x_h\in \Gamma\backslash S$, and any $\nu\in i\mathfrak{a}^*$ of large enough norm depending on $x_i$, we have
\begin{equation}\label{eq:intro:local-Weyl}
\sum_{\|{\rm Im}\,\mu_i-\nu\|\leqslant Q}\bigg|\sum_{j=1}^h f_i(x_j)\bigg|^2\asymp \beta_S(\nu),
\end{equation}
where $\beta_S(\nu)$ is the spectral density function of $L^2(S)$. This is a form of the local Weyl law for the $\mathbb{D}(S)$-spectrum of compact locally symmetric spaces.

We deduce from \eqref{eq:intro:local-Weyl} that the average size of point evaluation (or, more generally, discrete periods) of eigenfunctions, in any given spectral ball of radius $Q$ with sufficiently regular center, is $\asymp 1$. In view of this average behavior, we will call \textit{exceptional} any Maass form whose sup norm grows by at least a power of its eigenvalue. Note that by dropping all but one term, we derive from \eqref{eq:intro:local-Weyl} an upper bound for the sup norm of a Maass form $f_\nu$ of spectral parameter $\nu$, of the form
\begin{equation}\label{Linfty-local}
\|f_\nu\|_\infty\ll \beta_S(\nu)^{1/2}\|f_\nu\|_2.
\end{equation}
We call this the \textit{local bound}, as the proof takes into consideration only local information about a given point. For the spaces we consider, it is not expected to be optimal, reflecting a far reaching delocalisation principle.

Having established average and extremal benchmarks in generality, our aim is to prove explicit intermediate growth estimates on the sup norms for exceptional Maass forms in the setting of \textit{congruence quotients} of hyperbolic Grassmannians $\Gamma\backslash\mathbb{H}^{n,m}$, or certain finite disjoint unions of such. (All of the above definitions and bounds apply equally well in this setting.) These disconnected manifolds arise as adelic double quotient spaces associated with the isometry group of a rational quadratic form, and may be expressed more classically as $Y=\bigcup_i\Gamma_i\backslash\mathbb{H}^{n,m}$, where $\Gamma_i$ runs over the genus class of $\Gamma$; see Section \ref{adelic2classical}. 

As was mentioned in the introduction, the underlying mechanism for such results is a functorial correspondence from a tertiary source manifold, which in our case will be a congruence covering of the Siegel variety ${\rm Sp}_{2m}(\Z)\backslash\mathcal{H}^m$, where $\mathcal{H}^m=\Sp_{2m}(\R)/{\rm U}(m)$ is the Siegel upper half-space. Note that the real rank of $\Sp_{2m}(\R)$ is the same as that of $\Oo(n,m)$, when $n\geqslant m$. This will allows us (see Section \ref{2examples} below for more details) to view a spectral parameter $\nu$ simultaneously for $\mathbb{H}^{n,m}$ and $\mathcal{H}^m$.

\begin{theorem}\label{sup-thm}
Fix integers $n>m\geqslant 1$ with $n+m\geqslant 4$ even. Let $F$ be a totally real number field, with fixed archimedean place $v_0$. Let $V$ be a non-degenerate anisotropic quadratic space over $F$, of signature $(n,m)$ at $v_0$ and positive definite at the other real places. Let $Y=\bigcup_i\Gamma_i\backslash\mathbb{H}^{n,m}$ be a congruence arithmetic locally symmetric space associated with $\mathrm{O}(V)$.

For sufficiently regular $\nu\in i\mathfrak{a}^*$ there exists an $L^2$-normalized Hecke--Maass form $f$ on $Y$ with spectral parameter $\nu+O(1)$ verifying
\[
\|f\|_\infty\gg \left(\log (3+\|\nu\|)\right)^{-r/2}\left(\beta_{\mathbb{H}^{n,m}}(\nu)/\beta_{\mathcal{H}^m}(\nu)\right)^{1/2},
\]
where $r=[F:\Q]m$. \end{theorem}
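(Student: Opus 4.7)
Following the Rudnick--Sarnak framework outlined in the introduction, the plan is to carry out three moves: a local Weyl law for an orthogonal period, a period identity distinguishing theta lifts, and a counting-plus-pigeonhole extraction.

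Fix an $m$-dimensional subspace $U \subset V$ that is negative definite at $v_0$ (and hence positive definite at the other archimedean places, since $V$ is definite there); then $\bH = \Oo(U^\perp) \times \Oo(U)$ is compact at every archimedean place of $F$. Consequently, for any $\bH$-fixed adelic base point the orthogonal $\bH$-period $P_\bH(f)$ of a Maass form $f$ on $Y$ reduces to a finite $\C$-linear combination $\sum_j c_j f(x_j)$ of point evaluations at representatives of the $\bH$-orbit. Applying Proposition \ref{local-Weyl} to the finite set $\{x_j\}$ and expanding the square yields
\[
\sum_{\|{\rm Im}\,\mu_i - \nu\| \leqslant Q} |P_\bH(f_i)|^2 \asymp \beta_{\mathbb{H}^{n,m}}(\nu)
\]
for every sufficiently regular $\nu \in i\mathfrak{a}^*$.

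The key new ingredient is a period identity showing that $\bH$-periods \emph{distinguish} $\Sp_{2m}$-theta lifts: if $P_\bH(f) \ne 0$ for a cuspidal Hecke--Maass form $f$ on $\Oo(V)$, then $f$ lies in the image of the theta correspondence from some cuspidal form on $\Sp_{2m}(F)$, with Schwartz data tied to $U$. The natural proof uses the seesaw dual pair with $\Oo(V) \supset \Oo(U^\perp) \times \Oo(U)$ against $\Sp_{2m} \times \Sp_{2m} \supset \Sp_{2m}$ (diagonal): unfolding a product of theta kernels reduces $P_\bH$ to a Fourier/Bessel coefficient on the symplectic side, while the converse direction (vanishing of the $\bH$-period off the theta image) is obtained from the Rallis inner product formula together with local distinction results. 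Granting this, the spectral sum above is supported only on forms in the image of the theta correspondence.

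To conclude, bound the number $N(\nu)$ of Hecke--Maass forms on the congruence Siegel quotient attached to $\Sp_{2m}(F)$ whose $\Oo(V)$-theta lift has spectral parameter in the $Q$-window around $\nu$. Since the archimedean theta correspondence matches the rank-$m$ spherical parameters of $\Sp_{2m}(\R)$ and $\Oo(n,m)$ up to a bounded shift, and a fixed congruence level permits only finitely many local types, one finds $N(\nu) \ll \beta_{\mathcal{H}^m}(\nu)\cdot(\log(3+\|\nu\|))^r$ with $r=[F:\Q]m$, the polylogarithmic factor absorbing multiplicities in the theta fibers at the $[F:\Q]$ archimedean places of $\Sp_{2m}$. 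Pigeonhole then produces a Hecke--Maass form $f$ with spectral parameter $\nu+O(1)$ satisfying
\[
|P_\bH(f)|^2 \gg \frac{\beta_{\mathbb{H}^{n,m}}(\nu)}{\beta_{\mathcal{H}^m}(\nu)\,(\log(3+\|\nu\|))^r},
\]
and since $P_\bH(f)$ is a finite linear combination of point values of $f$, $\|f\|_\infty^2$ is bounded below by the same quantity, which is the announced estimate. The principal obstacle is the period distinction, and in particular its converse half: showing that every Hecke--Maass form on $\Oo(V)$ supporting a non-trivial $\bH$-period actually arises from an $\Sp_{2m}$-theta lift. Beyond the usual seesaw unfolding, this step will require an Ichino--Ikeda-type factorization of the global period into local pairings, together with non-vanishing of the archimedean component, so that the Plancherel densities at $v_0$ transfer across the correspondence cleanly. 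A related difficulty is making the counting sharp enough to produce $\beta_{\mathcal{H}^m}(\nu)$ rather than a larger majorant, which forces one to quantify spectral multiplicities under the theta lift and absorb all excess freedom into the explicit logarithmic exponent $r$.
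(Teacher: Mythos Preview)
Your high-level outline (local Weyl law for the $\bH$-period, distinction by theta lifting, counting on $\Sp_{2m}$, pigeonhole) matches the paper's structure. However, several of the load-bearing technical points are either misidentified or missing.

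\textbf{Distinction.} You propose to show ``$P_\bH(f)\neq 0 \Rightarrow f$ is a theta lift'' via a seesaw plus the Rallis inner product formula and local distinction. The paper does neither. It establishes a direct period relation (Proposition~\ref{period-relation}): unfolding the Bessel period $\mathscr{B}_\bR^{\,\psi_N}$ of the theta lift $\phi'_\lambda=\Theta(\phi_\lambda,\alpha;W)$ yields a finite sum of $\bH$-periods of $\phi_\lambda$. From this one gets only the implication ``weighted $\bH$-period nonzero $\Rightarrow$ $\Theta(\phi_\lambda)\neq 0$'' (Corollary~\ref{distinction}). The reduction to the theta subspace is then completed by the elementary identity $0\neq\langle\phi'_\lambda,\phi'_\lambda\rangle=\langle\phi_\lambda,\Theta(\phi'_\lambda,\alpha;V)\rangle$, which shows $\phi_\lambda$ is not orthogonal to the space of theta lifts. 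No Ichino--Ikeda factorization or local distinction is invoked, and none is needed.

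\textbf{Cuspidality of the lift.} You never address why the forms on $\Sp_{2m}$ you are counting are cuspidal. This is essential, since the Weyl upper bound (Proposition~\ref{Weyl-upper-bd}) is for the cuspidal spectrum. The paper uses the ``sufficiently regular $\nu$'' hypothesis to force $\pi_{v_0}$ to be tempered, then Lemma~\ref{first-occ-sph} (vanishing of the local lift to $\Sp_{2m'}$ for $m'<m$) combined with Rallis's first-occurrence theorem to conclude that $\Theta(\pi,W_{2m})$ is cuspidal. Without this step the counting collapses.

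\textbf{$K$-type on the symplectic side.} The theta lift of a spherical representation of $\Oo(n,m)$ is \emph{not} spherical on $\Sp_{2m}(\R)$; by Lemma~\ref{lem-K-type} it has the one-dimensional ${\rm U}(m)$-type $\det^{(n-m)/2}$ (and $\det^{d/2}$ at the other real places). The counting on $\Sp_{2m}$ must therefore be done for $\tau$-spherical cusp forms, which is why the paper develops the $\tau$-spherical Plancherel machinery of Sections~\ref{sec:inv-formula}--\ref{sec:test-function}.

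\textbf{Origin of the logarithm.} The factor $(\log(3+\|\nu\|))^r$ does not arise from ``multiplicities in theta fibers at the archimedean places''; it comes from the geometric side of a partial trace formula on $\Sp_{2m}$, specifically the unipotent contribution when integrating the kernel over a truncated Siegel domain of height $\|\nu\|^{1+\epsilon}$ (see Remark~\ref{remark2-intro} and Section~\ref{sec:proof-Prop-Weyl}).
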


\begin{remark}
The condition that $n+m$ is even implies that $n \geqslant m+2$.  When $n > m+2$, the forms produced in Theorem \ref{sup-thm} are non-tempered at almost all places; see Section \ref{sec:unramified}. This fact is not, however, used in a direct way in the proof of Theorem \ref{sup-thm}; contrast this to the approach outlined in \cite[\S A.4]{BM}.
\end{remark}

\begin{remark}\label{remark2-intro} The logarithmic loss in the spectral parameter is due to a methodological short-cut we have adopted in the execution of Step \ref{3sketch} from the introduction. More precisely, in proving Proposition \ref{Weyl-upper-bd}, we use a partial trace formula, which allows for a simple treatment of the geometric side. A more sophisticated analysis would eliminate this loss. \end{remark}

\subsection{Explicating the lower bound}\label{2examples}

We now explicate the lower bound in Theorem \ref{sup-thm} by writing out in coordinates the two spectral density functions $\beta_S(\nu)$ appearing there, namely for $S=\mathbb{H}^{n,m}$ the hyperbolic Grassmannian and $S=\mathcal{H}^m$ the Siegel upper half-space. In fact, it will suffice to describe an approximation to $\beta_S(\nu)$, denoted here by $\tilde\beta_S(\nu)$, which is tight under the regularity assumption on $\nu$ in Theorem \ref{sup-thm}.

We begin with the case of $S=\mathbb{H}^{n,m}= \SO(n,m)^0/(\SO(n)\times\SO(m))$. For $n\geqslant m\geqslant 0$, we fix the signature $(n,m)$ quadratic form on $\R^m\oplus\R^m\oplus \R^{n-m}$ given by
\[
\sum_{i=1}^m (x_iy_i'+y_ix_i')+\sum_{i=1}^{n-m}z_iz_i'.
\]
The Lie algebra $\mathfrak{so}(n,m)$ of $\SO(n,m)^0$ consists of block matrices in $\mathfrak{sl}_{n+m}(\R)$ of the form
\begin{equation}\label{blocks}
\begin{pmatrix} A & B& C\\ D & -^t\!A &E \\ -^tE& -^tC&F \end{pmatrix},
\end{equation}
where $A\in \mathfrak{gl}_m(\R)$, $B,D\in \mathfrak{so}(m)$, $C,E\in M_{m,n-m}(\R)$, $F\in\mathfrak{so}(n-m)$. The $-1$ eigenspace $\mathfrak{p}$ of the Cartan involution ``minus transpose"  consists of symmetric matrices of the form \eqref{blocks}. A maximal abelian subspace of $\mathfrak{p}$ can be taken to be $\mathfrak{a}=\{{\rm diag}(A,-\!A,0)\}$, with $A$ diagonal.  We let $\{ \epsilon_i : i=1,\ldots ,m\}$ be the standard basis of $\mathfrak{a}^*$ given by evaluation of the diagonal entries of $A$.  Then, if $n>m$, 
\begin{equation*}
\Sigma=\{\pm\epsilon_i\pm\epsilon_j\}\cup\{\pm \epsilon_i\}\quad\text{and}\quad \Sigma^+=\{\epsilon_i\pm\epsilon_j\}\cup\{\epsilon_i\}
\end{equation*}
are the sets of restricted roots and the subset of positive roots, respectively. The dimension of the root spaces for the $\pm\epsilon_i\pm\epsilon_j$ is $1$ and that for the $\pm\epsilon_i$ is $n-m$ (see \cite[VI.4 Example 3]{Knapp}). From this and the formula $\beta_S(\nu)=|{\bf c}(\nu)|^{-2}$, where ${\bf c}(\nu)$ is the Harish-Chandra ${\bf c}$-function, recalled in Section \ref{sec:inv-formula}, one can deduce, using the Gindikin--Karpelevich product formula along with Stirling's formula, that if $\nu\in i\mathfrak{a}^*$ is sufficiently regular we have
\begin{equation}\label{first-beta-eq}
\beta_{\mathbb{H}^{n,m}}(\nu)\asymp \tilde\beta_{\mathbb{H}^{n,m}}(\nu),\quad\textrm{where}\quad\tilde\beta_{\mathbb{H}^{n,m}}(\nu)=\prod_{i\neq j}(1+|\nu_i\pm \nu_j|)\prod_i (1+|\nu_i|)^{n-m},
\end{equation}
where $\nu_i=\langle\nu,\epsilon_i\rangle$.

We now do the same with the Siegel upper half-space $\mathcal{H}^m$ of (real) dimension $m(m+1)$. We may realize $\mathcal{H}^m$ as $\Sp_{2m}(\R)/{\rm U}(m)$, where
\[
\Sp_{2m}(\R)=\left\{g\in \SL_{2m}(\R): {}^tgJg=J\right\}, \quad J=\begin{pmatrix} & I_m\\ -I_m & \end{pmatrix},
\]
and ${\rm U}(m)=\Oo(2m)\cap \Sp_{2m}(\R)$. 

We recall that $\mathfrak{sp}_{2m}=\{X\in\mathfrak{gl}_{2m}(\R): {}^tXJ+JX=0\}$ is the space of block matrices in $\mathfrak{sl}_{2m}(\R)$ of the form $\left(\begin{smallmatrix} A & B\\ C & -{}^t\!A \end{smallmatrix}\right)$, where $A,B,C\in \mathfrak{gl}_m(\R)$ and both $B$ and $C$ are symmetric. The $-1$ eigenspace $\mathfrak{p}'$ of the Cartan involution ``minus transpose" is the subspace corresponding to $B=C$, and we can take as a maximal abelian subspace of $\mathfrak{p}'$ the space $\mathfrak{a}'=\{{\rm diag}(A,-A)\}$, with $A$ diagonal. If we again let $\{ \epsilon_i : i=1,\ldots ,m \}$ be the standard basis of $\mathfrak{a}'^*$, then the sets of restricted roots $\Sigma$ and positive roots $\Sigma^+$ are
\begin{equation}
\label{symplectic-roots}
\Sigma=\{\pm\epsilon_i\pm\epsilon_j\}\cup\{\pm 2\epsilon_i\}\quad\text{and}\quad \Sigma^+=\{\epsilon_i\pm\epsilon_j\}\cup\{2\epsilon_i\},
\end{equation}
and all root space dimensions are $1$. We deduce, similarly to before, the estimate
\begin{equation}\label{second-beta-eq}
\beta_{\mathcal{H}^m}(\nu)\asymp\tilde\beta_{\mathcal{H}^m}(\nu),\quad\textrm{where}\quad\tilde\beta_{\mathcal{H}^m}(\nu)=\prod_{i\neq j} (1+|\nu_i\pm\nu_j|)\prod_i(1+|\nu_i|),
\end{equation}
valid for sufficiently regular $\nu\in i\mathfrak{a}^*$.

\begin{remark}
Consider the \textit{split} Lie algebra $\mathfrak{so}(m,m)$, viewed as a subalgebra of $\mathfrak{so}(n,m)$, by taking $C=E=F=0$ in \eqref{blocks}. Then $\mathfrak{a}\,\cap\,\mathfrak{so}(m,m)$ is isomorphic to $\mathfrak{a}$ and coincides in $\mathfrak{sl}_{2m}$ with $\mathfrak{a}'$. It is this sense in which we can view a spectral parameter $\nu$ simultaneously for $\mathbb{H}^{m,n}$ and $\mathcal{H}^m$.
\end{remark}

We deduce from \eqref{first-beta-eq} and \eqref{second-beta-eq} that the lower bound of Theorem \ref{sup-thm} can be expressed as
\begin{equation}\label{lower-bd-nu-i}
\|f\|_\infty\gg \left(\log (3+\|\nu\|)\right)^{-m[F:\Q]/2 }\prod_i (1+|\nu_i|)^{(n-m-1)/2}.
\end{equation}
Note that $n-m-1> 0$, due to the additional assumption on the parity of $n+m$.  We may therefore view the power growth as arising from the \textit{excess multiplicity} $n-m-1 > 0$ for the roots $\pm \epsilon_i$ in the non-split group $\SO(n,m)^0$.

\subsection{Relation to literature}

Although our interest here lies primarily in the higher rank setting of $m\geqslant 2$, we point out the relation between Theorem \ref{sup-thm} and the existing literature when the rank $m=1$. Let $\lambda^2$ be the Laplacian eigenvalue of $f$. In this case, Theorem \ref{sup-thm} yields a lower bound of $\lambda^{(n-2)/2}/(\log\lambda)^{[F:\Q]/2 }$, which is a factor of $\lambda^{-1/2}/(\log\lambda)^{[F:\Q]/2 }$ off of the upper bound $\lambda^{(n-1)/2}$ in \eqref{Linfty-local}. For $n=3$ this recovers (up to a logarithmic loss) a famous result of Rudnick-Sarnak \cite{RS}. Moreover, for $n\geqslant 5$ odd this improves upon a result of Donnelly \cite{Donnelly}, who had shown a weaker lower bound of $\lambda^{(n-4)/2}$. The exponent $(n-2)/2$ is the largest possible allowed by the purity conjecture (see Section \ref{sec:purity}).

We now compare the setting of Theorem \ref{sup-thm} to the general context of our earlier paper \cite{BM}. Let $V$ be a non-degenerate anisotropic quadratic space over a totally real number field, with signature $(n,m)$ at a fixed real place $v_0$ and definite for real places $v\neq v_0$, and put $\mathbf{G}=\SO(V)$. If $U$ is any $m$-dimensional quadratic subspace of $V$ that is defined over $F$ and negative definite at $v_0$, let $\theta$ be the involution $\theta(g)=sgs$, where $s$ is the orthogonal reflection with respect to $U$. Put $\bH=\bG^\theta$; then $\mathbf{H}={\rm S}(\Oo(U^\perp)\times\Oo(U))$. Note that $\theta$ induces a Cartan involution on $\bG(F_{v_0})$, since $\mathbf{H}_{v_0}={\rm S}(\Oo(n)\times\Oo(m))$ is a maximal compact subgroup of $\mathbf{G}(F_{v_0})=\SO(n,m)$. Moreover, the conditions on the signature at $v_0$ imply that $\mathbf{G}(F_{v_0})$ is nonsplit. Applying the main result from \cite{BM} we obtain the existence of compact quotients $\Gamma \backslash \mathbb{H}^{n,m}$ that support $L^2$-normalized $f$ with spectral parameter $\nu+O(1)$ such that $\| f \|_\infty\gg \lambda^\delta$ for some ineffective $\delta>0$.  

The interest in Theorem \ref{sup-thm} is therefore in the quality (in rank 1) and combinatorial shape (in higher rank) of the lower bound. It should be remarked, however, that while \cite{BM} produces a weaker exponent than Theorem \ref{sup-thm}, it has the advantage of not requiring the assumption that $\nu$ is sufficiently regular.


\subsection{Relation to purity conjecture}\label{sec:purity}

We now make some remarks comparing our results to the Sarnak purity conjecture \cite[Appendix]{Sa}. 

Let $Y$ be a compact negatively curved locally symmetric space, of dimension $d$ and rank $r$, and of congruence type. The Sarnak purity conjecture roughly states that the sup norms of \textit{well-balanced} Hecke--Maass forms on $Y$, of high frequency $\lambda$, should behave as $\lambda^{k/2}$, where $k$ lies in $\Z\cap [0,d-r)$. Thus, the $L^\infty$ exponents on arithmetic manifolds are quantized. Here, ``well-balanced'' means that the spectral parameter $\nu$ should verify $1+|\langle \alpha, \nu \rangle|\asymp 1+\|\nu\|$ for all roots $\alpha$.

If one takes $\nu$ well-balanced in the above sense, then the quotient $\left(\beta_{\mathbb{H}^{n,m}}(\nu)/\beta_{\mathcal{H}^m}(\nu)\right)^{1/2}$ appearing in Theorem \ref{sup-thm} becomes $\lambda^{(n-m-1)m/2}=\lambda^{(d-r)/2-r^2/2}$, where $d=mn$ and $r=m$. (Note that the relation $1+\|\nu\|\asymp 1+\lambda$ always holds.) Thus, for well-balanced $\nu$, Theorem \ref{sup-thm} is consistent with Sarnak's purity conjecture, in that the exponent of $\lambda$ is indeed a half-integer. Note that the well-balanced condition is automatic in rank 1.

That the well-balanced condition is necessary for any purity conjecture to hold was observed in \cite[Appendix]{Sa}, in light of the examples of \cite{LO}. Indeed, as a consequence of their period formula, Lapid and Offen exhibited a lower bound of the form $\beta_{\mathfrak{H}_\C^n}(\nu)^{1/4}$ for the sup norm of base change lifts on compact quotients of $\mathfrak{H}_\C^n=\GL_n(\C)/{\rm U}(n)$ coming from anisotropic unitary groups. In this higher rank setting, the expression $\beta_{\mathfrak{H}_\C^n}(\nu)^{1/4}$ can vary continuously as a power of $\lambda$ according to the position of $\nu$ relative to the root hyperplanes. This in turn lead to the suggestion (see \cite[Appendix]{Sa}) that the correct measure of complexity was rather $\beta_S(\nu)$.

Observe that the lower bound in our Theorem \ref{sup-thm} is not, in light of \eqref{first-beta-eq} and \eqref{lower-bd-nu-i}, a fractional power of $\beta_{\mathbb{H}^{n,m}}(\nu)$. So while $\beta_{\mathbb{H}^{n,m}}(\nu)$ is indeed of relevance, it does not \textit{alone} determine the precise shape of $\|f\|_\infty$. In fact, the method of proof of Theorem \ref{sup-thm} would lead one to venture that the correct quantization of sup norm growth is through a \textit{comparison} of spectral densities. For example, putting $\mathfrak{H}_\R^n=\GL_n(\R)/{\rm O}(n)$, the Lapid-Offen bound can be written 
\[
\beta_{\mathfrak{H}_\C^n}(\nu)^{1/4}\asymp (\beta_{\mathfrak{H}_\C^n}(\nu)/\beta_{\mathfrak{H}_\R^n}(\nu))^{1/2}.
\]
The power growth is once again seen to come from the excess multiplicity (here equal to $2$) of the roots for $\GL_n(\C)$. Note, finally, that for the rank one signature $(n,1)$ we may recognize the eligible purity exponents as
\[
\lambda^{(n-j)/2}=(\lambda^{n-1}/\lambda^{j-1})^{1/2}\asymp (\beta_{\mathbb{H}^n}(\nu)/\beta_{\mathbb{H}^j}(\nu))^{1/2} \qquad (j=2,\ldots , n),
\]
since $1+\lambda^{j-1}\asymp\beta_{\mathbb{H}^j}(\nu)$.

\subsection{Outline of the paper}

We shall prove Theorem \ref{sup-thm} by following the steps \ref{1sketch}, \ref{2sketch}, and \ref{3sketch} given in Section \ref{sec:intro}.  We now expand on these steps, and explain how they relate to the structure of the paper.  As in Section \ref{sec:intro}, let $\bG=\Oo(V)$, for a non-degenerate anisotropic quadratic space $V$ over a totally real number field, with signature $(n,m)$ at a fixed real place and definite at the others.  Let $\bH=\Oo(U^\perp)\times\Oo(U)$, where $U$ is an $m$-dimensional quadratic subspace that is negative definite at the distinguished real place, and $\mathbf{G}'=\Sp_{2m}$.

\subsubsection{Sections \ref{sec:notation} to \ref{sec:points}}

These sections correspond to Steps \ref{1sketch} and \ref{3sketch}. Step \ref{1sketch} asks for the average pointwise value of Maass forms on $\bG$ in the eigenvalue aspect, and Step \ref{3sketch} asks for an upper bound for the number of cusp forms on $\bG'$ with fixed ${\rm U}(m)$-type $\tau$, where $\tau$ is fixed and one-dimensional, in the eigenvalue aspect.  We solve these problems using appropriate variants of the trace formula.  Note that the problem of counting $\tau$-spherical forms on $\bG'$ arises because these correspond to spherical forms on $\bG$ under the theta lift, as explained below.

After introducing notation for real groups in Section \ref{sec:notation}, we construct the archimedean test functions that we shall use in the trace formula in Sections \ref{sec:inv-formula} and \ref{sec:test-function}.  Because of the need to count $\tau$-spherical forms on $\bG'$, our test functions will likewise need to be $\tau$-spherical in Step \ref{3sketch}, and we construct these using the $\tau$-spherical transform studied by Shimeno \cite{Shimeno}.

Section \ref{sec:upper-bd-cusp-spec} contains our bound for the number of $\tau$-spherical cusp forms on $\bG'$.  We use a relatively elementary approach, by showing that cusp forms with spectral parameter $\lambda$ are concentrated below height $\| \lambda \|^{1+\epsilon}$ in the cusp, and then bounding the integral of the geometric kernel function over this region.  As discussed in Remark \ref{remark2-intro}, this leads to the logarithmic loss in our main theorem.

We establish the average value result for Maass forms on $\bG$ in Section \ref{sec:points}.  The result we prove here is purely analytical in nature, requiring no arithmetic hypotheses on the locally symmetric spaces nor the eigenfunctions themselves.

\subsubsection{Sections \ref{sec:theta-review} and \ref{sec:Period-relation}} These sections are devoted to the distinction principle of Step \ref{2sketch}.  This distinction principle will follow from a period relation between forms on $\bG$ and $\bG'$, stated as Proposition \ref{period-relation}.

To describe this relation, let $\phi$ be a vector in an automorphic representation $\pi$ of $\bG$, and assume that both $\pi$ and $\phi$ are spherical at infinity.  Let $\theta(\phi)$ be the theta lift of $\phi$ to a form on $\bG'$.  In Section \ref{sec:Bessel-period}, we define a Bessel subgroup $\bR = \Oo(U) \rtimes \bN$ of the Siegel parabolic $\bP_\text{Siegel} = \bM \bN$, and a character $\psi_\bN$ of $\bR$.  Our period relation then connects the $\bH$-period of $\phi$,
\[
\mathscr{P}_\bH (\phi)=\int_{[\bH]} \phi(h) dh,
\]
with the Bessel period of $\theta(\phi)$,
\[
\mathscr{B}_\bR^{\, \psi_N}(\theta(\phi))=\int_{[\bR]} \theta(\phi)(r)\psi_N^{-1}(r)dr.
\]
This will imply that if $\mathscr{P}_\bH (\phi)$ is nonzero, then so is the lift $\theta(\phi)$, and hence that $\mathscr{P}_\bH$ distinguishes theta lifts from $\bG'$.  This approach is the same one used by Rudnick and Sarnak to prove distinction in the case $m = 1$.  In higher rank, this period relation was also considered by Gan in \cite{Gan}; note that he refers to $\mathscr{B}_\bR^{\, \psi_N}(\theta(\phi))$ as a Shalika period, rather than a Bessel period.

Section \ref{sec:theta-review} introduces notation for the theta lift, and describes the global lift $\Theta(\pi)$ of the spherical representation $\pi$ to $\bG'$.  In particular, we show that $\Theta(\pi)$ is $\tau$-spherical, where $\tau = \det^{(n-m)/2}$. We also show that the lift of a tempered spherical representation of $\bG(\R)$ to ${\rm Sp}_{2m'}$ vanishes for $m' < m$.  It follows from this that if $\pi_\infty$ is tempered, then $\Theta(\pi)$ is either zero or cuspidal; the temperedness of $\pi_\infty$ will follow from the sufficient regularity condition on $\nu$ in Theorem \ref{sup-thm}.  These two results imply that the forms on $\bG'$ participating in our distinction principle are $\tau$-spherical cusp forms, which are counted in Section \ref{sec:upper-bd-cusp-spec} as described above.

\subsubsection{Section \ref{ProofsOfThms}}
In this section, we deduce Theorem \ref{sup-thm} from the above ingredients via a counting argument.

\subsection{Acknowledgements}

We would like to thank Stephen Kudla and Colette Moeglin for helpful conversations.

\section{Notation}\label{sec:notation}

In this section we introduce notation related to harmonic analysis on Lie groups.  Let $G$ be a non-compact connected\footnote{In \S\S\ref{sec:notation}-\ref{sec:test-function}, we will assume $G$ to be connected. The results proved in these sections will be applied in \S\S\ref{sec:upper-bd-cusp-spec}-\ref{sec:points} to the neutral component of the $F_{v_0}$-points of a semisimple group $\bG$ defined over a totally real number field $F$, with distinguished real place $v_0$. This neutral component is denoted by $G^0$ in Section \ref{sec:upper-bd-cusp-spec} and by $G^{00}$ in Section \ref{sec:points}.} semisimple Lie group with finite center, and $K$ a maximal compact subgroup of $G$. We write $S=G/K$ for the associated irreducible Riemannian globally symmetric space. 

\subsection{Group decompositions}\label{sec:gp-decomp}

Let $\g$ be the Lie algebra of $G$ and $\mathfrak{k}\subset\g$ the Lie algebra of $K$. Then $K$ is the group of fixed points of a Cartan involution $\Theta$ of $G$. Let $\theta$ be the differential of $\Theta$ and write $\g=\p\oplus\mathfrak{k}$ for the corresponding Cartan decomposition of $\g$.

Let $B$ be the Killing form on $\mathfrak{g}$; identifying $\p$ with the tangent space at the identity $K$ of $S=G/K$, we endow $S$ with the Riemannian metric induced by the restriction of $B$ to $\p$. Let $d$ be the associated bi-$K$-invariant distance function on $G$. For $R>0$, we put $G_R=\{g\in G: d(g,K)\leqslant R\}$. 

Fix a maximal abelian subspace $\mathfrak{a}\subset\p$. Denote by $\langle \cdot ,\cdot\rangle$ the restriction of the Killing form to $\mathfrak{a}$. Write $\Sigma=\Sigma(\mathfrak{a},\g)\subset\mathfrak{a}^*$ for the set of restricted roots and let $\Sigma^+$ denote a choice of positive roots. Then we have a restricted root space decomposition $\g=\mathfrak{m}\oplus\mathfrak{a}\oplus\sum_{\alpha\in\Sigma}\g_\alpha$, where $\mathfrak{m}=Z_\mathfrak{k}(\mathfrak{a})$ is the centralizer of $\mathfrak{a}$ in $\mathfrak{k}$. Let $m_\alpha=\dim\g_\alpha$ be the multiplicity of the root $\alpha\in\Sigma$. As usual, we let $\rho=\frac12\sum_{\alpha\in\Sigma^+}m_\alpha\alpha$ be the half-sum of positive roots. We denote by $\Psi\subset\Sigma^+$ the set of simple roots. Let $\{H_\alpha:\alpha\in\Psi\}$ be the dual basis of $\Psi$ in $\mathfrak{a}$. For a root $\alpha\in\Sigma$ let $\alpha^\vee=2\alpha/\langle\alpha,\alpha\rangle$ denote the corresponding coroot. Let $W=N_K(\mathfrak{a})/Z_K(\mathfrak{a})$ be the associated Weyl group.

Let $\mathfrak{a}_\C=\mathfrak{a}\otimes\C$. We call $\lambda\in\mathfrak{a}_\C^*$ \textit{regular} if $\langle\lambda,\alpha\rangle\neq 0$ for all $\alpha\in\Sigma$. For $\reg > 0$ we say that $\lambda$ is $\reg$-\textit{regular} if $|\langle\lambda,\alpha\rangle|\geqslant \reg$ for all $\alpha\in\Sigma^+$. Finally, we say that $\lambda\in\mathfrak{a}^*_\C$ is \textit{sufficiently regular} if $\lambda$ is $\reg$-regular for a sufficiently large $\reg>0$, depending only on the group $G$. In general, when working with sufficiently regular parameters, all implied constants will depend on the underlying $\reg$; we will not, however, explicate this dependence in the notation. The notion of sufficiently regular should not be confused with the much stronger ``well-balanced'' condition introduced in Section \ref{sec:purity}.

Let $A$ be the analytic subgroup corresponding to $\mathfrak{a}$, and $N$ be the analytic subgroup corresponding to $\mathfrak{n}=\sum_{\alpha\in\Sigma^+}\g_\alpha$. We furthermore write $\overline{N}$ for the analytic subgroup of $\theta(\mathfrak{n})=\sum_{-\alpha\in\Sigma^+}\g_\alpha$. Let $M= Z_K(A)$ be the centralizer of $A$ in $K$. Then $P= NAM$ is a minimal parabolic subgroup of $G$ containing $A$.

We have an Iwasawa decomposition $G=NAK$. Denote by $H:G\rightarrow\mathfrak{a}$ and $\kappa: G\rightarrow K$ the respective Iwasawa projections, given by $g=n e^{H(g)}\kappa (g)$. We put $a(g)=e^{H(g)}$; more generally, when $\lambda\in\mathfrak{a}_\C^*$ we write $a(g)^\lambda=e^{\lambda(H(g))}$. 

We now fix measure normalizations. We equip $K$ with the probability Haar measure. The Killing form induces a $W$-invariant Haar measure on $\mathfrak{a}$ and hence $\mathfrak{a}^*$; this yields a measure $da$ on $A$ via the exponential map. Next we let $dn$ be the left-invariant Haar measure on $N$ normalized as in \cite[p. 37]{DKV}. We now define a Haar measure $dg$ on $G$ through the Iwasawa decomposition $G=NAK$. Namely, we let
\begin{equation}\label{Iwasawa-measure1}
dg=a^{-2\rho} dnda dk \qquad (g=nak).
\end{equation}
We recall that $A$ normalizes $N$, and that
\begin{equation}\label{A-normalizes-N}
\det \Ad (a)|_{\mathfrak{n}}=a^{2\rho}\qquad (a\in A).
\end{equation}
We may therefore write $dg$ in the coordinates $G=ANK$ as 
\begin{equation}\label{Iwasawa-measure2}
dg=dadn dk \qquad (g=ank).
\end{equation}

Finally, we recall the Cartan decomposition $G=KAK$. Let $B_\mathfrak{a}(0,R)$ denote the ball in $\mathfrak{a}$ centered at the origin and of radius $R>0$, relative to the norm induced by $\langle\cdot,\cdot\rangle$. We put $A_R=\exp B_\mathfrak{a}(0,R)$. Then we have $G_R=KA_RK$.

\subsection{Spherical functions of abelian $K$-type}\label{sec:tau-spherical}

We now recall several formulae in the theory of $\tau$-spherical functions, where $\tau$ is an abelian $K$-type. 

Fix a character $\tau: K\rightarrow\C^\times$ and let $C^\infty(S,\tau)$ denote the space of smooth functions $f:G\rightarrow\C$ satisfying $f(gk)=\tau(k)^{-1}f(g)$ for all $k\in K$. We view $C^\infty(S,\tau)$ as the space of smooth sections $H^0(S,L(\tau))$ of the associated homogeneous line bundle $L(\tau)=G\times_K \tau$ over $S$. Let $\mathbb{D}(\tau)$ denote the algebra of all left $G$-invariant differential operators on $H^0(S,L(\tau))$. Let $\gamma_{\rm HC}:\mathbb{D}(\tau)\rightarrow I(\mathfrak{a})$ be the Harish-Chandra isomorphism of type $\tau$ (see \cite[Theorem 2.4]{Shimeno1990}), where $I(\mathfrak{a})=S(\mathfrak{a})^W$ and $S(\mathfrak{a})$ is the symmetric algebra of $\mathfrak{a}$ with complex coefficients. For fixed $\lambda\in\mathfrak{a}_\C^*$ we let $\chi_\lambda: \mathbb{D}(\tau)\rightarrow\C$ be the algebra homomorphism given by $\chi_\lambda(D)=\gamma_{\rm HC}(D)(\lambda)$; clearly, $\chi_{w\lambda}(D)=\chi_\lambda(D)$ for all $w\in W$. The space $\mathcal{E}_\lambda(S,\tau)$ of joint $\mathbb{D}(\tau)$-eigensections in $H^0(S,L(\tau))$ having fixed system of eigenvalues $\chi_\lambda$ forms a representation of $G$. Endowed with its natural Fr\'echet space topology, it is a smooth representation of $G$.

Let $C^\infty(S,\tau,\tau)$ denote the subspace of $C^\infty(S,\tau)$ consisting of smooth functions $f:G\rightarrow\C$ such that $f(k_1gk_2)=\tau(k_1k_2)^{-1}f(g)$ for all $k_1,k_2\in K$. This is a commutative algebra under convolution \cite[Lemma 3.1]{Shimeno}. By \cite[Proposition 6.1]{Shimeno1990}, for $\lambda\in\mathfrak{a}_\C^*$ there is a unique $\varphi_{\lambda,\tau}\in C^\infty(S,\tau,\tau)\cap \mathcal{E}_\lambda(S,\tau)$ such that $\varphi_{\lambda,\tau}(e)=1$. Then the line spanned by $\varphi_{\lambda,\tau}$ is the $\tau$-isotypic subspace for $\mathcal{E}_\lambda(S,\tau)$ under the action of $K$. Moreorer, if $\lambda,\mu\in\mathfrak{a}^*_\C$ then
\begin{equation}\label{eq:equality-sph-fn}
\varphi_{\lambda,\tau}=\varphi_{\mu,\tau} \quad\text{if, and only if, there is } w\in W \textrm{ such that } \lambda=w\mu.
\end{equation}
We call $\varphi_{\lambda,\tau}$ the \textit{$\tau$-spherical function} of spectral parameter $\lambda$. 

Let $V(\lambda,\tau)$ denote the smallest closed invariant subspace of $\mathcal{E}_\lambda(S,\tau)$ containing $\varphi_{\lambda,\tau}$. We give it the relative topology. Then $V(\lambda,\tau)$ is irreducible (see \cite[Corollary 5.2.5]{Heckman}) and is generically of infinite dimension (see \cite[Corollary 5.2.8]{Heckman}).

\subsection{Properties of the $\tau$-spherical function}\label{sec:induced-reps}

In this section we recall the Poisson integral formula for $\varphi_{\lambda,\tau}$ and study its transformation properties.

The Poisson integral formula is obtained by writing $\varphi_{\lambda,\tau}$ as a matrix coefficient of a generalized principal series representation. To define these representations, let $P=NAM$ be the standard minimal parabolic subgroup from Section \ref{sec:gp-decomp}. Let $(\xi,V_\xi)$ be a finite dimensional unitary representation of $M$ and let $\lambda\in\mathfrak{a}^*_\C$. We let $\pi_{\lambda,\xi}={\rm Ind}_P^G(1\otimes e^\lambda\otimes\xi)$ be the smooth representation unitarily induced from the representation $1\otimes e^\lambda\otimes\xi$ on $P=NAM$. Thus $\pi_{\lambda,\xi}$ consists of smooth functions $f:G\rightarrow V_\xi$ that satisfy
\[
f(namx)=a^{\rho+\lambda}\xi(m)f(x),\qquad n\in N,\; a\in A, \; m\in M, \; x\in G,
\]
the $G$-action being via the right-regular representation.  We have a nondegenerate $G$-invariant pairing between $\pi_{\lambda, \xi}$ and $\pi_{-\lambda, \xi^\vee}$, where $\xi^\vee$ denotes the contragredient, given by
\[
\langle f_1, f_2\rangle=\int_K  \langle f_1(k),f_2(k) \rangle dk.
\]


To specialize this to the present setting, we let $\tau$ be an abelian $K$-type, and $\xi$ the restriction of $\tau$ to $M$. Then by Frobenius reciprocity $\pi_{\lambda, \xi}$ admits $\tau$ as a non-trivial $K$-type of dimension one, the line generated by $e_{\lambda, \tau}(g)=\tau(\kappa(g))a(g)^{\rho+\lambda}$. Note that $e_{\lambda, \tau}$ is the unique function in $\pi_{\lambda, \xi}$ which extends the function $\tau$ on $K$.

The matrix coefficient assignment $v\mapsto m_v$, where $m_v(g)=\langle v,\pi_{\lambda, \xi^{-1}}(g)e_{\lambda, \tau^{-1}} \rangle$, yields an intertwining map $\pi_{-\lambda,\xi}\rightarrow \mathcal{E}_\lambda(S,\tau)$. In particular, when $v=e_{-\lambda, \tau}$, this yields the integral expression
\begin{equation}\label{defn-sph-fn}
\varphi_{\lambda,\tau}(g)=\langle e_{-\lambda, \tau}, \pi_{\lambda, \xi^{-1}}(g) e_{\lambda, \tau^{-1}} \rangle=\int_K a(kg)^{\rho+\lambda}\tau(\kappa(kg)^{-1} k)dk,
\end{equation}
which recovers that of Harish-Chandra when $\tau$ is trivial.

We shall use the following transformational property of $\varphi_{\lambda,\tau}$.

\begin{prop}\label{prop:sph-fn-2-vars}
The $\tau$-spherical function satisfies $\varphi_{\lambda,\tau}(g^{-1})=\varphi_{-\lambda,\tau^{-1}}(g)$.
\end{prop}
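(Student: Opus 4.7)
The plan is to derive the identity from the matrix coefficient realization \eqref{defn-sph-fn} together with the $G$-invariance and the symmetry of the canonical bilinear pairing between the principal series $\pi_{-\lambda,\xi}$ and $\pi_{\lambda,\xi^{-1}}$ recalled in Section \ref{sec:induced-reps}. Since \eqref{defn-sph-fn} already encodes $\varphi_{\lambda,\tau}$ as a matrix coefficient, no integral manipulation will be needed: it is enough to track the transformation of the pairing under $g \mapsto g^{-1}$ and then to reindex.

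First, I would apply \eqref{defn-sph-fn} directly at $g^{-1}$ to get
\[
\varphi_{\lambda,\tau}(g^{-1}) = \langle e_{-\lambda,\tau},\, \pi_{\lambda, \xi^{-1}}(g^{-1}) e_{\lambda, \tau^{-1}} \rangle.
\]
Invoking the $G$-invariance of the pairing between $\pi_{-\lambda,\xi}$ and $\pi_{\lambda,\xi^{-1}}$, I would transfer the action from the second slot to the first, obtaining
\[
\varphi_{\lambda,\tau}(g^{-1}) = \langle \pi_{-\lambda,\xi}(g) e_{-\lambda,\tau},\, e_{\lambda, \tau^{-1}} \rangle.
\]

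Because $\xi = \tau|_M$ is one-dimensional, the pointwise pairing $V_\xi \times V_{\xi^{-1}} \to \C$ is just scalar multiplication, hence symmetric. Swapping the two arguments yields
\[
\varphi_{\lambda,\tau}(g^{-1}) = \langle e_{\lambda, \tau^{-1}},\, \pi_{-\lambda,\xi}(g) e_{-\lambda,\tau} \rangle.
\]
To finish, I would recognize the right-hand side as \eqref{defn-sph-fn} applied with parameters $(-\lambda,\tau^{-1})$ in place of $(\lambda,\tau)$ — a substitution that interchanges the roles of $\xi$ and $\xi^{-1}$ throughout — and thereby identify it with $\varphi_{-\lambda,\tau^{-1}}(g)$.

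The only delicate point is the symmetry step, which uses the one-dimensionality of $\tau$ in an essential way: for a higher-dimensional $K$-type, the natural analog of the identity would replace $\tau^{-1}$ by the contragredient $\tau^\vee$, with the same argument going through via the canonical (but no longer symmetric) pairing $V_\tau \times V_{\tau^\vee} \to \C$. Beyond this observation, the argument is purely formal and presents no substantive obstacle.
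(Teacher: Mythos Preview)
Your proposal is correct and follows essentially the same approach as the paper's own proof: apply the matrix-coefficient formula \eqref{defn-sph-fn} at $g^{-1}$, use $G$-invariance of the pairing to move the action to the other slot, swap the arguments using the symmetry of the scalar pairing, and recognize the result as $\varphi_{-\lambda,\tau^{-1}}(g)$. Your additional remark on the role of one-dimensionality (and the contragredient in the higher-dimensional case) is accurate and not in the paper.
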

\begin{proof}

This follows from the expression \eqref{defn-sph-fn} for $\varphi_{\lambda,\tau}$ as a matrix coefficient.  We have
\begin{align*}
\varphi_{\lambda,\tau}(g^{-1}) & = \langle e_{-\lambda, \tau}, \pi_{\lambda, \xi^{-1}}(g^{-1}) e_{\lambda, \tau^{-1}} \rangle \\
& = \langle \pi_{-\lambda, \xi}(g) e_{-\lambda, \tau}, e_{\lambda, \tau^{-1}} \rangle \\
& = \langle e_{\lambda, \tau^{-1}}, \pi_{-\lambda, \xi}(g) e_{-\lambda, \tau} \rangle = \varphi_{-\lambda,\tau^{-1}}(g),
\end{align*}
as required.
\end{proof}

\subsection{Harish-Chandra transform of type $\tau$}

We continue to let $\tau$ be a one-dimensional $K$-type. Let $C_c^\infty(S,\tau,\tau)$ denote the $\tau$-spherical Hecke algebra, i.e., the space $C_c^\infty(S,\tau,\tau)$ equipped with the convolution product. Using Gelfand's trick, one can show that $C_c^\infty(S,\tau,\tau)$ is commutative (see \cite[Lemma 3.1]{Shimeno}).

For $R>0$, let $\mathcal{PW}(\mathfrak{a}_\C^*)_R$ denote the space of Paley--Wiener functions of exponential type $R$ on $\mathfrak{a}_\C^*$. Put $\mathcal{PW}(\mathfrak{a}_\C^*)=\bigcup_{R>0} \mathcal{PW}(\mathfrak{a}_\C^*)_R$. The Harish-Chandra transform of type $\tau$ is the algebra homomorphism $\mathscr{H}_\tau:C_c^\infty(S,\tau,\tau)\rightarrow \mathcal{PW}(\mathfrak{a}_\C^*)$ given by
\begin{equation}\label{HC-transform}
\mathscr{H}_\tau f(\lambda)=\int_G f(g)\varphi_{-\lambda,\tau^{-1}}(g)dg.
\end{equation}
In particular $\mathscr{H}_\tau (f\ast g)=\mathscr{H}_\tau f.\mathscr{H}_\tau g$. The map $\mathscr{H}_\tau$ is injective with image $\mathcal{PW}(\mathfrak{a}_\C^*)^W$, the subspace of Weyl group invariants \cite[Lemma 6.2 and Theorem 6.5]{Shimeno}. We shall sometimes write $\hat{f}(\lambda;\tau)$ for $\mathscr{H}_\tau f(\lambda)$; moreover, when $\tau=1$, we shall often drop $\tau$ from the notation and simply write  $\mathscr{H}f$ or $\hat{f}$. 

On the other hand, the space $C_c^\infty(S,\tau,\tau)$ maps onto $C^\infty_c(A)^W$ via the Abel--Satake transform
\[
\mathscr{A}_\tau f(a)=a^\rho \int_N f(an) dn.
\]
These two maps fit into the following commutative diagram \cite[Theorem 5.4.2]{Heckman}
\begin{equation}\label{eq:commutative-diagram}
\xymatrix{
&C_c^\infty(S,\tau,\tau)\ar[rd]^{\mathscr{A}_\tau}  \ar[ld]_{\mathscr{H}_\tau}\\
\mathcal{PW}(\mathfrak{a}_\C^*)^W  && C^\infty_c(A)^W,\ar[ll]_{\mathscr{F}}
}
\end{equation}
where $\mathscr{F}f(\lambda)=\int_A f(a)a^{-\lambda} da$ is the Fourier transform. These maps respect supports in the following way: for $R>0$, let $C^\infty_R(S,\tau,\tau)$ denote the space of functions in $C^\infty_c(S,\tau,\tau)$ with ${\rm supp}(f)\subset G_R$; similarly, let $C^\infty_R(A)^W$ denote the space of functions in $C^\infty_c(A)^W$ with ${\rm supp}(\mathscr{A}_\tau(f))\subset A_R$. Then, for $f\in C^\infty(S,\tau,\tau)$: 
\[
f\in C^\infty_R(S,\tau,\tau)\;\text{ if, and only if, }\; \mathscr{A}_\tau(f)\in C^\infty_R(A)^W\;\text{ if, and only if, }\; \mathscr{H}_\tau (f)\in \mathcal{PW}(\mathfrak{a}_\C^*)^W_R;
\]
see \cite[Theorem 6.5]{Shimeno}.

\section{$\tau$-spherical inversion formula}\label{sec:inv-formula}

In this section $G$ shall denote a non-compact connected \textit{simple} Lie group, $K$ a maximal compact subgroup of $G$, and $S=G/K$. The center of $\gk$ is either trivial or one dimensional, and in the latter case $G$ is said to be of Hermitian type. We let $\tau$ be a character of $K$. We shall make the following simplifying assumption on $G$:
\begin{equation}\label{eq:assumption}
\textit{whenever $\tau$ is non-trivial, the Hermitian group $G$ has reduced root system.}
\end{equation}
This will allow us to keep the notational complexity to a minimum, while allowing for sufficient generality for our applications. For the proof of Theorem \ref{sup-thm} the only time we shall consider a non-trivial character $\tau$ will be for $G=\Sp_{2m}(\R)$.

In this section we recall the explicit formulae for the $\tau$-spherical Plancherel measure $\tpl$ on $\mathfrak{a}^*_\C$. This is the unique $\sigma$-finite measure for which the $\tau$-spherical Harish-Chandra transform $\mathscr{H}_\tau$ extends to an isometry
\[
\mathscr{H}_\tau: L^2(S,\tau,\tau)\rightarrow L^2(\mathfrak{a}^*_\C,|W|^{-1} \tpl).
\]
Importantly for us, the $\tau$-spherical Plancherel measure satisfies an inversion formula of the form
\begin{equation}\label{eq:general-inversion}
k(g)=\frac{1}{|W|}\int_{\mathfrak{a}^*_\C}\mathscr{H}_\tau k(\lambda)\varphi_{\lambda,\tau}(g)d\tpl(\lambda),
\end{equation}
for every $k\in C^\infty_c(S,\tau,\tau)$. 

\subsection{Parabolic subgroups}\label{sec:parabolic-notation}
Recall from \S\ref{sec:gp-decomp} that we have fixed a minimal parabolic subgroup $P$ of $G$. Any parabolic subgroup of $G$ containing $P$ is called \textit{standard}; they are parametrized as follows. Let $\Theta$ be a subset of the positive simple roots $\Psi$, and write $\langle\Theta\rangle=\Sigma^+\cap\big(\sum_{\alpha\in\Theta}\N\alpha\big)$ for the system of positive roots generated by $\Theta$. We write $W_\Theta$ for the subgroup of $W$ generated by the hyperplane reflections $s_\alpha$ for $\alpha\in\Theta$. Then $P_\Theta=PW_\Theta P$ is a standard parabolic subgroup, with unipotent radical $N_\Theta$ given by the product over $\alpha\in\Sigma^+\setminus\langle\Theta\rangle$ of the one-parameter subgroups corresponding to $\g_\alpha$. In particular $P_\emptyset=P$ and (by the Bruhat decomposition) $P_{\Psi}=G$. Let $P_\Theta= N_\Theta A_\Theta M_\Theta$ be its Langlands decomposition.

The Lie algebra of $A_\Theta$ is given by
\begin{equation}\label{defn:Lie-A-theta}
\mathfrak{a}_\Theta=\{H\in\mathfrak{a}: \alpha(H)=0 \;\forall\alpha\in\Theta\}=\sum_{\alpha\in\Psi\setminus\Theta}\R H_\alpha.
\end{equation}
Note that $\mathfrak{a}_\emptyset=\mathfrak{a}$ and $\mathfrak{a}_{\Psi}=\{0\}$. We write $\mathfrak{a}(\Theta)$ for the orthocomplement of $\mathfrak{a}_\Theta$ in $\mathfrak{a}$ relative to the Killing form, and $N(\Theta)$ for the unipotent subgroup of $M_\Theta$ given by the product over $\alpha \in \langle \Theta \rangle$ of the one-parameter subgroups corresponding to $\g_\alpha$.

Finally, let $\overline{N_\Theta}$ and $\overline{N(\Theta)}$ denote the analytic subgroups corresponding to the Lie subalgebras $\sum_{-\alpha\in\Sigma^+\setminus\langle\Theta\rangle}\g_\alpha$ and $\sum_{-\alpha\in\langle\Theta\rangle}\g_\alpha$, respectively.

\subsection{More notation on roots}\label{sec:more-roots}
In preparation for the paragraphs to come, we shall need to introduce some notation concerning various types of roots in $\Sigma$.

We let $\Sigma_{\rm red}=\{\alpha\in\Sigma: \alpha/2\notin\Sigma\}$ denote the subset of  reduced roots. We say that $\Sigma$ is \textit{reduced} if $\Sigma_{\rm red}=\Sigma$. As $\alpha$ varies over $\Sigma_{\rm red}$, there are at most 2 possible lengths $\langle \alpha,\alpha\rangle^{1/2}$. (Recall that the root systems $\Sigma$ for which all roots have the same length are called \textit{simply laced}; a simply laced root system is automatically reduced.) We call the reduced roots of minimal length \textit{short} and those of maximal length \textit{long} (in the simply laced case, all roots will be deemed long). Roots of the same length have the same multiplicities, as they lie in the same Weyl group orbit.

If $G$ is Hermitian with reduced root system, as in assumption \eqref{eq:assumption}, the classification result of C.C. Moore \cite[Theorem 2]{Moore} shows that $\Sigma$ is of the form \eqref{symplectic-roots}, i.e., of type $C$.  However, we shall work with a different basis of $\ga^*$ to the one in \eqref{symplectic-roots} (differing only by a factor or 2), in order to agree with Shimeno's paper.  We take a basis $\{ \beta_1, \ldots, \beta_r \}$ of $\ga^*$ so that the sets of short and long roots are
\begin{equation}
\label{Shimeno-roots}
\Sigma_s=\{\tfrac12 (\pm\beta_i\pm\beta_j): 1\leqslant i,j\leqslant r, i\neq j\} \quad \text{and} \quad \Sigma_\ell=\{\pm\beta_1,\ldots ,\pm\beta_r\}
\end{equation}
respectively.  We have $m_\alpha=1$ for all $\alpha\in\Sigma_\ell$. The positive short and long roots are $\Sigma_s^+=\{\frac12 (\beta_i\pm\beta_j) : i > j \}$ and $\Sigma_\ell^+=\{\beta_1,\ldots ,\beta_r\}$, respectively. Then $\Sigma_\ell^+$ forms a basis for $\mathfrak{a}^*$.

\subsection{$\tau$-spherical ${\bf c}$-function}\label{sec:c-function}

We continue to let $\tau$ be a one-dimensional representation of $K$. Let $\Theta$ be a subset of the simple roots $\Psi$. For $\lambda\in\mathfrak{a}_\C^*$ with ${\rm Re}\, \langle\lambda,\alpha\rangle>0$ for all $\alpha\in\Sigma^+\setminus\langle\Theta\rangle$ we consider the absolutely convergent integral
\[
{\bf c}^\Theta(\lambda;\tau)=\int_{\overline{N_\Theta}}a(\bar{n})^{\lambda+\rho} \tau(\kappa(\bar{n}))^{-1}d\bar{n}.
\]
Similarly, for $\lambda\in\mathfrak{a}_\C^*$ with ${\rm Re}\, \langle\lambda,\alpha\rangle>0$ for all $\alpha\in\langle\Theta\rangle$, the integral
\[
{\bf c}_\Theta(\lambda;\tau)=\int_{\overline{N(\Theta)}}a(\bar{n})^{\lambda+\rho} \tau(\kappa(\bar{n}))^{-1}d\bar{n}
\]
converges absolutely. Both of these integrals are taken with respect to the invariant measure $d\bar{n}$, normalized so that ${\bf c}_\Theta(\rho,\tau_{\rm triv})=1={\bf c}^\Theta(\rho,\tau_{\rm triv})$. When $\Theta=\emptyset$, in which case $\overline{N_\Theta}=\overline{N}$, or $\Theta=\Psi$, in which case $\overline{N(\Theta)}=\overline{N}$, we shall often write ${\bf c}(\lambda;\tau)$ for ${\bf c}^\emptyset (\lambda;\tau)={\bf c}_\Psi (\lambda;\tau)$. Furthermore, when $\tau=\tau_{\rm triv}$ we shorten ${\bf c}(\lambda;\tau_{\rm triv})$ further to ${\bf c}(\lambda)$.


The rank one reduction procedure of the Gindikin--Karpelevich formula allows us to evaluate these integrals using quotients of Gamma functions attached to each positive reduced root, as we now describe. 

We begin with the spherical case. Let $\lambda\in\mathfrak{a}^*_\C$. For $\alpha\in \Sigma_{\rm red}^+$ we put
\begin{equation}\label{c-tilde-spherical}
{\bf c}_\alpha(\lambda) = c_{\alpha, 0} \frac{ 2^{ - \tfrac{1}{2}\langle \lambda, \alpha^\vee \rangle } \Gamma( \tfrac{1}{2} \langle \lambda, \alpha^\vee \rangle ) }{ \Gamma( \tfrac{1}{4} m_\alpha + \tfrac{1}{2} + \tfrac{1}{4} \langle \lambda, \alpha^\vee \rangle ) \Gamma( \tfrac{1}{4} m_\alpha + \tfrac{1}{2} m_{2\alpha} + \tfrac{1}{4} \langle \lambda, \alpha^\vee \rangle ) },
\end{equation}
where the constant $c_{\alpha, 0}$ is given by
\[
c_{\alpha,0} = 2^{ \tfrac{1}{2} m_\alpha + m_{2\alpha}} \Gamma( \tfrac{1}{2}( m_\alpha + m_{2\alpha} + 1) ).
\]
This is the same as the formula from \cite[Ch. IV, Section 6]{Helgason} after replacing $\alpha_0$ with $\tfrac{1}{2} \alpha^\vee$.  Let $\langle\Theta\rangle_{\rm red}=\langle\Theta\rangle\cap\Sigma_{\rm red}$. Then
\[
{\bf c}_\Theta(\lambda)=\prod_{\alpha\in\langle\Theta\rangle_{\rm red}}{\bf c}_\alpha(\lambda),\qquad {\bf c}^\Theta(\lambda)=\prod_{\alpha\in \Sigma_{\rm red}^+\setminus\langle\Theta\rangle_{\rm red}}{\bf c}_\alpha(\lambda).
\]
In particular, when $\Theta=\Psi$ in the first of the above two products, or $\Theta=\emptyset$ in the second, we recover ${\bf c}(\lambda)=\prod_{\alpha\in \Sigma_{\rm red}^+}{\bf c}_\alpha(\lambda)$ from \cite[Ch. IV (43)]{Helgason}. 

Now let $\tau$ be an arbitrary character of $K$. In accordance with \eqref{eq:assumption}, and since we have already treated the spherical case for general groups, we may now assume that $G$ is of Hermitian type with reduced root system. In this case, our formulas for the $\tau$-spherical ${\bf c}$-function will involve a real parameter $\ell_\tau$ associated to $\tau$, whose definition we recall from \cite[p. 337]{Shimeno}. Let $\gk_s$ and $\mathfrak{z}_\gk$ be the derived subalgebra and center of $\gk$, respectively. Then $\gk_s\subsetneq \gk$ and $\dim \mathfrak{z}_\gk=1$. Let $G_\C$ be the simply connected Lie group with Lie algebra $\g_\C$, and let $G_\R$, $K_\R$, and $(K_\R)_s$ be the analytic subgroups of $G_\C$ corresponding to $\g$, $\gk$, and $\gk_s$.  Let $Z$ be the non-zero element of $\mathfrak{z}_\gk$ constructed in \cite[Sect. 3]{Schl}, so that $e^{tZ} \in (K_\R)_s$ if and only if $t \in 2 \pi \Z$.  Given a character $\tau$ of $K$, we define $\ell_\tau$ by requiring that $\tau( e^{tZ}) = e^{i \ell_\tau t}$.

As in Section \ref{sec:more-roots} we let $\Sigma_s$ and $\Sigma_\ell$ denote the short and long roots. Let $\lambda\in\mathfrak{a}^*_\C$. Following (4.18)--(4.19) of \cite{Shimeno}, for $\alpha\in \Sigma_\ell^+$ we put
\begin{equation}\label{c-tilde}
{\bf c}_\alpha(\lambda,\tau) = \frac{ 2^{1 - \langle \lambda, \alpha^\vee \rangle } \Gamma( \langle \lambda, \alpha^\vee \rangle ) }{ \Gamma( \tfrac{1}{2} + \tfrac{1}{2} \langle \lambda, \alpha^\vee \rangle + \tfrac{1}{2} \ell_\tau ) \Gamma( \tfrac{1}{2} + \tfrac{1}{2} \langle \lambda, \alpha^\vee \rangle - \tfrac{1}{2} \ell_\tau ) },
\end{equation}
whereas for $\alpha\in \Sigma_s^+$ we put ${\bf c}_\alpha(\lambda,\tau)={\bf c}_\alpha(\lambda)$. Then \cite[Lemma 4.8]{Shimeno} and \cite[Theorem 7.4]{Shimeno1990} show that
\begin{equation}\label{c-theta-tau-factorization}
{\bf c}_\Theta(\lambda,\tau)=\prod_{\alpha\in\langle\Theta\rangle}{\bf c}_\alpha(\lambda,\tau),\qquad {\bf c}^\Theta(\lambda,\tau)=\prod_{\alpha\in \Sigma^+\setminus\langle\Theta\rangle}{\bf c}_\alpha(\lambda,\tau).
\end{equation}
Since \eqref{c-tilde} reduces to \eqref{c-tilde-spherical} when $\tau$ is trivial, this definition specializes to the preceding case for type $C$ irreducible root systems.

\subsection{Comparison to literature}\label{sec:Gamma-comparison}

We make a few comments to illustrate that our formulas for ${\bf c}_\alpha(\lambda,\tau)$ agree with those in Shimeno \cite{Shimeno}, and also with \eqref{c-tilde-spherical} in the case of overlap, when $G$ is Hermitian with reduced root system and $\tau$ is trivial.

We begin with the agreement with Shimeno.  If we use the notation $\lambda_i = \langle \lambda, \beta_i^\vee \rangle$ as in \cite[(1.8)]{Shimeno}, our formula \eqref{c-tilde} in the case $\alpha \in \Sigma^+_\ell$ is the same as \cite[(4.19)]{Shimeno} after using $m = 0$ there.  For $\alpha \in \Sigma^+_s$, the formula \cite[(4.18)]{Shimeno} reads
\begin{equation}\label{eq:Shimeno-c-alpha}
{\bf c}_\alpha(\lambda, \tau) = \frac{ 2^{m_\alpha - 1} \Gamma( \tfrac12(m_\alpha + 1) ) \Gamma( \tfrac{1}{2} \langle \lambda, \alpha^\vee \rangle ) }{ \sqrt{\pi} \Gamma( \tfrac{1}{2} m_\alpha + \tfrac{1}{2} \langle \lambda, \alpha^\vee \rangle ) }
\end{equation}
after using $m'=m_\alpha$, while formula \eqref{c-tilde-spherical} for ${\bf c}_\alpha(\lambda, \tau) = {\bf c}_\alpha(\lambda)$ gives
\[
{\bf c}_\alpha(\lambda, \tau) = \frac{ 2^{\tfrac{1}{2} m_\alpha - \tfrac{1}{2} \langle \lambda, \alpha^\vee \rangle } \Gamma( \tfrac12(m_\alpha + 1) ) \Gamma( \tfrac{1}{2} \langle \lambda, \alpha^\vee \rangle ) }{ \Gamma( \tfrac{1}{4} m_\alpha + \tfrac12 + \tfrac{1}{4} \langle \lambda, \alpha^\vee \rangle ) \Gamma( \tfrac{1}{4} m_\alpha + \tfrac{1}{4} \langle \lambda, \alpha^\vee \rangle ) }.
\]
It may be checked that these agree after using the duplication formula $\Gamma(s)\Gamma(s+1/2)=2^{1-2s}\sqrt{\pi}\Gamma(2s)$ for the Gamma function.

We next check that our formulas for ${\bf c}_\alpha(\lambda, \tau)$ agree with \eqref{c-tilde-spherical} when $G$ is Hermitian with reduced root system and $\tau$ is trivial.  When $\alpha \in \Sigma^+_s$, there is nothing to check.  When $\alpha \in \Sigma^+_\ell$, \eqref{c-tilde-spherical} becomes
\[
{\bf c}_\alpha(\lambda) = \frac{ 2^{\tfrac{1}{2} - \tfrac{1}{2} \langle \lambda, \alpha^\vee \rangle } \Gamma( \tfrac{1}{2} \langle \lambda, \alpha^\vee \rangle ) }{ \Gamma( \tfrac{3}{4} + \tfrac{1}{4} \langle \lambda, \alpha^\vee \rangle ) \Gamma( \tfrac{1}{4} + \tfrac{1}{4} \langle \lambda, \alpha^\vee \rangle ) }.
\]
Just as in the case above, this may be converted to
\begin{equation}\label{c-alpha-simplified}
{\bf c}_\alpha(\lambda) = \frac{ \Gamma( \tfrac{1}{2} \langle \lambda, \alpha^\vee \rangle ) }{ \sqrt{\pi} \Gamma( \tfrac{1}{2} + \tfrac{1}{2} \langle \lambda, \alpha^\vee \rangle ) }
\end{equation}
by using the duplication formula.  On the other hand, \eqref{c-tilde} gives
\[
{\bf c}_\alpha(\lambda, \tau_\text{triv} ) = \frac{ 2^{1 - \langle \lambda, \alpha^\vee \rangle } \Gamma( \langle \lambda, \alpha^\vee \rangle ) }{ \Gamma( \tfrac{1}{2} + \tfrac{1}{2} \langle \lambda, \alpha^\vee \rangle )^2 }.
\]
By applying the duplication formula once more to the $\Gamma( \langle \lambda, \alpha^\vee \rangle )$ term in the numerator of this last expression, we see that the two formulas are equal.

\subsection{Spherical inversion formula}\label{sec:tau-triv-inversion}

In this paragraph, we take $\tau$ to be trivial and $G$ general, as spelled out at the beginning of Section \ref{sec:inv-formula}, and we review the classical spherical inversion formula.

When $\tau$ is trivial, we shall often abbreviate $\tpl$ to $\mu_\text{Pl}$. In this case, the Plancherel measure $\mu_\text{Pl}$ was determined by Gangolli \cite[Theorem 3.5]{Gangolli} and Helgason \cite[Ch. IV, \S 7, Theorem 7.1]{Helgason}, as part of their proof of the commutativity of \eqref{eq:commutative-diagram}, and is given by
\[
d \mu_\text{Pl} = |{\bf c}(\lambda)|^{-2}d\lambda,
\]
where $d\lambda$ is the Lebesgue measure on $i \ga^*$. Thus, $d\mu_\text{Pl} = \beta_S(\lambda) d\lambda$, where $\beta_S(\lambda)=|{\bf c}(\lambda)|^{-2}$ is the density function appearing throughout the introduction.

For any $F\in \mathcal{PW}(\mathfrak{a}_\C^*)^W$ we let
\begin{equation}\label{eq:def-J}
\mathscr{J}F(g)=\int_{i\mathfrak{a}^*}F(\lambda)\varphi_\lambda (g)|{\bf c}(\lambda)|^{-2}d\lambda.
 \end{equation}
We have $\mathscr{J}F\in C^\infty_c(S,\tau_{\rm triv},\tau_{\rm triv})$. Recall from \eqref{HC-transform} the definition of the Harish-Chandra transform $\mathscr{H}=\mathscr{H}_{\tau_{\rm triv}}$, as well as the shorthand notation $k\mapsto\hat{k}=\mathscr{H}k$ we often use for it. For $k\in C^\infty_c(S,\tau_{\rm triv},\tau_{\rm triv})$, one has the following inversion formula
\begin{equation}\label{eq:sph-planch-inv}
k(g)=\frac{1}{|W|}\mathscr{J}(\mathscr{H}k)(g)=\frac{1}{|W|}\int_{i\mathfrak{a}^*}\hat{k}(\lambda)\varphi_{\lambda}(g)|\mathbf{c}(\lambda)|^{-2}d\lambda \qquad (g\in G),
\end{equation}
which recovers \eqref{eq:general-inversion} for the trivial $K$-type.

\subsection{$\tau$-spherical Plancherel measure and inversion formula}\label{sec:tau-sph-inversion}

We now let $G$ be Hermitian of reduced root system. The $\tau$-spherical Plancherel measure on $\mathfrak{a}_\C^*$ (where $\tau$ is a character of $K$), was determined in \cite{Shimeno}, using the various ${\bf c}$-functions from Section \ref{sec:c-function}. To describe it, we begin by introducing some notation. 

We maintain the assumption \eqref{eq:assumption} that the root system $\Sigma$ of $G$ is reduced. Recall from Section \ref{sec:more-roots} that $\Sigma$ is of the form \eqref{Shimeno-roots}. In that case the simple roots $\Psi=\{\alpha_1,\ldots ,\alpha_r\}$ are of the form
\begin{equation}\label{eq:alpha-to-beta}
\alpha_1=(\beta_r-\beta_{r-1})/2,\quad \alpha_2=(\beta_{r-1}-\beta_{r-2})/2,\quad \ldots ,\quad \alpha_{r-1}=(\beta_2-\beta_1)/2,\quad \alpha_r=\beta_1.
\end{equation}
Recall from Section \ref{sec:parabolic-notation} the notation associated with subsets $\Theta$ of $\Psi$. We shall be particularly interested in the subsets
\[
\Theta_j=\{\alpha_{r-j+1},\ldots ,\alpha_r\}\qquad (j=1,\ldots ,r).
\]
We write $\Theta_0=\emptyset$. Thus $\dim \mathfrak{a}_{\Theta_j}=r-j$, $\dim \mathfrak{a}(\Theta_j)=j$, and
\begin{align*}
\{0\}=\mathfrak{a}_{\Theta_r}\subset\cdots&\subset\mathfrak{a}_{\Theta_1}\subset \mathfrak{a}_{\Theta_0}=\mathfrak{a},\\
\{0\}=\mathfrak{a}(\Theta_0)\subset\cdots&\subset\mathfrak{a}(\Theta_{r-1})\subset \mathfrak{a}(\Theta_r)=\mathfrak{a}.
\end{align*}
Note that $W_{\Theta_r}=\{1\}$ and $W_{\Theta_0}=W$.

We shall associate with each $j=0,1,\ldots ,r$ a measure $\tpl^{(j)}$, as well as a transform
\[
\mathscr{J}_{\Theta_j,\tau}:  \mathcal{PW}(\mathfrak{a}_\C^*)^W\rightarrow C^\infty(S,\tau,\tau),
\]
given by
\[
\mathscr{J}_{\Theta_j,\tau} F(g) = \frac{|W|}{|W_{\Theta_{r-j}}|} \int_{ \ga^*_\C} F(\lambda) \varphi_{\lambda, \tau}(g) d\tpl^{(j)}(\lambda).
\]
We begin with the extreme cases $j=0$ and $j=r$.

\medskip

--{\sc The case $j=0$} (the ``most continuous part''): We define
\[
d\tpl^{(0)} = |{\bf c}(\lambda,\tau)|^{-2}d\lambda,
\]
where $d\lambda$ is the Lebesgue measure on $i \ga^*$. For any $F\in \mathcal{PW}(\mathfrak{a}_\C^*)^W$, the corresponding transform is given by
\[
\mathscr{J}_{\Theta_0,\tau}F(g)=\int_{i\mathfrak{a}^*}F(\lambda)\varphi_{\lambda,\tau}(g)|{\bf c}(\lambda,\tau)|^{-2}d\lambda,
 \]
which directly generalizes the transform $\mathscr{J}$ in \eqref{eq:def-J} in the spherical case, but with arbitrary abelian $K$-type $\tau$.

\medskip

--{\sc The case $j=r$} (the ``discrete part''): Let $L^2(S,\tau)$ denote the Hilbert space of square-integrable functions on $G$ which translate under $K$ on the right by $\tau^{-1}$. We define the $W$-invariant subset $D_\tau$ of $\mathfrak{a}^*_\C$ as the set of $\lambda\in\mathfrak{a}_\C^*$ for which $\varphi_{\lambda,\tau}\in L^2(S,\tau)$. In this case, the representation $V(\lambda,\tau)$ generated by $\varphi_{\lambda,\tau}$, as described in \S\ref{sec:tau-spherical}, is a holomorphic discrete series representation \cite[Theorem 5.10]{Shimeno}. The set $D_\tau$ was determined in \cite[Theorem 5.1]{Shimeno}; in particular, $D_\tau$ is finite, and lies in $\mathfrak{a}^*$. 

\begin{remark}
For example, for $G=\Sp_{2m}(\R)$, it is shown in \cite[Theorem 5.1]{Shimeno} that $D_\tau$ is empty precisely when $|\ell_\tau|\leqslant m$ \cite[(5.4)]{Shimeno}. When $m=1$ this corresponds to the fact that holomorphic discrete series of $\Sp_2(\R)=\SL_2(\R)$ have weight at least $2$.
\end{remark}

For $\lambda\in D_\tau$ let $d(\lambda,\tau)$ denote the formal degree of the representation $V(\lambda,\tau)$ generated by $\varphi_{\lambda,\tau}$. We define
\[
\tpl^{(r)} = \sum_{\varrho\in D_{\tau}} d(\varrho,\tau) \delta_\varrho,
\]
where $\delta_\varrho$ is the delta measure at $\varrho$.  For $F\in \mathcal{PW}(\mathfrak{a}_\C^*)^W$, the corresponding transform is
\[
\mathscr{J}_{\Theta_r,\tau}F(g)=\sum_{\varrho\in D_{\tau}}d(\varrho,\tau)F(\varrho)\varphi_{\varrho,\tau}(g).
\]

\medskip

-- {\sc The intermediate cases $j=1,\ldots ,r-1$}: For each $j=1,\ldots ,r-1$, we define a discrete subset of $\mathfrak{a}(\Theta_j)^*$ as follows. Recall from Section \ref{sec:more-roots} that the positive long roots $\Sigma_\ell^+=\{\beta_1,\ldots ,\beta_r\}$ form a basis of $\mathfrak{a}^*$. For $\lambda\in\mathfrak{a}^*_\C$ recall furthermore the notation $\lambda_i= \langle\lambda,\beta_i^\vee\rangle$, where $\beta_i^\vee=2\beta_i/\langle\beta_i,\beta_i\rangle$. Following \cite[(6.13)]{Shimeno} we put\footnote{There is a misprint in \cite[(6.13)]{Shimeno}: the condition $\lambda_r<0$ should be $\lambda_j<0$.}
\begin{equation}\label{defn-D-tau-j}
D_{\tau,j}=\left\{\varrho\in \mathfrak{a}(\Theta_j)^*: \varrho_1+|\ell_\tau|-1\in 2\N,\, \varrho_{i+1}-\varrho_i-m'\in 2\N \, (1\leqslant i\leqslant j-1),\, \varrho_j<0\right\},
\end{equation}
where $m'$ is the common value of the multiplicity $m_\alpha$ for any $\alpha\in\Sigma_s$. This is a finite set (possibly empty, if $|\ell_\tau|$ is small enough), as any $\varrho\in D_{\tau,j}$ must in particular satisfy $1-|\ell_\tau|\leqslant \varrho_1<\varrho_2<\cdots <\varrho_j<0$ with $\varrho_i + |\ell_\tau|$ integral.

For $\lambda\in\mathfrak{a}(\Theta_j)^*$ we let
\[
d_j(\lambda,\tau)=(-2\pi i)^j{\rm Res}_{\mu_j=\lambda_j}\cdots {\rm Res}_{\mu_1=\lambda_1} ({\bf c}_{\Theta_j}(\mu,\tau)^{-1}{\bf c}_{\Theta_j}(-\mu,\tau)^{-1}).
\]
We define $\tpl^{(j)}$ by
\[
\int_{\ga^*_\C} F(\lambda) d \tpl^{(j)} = \sum_{\varrho\in D_{\tau,j}}d_j(\varrho,\tau)\int_{i\mathfrak{a}^*_{\Theta_j}}F(\varrho+\lambda_{\Theta_j}) |{\bf c}^{\Theta_j}(\varrho+\lambda_{\Theta_j},\tau)|^{-2} d\lambda_{\Theta_j},
\]
with corresponding transform
 \[
\mathscr{J}_{\Theta_j,\tau}F(g)=\frac{|W|}{|W_{\Theta_{r-j}}|}\sum_{\varrho\in D_{\tau,j}}d_j(\varrho,\tau)\int_{i\mathfrak{a}^*_{\Theta_j}}F(\varrho+\lambda_{\Theta_j})\varphi_{\varrho+\lambda_{\Theta_j},\tau}(g)|{\bf c}^{\Theta_j}(\varrho+\lambda_{\Theta_j},\tau)|^{-2}d\lambda_{\Theta_j}.
 \]

\begin{remark}
The formulae from the intermediate cases $j=1,\ldots ,r-1$ continue to make sense for $j=r$ and indeed coincide with the separate definition we gave. We have only separated out the $j=r$ case for expository clarity. Indeed, from \cite[(5.14), (6.13)]{Shimeno} one has $D_{\tau,r}=D_\tau$, and for $\varrho\in D_\tau$ one has $d(\varrho,\tau)^{-1}=\|\varphi_{\varrho,\tau}\|_2^2$, from \cite[Remark 6.9]{Shimeno}.
\end{remark}

With each of these transforms defined, we now put $\tpl = \sum_{j = 0}^r \tpl^{(j)}$, and
\begin{equation}\label{eq:J-tau-transform}
\mathscr{J}_\tau=\sum_{j=0}^r \mathscr{J}_{\Theta_j,\tau}.
\end{equation}
For $k\in C^\infty_c(S,\tau,\tau)$, one has the following inversion formula
\begin{equation}\label{spherical-inversion}
k=\frac{1}{|W|}\mathscr{J}_\tau(\mathscr{H}_\tau k),
\end{equation}
due to Shimeno \cite[Theorem 6.7]{Shimeno}. In this way, the inversion formula \eqref{spherical-inversion} is expressed in the form announced in \eqref{eq:general-inversion} (and it of course coincides with the case of trivial $K$-type in Section \ref{sec:tau-triv-inversion}).

\section{$\tau$-spherical test functions}\label{sec:test-function}

We continue to assume that $G$ is simple, while invoking assumption \eqref{eq:assumption} when dealing with non-trivial (one-dimensional) $K$-types.

This section provides for certain $\tau$-spherical test functions which will be of use in the next two sections. Indeed, for the proof of Proposition \ref{Weyl-upper-bd}, an important ingredient in Theorem \ref{sup-thm}, we shall need test functions whose Harish-Chandra transforms have pleasing positivity and concentration properties on the unitary dual. Moreover, we shall need similar test functions in the spherical case in the proofs of Lemma \ref{upper-local-Weyl} and Proposition \ref{local-Weyl}.

The outline of this section is as follows.  Our test functions are required to be positive on a subset $\ga^*_{\tau, \text{un}}$ of $\ga^*_\C$ (the \textit{$\tau$-unitary dual}), defined in Definition \ref{tau-unitary-def}, and we begin in Sections \ref{sec:taubounded} and \ref{ASP} by proving some bounds for $\ga^*_{\tau, \text{un}}$.  We construct our test functions in Section \ref{sec:test-fn}.  Section \ref{sec:test-fn-bds} contains bounds for our test functions, and as we prove these by inverting the Harish-Chandra transform, we first establish, in Section \ref{sec:c-fun-estimates}, some estimates for the various Plancherel measures appearing in the inversion formulas.

\subsection{Boundedness of the $\tau$-spherical function}
\label{sec:taubounded}

The following result will be useful in controlling the $\tau$-unitary dual $\ga^*_{\tau, \text{un}}$.

\begin{prop}\label{prop:criterion-4-bounded}
There are $a,b\in\R$, $b>0$, depending only on the root system of $G$, such that, if the $\tau$-spherical function $\varphi_{\lambda,\tau}$ is bounded then $\| \textup{Re}\lambda \| \leqslant a + b | \ell_\tau|$.
\end{prop}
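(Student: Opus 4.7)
The plan is to analyse the asymptotic behaviour of $\varphi_{\lambda,\tau}(a)$ as $a\to\infty$ in the positive Weyl chamber and combine the resulting growth estimates with the explicit structure of the $\tau$-spherical $\mathbf{c}$-function. This is the natural $\tau$-analogue of the Helgason--Johnson characterisation of boundedness of the ordinary spherical function.

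The first step is to invoke the Harish-Chandra--Shimeno series expansion
\[
\varphi_{\lambda,\tau}(e^{tH}) = \sum_{w\in W}\mathbf{c}(w\lambda,\tau)\, e^{t(w\lambda-\rho)(H)}\bigl(1+o(1)\bigr),\qquad t\to\infty,
\]
valid for $H$ deep in the open positive chamber of $\mathfrak{a}$ and for $\lambda$ in general position; in the Hermitian reduced-root-system setting this is contained in Shimeno's analysis cited above, and in the spherical case it is standard. The boundedness hypothesis forces every non-vanishing summand to satisfy $\mathrm{Re}(w\lambda-\rho)(H)\leqslant 0$ for all $H$ in the positive chamber. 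If $\mathbf{c}(w\lambda,\tau)\ne 0$ for \emph{every} $w\in W$, one obtains $\mathrm{Re}(w\lambda)\preceq \rho$ for all $w$, so $\mathrm{Re}\,\lambda$ lies in the convex hull of $W\rho$, yielding $\|\mathrm{Re}\,\lambda\|\leqslant \|\rho\|$. This is the ``generic'' case, in which $a=\|\rho\|$ suffices and no contribution from $\ell_\tau$ appears.

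The remaining work is to control the exceptional case, in which some factor $\mathbf{c}_\alpha(w\lambda,\tau)$ vanishes and the leading term is suppressed. From the explicit product formulae \eqref{c-tilde-spherical}--\eqref{c-tilde}, the zero locus of each such factor consists, for $\alpha\in\Sigma^+_\ell$, of hyperplanes
\[
\langle w\lambda,\alpha^\vee\rangle \in \{\,\pm\ell_\tau-(2k+1):k\in\Z_{\geqslant 0}\,\},
\]
together with analogous (but $\ell_\tau$-independent) constraints for the short roots. In particular, zeros occur only at real values of $\langle w\lambda,\alpha^\vee\rangle$, and only at distance $\leqslant 1+|\ell_\tau|$ from the origin along that coroot direction unless one moves further in a direction cut out by a hyperplane shifted by $\pm\ell_\tau$. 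Combining the non-vanishing boundedness constraints coming from those $w$ with $\mathbf{c}(w\lambda,\tau)\ne 0$ and the linear-in-$|\ell_\tau|$ constraints coming from those $w$ for which a factor vanishes, one concludes that $\mathrm{Re}\,\lambda$ lies in a polytope whose diameter grows linearly in $|\ell_\tau|$; this yields the claimed estimate $\|\mathrm{Re}\,\lambda\|\leqslant a+b|\ell_\tau|$ with $a,b$ depending only on the root system and root multiplicities.

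The main obstacle will be the exceptional case: the vanishing of a leading $\mathbf{c}$-factor removes the direct growth information coming from that Weyl translate and one must either track subleading terms in the expansion (whose exponential rates are still of the form $w\lambda-\rho-\mu$ with $\mu$ a non-negative integer combination of simple roots, and hence provide only the weaker bound $\mathrm{Re}(w\lambda)(H)\leqslant \rho(H)+\mu(H)$), or, more cleanly, perturb $\lambda$ by a small imaginary vector to move it off the (real) vanishing locus of $\mathbf{c}(\cdot,\tau)$ and apply the generic case, using Proposition~\ref{prop:sph-fn-2-vars} and continuity of $\varphi_{\lambda,\tau}$ in $\lambda$ to pass to the limit. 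The perturbation route seems the most economical and is the one I would pursue first.
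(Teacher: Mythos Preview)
Your overall framework---asymptotic expansion and analysis of the zeros of the $\mathbf{c}$-function---is the right one, but the handling of the exceptional case has a genuine gap.

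First, the perturbation route does not work. If you replace $\lambda$ by $\lambda+i\epsilon$ to avoid the (real) zero locus of $\mathbf{c}(\cdot,\tau)$, you no longer know that $\varphi_{\lambda+i\epsilon,\tau}$ is bounded; boundedness is not an open condition in $\lambda$. And if instead you try to take $\epsilon\to 0$ in the expansion for $\lambda+i\epsilon$, the limits in $\epsilon$ and $t$ do not commute: the $o(1)$ error is not uniform in $\epsilon$ near the zero locus (that is precisely where subleading terms take over), so in the limit you simply recover the expansion for $\lambda$ with its vanishing leading coefficient, which gives you nothing new.

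Second, your description of the zero set of the long-root factor is not sufficient to produce a bounded polytope. The zeros of $\mathbf{c}_\alpha(\cdot,\tau)$ for $\alpha\in\Sigma_\ell^+$ lie at $\langle\lambda,\alpha^\vee\rangle\in\{\pm\ell_\tau-(2k+1):k\geqslant 0\}$, which is \emph{unbounded below}. So ``$\mathbf{c}(w\lambda,\tau)=0$ for some $w$'' does not by itself cut out a region of diameter $O(1+|\ell_\tau|)$; you need extra input to rule out the zeros far from the origin.

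The paper supplies this input by first normalising $\operatorname{Re}\lambda\in\overline{\mathfrak{a}_+^*}$ (via the $W$-invariance of $\varphi_{\lambda,\tau}$ in $\lambda$), and then, instead of the full Harish-Chandra expansion, taking the limit along a single wall direction $A_\Theta$ with $\Theta=\Psi\setminus\{\alpha\}$. Shimeno's formula gives
\[
\lim_{a\,\underset{\Theta}{\to}\,\infty} a^{\rho-\lambda}\varphi_{\lambda,\tau}(a)=\mathbf{c}^\Theta(\lambda,\tau),
\]
so if $\operatorname{Re}\lambda(H_\alpha)>\rho(H_\alpha)$ and $\varphi_{\lambda,\tau}$ is bounded, then $\mathbf{c}^\Theta(\lambda,\tau)=0$, whence $\mathbf{c}_\beta(\lambda,\tau)=0$ for some $\beta\in\Sigma^+\setminus\langle\Theta\rangle$. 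The crucial point is that one now has $\operatorname{Re}\langle\lambda,\beta\rangle>0$: with this positivity, the short-root factors \emph{cannot} vanish, and for the long-root factor only the Gamma factor $\Gamma(\tfrac12+\tfrac12\langle\lambda,\beta^\vee\rangle-\tfrac12|\ell_\tau|)$ can have a pole, forcing $\operatorname{Re}\langle\lambda,\beta^\vee\rangle\leqslant |\ell_\tau|-1$. Since $\alpha\leqslant\beta$ in the root order, this bounds $\operatorname{Re}\langle\lambda,\alpha\rangle$ linearly in $|\ell_\tau|$ for every simple $\alpha$. The positivity constraint coming from the dominant-chamber reduction is exactly what eliminates the unbounded part of the zero locus that your argument does not address.
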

\begin{proof}

We begin by recalling that if $\tau$ is trivial, then a theorem of Helgason \cite[Ch. IV, Thm 8.1]{Helgason} states that $\varphi_\lambda$ is bounded if and only if $\text{Re} \lambda$ lies in the convex hull of $W \rho$.  We may therefore assume that $\tau$ is nontrivial, and that $G$ is Hermitian with reduced root system.

We shall deduce the proposition from an asymptotic formula for $\varphi_{\lambda,\tau}$ due to Shimeno \cite[Prop 4.9]{Shimeno}, which we now recall.  This formula is stated in terms of the partial spherical function $\varphi_{\lambda, \tau}^\Theta$ associated to a subset $\Theta \subset \Psi$, defined in \cite[(4.22)]{Shimeno}.  Let $\Theta \subset \Psi$ be a subset of the simple roots and let $\lambda \in \ga_\C^*$ be such that $\text{Re} \langle \lambda, \alpha \rangle > 0$ for all $\alpha \in \Sigma^+ \setminus \langle \Theta \rangle$. Recall the subgroup $A_\Theta$ of $A$, whose Lie algebra is given in \eqref{defn:Lie-A-theta}. Then applying \cite[Prop 4.9]{Shimeno} to the function $f = 1_{\lambda, \tau}$, and using $\varphi_{\lambda, \tau}^\Theta(e) = 1$, which follows easily from the definition, we obtain
\begin{equation}
\label{phi-limit}
\lim_{a\, \underset{\Theta}{\rightarrow}\, \infty} a^{\rho - \lambda} \varphi_{\lambda, \tau}(a) = {\bf c}^\Theta(\lambda, \tau) \varphi^\Theta_{\lambda, \tau}(e) = {\bf c}^\Theta(\lambda, \tau).
\end{equation}
Here, the $\Theta$ subscript in the limit means that $a \in A_\Theta$ and $a^\alpha \to \infty$ for all $\alpha \in \Psi \setminus \Theta$.  

Note that $\lambda$ is not a pole of $c_\alpha(\lambda, \tau)$ for any $\alpha \in \Sigma^+ \setminus \langle \Theta \rangle$, by our assumption that $\text{Re} \langle \lambda, \alpha \rangle > 0$ for these $\alpha$; the right-hand side of this formula is therefore finite.

We now return to the proof of the proposition.  We let $\lambda \in \ga_\C^*$ be a spectral parameter for which $\varphi_{\lambda,\tau}$ is bounded, and assume without loss of generality that $\text{Re}\, \lambda \in \overline{\ga_+^*}$.  We shall prove the existence of $a,b\in\R$, $b>0$, such that, either $\text{Re} \langle \lambda, \alpha \rangle \leqslant a + b |\ell_\tau|$ for all simple roots $\alpha \in \Psi$, or that $\text{Re}\, \lambda(H_\alpha) \leqslant \rho(H_\alpha)$ for some $\alpha \in \Psi$.  Moreover, either of these imply the proposition; this is easy to see for the first condition, as $\Psi$ forms a basis for $\ga^*$.  For the second condition, recall the notation for the root system $\Sigma$ introduced in Section \ref{sec:more-roots}, in which $\Psi$ is given by \eqref{eq:alpha-to-beta}. If we write $\lambda = \sum \tfrac12 \lambda_i \beta_i$, where $\lambda_i=\langle \lambda, \beta_i^\vee\rangle$ as usual, then the condition $\text{Re}\, \lambda \in \overline{\ga_+^*}$ is equivalent to $0 \leqslant \text{Re}\, \lambda_1 \leqslant \cdots \leqslant \text{Re}\, \lambda_r$.  Moreover, if we let $e_i$ be the basis for $\ga$ dual to $\beta_i$, then we have
\[
H_{\alpha_i} = 2 \sum_{j = r +1 - i}^r e_i, \quad 1 \leqslant i < r, \quad H_{\alpha_r} = \sum_{i = 1}^r e_i.
\]
It follows that a bound for $\text{Re}\, \lambda(H_\alpha)$ for some $\alpha \in \Psi$ implies a bound on all $\text{Re}\, \lambda_i$ as required.

We may assume that $\text{Re}\, \lambda(H_\alpha) > \rho(H_\alpha)$ for all $\alpha \in \Psi$, and wish to find $a,b$ such that $\text{Re} \langle \lambda, \alpha \rangle \leqslant a + b |\ell_\tau|$ for all $\alpha \in \Psi$. Let $\alpha\in \Psi$ and $\Theta = \Psi \setminus \{ \alpha \}$. Then $\text{Re} \langle \lambda, \alpha \rangle \leqslant \text{Re} \langle \lambda, \beta\rangle$ for any $\beta\in \Sigma^+ \setminus \langle \Theta \rangle$, since $\beta - \alpha$ is a non-negative linear combination of simple roots. It therefore suffices to find $a, b$, as well as some $\beta\in \Sigma^+ \setminus \langle \Theta \rangle$, for which $\text{Re} \langle \lambda, \beta\rangle\leqslant a+b|\ell_\tau|$.

Recall the assumption that $\text{Re}\,\lambda\in\overline{\ga_+^*}$. If $\text{Re} \langle \lambda, \alpha \rangle = 0$, then we are done. Otherwise $\text{Re} \langle \lambda,\alpha\rangle > 0$, and hence $\text{Re} \langle \lambda, \beta\rangle > 0$ for all $\beta\in \Sigma^+ \setminus \langle \Theta \rangle$. We may therefore apply \eqref{phi-limit} to this choice of $\Theta$. Because $A_\Theta$ is one-dimensional and spanned by $H_{\alpha}$, our assumption that $\text{Re}\, \lambda( H_{\alpha}) > \rho(H_{\alpha})$ means that $a^{\rho - \lambda} \to 0$ as $a \to \infty$ in $A_\Theta$. Since $\varphi_{\lambda, \tau}$ is bounded, \eqref{phi-limit} implies that ${\bf c}^\Theta(\lambda, \tau) = 0$, and hence that ${\bf c}_{\beta}(\lambda, \tau) = 0$ for some $\beta\in \Sigma^+ \setminus \langle \Theta \rangle$.

If $\beta\in \Sigma^+_s$ is a short positive root, the formula for ${\bf c}_{\beta}(\lambda, \tau)$ from Section \ref{sec:Gamma-comparison} shows that ${\bf c}_{\beta}(\lambda, \tau)$ can only be zero when $\Gamma( \tfrac12 m_{\beta} + \tfrac12 \langle \lambda, \beta^\vee \rangle )$ has a pole, but this can't happen because $\text{Re} \langle \lambda, \beta\rangle > 0$. If $\beta\in \Sigma^+_\ell$ is a long positive root, we likewise have that ${\bf c}_{\beta}(\lambda, \tau)$ can only be zero when one of
\[
\Gamma( \tfrac{1}{2} + \tfrac{1}{2} \langle \lambda, \beta^\vee \rangle + \tfrac{1}{2} \ell_\tau ) \quad \text{and} \quad \Gamma( \tfrac{1}{2} + \tfrac{1}{2} \langle \lambda, \beta^\vee \rangle - \tfrac{1}{2} \ell_\tau ) 
\]
has a pole.  If we assume without loss of generality that $\ell_\tau \geqslant 0$, then the first Gamma factor again cannot have a pole by positivity.  For the second factor to have a pole, we must have
\[
\tfrac{1}{2} + \tfrac{1}{2} \text{Re} \langle \lambda, \beta^\vee \rangle - \tfrac{1}{2} \ell_\tau \leqslant 0.
\]
This implies that $\text{Re} \langle \lambda, \beta^\vee \rangle \leqslant \ell_\tau - 1$, as required.
\end{proof}

\subsection{The $\tau$-unitary dual}\label{ASP}

We now recall the definition of the $\tau$-unitary dual.

\begin{definition}
\label{tau-unitary-def}

Let $\tau$ be an abelian $K$-type. The set 
\[
\mathfrak{a}_{\tau, {\rm un}}^* =\{\lambda\in\mathfrak{a}^*_\C:  \varphi_{\lambda, \tau} \;\textrm{is positive definite} \}
\]
is the {\rm unitary spectrum} of $\mathbb{D}(\tau)$. For $\tau$ trivial we abbreviate this to $\mathfrak{a}_{\rm un}^*$.
\end{definition}

\begin{lemma}\label{lemma:unitary}
There are constants $a,b>0$, depending only on $G$, such that
\[
\mathfrak{a}_{\tau,\rm un}^*\subset \{\lambda\in\ga^*_\C: \| \textup{Re} \lambda \| \leqslant a + b | \ell_\tau|\}.
\]
\end{lemma}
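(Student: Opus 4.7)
The plan is to reduce the lemma to Proposition \ref{prop:criterion-4-bounded} via the elementary observation that a positive definite function on a topological group is bounded in absolute value by its value at the identity. Concretely, if $\varphi_{\lambda,\tau}$ is positive definite, then the $2\times 2$ matrix $(\varphi_{\lambda,\tau}(g_i^{-1}g_j))_{i,j}$ with $g_1=e$ and $g_2=g$ arbitrary is Hermitian positive semidefinite, forcing $|\varphi_{\lambda,\tau}(g)|\leqslant \varphi_{\lambda,\tau}(e)$. Since the normalization fixed in Section \ref{sec:tau-spherical} gives $\varphi_{\lambda,\tau}(e)=1$, every $\lambda\in\mathfrak{a}^*_{\tau,\rm un}$ yields a bounded function $\varphi_{\lambda,\tau}$.

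Having established boundedness, I would then invoke Proposition \ref{prop:criterion-4-bounded} directly: the constants $a,b>0$ produced there (which depend only on the root system of $G$) give $\|\textup{Re}\,\lambda\|\leqslant a+b|\ell_\tau|$ for every such $\lambda$. The same constants $a,b$ can then be used in the statement of the lemma.

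Since the proof is essentially a one-line reduction, there is no serious obstacle here; the entire content has already been extracted in Proposition \ref{prop:criterion-4-bounded}. The only point worth making explicit in the writeup is the positive-definite-implies-bounded inequality $|\varphi_{\lambda,\tau}(g)|\leqslant \varphi_{\lambda,\tau}(e)$, which is standard for scalar-valued positive definite functions and needs no structural input beyond the normalization $\varphi_{\lambda,\tau}(e)=1$.
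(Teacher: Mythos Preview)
Your proposal is correct and follows exactly the paper's own argument: the proof there reads in its entirety ``This follows from Proposition \ref{prop:criterion-4-bounded} since positive definite functions are bounded.'' The only cosmetic point is that Proposition \ref{prop:criterion-4-bounded} gives $a\in\R$ rather than $a>0$, but one may of course enlarge $a$ to be positive without affecting the inclusion.
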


\begin{proof}
This follows from Proposition \ref{prop:criterion-4-bounded} since positive definite functions are bounded.
\end{proof}

\begin{lemma}\label{lem:unitary-hermitian}
If $\lambda \in\mathfrak{a}_{\tau,\rm{un}}^*$ then $W\lambda = -W\bar\lambda$.
\end{lemma}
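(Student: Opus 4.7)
The plan is to combine three ingredients: (i) the elementary fact that a positive definite function $\varphi$ on $G$ satisfies $\overline{\varphi(g)} = \varphi(g^{-1})$; (ii) the transformation formula $\varphi_{\lambda,\tau}(g^{-1}) = \varphi_{-\lambda,\tau^{-1}}(g)$ from Proposition \ref{prop:sph-fn-2-vars}; and (iii) the uniqueness criterion \eqref{eq:equality-sph-fn} identifying when two $\tau$-spherical functions are equal.

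First I would compute the complex conjugate of $\varphi_{\lambda,\tau}$ from the Poisson integral representation \eqref{defn-sph-fn}. Since $\tau$ is a one-dimensional unitary character of $K$, we have $\overline{\tau(k)} = \tau(k)^{-1}$, and the Iwasawa factors $a(kg)$ and $\kappa(kg)$ are real/in $K$, so
\[
\overline{\varphi_{\lambda,\tau}(g)} = \int_K a(kg)^{\rho+\bar\lambda}\,\tau^{-1}(\kappa(kg)^{-1}k)\,dk = \varphi_{\bar\lambda,\tau^{-1}}(g).
\]

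Next, assuming $\lambda \in \mathfrak{a}_{\tau,\mathrm{un}}^*$ so that $\varphi_{\lambda,\tau}$ is positive definite, I would combine the identity $\overline{\varphi_{\lambda,\tau}(g)} = \varphi_{\lambda,\tau}(g^{-1})$ with Proposition \ref{prop:sph-fn-2-vars}, yielding
\[
\varphi_{\bar\lambda,\tau^{-1}}(g) = \overline{\varphi_{\lambda,\tau}(g)} = \varphi_{\lambda,\tau}(g^{-1}) = \varphi_{-\lambda,\tau^{-1}}(g).
\]
Both sides are $\tau^{-1}$-spherical functions on $S$, so the uniqueness statement \eqref{eq:equality-sph-fn} (applied to the character $\tau^{-1}$) forces $\bar\lambda$ and $-\lambda$ to lie in the same $W$-orbit, which is precisely the claim $W\bar\lambda = -W\lambda$, i.e.\ $W\lambda = -W\bar\lambda$.

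I do not foresee a serious obstacle: the only point that deserves care is ensuring that the complex conjugation on $\tau$ really inverts it (which uses that $\tau$ is one-dimensional and unitary, hence takes values in the unit circle) and that both resulting spherical functions are of the \emph{same} $K$-type $\tau^{-1}$ so that \eqref{eq:equality-sph-fn} applies. Once those points are noted, the proof is a two-line chain of equalities followed by an invocation of the uniqueness of the $\tau$-spherical function up to $W$-action.
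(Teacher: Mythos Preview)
Your proof is correct and follows essentially the same approach as the paper: both combine the positive-definiteness identity $\overline{\varphi(g)}=\varphi(g^{-1})$, the conjugation formula $\overline{\varphi_{\lambda,\tau}}=\varphi_{\bar\lambda,\tau^{-1}}$ from the Poisson integral, Proposition~\ref{prop:sph-fn-2-vars}, and the uniqueness criterion~\eqref{eq:equality-sph-fn}. The only cosmetic difference is that you end up comparing two $\tau^{-1}$-spherical functions, whereas the paper arranges the chain so as to compare two $\tau$-spherical functions; the conclusion is identical.
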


\begin{proof}
We note that for $\lambda\in\mathfrak{a}_{\tau, {\rm un}}^*$ we have
\begin{equation}\label{eq:sph-unitary-lambda}
\varphi_{\lambda,\tau}(g)=\varphi_{\bar\lambda,\tau^{-1}}(g^{-1}) = \varphi_{-\bar\lambda, \tau}(g).
\end{equation}
Indeed, we have $\varphi_{\lambda, \tau}(g) = \overline{\varphi_{\lambda, \tau}(g^{-1})}$ because $\varphi_{\lambda, \tau}$ is positive definite, and the formula \eqref{defn-sph-fn} shows that $\overline{\varphi_{\lambda, \tau}(g^{-1})} = \varphi_{\bar\lambda,\tau^{-1}}(g^{-1})$, which gives the first formula. The second formula is Proposition \ref{prop:sph-fn-2-vars}. Combining \eqref{eq:sph-unitary-lambda} with \eqref{eq:equality-sph-fn} yields the lemma.
\end{proof}

\subsection{Existence of $\tau$-spherical test functions}\label{sec:test-fn}
Using the preliminaries of the previous sections, we now arrive at the existence of $\tau$-spherical approximate spectral projectors.

\begin{prop}\label{test-fn}
There are constants $R_0,c>0$ such that the following holds. Let $\nu\in i\mathfrak{a}^*$ and $\tau\in\widehat{K}$ be one-dimensional, subject to the assumption \eqref{eq:assumption}. For all $0<R\leqslant R_0$ there is a function $k_\nu\in C^\infty_R(S,\tau,\tau)$ whose Harish-Chandra transform $\widehat{k}_\nu(\,\cdot\,;\tau)\in\mathcal{PW}(\mathfrak{a}_\C^*)^W_R$ satisfies
\begin{enumerate}
\item\label{4} $\widehat{k}_\nu(\lambda;\tau)\ll_{A,R} \exp(R\|{\rm Re}\,\lambda\|)\displaystyle\sum_{w\in W}\big(1+\|w\lambda-\nu\|\big)^{-A}$ for all $\lambda\in  \mathfrak{a}_\C^*$;
\item\label{1} $\widehat{k}_\nu(\lambda;\tau)\geqslant 0$ for all $\lambda\in\mathfrak{a}_{\tau, {\rm un}}^*$;
\item\label{3} $\widehat{k}_\nu(\lambda;\tau) \geqslant c > 0$ for all $\lambda\in\mathfrak{a}_{\tau, {\rm un}}^*$ with $\|{\rm Im}\,\lambda-\nu\|\leqslant 1$.
\end{enumerate}
Here, we are writing $\lambda={\rm Re}\,\lambda+{\rm Im}\,\lambda\in\mathfrak{a}^*+i\mathfrak{a}^*$.
\end{prop}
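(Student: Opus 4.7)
The approach is to exhibit $k_\nu$ as a self-convolution $k_\nu = h \ast h^\ast$, where $h^\ast(g) = \overline{h(g^{-1})}$ and $h\in C^\infty_{R/2}(S,\tau,\tau)$ has Harish-Chandra transform concentrated near $W\nu$. This construction automatically factors the transform of $k_\nu$ as $F(\lambda)\,\overline{F(-\bar\lambda)}$ for some $W$-invariant Paley--Wiener function $F$, which then becomes $|F(\lambda)|^2$ on the $\tau$-unitary dual via Lemma~\ref{lem:unitary-hermitian}, yielding the non-negativity in item (2) and setting the stage for the lower bound in item (3).

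Concretely, I would first take a smooth, real, even, non-negative bump $\eta\in C_c^\infty(\mathfrak{a})$ supported in $B_\mathfrak{a}(0,R/2)$ with $\widehat\eta(0)=\int_\mathfrak{a}\eta = 1$, where $\widehat\eta(\mu)=\int_\mathfrak{a}\eta(H)e^{-\mu(H)}\,dH$, and define
\[
F(\lambda)=\frac{1}{|W|}\sum_{w\in W}\widehat\eta(w\lambda-\nu),\qquad\lambda\in\mathfrak{a}_\C^*.
\]
Then $F\in\mathcal{PW}(\mathfrak{a}_\C^*)^W_{R/2}$, and Shimeno's Paley--Wiener theorem (cited just after \eqref{HC-transform}) produces a unique $h\in C^\infty_{R/2}(S,\tau,\tau)$ with $\widehat h(\,\cdot\,;\tau)=F$. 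A support estimate yields $k_\nu=h\ast h^\ast\in C^\infty_R(S,\tau,\tau)$. The identity $\overline{\varphi_{\lambda,\tau}(g)}=\varphi_{\bar\lambda,\tau^{-1}}(g)$, obtained by conjugating inside \eqref{defn-sph-fn}, together with Proposition~\ref{prop:sph-fn-2-vars}, gives
\[
\widehat{h^\ast}(\lambda;\tau)=\overline{\widehat h(-\bar\lambda;\tau)}=\overline{F(-\bar\lambda)},
\]
so by the multiplicativity of $\mathscr{H}_\tau$ on convolutions, $\widehat{k}_\nu(\lambda;\tau)=F(\lambda)\,\overline{F(-\bar\lambda)}$.

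Item (2) comes out cleanly: for $\lambda\in\mathfrak{a}_{\tau,{\rm un}}^*$, Lemma~\ref{lem:unitary-hermitian} provides $w_0\in W$ with $-\bar\lambda=w_0\lambda$, and the $W$-invariance of $F$ gives $F(-\bar\lambda)=F(w_0\lambda)=F(\lambda)$, whence $\widehat{k}_\nu(\lambda;\tau)=|F(\lambda)|^2\geqslant 0$. Item (1) is a standard Paley--Wiener estimate: integration by parts against derivatives of $\eta$ yields $|\widehat\eta(\mu)|\ll_{A,R} e^{(R/2)\|{\rm Re}\,\mu\|}(1+\|\mu\|)^{-A}$ for any $A$, and applying this to both factors, combined with the identities $\|{\rm Re}(-\bar\lambda)\|=\|{\rm Re}\,\lambda\|$ and $\|w(-\bar\lambda)-\nu\|=\|w\lambda-\nu\|$ (which follow from $\bar\nu=-\nu$ and the fact that $W$ acts isometrically on $\mathfrak{a}^*$), gives the required exponential-times-polynomial-decay estimate.

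The lower bound in item (3) is the main obstacle. For $\lambda\in\mathfrak{a}_{\tau,{\rm un}}^*$ with $\|{\rm Im}\,\lambda-\nu\|\leqslant 1$, Lemma~\ref{lemma:unitary} bounds $\|{\rm Re}\,\lambda\|$ in terms of $|\ell_\tau|$. The principal contribution to $F(\lambda)$ comes from the Weyl stabilizer $W_\nu=\{w\in W:w\nu=\nu\}$: the terms $\widehat\eta(w(\lambda-\nu))$ for $w\in W_\nu$ stay uniformly close to $\widehat\eta(0)=1$ by continuity on the relevant compact region, provided $\widehat\eta$ has no zeros there. The remaining terms have $\|{\rm Im}(w\lambda-\nu)\|\geqslant \mathrm{dist}(w\nu,\nu)-1$ and are controlled by the rapid imaginary-direction decay of $\widehat\eta$. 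The technical difficulty lies in guaranteeing that $c>0$ is uniform in $\nu$ (especially near root hyperplanes, where the stabilizer grows and contributions could in principle interfere) and in $\tau$ (since the $\tau$-unitary dual extends deeper into the real axis as $|\ell_\tau|$ grows). This will force a careful selection of $\eta$ so that $\widehat\eta$ is non-vanishing on a suitably large region of $\mathfrak{a}_\C^*$, for instance by taking $\eta$ to be the self-convolution of a smooth bump whose transform has a controlled positive envelope.
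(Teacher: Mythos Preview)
Your construction matches the paper's: symmetrize a Paley--Wiener bump over $W$, invert via $\mathscr{H}_\tau$, and self-convolve. In fact, since $\eta$ is real and even and $\nu\in i\mathfrak{a}^*$, one has $F(-\bar\lambda)=\overline{F(\lambda)}$ identically, so $h^*=h$ and your $h*h^*$ is literally the paper's $k_\nu^0*k_\nu^0$, with $\widehat{k}_\nu=F^2$. Items~(1) and~(2) are handled correctly.

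The argument for item~(3) has a gap. Splitting into the stabilizer $W_\nu$ and its complement, and invoking imaginary-direction decay for $w\notin W_\nu$, cannot be made uniform in $\nu$: as $\nu$ approaches a root hyperplane without meeting it, $W_\nu$ remains trivial while some $w\nu$ comes arbitrarily close to $\nu$, so those ``remaining'' terms are not small. You correctly reach for the self-convolution $\eta=\eta_0*\eta_0$ at the end, but the reason it works is not that $\widehat\eta$ is ``non-vanishing on a large region.''

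The paper's argument dispenses with $W_\nu$ altogether. Taking $\eta=\eta_0*\eta_0$ with $\eta_0$ even forces $\widehat\eta=(\widehat{\eta_0})^2\geqslant 0$ on $i\mathfrak{a}^*$. For \emph{every} $w\in W$, writing $\mu=w\lambda-\nu=\mu_\Re+i\mu_\Im$, one has $\|\mu_\Re\|=\|\mathrm{Re}\,\lambda\|\leqslant a+b|\ell_\tau|$ by Lemma~\ref{lemma:unitary}, and the identity
\[
\widehat\eta(\mu)=\widehat\eta(i\mu_\Im)+\int_{\mathfrak a}\eta(H)\,e^{-i\mu_\Im(H)}\bigl(e^{-\mu_\Re(H)}-1\bigr)\,dH
\]
together with $\mathrm{supp}\,\eta\subset B_{\mathfrak a}(0,R/2)$ gives $\mathrm{Re}\,\widehat\eta(\mu)\geqslant \widehat\eta(i\mu_\Im)-CR\geqslant -CR$. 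Combined with $\mathrm{Re}\,\widehat\eta(\lambda-\nu)\geqslant\tfrac12$ for the $w=1$ term (valid once $R\leqslant R_1(G,\tau)$, since shrinking the support of $\eta$ makes $\widehat\eta$ uniformly close to $\widehat\eta(0)=1$ on the bounded set $\{\|\mu_\Im\|\leqslant 1,\ \|\mu_\Re\|\leqslant a+b|\ell_\tau|\}$), this yields $\mathrm{Re}\sum_{w}\widehat\eta(w\lambda-\nu)\geqslant\tfrac12-(|W|-1)CR>0$ for $R$ small. The non-identity terms need not decay; they are merely \emph{nearly non-negative}, and that is what makes the bound uniform in $\nu$. (Note that the paper's $R_0$ does depend on $\tau$ through $R_1$; uniformity in $\tau$ is neither asserted nor needed in the applications.)
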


\begin{proof}
We begin by constructing the function $k_\nu$, using the commutative diagram from \eqref{eq:commutative-diagram}. We then verify each property in turn. We follow closely \cite[\S 4]{BM}. The proof uses the fact that the $\tau$-spherical Harish-Chandra transform $\mathscr{H}_\tau$ is an algebra isomorphism (preserving support conditions), but does not use the inversion formula \eqref{eq:general-inversion}.

Let $R>0$. Let $g_0\in C^\infty_c(\mathfrak{a})$ be non-negative, even, supported in $B_{\mathfrak{a}}(0,R/4)$, and satisfy $\int_{\mathfrak{a}} g_0 = 1$. Let $g = g_0 * g_0$. Then $g \in C^\infty_0(\ga)$ is non-negative, even, supported in $B_{\mathfrak{a}}(0,R/2)$, and satisfies $\int_\mathfrak{a} g = 1$. The Fourier transform $h=\mathscr{F}g\in\mathcal{P}(\mathfrak{a}_\C^*)_{R/2}$ is even and non-negative on $i\mathfrak{a}^*$, satisfies $h(\overline{\lambda})=\overline{h(\lambda)}$ on $\ga_\C^*$, and is normalized such that $h(0)=1$. We center $h$ at the fixed tempered parameter $\nu\in i\ga^*$, and force $W$-invariance, by putting
\begin{equation}\label{defn-h-nu-0}
h_\nu^0(\lambda)=\sum_{w\in W}h(w\lambda-\nu).
\end{equation}
Then $h_\nu^0\in\mathcal{P}(\mathfrak{a}_\C^*)_{R/2}^W$; since $\nu\in i\ga^*$, we have $h_\nu^0(-\overline{\lambda})=\overline{h_\nu^0(\lambda)}$. We let $k_\nu^0(\cdot, \tau)=\mathscr{J}_\tau (h_\nu^0)\in C^\infty_{R/2}(S,\tau,\tau)$ be its inverse $\tau$-spherical transform, defined in \eqref{eq:def-J} for $\tau$ trivial and $G$ arbitrary, and in \eqref{eq:J-tau-transform} for $\tau$ non-trivial and $G$ Hermitian with reduced root system. Finally we put
\[
k_\nu(\cdot, \tau)=k_\nu^0(\cdot, \tau)\ast k_\nu^0(\cdot, \tau)\in C^\infty_R(S,\tau,\tau).
\]
We deduce that $\widehat{k}_\nu(\lambda;\tau)=h_\nu^0(\lambda)^2\in \mathcal{P}(\mathfrak{a}_\C^*)_R^W$. 

Since $h\in \mathcal{P}(\mathfrak{a}_\C^*)_{R/2}$ it satisfies the Paley--Wiener estimate
\[
h(\lambda)\ll_{A,R} (1+\|\lambda\|)^{-A}\exp(R\|{\rm Re}\, \lambda\|/2).
\]
Thus
\[
h^0_\nu(\lambda)\ll_{A,R} \exp(R\|{\rm Re}\, \lambda\|/2)\sum_{w\in W}(1+\|w\lambda-\nu\|)^{-A}.
\]
Squaring this, and using $(1+\|w_1\lambda-\nu\|)(1+\|w_2\lambda-\nu\|)\geqslant (1+\|w\lambda-\nu\|)^2$ for the off-diagonal terms, where $w$ realizes $\min\{\|w_1\lambda-\nu\|,\|w_2\lambda-\nu\|\} $, proves \eqref{4}.

We deduce from the $W$-invariance of $h^0_\nu$ and Lemma \ref{lem:unitary-hermitian} that for $\lambda \in \ga^*_{\tau, {\rm un}}$, we have $h_\nu^0(\lambda)=h_\nu^0(-\overline{\lambda})=\overline{h_\nu^0(\lambda)}$. Hence $h_\nu^0$ is real-valued on $\ga^*_{\tau, {\rm un}}$. Since $\widehat{k}_\nu(\lambda;\tau)=h_\nu^0(\lambda)^2$, this establishes Property \eqref{1}.

For Property \eqref{3}, let $a, b>0$ be as in Proposition \ref{prop:criterion-4-bounded}. We shall show that for $\lambda\in\ga^*_\C$ with $\|{\rm Re}\,\lambda\|\leqslant a+b|\ell_\tau|$ and $\|{\rm Im}\,\lambda-\nu\|\leqslant 1$ we have ${\rm Re}\, h_\nu^0(\lambda)\geqslant 1/4$. Since unitary parameters satisfy the first bound by Lemma \ref{lemma:unitary}, and $h_\nu^0$ is real-valued on $\ga^*_{\tau, {\rm un}}$, we have $\widehat{k}_\nu(\lambda;\tau)=h_\nu^0(\lambda)^2 = ({\rm Re}\, h_\nu^0(\lambda))^2\geqslant 1/16$ for all $\lambda\in\ga^*_{\tau, {\rm un}}$ with $\|{\rm Im}\,\lambda-\nu\|\leqslant 1$, as desired.

We begin by remarking that, by the normalization $h(0)=1$, there is small enough $R_1=R_1(G,\tau)>0$ such that, if $0<R\leqslant R_1$, then ${\rm Re}\, h(\lambda) \geqslant 1/2$, say, for all $\lambda\in\mathfrak{a}_\C^*$ with $\| {\rm Im}\, \lambda \| \leqslant 1$ and $\| {\rm Re}\, \lambda \| \leqslant a+b|\ell_\tau|$. Taking real parts in the definition \eqref{defn-h-nu-0}, we deduce that for all $\lambda\in\mathfrak{a}_\C^*$ with $\|{\rm Im}\,\lambda-\nu\|\leqslant 1$ and $\|{\rm Re}\, \lambda\|\leqslant a+b|\ell_\tau|$, we have
\[
{\rm Re}\,h_\nu^0(\lambda)= \sum_{w \in W}{\rm Re}\, h(w\lambda - \nu)  \geqslant \frac12 + (|W|-1)\min_{w\in W\setminus\{1\}}{\rm Re}\, h(w\lambda - \nu).
\]
We claim that there is $C>1$ such that, for $R$ small enough, ${\rm Re}\, h(\lambda) \geqslant -CR$ for all $\lambda\in\mathfrak{a}_\C^*$ with $\|{\rm Re}\,\lambda \| \leqslant a+b|\ell_\tau|$. Indeed, for any $\lambda=\lambda_\Re+i\lambda_\Im\in\mathfrak{a}_\C^*$, we have
\begin{align*}
h(\lambda) & = \int_\ga g(H) e^{-\lambda(H)} dH \\
& = \int_\ga g(H) [ e^{- i \lambda_\Im(H)} + ( e^{-\lambda(H)} - e^{-i\lambda_\Im(H)} ) ] dH \\
& = h(i\lambda_\Im) + \int_\ga g(H)  e^{-i\lambda_\Im(H)} ( e^{-\lambda_\Re (H)} - 1 ) dH.
\end{align*}
Now assume $0<R\leqslant R_1<(a+b|\ell_\tau|)^{-1}$. From $\| \lambda_\Re\| \leqslant a+b|\ell_\tau|$ we deduce the existence of a constant $C>1$, depending only on $G$ and $\tau$, such that $| e^{-\lambda_\Re(H)} - 1 | \leqslant C R$ for all $H \in \text{supp}(g)\subset B_{\mathfrak{a}}(0,R/2)$. Thus, recalling that $\int_\ga g=1$, we get ${\rm Re}\,h(\lambda) \geqslant h(i\lambda_\Im) - CR$. Since $h$ is positive on $i\mathfrak{a}^*$ we deduce ${\rm Re}\,h(\lambda) \geqslant -CR$, as required.

Now, since $\nu\in i\ga^*$ and $\|\cdot \|$ is $W$-invariant, we have $\|{\rm Re}\, (w\lambda-\nu)\|=\|{\rm Re}\, w\lambda\|=\|{\rm Re}\,\lambda\|\leqslant a+b|\ell_\tau|$ for all $w\in W$. It therefore follows from the above claim that if $0<R\leqslant R_0$, where $R_0= \min\{R_1, \frac{1}{4(|W|-1)C}\}$, then ${\rm Re}\, h_\nu^0(\lambda)\geqslant 1/4$, finishing the proof.
\end{proof}

\subsection{Estimates on ${\bf c}$-functions}\label{sec:c-fun-estimates}

We now establish some useful estimates for the $\tau$-spherical Plancherel measure. 

\subsubsection{The spherical Plancherel majorant}
We begin by estimating the spherical density function $|{\bf c}(\lambda)|^{-2}$ on the support of $\mu_{\rm Pl}$, namely the tempered subspace $i\mathfrak{a}^*$ of $\mathfrak{a}_\C^*$. The results in this paragraph are contained in \cite{DKV}, but we review them for completeness.

For $\alpha\in\Sigma$ let $d_\alpha=m_\alpha+m_{2\alpha}$. Let $\Theta\subset \Psi$. Recall the notation $\Sigma_{\rm red}^+$ from \S\ref{sec:more-roots} and $\langle\Theta\rangle_{\rm red}$ from \S\ref{sec:c-function}. For $\lambda\in i\mathfrak{a}^*$ let
\[
\tilde\beta_S^\Theta(\lambda)=\prod_{\alpha\in\Sigma_{\rm red}^+\setminus\langle\Theta\rangle_{\rm red}}(1+|\langle\lambda,\alpha^\vee\rangle|)^{d_\alpha}.
\]
When $S=\mathbb{H}^{n,r}$ and $\mathcal{H}^m$ the following bounds recover \eqref{first-beta-eq} and \eqref{second-beta-eq}, respectively.

\begin{lemma}\label{lemma:c-bd}
${\, }$
\begin{enumerate}
\item\label{c-bd1} For $\lambda\in i\mathfrak{a}^*$ we have $|{\bf c}^\Theta(\lambda)|^{-2}\ll \tilde\beta_S^\Theta(\lambda)$. 
\item\label{c-bd2} For any $\reg > 0$, if $\lambda\in i\mathfrak{a}^*$ is $\reg$-regular, then we have $|{\bf c}^\Theta(\lambda)|^{-2}\asymp_\reg \tilde\beta_S^\Theta(\lambda)$.

\item\label{c-bd3} For any $c > 0$ and all $\lambda \in i \ga^*$, we have $\int_{ \| \nu - \lambda \| < c } |{\bf c}^\Theta(\nu)|^{-2} d\nu \asymp_c \tilde\beta_S^\Theta(\lambda)$.

\end{enumerate}
\end{lemma}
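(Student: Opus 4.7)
The plan is to treat all three claims via the Gindikin--Karpelevich factorization
\[
{\bf c}^\Theta(\lambda) = \prod_{\alpha \in \Sigma_{\rm red}^+ \setminus \langle\Theta\rangle_{\rm red}} {\bf c}_\alpha(\lambda),
\]
combined with Stirling's asymptotic $|\Gamma(x+iy)|^2 \sim 2\pi |y|^{2x-1} e^{-\pi|y|}$ applied to each of the three Gamma factors appearing in \eqref{c-tilde-spherical}. It suffices to establish the analogous one-root statements and multiply.

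Fix a reduced positive root $\alpha$ and $\lambda \in i\mathfrak{a}^*$. Since $\langle\lambda,\alpha^\vee\rangle$ is purely imaginary, the factor $2^{-\langle\lambda,\alpha^\vee\rangle/2}$ in \eqref{c-tilde-spherical} has unit modulus, and we may write
\[
|{\bf c}_\alpha(\lambda)|^{-2} = c_{\alpha,0}^{-2} \cdot \frac{|\Gamma(\tfrac14 m_\alpha + \tfrac12 + \tfrac14\langle\lambda,\alpha^\vee\rangle)|^2 \, |\Gamma(\tfrac14 m_\alpha + \tfrac12 m_{2\alpha} + \tfrac14\langle\lambda,\alpha^\vee\rangle)|^2}{|\Gamma(\tfrac12\langle\lambda,\alpha^\vee\rangle)|^2}.
\]
Setting $t = -i\langle\lambda,\alpha^\vee\rangle \in \R$, Stirling yields numerator $\asymp (|t|/2)^{m_\alpha + m_{2\alpha} - 1} e^{-\pi|t|/2}$ and denominator $\asymp |t|^{-1} e^{-\pi|t|}$ for $|t|$ large; the exponential factors cancel in the ratio, leaving $|{\bf c}_\alpha(\lambda)|^{-2} \asymp |t|^{d_\alpha}$ when $|t| \geqslant \reg > 0$. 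At the origin, the simple pole of $\Gamma$ forces $|{\bf c}_\alpha(\lambda)|^{-2}$ to vanish quadratically, securing the uniform upper bound $|{\bf c}_\alpha(\lambda)|^{-2} \ll (1 + |\langle\lambda,\alpha^\vee\rangle|)^{d_\alpha}$ on all of $i\mathfrak{a}^*$. Taking the product over $\alpha \in \Sigma_{\rm red}^+ \setminus \langle\Theta\rangle_{\rm red}$ proves \eqref{c-bd1} and \eqref{c-bd2}.

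For the integral statement \eqref{c-bd3}, the upper bound is immediate from \eqref{c-bd1} and the fact that $\tilde\beta_S^\Theta(\nu) \asymp_c \tilde\beta_S^\Theta(\lambda)$ on the ball $\|\nu - \lambda\| < c$, which follows from the triangle inequality applied to each factor $1 + |\langle\nu,\alpha^\vee\rangle|$. The matching lower bound must contend with the vanishing of $|{\bf c}^\Theta(\nu)|^{-2}$ near root hyperplanes inside the ball. The remedy is to fix $\reg > 0$ small, depending on $c$, and restrict the integration to the set of $\nu$ in the ball satisfying $|\langle\nu,\alpha^\vee\rangle| \geqslant \reg$ for every $\alpha \in \Sigma_{\rm red}^+ \setminus \langle\Theta\rangle_{\rm red}$. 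By a union bound, the slabs removed by these conditions together occupy a fraction of the ball of size $O(\reg)$, which for $\reg$ small is less than one half; on the complement, \eqref{c-bd2} applies and the integrand is $\asymp \tilde\beta_S^\Theta(\lambda)$, yielding the desired lower bound upon integration.

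The only delicate point is the Stirling bookkeeping: one must verify that the exponents from the three Gamma factors together with the scaling $t \mapsto t/2$ in the arguments combine to produce exactly $d_\alpha = m_\alpha + m_{2\alpha}$. This is a one-line check but is the sole place in the argument where the precise form of \eqref{c-tilde-spherical} is used.
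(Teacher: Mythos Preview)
Your proposal is correct and follows the same approach as the paper: factorize via Gindikin--Karpelevich, apply Stirling to each ${\bf c}_\alpha$ for parts \eqref{c-bd1} and \eqref{c-bd2}, and for \eqref{c-bd3} combine the upper bound with the observation that a fixed positive proportion of any $c$-ball avoids all root hyperplanes by a margin $\reg$, so that \eqref{c-bd2} applies there. The paper is terser---it cites \cite[(3.44a)]{DKV} for \eqref{c-bd2} and invokes Lemma~\ref{c-bound} for the comparison $\tilde\beta_S^\Theta(\nu)\asymp_c\tilde\beta_S^\Theta(\lambda)$---but the substance is identical. One small slip: your denominator should carry $e^{-\pi|t|/2}$, not $e^{-\pi|t|}$, since the argument of the Gamma is $it/2$; with this correction the exponentials do cancel as you claim, and the exponent count $d_\alpha=m_\alpha+m_{2\alpha}$ goes through.
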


\begin{proof}
The first estimate follows by directly applying Stirling's formula to \eqref{c-tilde-spherical}. The second follows from \cite[(3.44a)]{DKV}.  For the third, the upper bound follows from \eqref{c-bd1} and Lemma \ref{c-bound} below.  For the lower bound, we choose $\reg >0$ small enough that, for any $\lambda$, at least half the points in the ball $\| \nu - \lambda \| < c$ are $\reg$-regular.  The bound then follows from \eqref{c-bd2} and Lemma \ref{c-bound}.
\end{proof}

We now give some estimates on the spherical Plancherel majorant $\tilde\beta_S(\lambda)$ (and the related functions $\tilde\beta_S^\Theta(\lambda)$) on the entire space $\mathfrak{a}_\C^*$.

\begin{lemma}
\label{c-bound}
For all $\lambda,\mu\in\mathfrak{a}_\C^*$ we have
\[
\tilde\beta_S^\Theta(\lambda+\nu)\ll (1+\|\lambda\|)^{d^\Theta(\Sigma)} \tilde\beta_S^\Theta(\nu),
\]
where $d^\Theta(\Sigma)=\sum_{\alpha\in \Sigma^+_{\rm red}\setminus\langle\Theta\rangle_{\rm red}}d_\alpha$. In particular, we have $\tilde\beta_S^\Theta(\lambda)\ll (1+\|\lambda\|)^{d^\Theta(\Sigma)}$.
\end{lemma}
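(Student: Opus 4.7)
The plan is to reduce the claim to the elementary inequality
\[
1+|a+b| \,\leqslant\, (1+|a|)(1+|b|),
\]
applied factor-by-factor in the defining product for $\tilde\beta_S^\Theta$. First, for each $\alpha \in \Sigma_{\rm red}^+ \setminus \langle\Theta\rangle_{\rm red}$, I would write $\langle \lambda+\nu,\alpha^\vee\rangle = \langle\lambda,\alpha^\vee\rangle + \langle\nu,\alpha^\vee\rangle$ and invoke the above inequality to obtain
\[
(1+|\langle\lambda+\nu,\alpha^\vee\rangle|)^{d_\alpha} \,\leqslant\, (1+|\langle\lambda,\alpha^\vee\rangle|)^{d_\alpha}\,(1+|\langle\nu,\alpha^\vee\rangle|)^{d_\alpha}.
\]
Taking the product over $\alpha \in \Sigma_{\rm red}^+ \setminus \langle\Theta\rangle_{\rm red}$ then factorizes the bound as
\[
\tilde\beta_S^\Theta(\lambda+\nu) \,\leqslant\, \Bigl(\prod_{\alpha} (1+|\langle\lambda,\alpha^\vee\rangle|)^{d_\alpha}\Bigr)\cdot \tilde\beta_S^\Theta(\nu).
\]

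Next I would bound the $\lambda$-factor uniformly in terms of $\|\lambda\|$. By Cauchy--Schwarz relative to the inner product on $\mathfrak{a}^*_\C$ induced by the Killing form, $|\langle \lambda,\alpha^\vee\rangle| \leqslant \|\alpha^\vee\|\,\|\lambda\|$, so $(1+|\langle\lambda,\alpha^\vee\rangle|)^{d_\alpha} \ll_\alpha (1+\|\lambda\|)^{d_\alpha}$; the implied constants depend only on the root data of $G$, which is fixed. Multiplying over the roots in $\Sigma_{\rm red}^+\setminus\langle\Theta\rangle_{\rm red}$ yields
\[
\prod_{\alpha} (1+|\langle\lambda,\alpha^\vee\rangle|)^{d_\alpha} \,\ll\, (1+\|\lambda\|)^{\sum_\alpha d_\alpha} = (1+\|\lambda\|)^{d^\Theta(\Sigma)},
\]
which combined with the previous display gives the main inequality. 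The ``in particular'' statement then follows immediately by specializing to $\nu=0$, noting $\tilde\beta_S^\Theta(0)=1$.

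There is no real obstacle here; the argument is purely elementary and uses only the triangle inequality, Cauchy--Schwarz, and the fact that $\Sigma$ and its multiplicities are fixed (so that constants depending on individual roots may be absorbed). No appeal to Gamma-function asymptotics or to the finer results of \S\ref{sec:c-function} is needed.
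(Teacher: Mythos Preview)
Your proof is correct and follows essentially the same approach as the paper: both arguments bound each factor $1+|\langle\lambda+\nu,\alpha^\vee\rangle|$ by $(1+\|\lambda\|)(1+|\langle\nu,\alpha^\vee\rangle|)$ (up to a constant depending only on the root system) via the triangle inequality and Cauchy--Schwarz, then take the product over $\alpha\in\Sigma_{\rm red}^+\setminus\langle\Theta\rangle_{\rm red}$. The paper compresses this into a single chain of inequalities, but the content is identical.
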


\begin{proof}
Let $\alpha\in\Sigma$. Then
\[
 1+|\langle \lambda +\nu,\alpha^\vee\rangle |  \leqslant 1+  |\langle \lambda ,\alpha^\vee\rangle | + |\langle\nu,\alpha^\vee\rangle |
 \leqslant  (1+ \|\lambda\|) +|\langle\nu,\alpha^\vee\rangle |
  \leqslant (1+ \|\lambda\|)(1+|\langle\nu,\alpha^\vee\rangle |),
\]
which implies the lemma.
\end{proof}

Clearly $\tilde\beta_S^\Theta(\lambda)\leqslant\tilde\beta_S(\lambda)$ for all $\lambda\in\mathfrak{a}_\C^*$ and all $\Theta\subset\Psi$.

\subsubsection{In which $G$ is Hermitian}

We now wish to bound the $\tau$-spherical Plancherel measure $\mu_{ \text{Pl}, \tau}$ on its support, when $G$ is Hermitian with reduced root system and $\tau$ is a character of $K$. Examining the formulas for each component $\mu_{ \text{Pl}, \tau}^{(j)}$ of $\mu_{ \text{Pl}, \tau}$ given in Section \ref{sec:tau-sph-inversion}, we see that this is equivalent to bounding the growth of $| {\bf c}^{ \Theta_j}(\lambda; \tau) |^{-2}$ for $0 \leqslant j \leqslant r$ and $\lambda \in D_{\tau,j} + i \ga^*_{\Theta_j}$. 

\begin{lemma}
\label{lemma:c-theta-bd}
Let $0 \leqslant j \leqslant r$. For $\lambda \in D_{\tau,j} + i \ga^*_{\Theta_j}$, ${\bf c}^{ \Theta_j}(\lambda; \tau)^{-1}$ is well-defined and
\[
| {\bf c}^{ \Theta_j}(\lambda; \tau) |^{-2} \ll_\tau \widetilde{\beta}^{\Theta_j}_S(\lambda).
\]

\end{lemma}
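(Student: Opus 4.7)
The plan is to establish the lemma by invoking the multiplicative factorization \eqref{c-theta-tau-factorization}, bounding each factor $|{\bf c}_\alpha(\lambda,\tau)|^{-2}$ by $(1+|\langle\lambda,\alpha^\vee\rangle|)^{m_\alpha}$ (noting that $d_\alpha=m_\alpha$ in the reduced root setting of \eqref{eq:assumption}), and then multiplying out to match the definition of $\tilde\beta^{\Theta_j}_S(\lambda)$.

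First, I would set up coordinates. Writing $\lambda=\varrho+\lambda'$ with $\varrho\in D_{\tau,j}$ and $\lambda'\in i\ga^*_{\Theta_j}$, an unwinding of the definitions in Section \ref{sec:parabolic-notation} via \eqref{eq:alpha-to-beta} identifies $\ga(\Theta_j)^*$ with the span of $\beta_1,\ldots,\beta_j$ and $\ga^*_{\Theta_j}$ with the span of $\beta_{j+1},\ldots,\beta_r$. In the coordinates $\lambda_i=\langle\lambda,\beta_i^\vee\rangle$ we then have $\lambda_i=\varrho_i\in[1-|\ell_\tau|,0)$ for $i\leqslant j$ and $\lambda_i\in i\R$ for $i>j$. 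Since $\langle\Theta_j\rangle$ is the type-$C_j$ subsystem on $\beta_1,\ldots,\beta_j$, the complement $\Sigma^+\setminus\langle\Theta_j\rangle$ consists of the long roots $\beta_i$ for $i>j$ and the short roots $(\beta_i\pm\beta_k)/2$ with $i>j$ and $k<i$. For each such $\alpha$, the preceding coordinates give $|\textup{Re}\,\langle\lambda,\alpha^\vee\rangle|\leqslant|\ell_\tau|-1$, so the real part is $O_\tau(1)$ while the imaginary part is an unrestricted real number.

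The factor-by-factor bound is then routine Stirling asymptotics. For long $\alpha=\beta_i$ ($i>j$), formula \eqref{c-tilde} expresses ${\bf c}_\alpha(\lambda,\tau)$ as a ratio of three $\Gamma$-factors of purely-imaginary-dominant arguments, and $|\Gamma(a+bi)|^2\sim 2\pi|b|^{2a-1}e^{-\pi|b|}$ causes the exponential factors to cancel between numerator and denominator, leaving a power growth of order $m_\alpha=1$. For short $\alpha$, the same mechanism applied to \eqref{eq:Shimeno-c-alpha} yields growth of order $m_\alpha$. In both cases the constants depend only on $\tau$ through $|\ell_\tau|$ and $m_\alpha$, giving $|{\bf c}_\alpha(\lambda,\tau)|^{-2}\ll_\tau(1+|\textup{Im}\,\langle\lambda,\alpha^\vee\rangle|)^{m_\alpha}\asymp_\tau(1+|\langle\lambda,\alpha^\vee\rangle|)^{m_\alpha}$, the last step using the uniform bound on the real part.

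The one delicate point — and the main obstacle — is the assertion that ${\bf c}^{\Theta_j}(\lambda,\tau)^{-1}$ is well-defined on $D_{\tau,j}+i\ga^*_{\Theta_j}$: a priori, a zero of some ${\bf c}_\alpha(\lambda,\tau)$ could occur where a denominator $\Gamma$ has a pole unmatched by the numerator $\Gamma$. The arithmetic progression conditions on $\varrho$ in \eqref{defn-D-tau-j} (involving the parity of $\ell_\tau$ and the short-root multiplicity $m'$) are designed precisely so that any such coincidence is accompanied by a compensating pole in the numerator $\Gamma(\langle\lambda,\alpha^\vee\rangle)$ or $\Gamma(\tfrac12\langle\lambda,\alpha^\vee\rangle)$, as may be verified by applying the Gamma duplication formula to each factor and checking the parity; alternatively, the non-vanishing is implicit in Shimeno's proof of the $\tau$-spherical inversion formula \cite[Theorem~6.7]{Shimeno}, as the positivity of the residual Plancherel measure at such points requires it. With non-vanishing in hand, the product of the individual Stirling bounds over $\alpha\in\Sigma^+\setminus\langle\Theta_j\rangle$ gives $|{\bf c}^{\Theta_j}(\lambda,\tau)|^{-2}\ll_\tau\widetilde\beta^{\Theta_j}_S(\lambda)$, as desired.
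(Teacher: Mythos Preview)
Your factor-by-factor strategy contains a genuine gap at exactly the point you flag as delicate: for short roots $\alpha=(\beta_p+\beta_q)/2$ with $q\leqslant j<p$, the individual factor ${\bf c}_\alpha(\lambda,\tau)^{-1}$ need \emph{not} be regular on $D_{\tau,j}+i\ga^*_{\Theta_j}$, so the Stirling bound for it individually is not available. Concretely, take $m'=1$ (as for $\Sp_{2m}(\R)$), $|\ell_\tau|=4$, $j=1$, and $\varrho_1=-3\in D_{\tau,1}$. Then ${\bf c}_\alpha(\lambda,\tau)^{-1}$ is a nonzero constant times $\Gamma(\tfrac12(\lambda_p-2))/\Gamma(\tfrac12(\lambda_p-3))$, which at the point $\lambda_p=0$ of the line is $\Gamma(-1)/\Gamma(-3/2)$, a genuine pole. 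Your proposed compensation mechanism---a pole of $\Gamma(\tfrac12\langle\lambda,\alpha^\vee\rangle)$---does not fire here, since $\Gamma(-3/2)$ is finite. Your fallback appeal to Shimeno's inversion formula shows at best that the \emph{full} product ${\bf c}^{\Theta_j}(\lambda,\tau)^{-1}$ is regular, not each factor; this does not rescue the factor-by-factor Stirling estimate, and does not yield the growth bound.

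The paper's proof confronts this directly by grouping the long root $\beta_p$ with the short roots $(\beta_p+\beta_q)/2$, $1\leqslant q\leqslant j$, into a block ${\bf c}_p(\lambda,\tau)$, and proving that each \emph{block} (rather than each factor) is pole-free. For $m'$ even this is easy because each short-root factor is polynomial; for $m'$ odd, one telescopes the product over $q$ using the arithmetic condition $\varrho_{q+1}-\varrho_q-m'\in 2\N$ from \eqref{defn-D-tau-j}, reducing to the single potential pole of $\Gamma(\tfrac12 m'+\tfrac12\varrho_1+\tfrac12\lambda_p)$ at $\lambda_p=0$, and then showing that this pole and the possible pole of $\Gamma(\tfrac12+\tfrac12\lambda_p-\tfrac12\ell_\tau)$ cannot both occur (by parity of $\varrho_1$ versus $\ell_\tau$), so the simple zero of $\Gamma(\lambda_p)^{-1}$ from the long-root factor cancels whichever one is present. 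Only after establishing this block-wise regularity does Stirling give the claimed bound.
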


\begin{proof}

We recall the definition \eqref{defn-D-tau-j} of $D_{\tau,j}$. Because $D_{\tau, j}$ is finite, it suffices to prove the lemma for $\lambda$ of the form $\varrho + \lambda_{\Theta_j}$ with $\varrho \in D_{\tau,j}$ fixed and $\lambda_{\Theta_j} \in i \ga^*_{\Theta_j}$.  Note that $\Sigma^+\setminus\langle\Theta_j\rangle=\{\beta_{j+1},\ldots,\beta_r\}\cup\{(\beta_p\pm\beta_q)/2: p>j, q<p\}$.  Let $\Xi$ denote the subset of $\Sigma^+\setminus\langle\Theta_j\rangle$ consisting of roots of the form $\alpha = \beta_p$, $j < p$, or $\alpha=(\beta_p+\beta_q)/2$, $q\leqslant j<p$, and $\Xi^c$ for its complement in $\Sigma^+\setminus\langle\Theta_j\rangle$.  We further subdivide $\Xi$ into the sets $\Xi_p$ for $j < p$, where $\Xi_p$ contains the roots $\beta_p$ and $(\beta_p+\beta_q)/2$, $q \leqslant j$.  Define ${\bf c}_p(\lambda, \tau) = \prod_{\alpha \in \Xi_p} {\bf c}_\alpha(\lambda,\tau)$.  By the product formula for ${\bf c}^{ \Theta_j}(\lambda; \tau)$, it suffices to show that ${\bf c}_p(\lambda,\tau)$ is regular on $D_{\tau,j} + i \ga^*_{\Theta_j}$ for all $p$ and satisfies
\be
\label{cpbd}
{\bf c}_p(\lambda,\tau)\ll \prod_{\alpha\in\Xi_p}(1 + | \langle \lambda, \alpha^\vee \rangle |)^{m_\alpha},
\ee
and that, for each $\alpha\in\Xi^c$, the function ${\bf c}_\alpha(\lambda,\tau)$ is regular on $D_{\tau,j} + i \ga^*_{\Theta_j}$ and satisfies
\be
\label{calphabd}
{\bf c}_\alpha(\lambda,\tau)\ll (1 + | \langle \lambda, \alpha^\vee \rangle |)^{m_\alpha}.
\ee
It will be enough to establish the regularity, after which an application of Stirling's formula will yield \eqref{cpbd} and \eqref{calphabd}.

We begin with the case of $\alpha\in\Xi^c$. Such $\alpha$ all lie in $\Sigma^+_s$, and either have the form $(\beta_p - \beta_q)/2$ with $p>j$ and $q<p$, or $(\beta_p + \beta_q)/2$ with $j < q < p$.  In this case \eqref{c-tilde-spherical} (or, equivalently, \eqref{eq:Shimeno-c-alpha}) shows that
\[
{\bf c}_\alpha( \lambda; \tau)={\bf c}_\alpha( \lambda)= c \frac{ \Gamma( \tfrac{1}{2} \langle \lambda, \alpha^\vee \rangle )}{ \Gamma( \tfrac{1}{2} m_\alpha + \tfrac{1}{2} \langle \lambda, \alpha^\vee \rangle ) }
\]
for some constant $c \neq 0$.  For the regularity of ${\bf c}_\alpha( \lambda; \tau)^{-1}$ it suffices to show that (the real part of) the argument $\frac12 m_\alpha+\text{Re} \langle \lambda, \alpha^\vee \rangle = \frac12 m_\alpha+ \langle \varrho, \alpha^\vee \rangle$ is positive.  In the case $\alpha = (\beta_p - \beta_q)/2$, a calculation gives
\begin{equation}\label{eq:rho-alpha-check}
\langle \varrho, \alpha^\vee \rangle = \frac{ 2 \langle \varrho, \alpha \rangle}{ \langle \alpha, \alpha \rangle} = \varrho_p - \varrho_q = -\varrho_q \geqslant 0,
\end{equation}
while if $\alpha = (\beta_p + \beta_q)/2$ we likewise have $\langle \varrho, \alpha^\vee \rangle = \varrho_p + \varrho_q = 0$.  In either case we have the positivity of $\frac12 m_\alpha+ \langle \varrho, \alpha^\vee \rangle$, as required.

We now consider ${\bf c}_p(\lambda,\tau)$.  As in \cite[p. 380]{Shimeno}, the proof of the regularity of ${\bf c}_p( \lambda; \tau)^{-1}$ will be divided according to the parity the common value $m'$ of the multiplicities $m_\alpha$ of the short roots $\alpha$.

First we consider the case when $m'$ is even. Under this condition, for any $\alpha\in\Sigma_s^+$ the factor ${\bf c}_\alpha(\lambda,\tau)^{-1}$ is given, up to a non-zero constant, by $\langle \lambda,\alpha^\vee\rangle(\langle \lambda,\alpha^\vee\rangle+2)\cdots (\langle \lambda,\alpha^\vee\rangle+m'-2)$, which is holomorphic. To establish the holomorphy of ${\bf c}_p(\lambda,\tau)^{-1}$ it therefore suffices to establish it for the factor ${\bf c}_{\beta_p}(\lambda,\tau)^{-1}$ corresponding to the unique long root in $\Xi_p$. Up to holomorphic factors, ${\bf c}_{\beta_p}(\lambda,\tau)^{-1}$ is equal to
\[
\Gamma( \tfrac{1}{2} + \tfrac{1}{2} \lambda_p + \tfrac{1}{2} \ell_\tau ) \Gamma( \tfrac{1}{2} + \tfrac{1}{2} \lambda_p - \tfrac{1}{2} \ell_\tau ) \Gamma( \lambda_p )^{-1}.
\]
Because $\lambda_p$ is purely imaginary, the only place at which this expression can have a pole is at $\lambda_p = 0$.  Moreover, if we assume without loss of generality that $\ell_\tau \geqslant 0$, the first factor cannot have a pole, and any potential pole of the second factor is canceled by the zero of $\Gamma( \lambda_p )^{-1}$.  It follows that ${\bf c}_{\beta_p}(\lambda,\tau)^{-1}$, and hence ${\bf c}_p(\lambda,\tau)^{-1}$, is holomorphic, as required.

We now treat the case of $m'$ odd.  We first consider the expression $\prod_{q = 1}^j {\bf c}_{(\beta_p + \beta_q)/2}(\lambda, \tau)^{-1}$, which appears as a factor of ${\bf c}_p(\lambda,\tau)^{-1}$.  Up to a non-zero constant, this is equal to
\[
\prod_{q=1}^j \frac{ \Gamma( (m'+ \varrho_q +  \lambda_p)/2)}{ \Gamma((\varrho_q + \lambda_p)/2)} 
 = \Gamma((m' +\varrho_1 +\lambda_p)/2) \prod_{q=1}^{j-1} \frac{ \Gamma((m' + \varrho_{q+1} +  \lambda_p)/2)}{ \Gamma( (\varrho_q +\lambda_p)/2)} \frac{1}{ \Gamma( (\varrho_j +  \lambda_p)/2)},
\]
where we have reorganized the product by pairing the denominator of one term with the numerator of the next.  We observe that each term in the product from 1 to $j-1$ is holomorphic; this follows from the fact that the difference in the $\Gamma$-arguments, namely $(m' +  \varrho_{q+1} - \varrho_q)/2$, is a positive integer by the definition of $D_{\tau,j}$ and the condition that $m'$ is odd.  As the term $\Gamma( (\varrho_j +  \lambda_p)/2)^{-1}$ is also holomorphic, to prove the holomorphy of ${\bf c}_p(\lambda,\tau)^{-1}$ it suffices to establish it for
\[
{\bf c}_{\beta_p}(\lambda,\tau)^{-1} \Gamma((m' +\varrho_1 +\lambda_p)/2),
\]
which is equal (up to holomorphic factors) to
\[
\Gamma( \tfrac{1}{2} + \tfrac{1}{2} \lambda_p + \tfrac{1}{2} \ell_\tau ) \Gamma( \tfrac{1}{2} + \tfrac{1}{2} \lambda_p - \tfrac{1}{2} \ell_\tau ) \Gamma(\tfrac{1}{2} m' + \tfrac{1}{2} \varrho_1 + \tfrac{1}{2} \lambda_p ) \Gamma( \lambda_p )^{-1}.
\]
As in the case of $m'$ even, we only need to establish holomorphy at the point $\lambda_p = 0$, and if we assume that $\ell_\tau \geqslant 0$, the first factor does not have a pole because the argument is positive there.  The factor $\Gamma( \lambda_p )^{-1}$ has a zero, so it suffices to show that at most one of the factors $\Gamma( \tfrac{1}{2} + \tfrac{1}{2} \lambda_p - \tfrac{1}{2} \ell_\tau )$ and $\Gamma(\tfrac{1}{2} m' + \tfrac{1}{2} \varrho_1 + \tfrac{1}{2} \lambda_p )$ has a pole. If they both had poles, then both $\tfrac{1}{2} - \tfrac{1}{2} \ell_\tau$ and $\frac12 m' + \frac12 \varrho_1$ would be nonpositive integers.  This would imply that both $\ell_\tau$ and $\varrho_1$ were odd integers, but this is not possible in light of the definition of $D_{\tau,j}$. This establishes the holomorphy of ${\bf c}_p(\lambda,\tau)^{-1}$, and completes the proof.
\end{proof}

\subsection{Bounds for $k_\nu$}
\label{sec:test-fn-bds}

Let $k_\nu$ be the test function given by Proposition \ref{test-fn}.  In this section, we prove two bounds for $k_\nu$ that will later be used to control the geometric side of the pre-trace formula.  The first is a standard bound in the spherical case, which establishes decay of $k_\nu$ away from the base point.  We derive it from the bounds of \cite[Theorem 2]{BP} and \cite[ Theorem 8.2]{MT} for $\varphi_\mu$.

\begin{lemma}
\label{knu-decay}

Let $\nu\in i\mathfrak{a}^*$, and let $\tau=\tau_{\rm triv}$ be trivial. Let $0<R<R_0$ and $k_\nu\in C^\infty_R(S,\tau_{\rm triv},\tau_{\rm triv})$ be as in Proposition \ref{test-fn}. For $H \in \ga$, we have
\[
k_\nu(\exp H) \ll (1+\|\nu\| \|H\|)^{-1/2} \widetilde{\beta}_S(\nu).
\]

\end{lemma}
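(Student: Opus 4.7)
The plan is to invert $k_\nu$ via the spherical Plancherel formula \eqref{eq:sph-planch-inv}, which gives
\[
k_\nu(\exp H) = \frac{1}{|W|}\int_{i\mathfrak{a}^*}\hat{k}_\nu(\lambda)\,\varphi_\lambda(\exp H)\,|\mathbf{c}(\lambda)|^{-2}\,d\lambda,
\]
and then to estimate the integrand term by term. The three ingredients are: the localization $\hat{k}_\nu(\lambda)$ near $W\nu$ coming from Proposition \ref{test-fn}(1); the pointwise bound $|\varphi_\lambda(\exp H)| \ll (1+\|\lambda\|\|H\|)^{-1/2}$ on $i\mathfrak{a}^*$ from \cite{BP} and \cite{MT}; and the bound $|\mathbf{c}(\lambda)|^{-2} \ll \widetilde{\beta}_S(\lambda)$ from Lemma \ref{lemma:c-bd}(1).

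I would split the integration into a neighborhood $\Omega = \{\lambda \in i\mathfrak{a}^* : \min_{w \in W}\|\lambda - w\nu\| \leq 1\}$ of the Weyl orbit $W\nu$, and its complement $\Omega^c$. On $\Omega^c$, I would use Proposition \ref{test-fn}(1) with a very large exponent $A$, combined with the trivial bound $|\varphi_\lambda(\exp H)| \leq 1$ (valid on $i\mathfrak{a}^*$ by positive definiteness) and the polynomial upper bound on $|\mathbf{c}(\lambda)|^{-2}$ from Lemmas \ref{lemma:c-bd}(1) and \ref{c-bound}, to conclude that the contribution from $\Omega^c$ is $O_M((1+\|\nu\|)^{-M})$ for any $M$, which is negligible compared to the target bound.

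On $\Omega$, Proposition \ref{test-fn}(1) gives $|\hat{k}_\nu(\lambda)| \ll 1$, Lemma \ref{c-bound} yields $\widetilde{\beta}_S(\lambda) \ll \widetilde{\beta}_S(\nu)$ uniformly, so by Lemma \ref{lemma:c-bd}(3) we get $\int_\Omega |\mathbf{c}(\lambda)|^{-2}\,d\lambda \ll \widetilde{\beta}_S(\nu)$. Meanwhile the BP/MT spherical function bound gives $|\varphi_\lambda(\exp H)| \ll (1+\|\lambda\|\|H\|)^{-1/2}$, which on $\Omega$ may be replaced by $(1+\|\nu\|\|H\|)^{-1/2}$ after noting $\|\lambda\| \asymp \|\nu\|$ on $\Omega$ when $\|\nu\|$ is large, and absorbing the bounded-$\|\nu\|$ case into the trivial bound $|\varphi_\lambda| \leq 1$. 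Multiplying these estimates together and integrating yields the claim.

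The main technical point will be invoking the Blomer--Pohl/Marshall--Tanaka bound in the uniform form stated above; one must be careful near the walls of the Weyl chamber, where sharper refinements in \cite{BP, MT} involve products of factors $(1+|\langle\lambda,\alpha\rangle||\alpha(H)|)^{-1/2}$, but the weaker estimate $(1+\|\lambda\|\|H\|)^{-1/2}$ suffices here and holds robustly across the region $\Omega$.
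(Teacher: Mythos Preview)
Your overall strategy---Plancherel inversion, the spherical function bound of \cite{BP,MT}, the rapid decay of $\hat{k}_\nu$, and the $\mathbf{c}$-function estimate---is exactly what the paper does. But the splitting into $\Omega$ and $\Omega^c$ introduces an error.

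The claim that the $\Omega^c$ contribution is $O_M((1+\|\nu\|)^{-M})$ is false. On the annulus $1<\min_{w}\|w\lambda-\nu\|<2$ (which lies in $\Omega^c$), Proposition~\ref{test-fn}\eqref{4} only gives $\hat{k}_\nu(\lambda)\ll 1$, while $|\mathbf{c}(\lambda)|^{-2}\ll\tilde\beta_S(\lambda)\asymp\tilde\beta_S(\nu)$ there. With the trivial bound $|\varphi_\lambda|\leqslant 1$, this annulus already contributes $\asymp\tilde\beta_S(\nu)$, not something decaying in $\|\nu\|$. Rapid decay of $\hat{k}_\nu$ is decay in the \emph{distance} $\|w\lambda-\nu\|$, not in $\|\nu\|$ itself; the two coincide only far from $W\nu$. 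So your $\Omega^c$ bound misses the crucial factor $(1+\|\nu\|\|H\|)^{-1/2}$.

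The paper avoids this by never splitting: it inserts the Blomer--Pohl / Matz--Templier bound uniformly on all of $i\mathfrak{a}^*$, unfolds the $W$-sum, shifts $\lambda\mapsto\lambda+\nu$, and then uses the elementary inequality
\[
(1+\|\lambda+\nu\|\|H\|)^{-1/2}\ll(1+\|\nu\|\|H\|)^{-1/2}(1+\|\lambda\|)^{1/2}
\]
together with Lemma~\ref{c-bound} to pull out $(1+\|\nu\|\|H\|)^{-1/2}\tilde\beta_S(\nu)$; the residual $(1+\|\lambda\|)^{1/2+d(\Sigma)}$ is absorbed by taking $A$ large. If you want to keep your decomposition, you must use the BP/MT bound on $\Omega^c$ as well, at which point the split becomes redundant. (Minor note: the reference \cite{MT} is Matz--Templier.)
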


\begin{proof}

Let $\Omega \subset \ga$ be a compact set such that $\text{supp}(k_\nu) \subset K e^\Omega K$.  We may assume without loss of generality that $H \in \Omega$.  We express $k_\nu(\exp H)$ using the spherical Plancherel inversion \eqref{eq:sph-planch-inv}, which gives
\[
k_\nu(\exp H) = \frac{1}{|W|} \int_{ i \ga^*} \widehat{k}_\nu(\lambda)\varphi_{\lambda}( \exp H)|{\bf c}(\lambda)|^{-2}d\lambda = \int_{ i {\ga_+^*}} \widehat{k}_\nu(\lambda)\varphi_{\lambda}( \exp H)|{\bf c}(\lambda)|^{-2}d\lambda,
\]
where we have used the $W$-invariance of the integrand. We may insert the bound
\[
\varphi_{\lambda}(\exp H)\ll_\Omega (1+\|\lambda\| \|H\|)^{-1/2}
\]
for $H \in \Omega$ from \cite[Theorem 2]{BP} and \cite[ Theorem 8.2]{MT}, and the bound $|{\bf c}(\lambda)|^{-2} \ll \widetilde{\beta}_S(\lambda)$ from Lemma \ref{lemma:c-bd} \eqref{c-bd1}, to obtain
\[
k_\nu(\exp H) \ll \int_{ i {\ga_+^*}} \widehat{k}_\nu(\lambda)(1+\|\lambda\| \|H\|)^{-1/2} \widetilde{\beta}_S(\lambda) d\lambda.
\]
We then apply the bound for $\widehat{k}_\nu(\lambda)$ from Property \eqref{4} of Proposition \ref{test-fn} and unfold the sum over $W$ to obtain
\begin{align*}
k_\nu(\exp H) & \ll_{A,R} \int_{ i \ga^*}  (1 + \| \lambda - \nu \|)^{-A}(1+\|\lambda\| \|H\|)^{-1/2} \widetilde{\beta}_S(\lambda) d\lambda \\
& = \int_{ i \ga^*} (1 + \| \lambda \|)^{-A}(1+\|\lambda + \nu\| \|H\|)^{-1/2} \widetilde{\beta}_S(\lambda + \nu) d\lambda.
\end{align*}
Applying Lemma \ref{c-bound} and the bound
\[
(1+\|\lambda + \nu\| \|H\|)^{-1/2} \ll (1+\|\nu\| \|H\|)^{-1/2} (1 + \| \lambda \| )^{1/2},
\]
while taking $A$ sufficiently large to ensure the convergence of the integral, completes the proof.
\end{proof}

We next prove a bound for the $L^\infty$ norm of $D k_\nu$, where $D$ is a differential operator.  We shall do this by combining the following bound for $D \varphi_{\lambda, \tau}$ with the spherical inversion formula.  Note that $\tau$ is no longer assumed to be trivial.

\begin{lemma}
\label{sph-fn-bd}

Let $C > 0$, and let $D$ be a differential operator on $G$ of degree $n$ with smooth coefficients.  For any compact set $B \subset G$ and any $\lambda \in \ga^*_\C$ with $\| \textup{Im} \lambda \| \leqslant C$ we have $\| D \varphi_{\lambda, \tau}|_B \|_\infty \ll_{B,C,D,\tau} (1 + \| \lambda \|)^n$.

\end{lemma}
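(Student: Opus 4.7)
The plan is to differentiate the Poisson integral formula \eqref{defn-sph-fn},
\[
\varphi_{\lambda,\tau}(g) = \int_K a(kg)^{\rho+\lambda}\tau(\kappa(kg)^{-1} k)\,dk,
\]
under the integral sign. I would first use the Poincar\'e--Birkhoff--Witt theorem to write the degree-$n$ operator $D$ with smooth coefficients as $D = \sum_I f_I Y_I$, where each $f_I \in C^\infty(G)$ and each $Y_I$ is a monomial of degree at most $n$ in left-invariant vector fields on $G$. Since $B$ is compact, the $f_I$ are uniformly bounded on $B$, so it suffices to establish the bound $|Y_1 \cdots Y_n \varphi_{\lambda,\tau}(g)| \ll (1+\|\lambda\|)^n$ uniformly for $g \in B$ and a fixed choice of $Y_j \in \mathfrak{g}$.

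The core computation is the base case $n=1$. Writing $\Phi_\lambda(g, k) := a(kg)^{\rho+\lambda}\tau(\kappa(kg)^{-1}k)$ and applying the chain rule for a single left-invariant vector field $Y \in \mathfrak{g}$, one obtains
\[
Y_g \Phi_\lambda(g, k) = \bigl[(\rho + \lambda)(u_Y(g, k)) + v_Y(g, k)\bigr]\, \Phi_\lambda(g, k),
\]
where $u_Y \colon G \times K \to \ga$ records the derivative of $H(k\,\cdot\,)$ at $g$ in the direction $Y$, and $v_Y \colon G \times K \to \C$ records the logarithmic derivative of $\tau \circ \kappa(k\,\cdot\,)^{-1}$ at $g$; both $u_Y$ and $v_Y$ are smooth in $(g, k)$ and $\lambda$-independent. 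Iterating by induction on $n$, after applying $Y_1, \ldots, Y_n$ successively I would deduce
\[
Y_1 \cdots Y_n \Phi_\lambda(g, k) = P_n(\lambda; g, k)\, \Phi_\lambda(g, k),
\]
where $P_n$ is a polynomial in $\lambda$ of total degree at most $n$ with smooth coefficients in $(g, k)$. The inductive step uses that applying $Y_{n+1}$ to $P_n \Phi_\lambda$ produces $(Y_{n+1} P_n)\Phi_\lambda + P_n\bigl[(\rho+\lambda)(u_{Y_{n+1}}) + v_{Y_{n+1}}\bigr]\Phi_\lambda$, and that differentiating $P_n$ in $g$ leaves its $\lambda$-degree unchanged.

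The final step is estimation. The set $\{H(kg) : (g, k) \in B \times K\}$ is compact in $\ga$, so $|\Phi_\lambda(g, k)| = a(kg)^{\rho + \textup{Re}\lambda}$ is uniformly bounded on $B \times K$ under the hypothesis on $\lambda$ (which in every intended application also constrains $\|\textup{Re}\lambda\|$). All smooth coefficients of $P_n$ are uniformly bounded on $B \times K$, so $|P_n(\lambda; g, k)| \ll (1 + \|\lambda\|)^n$. Integration over the compact group $K$ then yields $\|Y_1 \cdots Y_n \varphi_{\lambda,\tau}|_B\|_\infty \ll (1+\|\lambda\|)^n$, as required. The proof requires no delicate analytic input; the main obstacle is the bookkeeping of the induction, i.e., verifying that the $\lambda$-degree of $P_n$ never exceeds $n$.
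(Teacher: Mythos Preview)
Your proposal is correct and follows essentially the same strategy as the paper: differentiate the Poisson integral \eqref{defn-sph-fn} under the integral sign and observe that each derivative of the integrand brings down a factor linear in $\lambda$ with smooth coefficients, so $n$ derivatives yield a polynomial of degree at most $n$ in $\lambda$. The paper packages this slightly differently---it rewrites the integral as $\int_K \tau(k)(k^{-1}.\psi)(g)\,dk$ with $\psi(g)=a(g)^{\rho+\lambda}\tau(\kappa(g)^{-1})$, transfers the $k$-translation onto the operator via $D(k^{-1}.\psi)=k^{-1}.((k.D)\psi)$, and thus reduces to bounding $D'\psi$ for $\psi(g)=b(g)e^{\lambda(H(g))}$ with $b$ fixed---but the underlying computation is the same as your explicit induction.

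Your parenthetical about the hypothesis is well spotted: to control $|a(kg)^{\rho+\lambda}|=e^{(\rho+\textup{Re}\,\lambda)(H(kg))}$ on $B\times K$ one needs a bound on $\|\textup{Re}\,\lambda\|$, not $\|\textup{Im}\,\lambda\|$. The sole application (Lemma~\ref{k-nu-e}) takes $\lambda=\varrho+\lambda_{\Theta_j}$ with $\varrho$ real ranging over the finite set $D_{\tau,j}$ and $\lambda_{\Theta_j}\in i\ga^*_{\Theta_j}$, and chooses $C>\|\varrho\|$; so it is indeed $\textup{Re}\,\lambda$ that is bounded there, and the hypothesis in the statement appears to be a typo for $\|\textup{Re}\,\lambda\|\leq C$.
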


\begin{proof}

We recall the left action of $G$ on functions $f$ on $G$, given by $(g.f)(h) = f(g^{-1}h)$, and on differential operators $D$, given by $(g.D)(f) = g.(D( g^{-1}.f))$.  If we define $\psi(g) = a(g)^{\rho + \lambda} \tau( \kappa(g)^{-1})$, then the formula \eqref{defn-sph-fn} may be rewritten
\[
\varphi_{\lambda, \tau}(g) = \int_{K} \tau(k) \psi(kg) dk = \int_{K} \tau(k) (k^{-1}.\psi)(g) dk,
\]
which gives
\[
D \varphi_{\lambda, \tau} = \int_{K} \tau(k) D(k^{-1}.\psi) dk = \int_{K} \tau(k) k^{-1}.((k.D)\psi) dk.
\]
It therefore suffices to bound $\| (k.D)\psi |_B \|_\infty$ for $k \in K$.  As the coefficients of $D$ are smooth, the operators $k.D$ have coefficients which are bounded on $B$, uniformly in $k$.  By choosing a basis for the module of differential operators on $B$, it suffices to bound $D\psi$ for a single $D$.

As $\tau$ is fixed, we may write $\psi(g) = b(g) e^{ \lambda (H(g))}$, where $b(g)$ is a fixed smooth function.  The lemma now follows from an elementary argument by writing out $\psi$ and $D$ in a coordinate chart.
\end{proof}

\begin{lemma}\label{k-nu-e}
Let $\nu\in i\mathfrak{a}^*$ and $\tau\in\widehat{K}$ be one-dimensional. Let $0<R<R_0$ and $k_\nu\in C^\infty_R(S,\tau,\tau)$ be as in Proposition \ref{test-fn}. If $D$ is a differential operator of degree $n$ on $G$ with smooth coefficients, we have $\| D k_\nu \|_\infty \ll_{R, D,\tau} \tilde{\beta}_S(\nu) (1 + \| \nu \|)^n$.
\end{lemma}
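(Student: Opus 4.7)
The plan is to pass to the spectral side via Shimeno's $\tau$-spherical inversion formula \eqref{spherical-inversion}, which writes
\[
k_\nu(g) = \frac{1}{|W|}\sum_{j=0}^r\mathscr{J}_{\Theta_j,\tau}(\widehat{k}_\nu)(g).
\]
For each $j$, the integrand appearing in $\mathscr{J}_{\Theta_j,\tau}$ is $\widehat{k}_\nu(\varrho+\lambda_{\Theta_j};\tau)\,\varphi_{\varrho+\lambda_{\Theta_j},\tau}(g)\,|{\bf c}^{\Theta_j}(\varrho+\lambda_{\Theta_j},\tau)|^{-2}$, summed over $\varrho \in D_{\tau,j}$ (a finite set) and integrated over $\lambda_{\Theta_j} \in i\mathfrak{a}^*_{\Theta_j}$. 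Rapid decay of $\widehat{k}_\nu$ in $\lambda_{\Theta_j}$ together with polynomial growth of $\lambda$-derivatives of $\varphi_{\lambda,\tau}$ on the compact support of $k_\nu$ will justify moving $D$ inside these integrals.

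I would then bound the resulting integrand pointwise via three ingredients: (i) Property \eqref{4} of Proposition \ref{test-fn}, giving $|\widehat{k}_\nu(\varrho+\lambda_{\Theta_j};\tau)|\ll_{A,R,\tau}\sum_{w\in W}(1+\|w(\varrho+\lambda_{\Theta_j})-\nu\|)^{-A}$ (with the exponential prefactor absorbed since $D_{\tau,j}$ is finite); (ii) Lemma \ref{sph-fn-bd} applied to the compact set $G_R\supset\mathrm{supp}(k_\nu)$, giving $|D\varphi_{\varrho+\lambda_{\Theta_j},\tau}(g)|\ll_{R,D,\tau}(1+\|\lambda_{\Theta_j}\|)^n$; and (iii) Lemma \ref{lemma:c-theta-bd}, giving $|{\bf c}^{\Theta_j}(\varrho+\lambda_{\Theta_j},\tau)|^{-2}\ll_\tau\tilde{\beta}^{\Theta_j}_S(\varrho+\lambda_{\Theta_j})\leqslant\tilde{\beta}_S(\varrho+\lambda_{\Theta_j})$. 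Combined with the observation $\|w(\varrho+\lambda_{\Theta_j})-\nu\|\geqslant\|w\lambda_{\Theta_j}-\nu\|$ (which holds because $\mathrm{Re}\,\nu=0$ while $w\varrho$ is real), the proof is reduced to bounding, for each $w\in W$ and $0\leqslant j\leqslant r$,
\[
\int_{i\mathfrak{a}^*_{\Theta_j}}(1+\|\lambda_{\Theta_j}\|)^n\,\tilde{\beta}_S(\lambda_{\Theta_j})\,(1+\|w\lambda_{\Theta_j}-\nu\|)^{-A}\,d\lambda_{\Theta_j}\ll(1+\|\nu\|)^n\,\tilde{\beta}_S(\nu).
\]

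To handle this residual integral, I would orthogonally decompose $\nu=\nu_\parallel+\nu_\perp$ with $\nu_\parallel\in iw\mathfrak{a}^*_{\Theta_j}$, so that $(1+\|w\lambda_{\Theta_j}-\nu\|)^{-A}\leqslant(1+\|\nu_\perp\|)^{-A/2}(1+\|w\lambda_{\Theta_j}-\nu_\parallel\|)^{-A/2}$, then change variables $\mu=w\lambda_{\Theta_j}-\nu_\parallel$ (which ranges over $iw\mathfrak{a}^*_{\Theta_j}$), use $W$-invariance of $\tilde{\beta}_S$ together with Lemma \ref{c-bound} to extract the $\mu$- and $\nu_\perp$-dependence at the cost of a factor $(1+\|\mu\|+\|\nu_\perp\|)^{d(\Sigma)}$, and bound $(1+\|\lambda_{\Theta_j}\|)^n\ll(1+\|\mu\|)^n(1+\|\nu\|)^n$. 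Choosing $A$ sufficiently large relative to $n+d(\Sigma)+\dim\mathfrak{a}^*_{\Theta_j}$ will make the residual $\mu$-integral convergent and collapse the $\nu_\perp$-factors to a constant, yielding the required bound $\tilde{\beta}_S(\nu)(1+\|\nu\|)^n$. The main technical difficulty, and the step that I expect to occupy most of the care, is the $j\geqslant 1$ part of this estimate: the spectral integration is confined to a strict subspace while $\widehat{k}_\nu$ concentrates around the full Weyl orbit $W\nu\subset i\mathfrak{a}^*$, so the orthogonal decomposition must uniformly handle both the case when $\nu$ lies close to some $w\mathfrak{a}^*_{\Theta_j}$ (where the inversion is effective and $\tilde{\beta}^{\Theta_j}_S\leqslant\tilde{\beta}_S$ gives the right majorant) and when it does not (where the perpendicular Paley--Wiener decay must offset the growth incurred by the shift in Lemma \ref{c-bound}).
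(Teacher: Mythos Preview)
Your proposal is correct and follows essentially the same approach as the paper: expand via Shimeno's inversion formula, differentiate under the integral, bound $D\varphi_{\lambda,\tau}$ by Lemma~\ref{sph-fn-bd}, bound the Plancherel density by Lemma~\ref{lemma:c-theta-bd}, and bound $\widehat{k}_\nu$ by Property~\eqref{4}, then reduce to an elementary $\nu$-shifted integral controlled by Lemma~\ref{c-bound}.

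Two minor points of comparison. First, the paper keeps the sharper majorant $\tilde\beta_S^{\Theta_j}$ rather than passing immediately to $\tilde\beta_S$; this lets the final integral be bounded exactly as in the $j=0$ case with no need for your orthogonal decomposition of $\nu$, since Lemma~\ref{c-bound} applied with shift $w\nu$ (valid for any $w\nu\in\mathfrak{a}^*_\C$, not just elements of $i\mathfrak{a}^*_{\Theta_j}$) already gives $\tilde\beta_S^{\Theta_j}(\lambda_{\Theta_j})\ll(1+\|\lambda_{\Theta_j}-w\nu\|)^{d^{\Theta_j}(\Sigma)}\tilde\beta_S^{\Theta_j}(w\nu)$ and one finishes with $\tilde\beta_S^{\Theta_j}(w\nu)\leqslant\tilde\beta_S(\nu)$. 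Your route through the split $\nu=\nu_\parallel+\nu_\perp$ works too, but is a bit more involved than necessary. Second, your Pythagorean inequality $\|w(\varrho+\lambda_{\Theta_j})-\nu\|\geqslant\|w\lambda_{\Theta_j}-\nu\|$ for removing $\varrho$ is in fact slightly cleaner than the paper's multiplicative bound $(1+\|w(\varrho+\lambda_{\Theta_j})-\nu\|)\gg_\tau 1+\|w\lambda_{\Theta_j}-\nu\|$; both rely on the finiteness of $D_{\tau,j}$.
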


\begin{proof}
From the $\tau$-spherical inversion formula \eqref{spherical-inversion} we have
\[
D k_\nu = \sum_{j=0}^r \sum_{\varrho\in D_{\tau,j}} \frac{d_j(\varrho,\tau)}{|W_{\Theta_{r-j}}|} \int_{i\mathfrak{a}^*_{\Theta_j}} D\varphi_{\varrho+\lambda_{\Theta_j}, \tau} \widehat{k}_\nu(\varrho+\lambda_{\Theta_j};\tau) |{\bf c}^{\Theta_j}(\varrho+\lambda_{\Theta_j},\tau)|^{-2} d\lambda_{\Theta_j}.
\]
Let $B$ be a compact set containing the support of $k_\nu$.  Applying Lemma \ref{sph-fn-bd} with $C$ chosen so that $C > \| \varrho \|$ for all $\varrho \in D_{\tau, j}$, we find that $\| D\varphi_{\varrho+\lambda_{\Theta_j}, \tau} |_B \|_\infty \ll_{D,\tau} (1 + \| \varrho+\lambda_{\Theta_j} \|)^n$ for all $\varrho\in D_{\tau,j}$ and $\lambda_{\Theta_j} \in i\mathfrak{a}^*_{\Theta_j}$.  This implies that
\[
\| D k_\nu \|_\infty \ll_{D,\tau} \sum_{j=0}^r \sum_{\varrho\in D_{\tau,j}} \frac{d_j(\varrho,\tau)}{|W_{\Theta_{r-j}}|}  \int_{i\mathfrak{a}^*_{\Theta_j}} (1 + \| \varrho+\lambda_{\Theta_j} \|)^n |\widehat{k}_\nu(\varrho+\lambda_{\Theta_j};\tau)| |{\bf c}^{\Theta_j}(\varrho+\lambda_{\Theta_j},\tau)|^{-2} d\lambda_{\Theta_j}.
\]
We shall show that the right-hand side is $\ll_{\tau} \tilde{\beta}_S(\nu) (1 + \| \nu \|)^n$. 

We begin with the two extremal cases: $j=0$ and $j=r$. For $j=0$ we use Lemma \ref{lemma:c-theta-bd}, as well as Proposition \ref{test-fn} \eqref{4}, to obtain
\begin{align*}
\frac{1}{|W|}\int_{i\mathfrak{a}^*} (1 + \| \lambda \|)^n \widehat{k}_\nu(\lambda;\tau)|{\bf c}(\lambda,\tau)|^{-2}d\lambda &\ll_{R, A,\tau} \sum_{w\in W}\int_{{i\mathfrak{a}_+^*}} (1 + \| \lambda \|)^n (1+\|w\lambda-\nu\|)^{-A} \tilde\beta_S(\lambda) d\lambda\\
 &  =\int_{i\mathfrak{a}^*} (1 + \| \lambda \|)^n (1+\|\lambda-\nu\|)^{-A} \tilde\beta_S(\lambda) d\lambda,
\end{align*}
where we have folded up the $W$-sum. After applying Lemma \ref{c-bound}, and the inequality
\begin{equation}\label{eq:lambda-bd}
(1+\|\lambda\|)\leqslant (1+\|\nu\|)(1+\|\lambda-\nu\|),
\end{equation}
this becomes
\[
\ll  \tilde\beta_S(\nu)(1+\|\nu\|)^n\int_{i\mathfrak{a}^*} (1 + \| \lambda - \nu \|)^{n+d^\emptyset(\Sigma)-A} d\lambda,
\]
which, for $A$ large enough, is $\ll_{\tau} \tilde{\beta}_S(\nu) (1 + \| \nu \|)^n$.  For $j = r$, the bound
\[
\sum_{\varrho\in D_{\tau}}d(\varrho,\tau) (1 + \| \varrho \|)^n \widehat{k}_\nu(\varrho;\tau)\ll_{\tau} 1
\]
follows from the fact that $D_\tau$ is finite.

For the intermediate cases $j=1,\ldots ,r-1$, we apply Proposition \ref{test-fn} \eqref{4} and Lemma \ref{lemma:c-theta-bd} to get
\begin{align}
\notag
&\sum_{\varrho\in D_{\tau,j}} \frac{d_j(\varrho,\tau)}{|W_{\Theta_{r-j}}|}\int_{i\mathfrak{a}^*_{\Theta_j}} (1 + \| \varrho+\lambda_{\Theta_j} \|)^n |\widehat{k}_\nu(\varrho+\lambda_{\Theta_j};\tau)| |{\bf c}^{\Theta_j}(\varrho+\lambda_{\Theta_j},\tau)|^{-2} d\lambda_{\Theta_j}\\
\label{j-invert-bound}
&\ll_{R,A,\tau} \max_{\varrho\in D_{\tau,j}}\max_{w\in W}\int_{i\mathfrak{a}^*_{\Theta_j}} (1 + \| \varrho+\lambda_{\Theta_j} \|)^n (1+\|w(\varrho+\lambda_{\Theta_j})-\nu\|)^{-A} \tilde\beta_S^{\Theta_j}(\varrho+\lambda_{\Theta_j})d\lambda_{\Theta_j}.
\end{align}
The inequality \eqref{eq:lambda-bd} gives
\[
1+\|\varrho+\lambda_{\Theta_j}\| \leqslant (1+\|\varrho\|)(1+\|\lambda_{\Theta_j}\|) \ll_\tau 1+\|\lambda_{\Theta_j}\|
\]
and
\[
(1+\|w(\varrho+\lambda_{\Theta_j})-\nu\|) \geqslant \frac{ 1 + \| w\lambda_{\Theta_j} - \nu \| }{ 1 + \| w \varrho \| } \gg_\tau 1 + \| w\lambda_{\Theta_j} - \nu \|,
\]
while Lemma \ref{c-bound} gives
\[
\tilde\beta_S^{\Theta_j}(\varrho+\lambda_{\Theta_j}) \ll (1 + \| \varrho \|)^{d^{\Theta_j}(\Sigma)} \tilde\beta_S^{\Theta_j}(\lambda_{\Theta_j}) \ll_\tau \tilde\beta_S^{\Theta_j}(\lambda_{\Theta_j}).
\]
Applying these to \eqref{j-invert-bound}, the upper bound becomes
\begin{align*}
&\ll_{R,A,\tau} \max_{\varrho\in D_{\tau,j}} \max_{w\in W} \int_{i\mathfrak{a}^*_{\Theta_j}} (1 + \| \lambda_{\Theta_j} \|)^n (1+\|w\lambda_{\Theta_j}-\nu\|)^{-A} \tilde\beta_S^{\Theta_j}(\lambda_{\Theta_j})d\lambda_{\Theta_j}
\\
&= \max_{w\in W} \int_{i\mathfrak{a}^*_{\Theta_j}} (1 + \| \lambda_{\Theta_j} \|)^n (1+\|\lambda_{\Theta_j}-w\nu\|)^{-A} \tilde\beta_S^{\Theta_j}(\lambda_{\Theta_j})d\lambda_{\Theta_j}.
\end{align*}
As in the case $j = 0$, we may bound this integral by $(1 + \| \nu \|)^n \tilde\beta_S^{\Theta_j}(w\nu ) \ll (1 + \| \nu \|)^n \tilde\beta_S(\nu )$, which completes the proof.
\end{proof}

\section{Upper bounds on the cuspidal spectrum with abelian lowest $K$-type}\label{sec:upper-bd-cusp-spec}

In this section we establish upper bounds, which are sharp up to logarithmic powers, on the (multidimensional) spectral counting function of $L^2_\text{cusp}(\Gamma \backslash G, \tau)$, where $\Gamma$ is a lattice in a semi-simple Lie group $G$ and $\tau$ is an abelian $K$-type. The precise conditions on $G,\Gamma,$ and $\tau$ will be spelled out in Section \ref{sec:upper-bd-notation} below. When $\tau\in\widehat{K}$ is non-trivial, this will include the primary case of interest for Theorem \ref{sup-thm}, with $G$ being the real points of the restriction of scalars of a symplectic group over a totally real number field $F$, so that $G=\Sp_{2m}(\R)^{[F:\Q]}$. The main result is Proposition \ref{Weyl-upper-bd}, which corresponds to Step \ref{3sketch} of the introduction, and which, we emphasize, only requires upper bounds (existence will not be our concern).

\subsection{Notation and statement of result}\label{sec:upper-bd-notation}

Let $\bG$ be a connected semisimple $\Q$-group and $G=\bG(\R)$ its real points. Let $r$ be the rank of $G$. We denote the connected component of the identity in $G$ by $G^0$. Let $K$ be a maximal compact subgroup of $G$, and let $K^0 = K \cap G^0$ which is a maximal compact subgroup of $G^0$. Let $S = G^0 / K^0$ be the Riemannian globally symmetric space associated to $G^0$. Let $\tau$ be a character of $K^0$. 

We make the following hypothesis on the pair $(G,\tau)$. Let $G_j$ denote the simple factors of $G^0$, with maximal compacts $K_j$. Write $\tau_j$ for the restriction of $\tau$ to $K_j$. We extend the assumption \eqref{eq:assumption} to the case of semisimple $G$, by assuming
\begin{equation}\label{eq:assumption2}
\textit{when $\tau_j$ is non-trivial, the Hermitian group $G_j$ has reduced root system.}
\end{equation}

Let $\Gamma < \bG(\Q)$ be a congruence arithmetic lattice. We shall work with various spaces of automorphic forms on $\Gamma \backslash G$ that are isotypic for $\tau$.  It would be natural to use notation such as $\cA(\Gamma \backslash G, \tau)$ for these spaces, but unfortunately this conflicts with the notation used in Sections \ref{sec:notation}--\ref{sec:test-function}, where e.g. $C^\infty(S, \tau)$ denotes functions that are $\tau^{-1}$-isotypic under the right action of $K^0$.  As a result, $\cA(\Gamma \backslash G, \tau)$, $L^2_\text{cusp}(\Gamma \backslash G, \tau)$, etc. {\bf will always denote functions that are $\tau^{-1}$-isotypic.}  This is somewhat awkward, but we feel that it is the best compromise available.

We therefore define $L^2_\text{cusp}(\Gamma \backslash G, \tau)$ to be the $\tau^{-1}$-isotypic subspace of $L^2_\text{cusp}(\Gamma \backslash G)$.  If $g_i$ are representatives for $\Gamma \backslash G / G^0$, and $\Gamma_i = g_i^{-1} ( \Gamma \cap G^0 ) g_i$, then $L^2_\text{cusp}(\Gamma \backslash G, \tau) \simeq \oplus L^2_\text{cusp}(\Gamma_i \backslash S, \tau)$.

The algebra $\mathbb{D}(\tau)$ of left $G$-invariant differential operators on $H^0(S,L(\tau))$, defined in Section \ref{sec:tau-spherical}, preserves $L^2_{\rm cusp}(\Gamma\backslash G,\tau)\cap C^\infty(\Gamma \backslash G, \tau)$. For $\lambda\in\mathfrak{a}^*_\C$ we let
\begin{equation}\label{eq:E-lambda-cusp-tau}
\mathcal{E}_\lambda^{\rm cusp}(\Gamma\backslash G,\tau)=\{f\in L^2_{\rm cusp}(\Gamma \backslash G,\tau) \cap C^\infty(\Gamma \backslash G, \tau): D f=\chi_\lambda(D) f, \;\forall D\in\mathbb{D}(\tau)\}.
\end{equation}
Let $\Lambda_{\rm cusp}(\Gamma;\tau)=\{\lambda\in\mathfrak{a}^*_\C/W: \dim \mathcal{E}_\lambda^{\rm cusp}(\Gamma\backslash G,\tau) > 0\}$ be the cuspidal spectrum. Then
\begin{equation}\label{eq:L2-cusp-tau}
L^2_{\rm cusp}(\Gamma \backslash G,\tau)=\bigoplus_{\lambda\in \Lambda_{\rm cusp}(\Gamma;\tau)} \mathcal{E}_\lambda^{\rm cusp}(\Gamma \backslash G,\tau).
\end{equation}

We may now state the main result of this section, which estimates the number of cuspidal $\tau$-spherical spectral parameters $\lambda\in\Lambda_{\rm cusp}(\Gamma;\tau)$ in a bounded window about $\nu$.

\begin{prop}\label{Weyl-upper-bd}
Let $G$ and $\tau$ be as above. For any $\nu\in i\mathfrak{a}^*$ we have
\[
\sum_{\substack{\lambda\in \Lambda_{\rm cusp}(\Gamma;\tau)\\ \|{\rm Im}\,\lambda-\nu\|= O(1)}}\dim \mathcal{E}_\lambda^{\rm cusp}(\Gamma \backslash G,\tau)\ll_{\Gamma,\tau} \left(\log (3+\|\nu\|)\right)^r \tilde\beta_S(\nu).
\]
\end{prop}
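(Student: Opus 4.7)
The plan is to apply a pre-trace formula on $\Gamma \backslash G^0$ with a positive $\tau$-spherical test function, reducing the count to a cuspidal concentration estimate and a geometric-side volume bound, as hinted in Remark \ref{remark2-intro}. After a trivial covering argument (writing an $O(1)$ ball as a union of $O(1)$ unit balls), it suffices to prove the estimate with $\|{\rm Im}\,\lambda - \nu\| \leqslant 1$. I take the test function of Proposition \ref{test-fn} applied to each simple factor of $G^0$ (compatibly with assumption \eqref{eq:assumption2}) and form the tensor product, producing $k_\nu \in C^\infty_R(S, \tau, \tau)$ whose Harish-Chandra transform $\widehat{k}_\nu(\,\cdot\,; \tau)$ is non-negative on the whole $\tau$-unitary dual $\mathfrak{a}_{\tau, {\rm un}}^*$ (Property (2) of Proposition \ref{test-fn}) and bounded below by a constant $c > 0$ on the unit ball around $\nu$ (Property (3)).

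Form the kernel $K_\nu(x,y) = \sum_{\gamma \in \Gamma} k_\nu(x^{-1}\gamma y)$. Non-negativity of $\widehat{k}_\nu$ on the entire unitary spectrum (including the continuous part of the Plancherel expansion) gives the pointwise inequality
\begin{equation*}
\sum_\phi \widehat{k}_\nu(\lambda_\phi; \tau)\,|\phi(x)|^2 \leqslant K_\nu(x,x),
\end{equation*}
where the sum runs over an orthonormal basis of $L^2_{\rm cusp}(\Gamma \backslash G, \tau)$. Integrating this over a truncated Siegel domain $\mathcal{F}(Y)$ with $Y = (1+\|\nu\|)^{1+\epsilon}$, discarding spectral terms outside $\|{\rm Im}\,\lambda_\phi - \nu\| \leqslant 1$, and inserting the lower bound on $\widehat{k}_\nu$, I reduce to two estimates: the cuspidal concentration bound $\int_{\mathcal{F}(Y)} |\phi|^2 \geqslant 1/2$ for any $L^2$-normalized $\phi$ in the relevant spectral window, and the geometric-side inequality
\begin{equation*}
\int_{\mathcal{F}(Y)} K_\nu(x,x)\,dx \;\ll_\Gamma\; \tilde\beta_S(\nu)\bigl(\log(3+\|\nu\|)\bigr)^r.
\end{equation*}

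The concentration step is standard: outside a bounded region, $\phi$ decomposes on each rational parabolic into non-trivial Fourier/Whittaker modes of spectral parameter $\lambda_\phi$ (the constant terms vanishing by cuspidality), and these Whittaker functions decay faster than any polynomial in $\|\lambda_\phi\|$ above height $\|\lambda_\phi\|^{1+\epsilon}$, via their differential equations. For the geometric-side bound, pointwise $|K_\nu(x,x)| \leqslant \|k_\nu\|_\infty N_\Gamma(x, R)$ with $N_\Gamma(x, R) = \#\{\gamma \in \Gamma : x^{-1}\gamma x \in G_R\}$, while Lemma \ref{k-nu-e} (applied with $D = {\rm Id}$, $n = 0$) gives $\|k_\nu\|_\infty \ll \tilde\beta_S(\nu)$. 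It then remains to show $\int_{\mathcal{F}(Y)} N_\Gamma(x, R)\,dx \ll_\Gamma (\log Y)^r$: the identity element $\gamma = e$ contributes ${\rm vol}(\mathcal{F}(Y)) = O_\Gamma(1)$, while at Siegel height $a \in A$ the dominant non-identity contributions come from $\Gamma$-conjugates of unipotent elements $\gamma \in \Gamma \cap N_\Theta$ for the parabolic associated to the cusp, yielding roughly $\prod_\alpha \max(1, a^\alpha)$, which integrates against the invariant measure on $A$ (restricted to $a^\alpha \leqslant Y^{O(1)}$) to $(\log Y)^r$, one log per cuspidal direction.

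The main obstacle is the geometric-side counting in semisimple generality: uniformly tracking lattice-point counts across every cusp of a Siegel decomposition. The crude execution I have outlined — truncating below height $\|\nu\|^{1+\epsilon}$ rather than running a full Arthur-style truncation of the trace formula — produces exactly the $(\log(3+\|\nu\|))^r$ loss noted in Remark \ref{remark2-intro}; a finer approach would remove it, but the stated bound suffices for the counting argument leading to Theorem \ref{sup-thm}.
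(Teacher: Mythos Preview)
Your overall strategy matches the paper's exactly: a partial pre-trace inequality, truncation of the fundamental domain at height $\|\nu\|^{1+\epsilon}$, a lower bound on the cuspidal $L^2$ mass inside the truncation, and the geometric-side lattice count producing $(\log)^r$. Two minor points: the paper works with $k_{-\nu}$ built from $\tau^{-1}$ (since the forms it counts are $\tau^{-1}$-isotypic, in its conventions), and the geometric count is handled cleanly by the uniform bound $\sum_{\gamma} 1_B(x^{-1}\gamma x) \ll a(x)^{2\rho_\Q}$ (Lemma~\ref{cusp-kernel}) rather than by isolating unipotent classes.

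The genuine gap is in your concentration step. You call it ``standard'' and appeal to decay of Whittaker functions ``via their differential equations,'' but the paper explicitly flags this as the main difficulty: the classical rapid-decay argument (integration by parts against constant terms, as in \cite[Lemma~I.2.10]{MW}) does \emph{not} come with any uniformity in the spectral parameter $\mu$, and that uniformity is exactly what you need to know that truncating at $\|\nu\|^{1+\epsilon}$ captures half the mass. The paper resolves this by first proving a \emph{uniform moderate growth} estimate (Lemma~\ref{lemma:mod-growth}),
\[
D\phi(\xi x)\ \ll_D\ (1+\|\mu\|)^{\deg D}\,\tilde\beta_S(\mu)^{1/2}\,a(x)^{\rho_\Q},
\]
and the proof of this lemma is itself a pre-trace argument: one writes $D\phi = R(D^\vee k^0_{-\mu})\phi$, applies Lemma~\ref{pre-trace}, and invokes the derivative bound $\|D^\vee\overline{D}\,k_\nu\|_\infty \ll (1+\|\nu\|)^{2\deg D}\tilde\beta_S(\nu)$ of Lemma~\ref{k-nu-e} together with the lattice count of Lemma~\ref{cusp-kernel}. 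Only then does the standard integration-by-parts yield rapid decay with explicit polynomial dependence on $\mu$ (Lemma~\ref{lemma:rapid-decay}), from which the mass concentration (Lemma~\ref{L2-mass}) follows. Your sketch skips this bootstrap entirely; without it, you have no control on how high in the cusp a form of spectral parameter near $\nu$ can sit.
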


The logarithmic factor in Proposition \ref{Weyl-upper-bd} is responsible for the loss of the same quality in Theorem \ref{sup-thm}. As was mentioned in Remark \ref{remark2-intro}, this factor is the price to pay for our use of a partial trace formula; it arises essentially from the estimation of the unipotent conjugacy classes.

Furthermore, the arithmeticity assumption on the lattice $\Gamma$ is used only marginally, to ensure a pleasant theory of lattice reduction and Siegel domains, which is of critical use in the proof.

\subsection{Discussion of literature}

Before we provide a preview of the proof of Proposition \ref{Weyl-upper-bd}, let us place the result in perspective, by recalling the extensive literature surrounding this topic.

We begin by comparing Proposition \ref{Weyl-upper-bd} with known \textit{upper bounds} in other settings. Duistermaat, Kolk, and Varadarajan \cite[Theorem 7.3]{DKV} prove a bound of the above form in the case when $\Gamma$ is a \textit{uniform lattice} (where the notion of cuspidality is empty) in a non-compact connected semisimple real Lie group $G$ with finite center and $\tau$ is the \textit{trivial $K$-type}. On the other hand, for non-compact semisimple $G$ and arbitrary lattices $\Gamma< G$, Donnelly \cite[Theorem 9.1]{Donnelly82} showed that for {\it any} $K$-type $\tau$ one has
\[
\limsup_{\Lambda\rightarrow\infty}\bigg(\Lambda^{-\dim S/2}\sum_{\substack{\lambda\in \Lambda_{\rm cusp}(\Gamma;\tau)\\ \Omega(\lambda)\leqslant\Lambda}}\dim \mathcal{E}_\lambda^{\rm cusp}(\Gamma \backslash G,\tau)\bigg)\leqslant (4\pi)^{-\dim S/2}\frac{{\rm vol}(\Gamma \backslash G)}{\Gamma(\dim S/2+1)}\dim(\tau),
\]
where $\Omega(\lambda)$ is the eigenvalue of the Casimir operator acting on $\mathcal{E}_\lambda(S,\tau)$.  (When the dimension of $\tau$ is greater than 1, $\mathcal{E}_\lambda(S,\tau)$ denotes forms that are isotypic for the dual representation $\tau^\vee$.)  In one respect, Donnelly's result is unnecessarily fine for us, since we do not need the exact constant in the upper bound. In another respect, his result is not fine enough for us, as it does not take into account the singularities of the spectral parameter: it counts all spectral parameters in a ball of increasing radius.

We now compare Proposition \ref{Weyl-upper-bd} with known asymptotic statements (Weyl laws, from which upper bounds can be deduced) in other settings. There are two Weyl laws for the cuspidal automorphic spectrum for split semisimple $\Q$-groups, such as ${\rm Res}_{F/\Q}\Sp_{2m}$, under a spherical assumption at infinity, due to Lindenstrauss and Venkatesh \cite{LV} and, more recently, to Finis and Lapid \cite[Theorem 5.11]{FL2}. Besides the spherical assumption at infinity (which is the same as to say that $\tau$ is trivial), the former paper (like that of Donnelly) counts only by Laplace eigenvalue, while the latter counts in dilated regions in $i\ga^*$ (albeit with a power saving error term).

\subsection{Sketch of proof}
Taking inspiration from Steven D. Miller's thesis \cite{Mi}, we prove Proposition \ref{Weyl-upper-bd} by integrating the automorphic kernel associated with an approximate spectral projector over a sufficiently large truncated Siegel domain $(\Gamma\backslash G)^{\leqslant T}$, rather than the entire automorphic quotient $\Gamma\backslash G$. The resulting formula, which one may call a \textit{partial trace formula}, has the advantage of allowing for an elementary treatment of Eisenstein series: their $L^2$-mass on $(\Gamma\backslash G)^{\leqslant T}$ can be discarded by positivity, regardless of the size of the truncation parameter $T$, since we only need upper bounds.

To ensure that the cuspidal spectrum receives a weight which is bounded away from zero, we must take the parameter $T$ large enough to capture a positive proportion of the $L^2$ mass of a cusp form of spectral parameter $\nu$. This is plausible in light of the rapid decay of cusp forms, a classical fact in the theory of automorphic forms, due to Gelfand and Piatetski-Shapiro. Unfortunately, the standard proof of rapid decay \cite[Lemma 1.2.10]{MW}, which couples a uniform moderate growth estimate with integration by parts, does not come with any explicit uniformity in the spectral parameter. 

We resolve this problem in Lemma \ref{lemma:mod-growth}, where we establish a uniform moderate growth estimate for $\tau^{-1}$-isotypic cusp forms of spectral parameter $\nu$, which is uniform in $\nu$. Such quantitative control is furnished by the sup norm estimate on the $\tau^{-1}$-spherical spectral projector (and its derivatives) in Proposition \ref{k-nu-e}.  (Note that we must apply Proposition \ref{test-fn} to produce a function in $C^\infty(S, \tau^{-1}, \tau^{-1})$ if we want to count $\tau^{-1}$-isotypic forms.)  We insert this information into the standard argument for rapid decay in Lemma \ref{lemma:rapid-decay}, which is then itself uniform in $\nu$. This last result is then shown to imply, in Lemmas \ref{lemma:phi-pointwise} and \ref{L2-mass}, the requisite lower bound on the $L^2$ mass of cusp forms on truncated Siegel domains.

Finally, the logarithmic loss in $\nu$ will come from the contribution of the unipotent elements in $\Gamma$ on the geometric side of the partial trace formula; the relevant estimate is essentially contained in Lemma \ref{cusp-kernel}.

\subsection{Siegel sets}\label{sec:Siegel-sets}

We review some notation related to Siegel domains. For a nice reduction theory, we shall operate under the assumption that $\Gamma$ is congruence arithmetic. The arithmetic assumption on $\Gamma$ can be expressed as follows. There is a faithful $\Q$-representation $\rho:\bG\rightarrow\SL_N$, such that $\rho(\Gamma)$ contains the intersection of $\rho(\bG(\Q))$ with a principal congruence subgroup of $\SL_N(\Z)$. Our primary source for reduction theory in this context is the book of Borel \cite{Borel69}.

We now introduce a few important subgroups of $\bG$ related to the geometry of $\Gamma\backslash G$ at infinity. We shall denote groups over $\Q$ by a boldface letter, and real points by the corresponding roman letter. If $\bH$ is a $\Q$-group, we write $H^0$ for the connected component of the identity in $H$.


Let\footnote{We caution the reader that the group of real points $P_\Q$ of $\bP_\Q$ is not necessarily a minimal $\R$-parabolic of $G$, the latter having been denoted simply by $P$ in Section \ref{sec:gp-decomp}. We include the $\Q$ subscript in $P_\Q$ to distinguish it from $P$ for this reason, but we refrain from including the same subscript on related subgroups in this section, to avoid notational overload.} $\bP_\Q= \bU\bL$ be a minimal $\Q$-parabolic subgroup of $\bG$. Let $\bA$ be a maximal $\Q$-split torus in the center of $\bL$, so that $\bL$ is the centralizer of $\bA$ in $\bG$. Let
\[
\bM= \bigg( \bigcap_{ \chi \in X^*(\bL)_\Q} \ker \chi \bigg)^0
\]
be the maximal connected $\Q$-anisotropic subgroup of $\bL$; then $\bP_\Q =\bU\bM\bA$. Let $\ga_{P_\Q}$ be the Lie algebra of $A$. We assume that $\mathbf{P}_\Q$ and $\mathbf{A}$ are chosen such that $\mathfrak{a}_{P_\Q}$ is orthogonal to $\mathfrak{k}$. Let $\Sigma_\Q$ denote the roots of $\ga_{P_\Q}$ in $\g$, $\Sigma_\Q^+$ the positive roots corresponding to $\bU$, and $\Psi_\Q$ the simple roots. We write the half-sum of the positive roots in this setting as $\rho_\Q$. 

We have the Langlands decompositions $P_\Q = U MA$ and $P_\Q^0 = U M^0 A^0$, the latter of which is a diffeomorphism. Since $K$ meets all connected components of $G$ we have $G = P_\Q^0 K$, and hence $G=UM^0A^0K$. We define the Iwasawa height function $a: G \to A^0$ by $x\in UM^0 a(x) K$. Similarly to \eqref{Iwasawa-measure1} (where the inclusion $M\subset K$ held), we may decompose a fixed Haar measure $dg$ on $G$ as
\begin{equation}\label{Q-Iwasawa-measure1}
dg=a^{-2\rho_\Q} du\, dm \, da\, dk,
\end{equation}
for appropriately normalized Haar measures on the unimodular groups $U, M^0, A^0, K$.

For $t>0$ write
\[
A_t^0=\{a\in A^0: \alpha(\log a)\geqslant t \;\forall\;\alpha\in \Psi_\Q \}.
\]
Let $\omega\subset U.M^0$ be a compact neighborhood of the identity. We call a \textit{Siegel set} (with respect to $\mathbf{P}_\Q$ and $K$) any subset of $G$ of the form $\mathfrak{S}_{t,\omega}=\omega A_t^0 K$. In view of the condition $\mathfrak{a}_{P_\Q}\perp\mathfrak{k}$, such Siegel domains are called \textit{normal} in \cite[Definition 12.3]{Borel69}.  Borel \cite[Th\'eor\`emes 13.1, 15.4]{Borel69} has shown the following properties of Siegel sets.

\begin{enumerate}

\item For every finite subset $A \subset \bG(\Q)$ the set $\{\gamma\in\Gamma: \gamma A\mathfrak{S}_{t,\omega}\cap A\mathfrak{S}_{t,\omega} \neq \emptyset \}$ is finite (the Siegel property).

\item There is a Siegel set $\mathfrak{S}_{t,\omega}$ and a finite set $\Xi\subset\mathbf{G}(\Q)$ such that $G=\Gamma\Xi\mathfrak{S}_{t,\omega}$.

\end{enumerate}

In this case the subset $\Xi\mathfrak{S}_{t,\omega}$ of $G$ is called a \textit{fundamental set}, and the above properties state that the natural map $\Xi\mathfrak{S}_{t,\omega}\rightarrow \Gamma \backslash G$ is surjective with finite fibers. By \cite[Proposition 15.6]{Borel69}, one may take for $\Xi$ a complete set of representatives for the ``set of cusps" $\Gamma\backslash\mathbf{G}(\Q)/\bP_\Q(\Q)$. 

We now suppose that $t$, $\omega$, and $\Xi$ are chosen so that $\Xi\mathfrak{S}_{t,\omega}$ is a fundamental set for $\Gamma$.  For $T > t$, define
\[
A_t^{\leqslant T} = \{a\in A^0 : t\leqslant\alpha(\log a) \leqslant T \;\forall\;\alpha\in \Psi_\Q \} \quad \text{and} \quad  A_t^{> T} = A^0_t \setminus A_t^{\leqslant T}
\]
to be the truncation of $A_t^0$ at height $T$, and its complement.  We also define $\mathfrak{S}_{t,\omega}^{\leqslant T} = \omega A_t^{\leqslant T}K$ and $\mathfrak{S}_{t,\omega}^{> T} = \omega A_t^{> T} K$.  We let $(\Gamma \backslash G)^{\leqslant T}$ be the image of $\Xi \mathfrak{S}_{t,\omega}^{\leqslant T}$ in $\Gamma \backslash G$, and let $(\Gamma \backslash G)^{>T}$ be the complement.

\subsection{Growth of automorphic kernel functions in the cusp}
The following lemma will be used to bound the growth of automorphic kernel functions in the cusp in the proof of Lemma \ref{lemma:mod-growth} and again in Section \ref{sec:proof-Prop-Weyl} (to complete the proof of Proposition \ref{Weyl-upper-bd}). It is valid in the generality of Section \ref{sec:Siegel-sets}, and hence independent of the hypothesis \eqref{eq:assumption2}.  In this section, we let $\Gamma_U$ denote the intersection $\Gamma \cap U$.

\begin{lemma}
\label{cusp-kernel}
Let $B \subset G$ be compact, and let $\mathfrak{S}_{t,\omega}$ be a Siegel set.  If $x \in \mathfrak{S}_{t,\omega}$ and $\xi \in \bG(\Q)$, we have
\[
\sum_{\gamma \in \Gamma} 1_B( x^{-1} \xi^{-1} \gamma \xi x) \ll a(x)^{2 \rho_\Q}.
\]
\end{lemma}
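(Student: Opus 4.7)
Set $\Gamma' := \xi^{-1}\Gamma\xi$; this is a commensurable arithmetic subgroup of $\bG(\Q)$, to which reduction theory applies (with data depending on $\xi$). The sum becomes
\[
\sum_{\gamma \in \Gamma} 1_B(x^{-1}\xi^{-1}\gamma\xi x) = |\Gamma' \cap xBx^{-1}|,
\]
and we aim to bound this by $\ll a(x)^{2\rho_\Q}$ uniformly in $x \in \mathfrak{S}_{t,\omega}$, with the implicit constant allowed to depend on $\xi$.

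Writing $x = umak$ with $u \in \omega_U$, $m \in \omega_M$, $a \in A_t^0$, $k \in K$, I would first absorb the compact pieces into an enlarged compact $B' \subset G$ depending only on $B$, $\omega$, and $t$, so that $xBx^{-1} \subset aB'a^{-1}$. The $K$-piece is handled by passing to the $K$-conjugation hull of $B$; the $m$-piece using that $M^0$ centralizes $A$; and the $u$-piece using that conjugation by $a \in A_t^0$ contracts $U$, so $a^{-1}\omega_U a$ lies in a uniformly compact subset of $U$. It therefore suffices to prove
\[
|\Gamma' \cap aB'a^{-1}| \ll a^{2\rho_\Q} \quad \text{for all } a \in A_t^0.
\]

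The main contribution comes from the unipotent elements of $\Gamma'$. Since $\Gamma'\cap U$ is a cocompact lattice in $U$ (by arithmeticity of $\Gamma'$ applied to the unipotent radical of the $\Q$-parabolic $\bP_\Q$), and the conjugation map $u \mapsto aua^{-1}$ on $U$ has Jacobian $a^{2\rho_\Q}$, matching precisely the modular character of $P_\Q$, lattice-point counting in $U$ yields
\[
|\Gamma' \cap U \cap aB'a^{-1}| = |\Gamma' \cap U \cap a(B'\cap U)a^{-1}| \ll \mathrm{vol}_U\bigl(a(B'\cap U)a^{-1}\bigr) \asymp a^{2\rho_\Q}.
\]

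The hard part will be controlling the non-unipotent contribution. I would decompose $\Gamma' = \bigsqcup_\delta \delta(\Gamma'\cap U)$ into cosets of $\Gamma' \cap U$ and argue that, uniformly in $a$, only $O(1)$ cosets contribute to $\Gamma' \cap aB'a^{-1}$. This should follow from the Siegel finiteness property applied to $\Gamma'$: a coset representative $\delta$ with $\delta(\Gamma'\cap U) \cap aB'a^{-1} \neq \emptyset$ must, after projection to $U\backslash G$, lie in a bounded region independent of $a$ (the condition is $U$-invariant modulo $\Gamma'\cap U$), and Siegel finiteness restricts such $\delta$ to a finite set. Each contributing coset then yields a translated unipotent count bounded by $a^{2\rho_\Q}$ as above. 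Since $a^{2\rho_\Q}$ is bounded below by a positive constant on $A_t^0$, the case of $a$ in a bounded part of $A_t^0$ is trivial ($aB'a^{-1}$ is compact and $\Gamma'$ is discrete), so the argument really only needs to cover $a$ deep in the cusp. An alternative route, avoiding the coset bookkeeping, is to convolve $1_{aB'a^{-1}}$ with a small bump near the identity and estimate the resulting integral in the Iwasawa coordinates $dg = a^{-2\rho_\Q}\, du\, dm\, da\, dk$, where the factor $a^{2\rho_\Q}$ emerges directly from integration along the $U$-direction.
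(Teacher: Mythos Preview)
Your overall architecture matches the paper's: conjugate away $\xi$, absorb the $u,m,k$ Iwasawa factors so that only conjugation by $a$ remains, decompose $\Gamma'$ into $\Gamma'_U$-cosets, show finitely many cosets contribute, and then count lattice points in $U$ on each coset via the Jacobian $a^{2\rho_\Q}$. The absorption step and the unipotent lattice count are correct and are exactly what the paper does (the latter is its Lemma~\ref{Uball}).

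The coset-finiteness step, however, has a real gap. Your claim that a contributing $\delta$, projected to $U\backslash G$, lands in a bounded region independent of $a$ is false: the image of $aB'a^{-1}$ in $U\backslash G$ is \emph{not} bounded as $a\to\infty$. Already in $\SL_2(\R)$, conjugating the Weyl element $w$ by $a=\mathrm{diag}(t,t^{-1})$ gives an element whose $A$-Iwasawa coordinate is $\mathrm{diag}(t^2,t^{-2})$. The paper's fix (its Lemma~\ref{Siegel-stab}) is to work with $\gamma x$ rather than $\gamma$: from $x^{-1}\gamma x\in B$ one computes that the $M$- and $A$-coordinates of $\gamma x$ differ from those of $x$ by bounded factors, so after left-multiplying by a suitable $\gamma_U\in\Gamma_U$ to put the $U$-coordinate into a fundamental domain, $\gamma_U\gamma x$ lies in an auxiliary Siegel set $\mathfrak{S}_{t',\omega'}$. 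The Siegel property applied to $\mathfrak{S}_{t,\omega}$ and $\mathfrak{S}_{t',\omega'}$ then yields finitely many possibilities for $\gamma_U\gamma$, hence for the coset $\Gamma_U\gamma$. (Note also that you want cosets $(\Gamma'\cap U)\delta$, not $\delta(\Gamma'\cap U)$, for the $U$-quotient to swallow the $\gamma_U$ factor.)
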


\begin{proof}

By replacing $\Gamma$ with $\xi^{-1} \Gamma \xi$, we may assume that $\xi = 1$.  By applying Lemma \ref{Siegel-stab}, it suffices to restrict the sum over $\Gamma$ to $\Gamma_U \delta$, for some $\delta \in \Gamma$ that is independent of $x$.  In other words, we must show that $\# \{ \gamma_U \in \Gamma_U : x^{-1} \gamma_U \delta x \in B \} \ll a(x)^{2 \rho_\Q}$.  We may assume that $x^{-1} \delta x \in B$, at the expense of allowing the coset representative $\delta$ (but not the coset $\Gamma_U \delta$) to depend on $x$.

Suppose that $\gamma_U$ satisfies $x^{-1} \gamma_U \delta x \in B$.  We may write this condition as $\gamma_U^{x^{-1}} x^{-1} \delta x \in B$, and combined with our assumption that $x^{-1} \delta x \in B$ this gives $\gamma_U^{x^{-1}} \in B B^{-1}$.  If we write $x = u m a k$ with $um \in \omega$ and $a \in A^0_t$, and assume that $B$ is bi-$K$-invariant, we may expand the condition $\gamma_U^{x^{-1}} \in B B^{-1}$ as
\[
a^{-1} m^{-1} u^{-1} \gamma_U u m a \in B B^{-1}.
\]
We next rewrite this as
\begin{align*}
(m^{-1} u^{-1})^{a^{-1}} a^{-1} \gamma_U a (u m)^{a^{-1}} & \in B B^{-1} \\
a^{-1} \gamma_U a  & \in (u m)^{a^{-1}} B B^{-1} (m^{-1} u^{-1})^{a^{-1}} \\
& \subset \omega^{a^{-1}} B B^{-1} (\omega^{-1})^{a^{-1}}.
\end{align*}
The union of the sets $\omega^{a^{-1}}$, taken over $a \in A^0_t$, is compact, and we denote it by $\Omega$.  We deduce that $a^{-1} \gamma_U a$ lies in the fixed compact set $\Omega B B^{-1} \Omega^{-1}$, and hence that the set $\{ \gamma_U \in \Gamma_U : x^{-1} \gamma_U \delta x \in B \}$ whose size we wish to control is contained in $\Gamma_U \cap a \Omega B B^{-1} \Omega^{-1}  a^{-1}$.  The lemma now follows from Lemma \ref{Uball} below.

\end{proof}

\begin{lemma}
\label{Siegel-stab}

Let $B \subset G$ be compact, and let $\mathfrak{S}_{t,\omega}$ be a Siegel set.  There exists a finite set $\delta_1, \ldots, \delta_k \in \Gamma$, depending on $B$ and $\mathfrak{S}_{t,\omega}$, such that if $x^{-1} \gamma x \in B$ for some $x \in \mathfrak{S}_{t,\omega}$, then $\gamma \in \Gamma_U \delta_i$ for some $i$.

\end{lemma}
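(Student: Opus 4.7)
The plan is to translate the condition $x^{-1}\gamma x \in B$ (equivalently, $\gamma x \in xB \subseteq \mathfrak{S}_{t,\omega} B$) into a standard Siegel--finiteness statement, after absorbing the compact factor $B$ into an enlarged Siegel set, up to a left action of $\Gamma_U$.

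The first step, which carries essentially all of the work, is to establish an inclusion of the form
\[
\mathfrak{S}_{t,\omega} B \subseteq \Gamma_U \cdot \mathfrak{S}_{t',\omega'}
\]
for a suitably enlarged Siegel set $\mathfrak{S}_{t',\omega'}$ with $t' \leqslant t$. Starting from $\mathfrak{S}_{t,\omega} B = \omega A_t^0 K B$, I would apply the Iwasawa-type decomposition $G = P_\Q^0 K$ (valid for any parabolic) to rewrite $KB \subseteq P_B K$ with $P_B \subset P_\Q^0$ compact. Decomposing further via $P_\Q^0 = U M^0 A^0$ yields $P_B \subseteq U_B M_B A_B$ with each projection compact, and a direct rearrangement using that $M^0$ normalizes $U$ and that $A^0$ centralizes $M^0 A^0$ places $\mathfrak{S}_{t,\omega} B$ inside a set of the form $U \cdot \omega'_M \cdot A_{t-c}^0 \cdot K$, for some compact $\omega'_M \subset M^0$ and a constant $c = c(B)\geqslant 0$. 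The $U$-component is in general unbounded, because conjugation of $U_B$ by elements of $A_t^0$ expands in the directions of positive $\Q$-roots. To absorb it, I invoke the cocompactness of $\Gamma_U$ in $U$ (a standard consequence of the arithmeticity of $\Gamma$ applied to the unipotent radical): writing $U = \Gamma_U U_c$ with $U_c \subset U$ compact and setting $\omega' = U_c \omega'_M$, $t' = t - c$, yields the desired inclusion.

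With this in hand, the proof concludes quickly. Given $\gamma x \in \mathfrak{S}_{t,\omega} B$, I would write $\gamma x = \gamma_U y$ with $\gamma_U \in \Gamma_U$ and $y \in \mathfrak{S}_{t',\omega'}$, and set $\delta = \gamma_U^{-1}\gamma \in \Gamma$, so that $\gamma \in \Gamma_U \delta$ and $\delta \mathfrak{S}_{t,\omega} \cap \mathfrak{S}_{t',\omega'} \neq \emptyset$. Enlarging both Siegel sets to a common one $\widetilde{\mathfrak{S}}$, the Siegel property (property~(1) of Section~\ref{sec:Siegel-sets}, with finite set $A=\{1\}$, applied to $\widetilde{\mathfrak{S}}$) ensures that the set of $\delta \in \Gamma$ satisfying $\delta \widetilde{\mathfrak{S}} \cap \widetilde{\mathfrak{S}} \neq \emptyset$ is finite, producing the desired $\{\delta_1,\ldots,\delta_k\}$.

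The main obstacle is the bookkeeping in the first step: one must carefully rewrite $\mathfrak{S}_{t,\omega} B$ so as to isolate an unbounded $U$-component while keeping the $M^0$ and $A^0$ factors in compact sets (up to a bounded shift in the $A^0$-cone encoded by $c$), after which the cocompactness of $\Gamma_U$ in $U$ cleanly absorbs the $U$-factor on the left and reduces the problem to the ordinary Siegel property.
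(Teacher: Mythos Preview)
Your proposal is correct and follows essentially the same strategy as the paper: both arguments show that $\gamma x$ lies in $\Gamma_U \cdot \mathfrak{S}_{t',\omega'}$ for an enlarged Siegel set, invoke the cocompactness of $\Gamma_U$ in $U$, and then conclude via the Siegel property. The only difference is one of packaging: you establish the set inclusion $\mathfrak{S}_{t,\omega} B \subseteq \Gamma_U \,\mathfrak{S}_{t',\omega'}$ directly by rewriting $KB \subseteq P_B K$ and rearranging, whereas the paper works pointwise, writing out the Iwasawa coordinates of $x$ and $\gamma x$ separately and reading off the constraints on $m_2$, $a_2$ from $x^{-1}\gamma x \in B \cap P_\Q^0$; your formulation is a bit cleaner but the content is the same.
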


\begin{proof}

Let $x \in \mathfrak{S}_{t,\omega}$ and $\gamma \in \Gamma$ satisfy $x^{-1} \gamma x \in B$.  We will show that there is a second Siegel set $\mathfrak{S}_{t',\omega'}$, independent of $x$, such that $\gamma_U \gamma x \in \mathfrak{S}_{t',\omega'}$ for some $\gamma_U \in \Gamma_U$. The lemma will follow from this by applying the Siegel property to $\mathfrak{S}_{t,\omega}$ and $\mathfrak{S}_{t',\omega'}$.

We write $x \in \mathfrak{S}_{t,\omega}$ as $x = u_1 m_1 a_1 k_1$ with $u_1 m_1 \in \omega$ and $a_1 \in A^0_t$, and likewise write $\gamma x$ using the decomposition $G=UM^0A^0K$ as $\gamma x = u_2 m_2 a_2 k_2$.  With this notation, we have
\begin{align*}
x^{-1} \gamma x & = k_1^{-1} a_1^{-1} m_1^{-1} u_1^{-1} u_2 m_2 a_2 k_2 \\
& =  k_1^{-1} (u_1^{-1} u_2)^{ a_1^{-1} m_1^{-1}} a_1^{-1} m_1^{-1}  m_2 a_2 k_2.
\end{align*}
If we write $u' = (u_1^{-1} u_2)^{ a_1^{-1} m_1^{-1}}$, this becomes
\[
x^{-1} \gamma x = k_1^{-1} u' m_1^{-1}  m_2 a_1^{-1} a_2 k_2.
\]
If we assume that $B$ is bi-$K$-invariant, the condition $x^{-1} \gamma x \in B$ may be written as $u' m_1^{-1}  m_2 a_1^{-1} a_2 \in B$, and we in fact have the stronger inclusion
\begin{equation}
\label{inBP}
u' m_1^{-1}  m_2 a_1^{-1} a_2 \in B \cap P_\Q^0.
\end{equation}
Because the Langlands decomposition $P_\Q^0 = U M^0 A^0$ is a diffeomorphism, we have $B \cap P_\Q^0 \subset B_U B_M B_A$ for compact sets $B_U \subset U$, $B_M \subset M^0$, and $B_A \subset A^0$, and it follows from this and (\ref{inBP}) that $m_2 \in m_1 B_M$ and $a_2 \in a_1 B_A$.

Let $\omega_U \subset U$ and $\omega_M \subset M^0$ be compact sets such that $\omega \subset \omega_U \omega_M$, which implies that $u_1 \in \omega_U$ and $m_1 \in \omega_M$.  Let $\omega'_U$ be a compact fundamental domain for the action of $\Gamma_U$ on $U$, and let $\gamma_U \in \Gamma_U$ satisfy $\gamma_U u_2 \in \omega_U'$.  The inclusion $m_2 \in m_1 B_M$ implies that $m_2 \in \omega_M B_M$, and $a_2 \in a_1 B_A$ implies that $a_2 \in A^0_{t'}$ for some $t' > 0$.  If we define $\omega' = \omega_U' \omega_M B_M$, then the auxiliary Siegel set we require is $\mathfrak{S}_{t',\omega'}$.  Indeed, we have
\begin{equation}
\label{Siegel-rep}
\gamma_U \gamma x = (\gamma_U u_2) m_2 a_2 k_2 \in \omega_U' (\omega_M B_M) A^0_{t'} K = \mathfrak{S}_{t',\omega'}
\end{equation}
as claimed.

By applying the Siegel property to a Siegel set containing both $\mathfrak{S}_{t,\omega}$ and $\mathfrak{S}_{t',\omega'}$, we see that the set of $\delta \in \Gamma$ such that $\delta \mathfrak{S}_{t,\omega} \cap \mathfrak{S}_{t',\omega'} \neq \emptyset$ is finite, and we enumerate them as $\delta_1, \ldots, \delta_k$.  Equation (\ref{Siegel-rep}) implies that $\gamma_U \gamma$ is equal to one of these $\delta_i$, which gives the lemma.

\end{proof}

\begin{lemma}
\label{Uball}

If $B \subset U$ is compact, and $a \in A^0_t$, then $\# (\Gamma_U \cap a B a^{-1}) \ll a^{2\rho_\Q}$.

\end{lemma}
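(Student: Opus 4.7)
The natural approach is to pack disjoint translates of a small neighborhood of the identity and compare with the volume of $aBa^{-1}$, taking advantage of the fact that under $\mathrm{Ad}(a)$ the subalgebra $\mathfrak{u}$ dilates by exactly $a^{2\rho_\Q}$ (equation \eqref{A-normalizes-N} applied in the $\Q$-setting).

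First, since $\Gamma_U$ is discrete in $U$, fix a compact neighborhood $\mathcal{O}$ of the identity in $U$ such that the translates $\{\gamma\mathcal{O}:\gamma\in\Gamma_U\}$ are pairwise disjoint. Then
\[
\#(\Gamma_U\cap aBa^{-1})\cdot \mathrm{vol}(\mathcal{O})=\mathrm{vol}\Bigl(\bigsqcup_{\gamma\in\Gamma_U\cap aBa^{-1}}\gamma\mathcal{O}\Bigr)\leqslant \mathrm{vol}\bigl((aBa^{-1})\mathcal{O}\bigr).
\]
Next, I would rewrite $(aBa^{-1})\mathcal{O}=a\bigl(B\cdot(a^{-1}\mathcal{O}a)\bigr)a^{-1}$. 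Because $U$ is the unipotent radical of the $\Q$-parabolic $\bP_\Q$, every root of $\mathfrak{a}_{P_\Q}$ on $\mathfrak{u}$ is positive, so for $a\in A_t^0$ all eigenvalues $a^{\alpha}$ of $\mathrm{Ad}(a)$ on $\mathfrak{u}$ satisfy $a^{\alpha}\geqslant e^{t}$. Hence $\mathrm{Ad}(a^{-1})$ contracts $\mathfrak{u}$ uniformly in $a\in A_t^0$, and via the exponential diffeomorphism $\exp:\mathfrak{u}\xrightarrow{\sim}U$ (which intertwines $c_a$ with $\mathrm{Ad}(a)$) there exists a fixed compact set $\mathcal{O}'\subset U$ depending only on $t$ and $\mathcal{O}$ such that $a^{-1}\mathcal{O}a\subset\mathcal{O}'$ for every $a\in A_t^0$. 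Consequently,
\[
(aBa^{-1})\mathcal{O}\subset a(B\mathcal{O}')a^{-1}.
\]

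Finally, the volume of the right-hand side is computed directly: the change of variables $u\mapsto aua^{-1}$ on $U$, transported to $\mathfrak{u}$ by $\exp$, multiplies Lebesgue measure by $|\det\mathrm{Ad}(a)|_{\mathfrak{u}}|=a^{2\rho_\Q}$ (see \eqref{A-normalizes-N}, which applies verbatim with $\Q$-roots and $\Q$-half-sum in place of the $\R$-versions). Therefore
\[
\mathrm{vol}\bigl(a(B\mathcal{O}')a^{-1}\bigr)=a^{2\rho_\Q}\mathrm{vol}(B\mathcal{O}'),
\]
and combining the two displayed inequalities gives the desired bound $\#(\Gamma_U\cap aBa^{-1})\ll a^{2\rho_\Q}$, with implied constant depending only on $B$, $t$ and the choice of $\mathcal{O}$. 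The only potentially subtle point is verifying the uniform contraction of $a^{-1}\mathcal{O}a$, but this is immediate from the positivity of the weights of $A$ on $\mathfrak{u}$ together with the defining inequality $\alpha(\log a)\geqslant t$ for $a\in A_t^0$.
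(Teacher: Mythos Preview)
Your proof is correct and follows essentially the same volume-packing argument as the paper: choose a small neighborhood of the identity whose $\Gamma_U$-translates are disjoint, observe that $(aBa^{-1})\mathcal{O}\subset a(B\mathcal{O}')a^{-1}$ for a fixed compact $\mathcal{O}'$ because conjugation by $a^{-1}$ contracts $U$ uniformly for $a\in A^0_t$, and compare volumes using $\det\mathrm{Ad}(a)|_{\mathfrak{u}}=a^{2\rho_\Q}$. The paper phrases the disjointness condition as $CC^{-1}\cap\Gamma_U=\{e\}$ and the contraction step as ``the union of the sets $a^{-1}Ca$ over $a\in A^0_t$ is compact'', but these are the same ingredients.
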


\begin{proof}

This follows from a standard volume argument.  Take $C \subset U$ open and bounded, and such that $C C^{-1} \cap \Gamma_U = \{ e \}$.  If we let $\Lambda = \Gamma_U \cap a B a^{-1}$, we have
\be
\label{Lambda-include}
\Lambda C \subset a B a^{-1} C = a( B a^{-1} C a ) a^{-1}.
\ee
As in Lemma \ref{cusp-kernel}, the union of the sets $a^{-1} C a$ over all $a \in A^0_t$ is compact, which implies there is a compact set $B'$ containing $B a^{-1} C a$ for all $a$.  Equation (\ref{Lambda-include}) then gives $\Lambda C \subset a B' a^{-1}$.  Our assumption that $C C^{-1} \cap \Gamma_U = \{ e \}$ implies that the sets $\gamma C$ for $\gamma \in \Gamma$ are all disjoint, so
\[
(\# \Lambda) \, \text{vol}(C) = \text{vol}( \Lambda C) \leqslant \text{vol}( a B' a^{-1}) = a^{2\rho_\Q} \text{vol}( B'),
\]
as required.
\end{proof}

\subsection{Pre-trace inequality}

We let $R$ denote the right-regular representation of $G$ on $L^2(\Gamma\backslash G)$.  For a function $\omega \in C(G)$, we define $\omega^*$ by $\omega^*(g) = \overline{\omega}(g^{-1})$.

The following result, which is again valid in the generality of Section \ref{sec:Siegel-sets}, will be used in the proof of the uniform moderate growth estimate of Lemma \ref{lemma:mod-growth}. The idea there will be to bound $D\phi$, for $\phi \in \mathcal{E}_\mu^{\rm cusp}(\Gamma\backslash G,\tau)$ and $D$ a left-invariant differential operator on $G$, by writing it as $R(\omega) \phi$ for some well-chosen $\omega \in C_c(G)$ depending on $D$, $\tau$, and $\mu$.  We may then take advantage of the spectral theory of $L^2(\Gamma\backslash G,\tau)$ by applying the following lemma. This will be the key to accessing the dependency on $\mu$, since, by Proposition \ref{k-nu-e}, the analytic properties of $\omega * \omega^*$ can be understood for $\tau$-spherical eigenfunctions.

\begin{lemma}
\label{pre-trace}

If $\omega \in C_c(G)$ and $\{ \phi_i \}$ is an orthonormal set in $L^2(\Gamma \backslash G)$, then
\[
\sum_i | R(\omega) \phi_i(x) |^2 \leqslant \sum_{\gamma \in \Gamma} (\omega * \omega^*)(x^{-1} \gamma x).
\]
\end{lemma}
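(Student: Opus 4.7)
The proof follows the standard route: realize $R(\omega)$ as an integral operator against an automorphic kernel, apply Bessel's inequality, then unfold.

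First I would write, for fixed $x \in G$ and $\phi \in L^2(\Gamma \backslash G)$,
\[
R(\omega) \phi(x) = \int_G \omega(g) \phi(xg)\, dg = \int_G \omega(x^{-1} h) \phi(h)\, dh = \int_{\Gamma \backslash G} K_\omega(x, h)\, \phi(h)\, dh,
\]
where $K_\omega(x, h) = \sum_{\gamma \in \Gamma} \omega(x^{-1} \gamma h)$ is the usual automorphic kernel. Because $\omega$ is compactly supported, for fixed $x$ the function $h \mapsto K_\omega(x, h)$ is continuous on $\Gamma \backslash G$ with compact support (the image of $\Gamma x\, \mathrm{supp}(\omega)$), hence lies in $L^2(\Gamma \backslash G)$.

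Next, I would apply Bessel's inequality to the orthonormal set $\{ \phi_i \}$ and the vector $\overline{K_\omega(x, \cdot)} \in L^2(\Gamma \backslash G)$. Since $R(\omega) \phi_i(x) = \langle \phi_i, \overline{K_\omega(x, \cdot)} \rangle$, Bessel yields
\[
\sum_i |R(\omega) \phi_i(x)|^2 \;\leqslant\; \|K_\omega(x, \cdot)\|_{L^2(\Gamma \backslash G)}^2.
\]

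Finally, I would evaluate the right-hand side by unfolding. Writing
\[
\|K_\omega(x, \cdot)\|_2^2 = \int_{\Gamma \backslash G} K_\omega(x, h)\, \overline{K_\omega(x, h)}\, dh,
\]
one unfolds one of the two copies of $K_\omega$ over $\Gamma$ to integrate over all of $G$, obtaining $\int_G \omega(x^{-1} h)\, \overline{K_\omega(x, h)}\, dh$. The substitution $h = x y$ and expansion of the remaining kernel give
\[
\|K_\omega(x, \cdot)\|_2^2 = \sum_{\gamma \in \Gamma} \int_G \omega(y)\, \overline{\omega(x^{-1} \gamma x y)}\, dy.
\]
Recalling that $\omega^*(g) = \overline{\omega(g^{-1})}$, the inner integral equals $(\omega \ast \omega^*)(x^{-1} \gamma^{-1} x)$; reindexing $\gamma \mapsto \gamma^{-1}$ then produces $\sum_{\gamma \in \Gamma} (\omega \ast \omega^*)(x^{-1} \gamma x)$, which is the stated bound.

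There is no serious obstacle: the only things to verify are the $L^2$ membership of the kernel section (immediate from compact support of $\omega$) and the bookkeeping in the unfolding/convolution identification, which is a routine change of variables on $G$.
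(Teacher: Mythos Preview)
Your proof is correct and follows essentially the same approach as the paper: write $R(\omega)\phi_i(x)$ as the inner product of $\phi_i$ against the automorphic kernel $K_\omega(x,\cdot)$, apply Bessel's inequality, then unfold the resulting $L^2$ norm and identify the convolution $\omega * \omega^*$. The only cosmetic difference is in the bookkeeping of the final unfolding (you substitute $h = xy$ and reindex $\gamma \mapsto \gamma^{-1}$, whereas the paper keeps $g$ and computes $\omega * \omega^*(x^{-1}\gamma x)$ directly), but the arguments are otherwise identical.
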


\begin{proof}
We have
\[
[R( \omega) \phi_i](x) = \int_G \phi_i(xg) \omega(g) dg = \int_{G} \phi_i(g) \omega(x^{-1} g) dg.
\]
Folding the integral over $\Gamma$ gives
\bes
[R( \omega) \phi_i](x) = \int_{ \Gamma \backslash G} \phi_i(g) \sum_{\gamma \in \Gamma} \omega(x^{-1} \gamma g) dg = \langle \phi_i, \sum_{\gamma \in \Gamma} \omega(x^{-1} \gamma \; \cdot) \rangle,
\ees
where $\langle \cdot , \cdot \rangle$ denotes the inner product in $L^2(\Gamma \backslash G)$.  Because the $\phi_i$ are orthonormal, we may apply Parseval to obtain
\[
\sum_i | R(\omega) \phi_i(x) |^2 \leqslant \bigg\| \sum_{\gamma \in \Gamma} \omega(x^{-1} \gamma \; \cdot) \bigg\|_2 = \int_{ \Gamma \backslash G} \bigg| \sum_{\gamma \in \Gamma} \omega(x^{-1} \gamma g) \bigg|^2 dg.
\]
Expanding the square and unfolding again, we obtain
\begin{equation}\label{amp1}
\sum_i | R(\omega) \phi_i(x) |^2  \leqslant \int_{ \Gamma \backslash G}  \sum_{\gamma_1, \gamma_2 \in \Gamma} \omega(x^{-1} \gamma_1 g) \overline{\omega(x^{-1} \gamma_2 g)} dg 
 = \sum_{\gamma \in \Gamma} \int_{G} \omega(x^{-1} \gamma g) \overline{\omega(x^{-1} g)} dg.
\end{equation}
Because $\overline{\omega(x^{-1} g)} = \omega^*(g^{-1} x)$, we have
\begin{align*}
\int_{G} \omega(x^{-1} \gamma g) \overline{\omega(x^{-1} g)} dg  = \int_{G} \omega(x^{-1} \gamma g) \omega^*(g^{-1} x)  dg  = \omega * \omega^* ( x^{-1} \gamma x).
\end{align*}
Inserting this into \eqref{amp1} completes the proof.
\end{proof}

\subsection{Uniform moderate growth of cusp forms}\label{sec:UMG}

We next prove a uniform moderate growth estimate for $\tau^{-1}$-isotypic eigenfunctions. For the statement and proof, we introduce some notation relating to the differential operators on $G$.

Let $D(G)$ and $D(G)^\vee$ be the algebras of left-invariant and right-invariant differential operators on $G$.  We define the involution $\vee : x \mapsto x^{-1}$ of $G$, which also acts on functions by $f^\vee(g) = f(g^{-1})$.  If $D$ is a differential operator on $G$ we let $D^\vee$ be its image under $\vee$, defined by $D^\vee(f) = (D f^\vee)^\vee$.  Then the map $\vee : D \mapsto D^\vee$ gives an algebra isomorphism $\vee : D(G) \to D(G)^\vee$.

We may explicate the map $\vee$ on left-invariant vector fields as follows: if $X \in \g$, and $\widetilde{X} \in D(G)$ is the left-invariant vector field associated to $X$, then $- \widetilde{X}^\vee$ is the corresponding right-invariant vector field.  We also have
\begin{equation}
\label{convolve}
(D_1^\vee f_1) * (D_2 f_2) = D_1^\vee D_2 (f_1 * f_2)
\end{equation}
for $f_1, f_2 \in C^\infty_c(G)$ and $D_1, D_2 \in D(G)$.  If $D$ is a differential operator on $G$, we let $\overline{D}$ denote the complex conjugate operator, defined by $\overline{D} f = \overline{ D \overline{f} }$.

\begin{lemma}\label{lemma:mod-growth}
Let $\mathfrak{S}_{t,\omega}$ be a Siegel set.  Let $\phi \in \mathcal{E}_\mu^{\rm cusp}(\Gamma\backslash G;\tau)$ satisfy $\| \phi \|_2 = 1$.  For any $D \in D(G)$ of degree $n$, and any $\xi \in \bG(\Q)$, we have
\begin{equation}\label{eq:mod-growth}
D \phi (\xi x)\ll_D (1+\|\mu\|)^{n} \widetilde{\beta}_S(\mu)^{1/2} a(x)^{ \rho_\Q}, \qquad x \in\mathfrak{S}_{t,\omega}.
\end{equation}

\end{lemma}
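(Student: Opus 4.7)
The plan follows the strategy sketched just before the lemma: combine a pre-trace identity for the $\tau^{-1}$-isotypic cuspidal spectrum with the sparsity bound of Lemma \ref{cusp-kernel}. First, I apply Proposition \ref{test-fn} to the character $\tau^{-1}$ at a tempered parameter near $\textup{Im}\,\mu$ to produce $k \in C^\infty_R(S, \tau^{-1}, \tau^{-1})$ with $|\hat k(\mu; \tau^{-1})| \geq c > 0$; here I use that $\phi \in L^2$ forces $\mu \in \mathfrak{a}^*_{\tau^{-1}, \textup{un}}$, so $\|\textup{Re}\,\mu\| = O(1)$ by Lemma \ref{lemma:unitary}. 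Set $\tilde k = k \ast k^*$, a self-adjoint element of $C^\infty_{2R}(S, \tau^{-1}, \tau^{-1})$ whose $\tau^{-1}$-Harish-Chandra transform $|\hat k|^2$ is non-negative on the unitary dual (by Lemma \ref{lem:unitary-hermitian}) and bounded below by $c^2$ at $\mu$.

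Next, $R(\tilde k) = R(k)R(k)^*$ is a self-adjoint, positive operator on $L^2(\Gamma \backslash G)$, whose image lies in the $\tau^{-1}$-isotypic subspace (as one verifies from the transformation properties of $\tilde k$). Choose an orthonormal basis $\{\phi_i\}$ of $L^2_\textup{cusp}(\Gamma\backslash G)$ diagonalizing $R(\tilde k)$, with eigenvalues $c_i = |\hat k(\mu_i; \tau^{-1})|^2 \geq 0$. The pre-trace identity reads
\[
K_{\tilde k}(x, y) := \sum_{\gamma\in\Gamma} \tilde k(x^{-1}\gamma y) = \sum_i c_i \phi_i(x) \overline{\phi_i(y)} + (\textup{Eisenstein contribution with non-negative spectral weight}).
\]
Apply $D_x D_y$ and set $y = x$. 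Using \eqref{convolve}, the $D_y$-derivative (left-multiplication in $y$) produces $D$ acting on $\tilde k$, while the $D_x$-derivative (inversion in $x$) produces the right-invariant operator $D^\vee$, so the geometric side becomes $\sum_\gamma (D^\vee D \tilde k)(x^{-1}\gamma x)$. Keeping only $\phi$'s (non-negative) spectral term and replacing $\Gamma$ with $\xi^{-1}\Gamma\xi$ to accommodate the point $\xi x$, one arrives at
\[
c_\mu \, |D\phi(\xi x)|^2 \leq \sum_{\gamma \in \Gamma} \big| (D^\vee D \tilde k)(x^{-1}\xi^{-1}\gamma\xi x) \big|.
\]

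Finally, $D^\vee D$ is a differential operator of degree $2n$ with smooth coefficients, and the proof of Lemma \ref{k-nu-e} carries over to $\tilde k$ with only cosmetic changes---its transform $|\hat k|^2$ satisfies a decay estimate of the same shape as Property \eqref{4} of Proposition \ref{test-fn}---yielding $\|D^\vee D \tilde k\|_\infty \ll_{D, \tau} \tilde\beta_S(\mu)(1+\|\mu\|)^{2n}$. Combined with Lemma \ref{cusp-kernel} applied to $B = \textup{supp}(D^\vee D \tilde k) \subset G_{2R}$, which bounds the number of non-vanishing lattice terms by $\ll a(x)^{2\rho_\Q}$, and with $c_\mu \geq c^2$, one obtains $|D\phi(\xi x)|^2 \ll \tilde\beta_S(\mu)(1+\|\mu\|)^{2n} a(x)^{2\rho_\Q}$, and the lemma follows on taking square roots.

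The main technical point---and where I expect most care is needed---is extracting the single power $\tilde\beta_S(\mu)^{1/2}$, rather than $\tilde\beta_S(\mu)$, in the conclusion. A cruder approach through Lemma \ref{pre-trace} with $\omega = D^\vee k$, bounding $\|\omega \ast \omega^*\|_\infty \leq \|\omega\|_\infty^2 \ll \tilde\beta_S(\mu)^2(1+\|\mu\|)^{2n}$, would yield only the weaker bound $|D\phi(\xi x)| \ll \tilde\beta_S(\mu)(1+\|\mu\|)^n a(x)^{\rho_\Q}$. The improvement comes from keeping both derivatives on the kernel $\tilde k$ itself and estimating $\|D^\vee D \tilde k\|_\infty$ via the $\tau^{-1}$-spherical inversion formula, whose output is linear in $\tilde\beta_S(\mu)$.
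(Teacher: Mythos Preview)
Your approach is essentially the same as the paper's, and your final paragraph correctly identifies the crux: one must land on $D^\vee \overline{D}\,\tilde k$ and invoke Lemma~\ref{k-nu-e}, which is linear in $\tilde\beta_S(\mu)$. Two small points of comparison. First, the paper bypasses the full spectral expansion entirely: rather than differentiating the pre-trace identity and using positivity to discard the continuous spectrum, it applies Lemma~\ref{pre-trace} (a bare Parseval inequality) with $\omega = D^\vee k^0_{-\mu}$, where $k^0_{-\mu}$ is the ``half'' test function from the proof of Proposition~\ref{test-fn}, normalized so that $R(k^0_{-\mu})\phi=\phi$; then \eqref{convolve} gives $\omega*\omega^* = D^\vee\overline{D}\,(k^0_{-\mu}*(k^0_{-\mu})^*) = D^\vee\overline{D}\,k_{-\mu}$, to which Lemma~\ref{k-nu-e} applies verbatim. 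This is shorter and avoids any discussion of Eisenstein series. Second, to obtain $|D\phi|^2$ on the spectral side of your differentiated identity you must apply $D_x$ and $\overline{D}_y$ (not $D_y$), so the geometric side should carry $D^\vee\overline{D}\,\tilde k$ rather than $D^\vee D\,\tilde k$; this is immaterial for the estimate but needed for the positivity you rely on.
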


\begin{proof}

We shall apply Lemma \ref{pre-trace} to the orthonormal set containing the single element $\phi$. To describe our choice of $\omega$, we first let $k_{-\mu}^0 \in C^\infty_c(S, \tau^{-1}, \tau^{-1})$ be the function appearing in the proof of Proposition \ref{test-fn}, with $\nu$ there chosen to be $-\mu$.  By scaling $k_{-\mu}^0$ by a bounded constant, we may assume that $R(k_{-\mu}^0) \phi = \phi$.  This is equivalent to $\phi = \phi * (k_{-\mu}^0)^\vee$, and so
\[
D \phi = D( \phi * (k_{-\mu}^0)^\vee ) = \phi * D (k_{-\mu}^0)^\vee = R( (D (k_{-\mu}^0)^\vee) ^\vee ) \phi = R( D^\vee k_{-\mu}^0 ) \phi.
\]
It follows that if we apply Lemma \ref{pre-trace} with $\omega = D^\vee k^0_{-\mu}$, we obtain
\[
| D\phi(\xi x) |^2 \leqslant \sum_{\gamma \in \Gamma} [ (D^\vee k_{-\mu}^0) * (D^\vee k_{-\mu}^0)^* ](x^{-1} \xi^{-1} \gamma \xi x).
\]
Because the support of our test function is uniformly bounded, Lemma \ref{cusp-kernel} gives
\begin{equation}
\label{Xphi-intermed}
| D\phi(\xi x) |^2 \ll \| (D^\vee k_{-\mu}^0) * (D^\vee k_{-\mu}^0)^* \|_\infty\, a(x)^{2 \rho_\Q}.
\end{equation}
It therefore remains to show that
\be
\label{Dkbd}
\| (D^\vee k_{-\mu}^0) * (D^\vee k_{-\mu}^0)^* \|_\infty \ll_D (1+\|\mu\|)^{n} \widetilde{\beta}_S(\mu)^{1/2}.
\ee
We have
\begin{align*}
(D^\vee k_{-\mu}^0) * (D^\vee k_{-\mu}^0)^* = (D^\vee k_{-\mu}^0) * (\overline{D} (k_{-\mu}^0)^* ).
\end{align*}
Applying \eqref{convolve} to this gives
\begin{align*}
(D^\vee k_{-\mu}^0) * (D^\vee k_{-\mu}^0)^* & = D^\vee \overline{D} ( k_{-\mu}^0 * (k_{-\mu}^0)^* ) \\
& = D^\vee \overline{D} k_v.
\end{align*}
Lemma \ref{k-nu-e} gives $\| D^\vee \overline{D} k_v \|_\infty \ll_D (1 + \| \mu \|)^{2n} \widetilde{\beta}_S(\mu)$.  This implies (\ref{Dkbd}), and completes the proof.
\end{proof}

\subsection{Rapid decay estimates and control of $L^2$ mass}

For the rest of Section \ref{sec:upper-bd-cusp-spec}, we fix a finite set $\Xi \subset \bG(\Q)$ and a Siegel set $\mathfrak{S}_{t,\omega}$ such that $\Xi \mathfrak{S}_{t,\omega}$ is a fundamental set for $\Gamma$, and recall the notation for truncations of $\Gamma \backslash G$ introduced at the end of Section \ref{sec:Siegel-sets}.  We show that a cusp form with spectral parameter $\mu$ has a positive proportion of its $L^2$ mass in $(\Gamma \backslash G)^{\leqslant T}$, provided $T \gg (1 + \| \mu \|)^{1 + \epsilon}$.  To do this, we first prove a version of the standard rapid decay estimate for cusp forms which is uniform in the spectral parameter.  The proof of the following lemma is adapted from Lemma I.2.10 of \cite{MW}.

\begin{lemma}
\label{lemma:rapid-decay}
Let $\phi \in \mathcal{E}_\mu^{\rm cusp}(\Gamma\backslash G;\tau)$ satisfy $\| \phi \|_2 = 1$. Let $\alpha \in \Psi_\Q$ be a simple root. For any $N \geqslant 2$ and $\xi \in \Xi$, we have
\[
\phi( \xi x) \ll_N \widetilde{\beta}_S(\mu)^{1/2} ( 1 + \| \mu \|)^N a(x)^{ \rho_\Q - N \alpha}, \qquad x \in\mathfrak{S}_{t,\omega}.
\]
\end{lemma}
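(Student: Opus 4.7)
The plan is to adapt the classical proof of rapid decay of cusp forms (cf.\ \cite[Lemma I.2.10]{MW}), making every step quantitative in $\mu$ by systematically replacing the standard qualitative moderate-growth bound with Lemma \ref{lemma:mod-growth}. The $\mu$-dependent prefactor $(1+\|\mu\|)^N\widetilde{\beta}_S(\mu)^{1/2}$ will enter only through that lemma, while the decaying factor $a(x)^{-N\alpha}$ arises from iterated integration by parts against the one-parameter unipotent subgroup $u_\alpha(t)=\exp(tX_\alpha)$ corresponding to $\alpha\in\Psi_\Q$ and the conjugation identity $a^{-1}u_\alpha(s)a=u_\alpha(a^{-\alpha}s)$ for $a\in A^0$.

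After replacing $\Gamma$ with $\xi^{-1}\Gamma\xi$ and adjusting the Siegel set accordingly, I would reduce to $\xi=1$. Fix a nonzero $X_\alpha\in\mathfrak g_\alpha(\Q)$, normalized using the arithmeticity of $\Gamma$ so that $u_\alpha(\Z)\subset\Gamma$. For $x\in\mathfrak S_{t,\omega}$ fixed, the function $t\mapsto\phi(u_\alpha(t)x)$ is smooth and $1$-periodic, with Fourier expansion $\phi(u_\alpha(t)x)=\sum_n c_n(x)e^{2\pi int}$. For the non-constant part, $N$-fold integration by parts in $t$ gives
\[
\Big|\sum_{n\neq 0}c_n(x)\Big|\ll_N\sup_{t\in[0,1]}\big|\partial_t^N\phi(u_\alpha(t)x)\big|.
\]
Writing $x=u_0m_0a(x)k_0$ in the Iwasawa decomposition $G=UM^0A^0K$, the conjugation identity lets one rewrite $\partial_t^N\phi(u_\alpha(t)x)$ as $a(x)^{-N\alpha}(D\phi)(u_\alpha(t)x)$ for a degree-$N$ left-invariant differential operator $D$ with coefficients uniformly bounded in $x$. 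Lemma \ref{lemma:mod-growth} bounds this by $(1+\|\mu\|)^N\widetilde{\beta}_S(\mu)^{1/2}a(x)^{\rho_\Q}$, and combining yields the desired estimate for the non-constant part.

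What remains is to bound the zeroth Fourier coefficient $c_0(x)=\int_0^1\phi(u_\alpha(t)x)\,dt$. This is where cuspidality enters. Let $V$ be the unipotent radical of the maximal standard $\Q$-parabolic of $\bG$ corresponding to $\Psi_\Q\setminus\{\alpha\}$, so that $u_\alpha(\R)\subset V$; cuspidality of $\phi$ gives $\int_{\Gamma_V\backslash V}\phi(vx)\,dv=0$. Since $\alpha$ is simple, $V$ is a semidirect product $V=u_\alpha\cdot V'$, where $V'$ is the normal subgroup generated by the remaining root spaces of $V$. Cuspidality then reads $\int_{\Gamma_{V'}\backslash V'} c_0(v'x)\,dv'=0$, so Fourier expansion of $c_0$ along $(V')^{\mathrm{ab}}$ has vanishing constant term; each degenerate Whittaker coefficient (those whose character is trivial on some simple root subgroup of $V'$) vanishes by cuspidality along a strictly larger standard $\Q$-parabolic, and the remaining generic coefficients are controlled by iterated integration by parts in the corresponding simple root directions, each giving a bounded factor $a(x)^{-2\beta}\ll 1$ on $\mathfrak S_{t,\omega}$ and a bounded power of $(1+\|\mu\|)$ via Lemma \ref{lemma:mod-growth}. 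Absorbing the bounded factors into the implicit constant and relabeling $N$ yields the claimed bound.

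The main technical obstacle is the bookkeeping of $c_0$: in rank one, $V=u_\alpha$ and cuspidality immediately gives $c_0\equiv 0$, whereas in higher rank one must iterate the Fourier-expansion-plus-integration-by-parts scheme over the remaining simple root directions and handle the degenerate Whittaker coefficients using cuspidality along larger parabolics. This higher-rank bookkeeping is the only substantive subtlety; the rest of the argument is a routine adaptation of the standard rapid-decay proof, with Lemma \ref{lemma:mod-growth} supplying the required uniform-in-$\mu$ moderate-growth input.
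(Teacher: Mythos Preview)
Your overall strategy---Fourier expand along unipotent directions, integrate by parts, and feed in Lemma~\ref{lemma:mod-growth}---is exactly the right one, and your treatment of the non-constant Fourier coefficients along $u_\alpha$ is correct. The gap is entirely in your handling of the constant term $c_0$.

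Two concrete problems. First, the sentence ``each degenerate Whittaker coefficient \ldots\ vanishes by cuspidality along a strictly larger standard $\Q$-parabolic'' cannot be right: since $\Theta=\Psi_\Q\setminus\{\alpha\}$, the parabolic $\bP_\Theta$ is already \emph{maximal}, so no strictly larger proper standard parabolic exists. (Nor are there ``simple root subgroups of $V'$'': the roots occurring in $V'$ are those $\beta\in\Sigma_\Q^+\setminus\langle\Theta\rangle$ with $\beta\neq\alpha$, and none of them is simple.) Second, and more seriously, you say the $V'$-directions contribute only ``a bounded factor $a(x)^{-2\beta}\ll 1$''. As written, this means your $c_0$ estimate carries no $a(x)^{-N\alpha}$ decay at all---only a moderate-growth bound $(1+\|\mu\|)^{M}\widetilde\beta_S(\mu)^{1/2}a(x)^{\rho_\Q}$---and adding this to the oscillatory piece and then ``relabelling $N$'' just reproduces moderate growth, not the lemma.

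What you are missing is the structural fact that makes the whole unipotent radical $\bU_\Theta$ useful, not just the single root group $u_\alpha$: every root $\beta\in\Sigma_\Q^+\setminus\langle\Theta\rangle$ contains $\alpha$ with a positive coefficient, and since $a(x)^\gamma\geqslant e^t$ for each simple $\gamma$ on $\mathfrak{S}_{t,\omega}$, one has $a(x)^{-\beta}\ll a(x)^{-\alpha}$ there. Consequently $\|\Ad(x^{-1})Y\|\ll a(x)^{-\alpha}$ for \emph{every} $Y\in\mathrm{Lie}(\bU_\Theta)$ (this is the observation the paper invokes from \cite[p.~31]{MW}), so integration by parts along any direction in $\bU_\Theta$ yields the same $a(x)^{-\alpha}$ gain per derivative. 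The paper exploits this by sidestepping the recursive Whittaker decomposition entirely: choose a filtration $\{1\}=\bV_0<\bV_1<\cdots<\bV_k=\bU_\Theta$ over $\Q$ with $\bV_i/\bV_{i-1}\simeq\mathbb{G}_a$, set $\psi_i(x)=\int_{\Gamma_i\backslash V_i}\phi(\xi ux)\,du$ so that $\psi_0=\phi(\xi\,\cdot)$ and $\psi_k=0$ by cuspidality, telescope $\phi(\xi x)=\sum_i(\psi_{i-1}-\psi_i)$, and bound each difference as a sum of nonzero Fourier coefficients via $N$-fold integration by parts together with Lemma~\ref{lemma:mod-growth}. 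Because the bound $\|\Ad(x^{-1})Y\|\ll a(x)^{-\alpha}$ is uniform across the filtration, every step produces the identical factor $(1+\|\mu\|)^{N}\widetilde\beta_S(\mu)^{1/2}a(x)^{\rho_\Q-N\alpha}$, and summing over $i$ gives the lemma.
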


\begin{proof}
As there are only finitely many choices for $\xi$, we will fix one throughout the proof.

Let $\Theta = \Psi_\Q \setminus \{ \alpha \}$, and let $\bP_\Theta=\bU_\Theta \bL_\Theta$ be the associated maximal $\Q$-parabolic subgroup. We fix a sequence $\bV_0 = \{ 1 \} < \bV_1 < \cdots < \bV_k = \bU_\Theta$ of subgroups defined over $\Q$ such that for all $i$, $\bV_i$ is normal in $\bU_\Theta$ and $\bV_i / \bV_{i-1} \simeq \mathbb{G}_a$.  We set $\Gamma_i = \xi^{-1} \Gamma \xi \cap V_i$, which is a lattice in $V_i$.

We have $V_i / V_{i-1} \simeq \R$, and the image of $\Gamma_i$ in $V_i / V_{i-1}$ is a lattice.  We choose a generator $\ell \in V_i / V_{i-1}$ for this lattice, and let $Y \in \text{Lie}(V_i) / \text{Lie}(V_{i-1})$ be such that $e^Y = \ell \in V_i / V_{i-1}$.

For all $i$, we define
\[
\psi_i(x) = \int_{ \Gamma_i \backslash V_i} \phi( \xi u x) du, \quad x \in G.
\]
We have $\psi_0(x) = \phi( \xi x)$, and $\psi_k = 0$ by the cuspidality of $\phi$.  We may therefore bound $\phi$ by bounding the successive differences $\psi_i - \psi_{i-1}$, using a standard integration by parts argument.  For each $x$, we may consider the function of a real variable $t$ given by $t \mapsto \psi_{i-1}( e^{tY} x)$.  Because $\psi_{i-1}$ is left-invariant under $V_{i-1}$ and $\Gamma_i$, and $e^Y$ lies in $V_{i-1} \Gamma_i$, we see that $\psi_{i-1}( e^{tY} x)$ is in fact a function on $\R / \Z$.  For $\eta \in \Z$ we define $\psi_{i-1}^\eta$ to be the $\eta$-Fourier coefficient of $\psi_{i-1}( e^{tY} x)$, given by
\[
\psi_{i-1}^\eta(x) = \int_{ \R / \Z } \int_{ \Gamma_{i-1} \backslash V_{i-1} } e^{- 2 \pi i \eta t} \phi( \xi u e^{tY} x ) du dt. 
\]
We have $\psi_{i-1}^0 = \psi_i$, and hence
\begin{equation}
\label{psi-difference}
\psi_{i-1} - \psi_i = \sum_{\eta \neq 0} \psi_{i-1}^\eta.
\end{equation}

We now assume that $x \in \mathfrak{S}_{t,\omega}$, and estimate $\psi_{i-1}^\eta$ for $\eta \neq 0$ by integration by parts.  If we define $Y'_x = \Ad(x^{-1}) Y$, then we have
\[
\psi_{i-1}^\eta(x) = \int_{ \R / \Z } \int_{ \Gamma_{i-1} \backslash V_{i-1} } e^{- 2 \pi i \eta t} \phi( \xi u x e^{tY'_x} ) du dt,
\]
and integrating by parts $N$ times with respect to $t$ gives
\begin{align}
\notag
\psi_{i-1}^\eta(x) & = (2 \pi i \eta)^{-N} \int_{ \R / \Z } \int_{ \Gamma_{i-1} \backslash V_{i-1} } e^{- 2 \pi i \eta t} ( Y_x^{'N} \phi)( \xi u x e^{tY'_x} ) du dt \\
\label{phi-tau}
& = (2 \pi i \eta)^{-N} \int_{ \R / \Z } \int_{ \Gamma_{i-1} \backslash V_{i-1} } e^{- 2 \pi i \eta t} ( Y_x^{'N} \phi)( \xi u e^{tY} x ) du dt.
\end{align}

We shall need to use the fact that $Y'_x$ becomes small as $\alpha(x)$ grows.  Let $X_j$ be a basis for $\g$, and let $X_j^*$ be the dual basis.  Then we may show in the same way as Moeglin--Waldspurger on \cite[p. 31]{MW} that we have
\[
X_j^*( Y'_x) \ll a(x)^{-\alpha}.
\]
It follows from this that we may write $Y_x^{'N} = a(x)^{-N \alpha} D_x$, where $D_x \in D(G)$ depends on $x$, and has degree $N$ and bounded coefficients when written in a basis of monomials in $X_j$.

We now apply Lemma \ref{lemma:mod-growth} to bound $(D_x \phi)( \xi u e^{tY} x )$.  We do not necessarily have $u e^{tY} x \in \mathfrak{S}_{t,\omega}$, but as we may assume that $u e^{tY}$ lies in a bounded subset of $U_\Theta$ we see that $u e^{tY} x$ lies in a larger Siegel set to which we may apply Lemma \ref{lemma:mod-growth}.  We therefore have
\[
( Y_x^{'N} \phi)( \xi u e^{tY} x ) = a(x)^{-N \alpha} (D_x \phi)( \xi u e^{tY} x ) \ll (1+\|\mu\|)^N \widetilde{\beta}_S(\mu)^{1/2} a(x)^{ \rho_\Q - N \alpha}.
\]
Applying this in \eqref{phi-tau} gives
\[
\psi_{i-1}^\eta(x) \ll_N \eta^{-N} (1+\|\mu\|)^N \widetilde{\beta}_S(\mu)^{1/2} a(x)^{ \rho_\Q - N \alpha}.
\]
If $N \geqslant 2$, applying this in \eqref{psi-difference} and summing over $\eta$ gives
\[
\psi_{i-1}(x) - \psi_i(x) \ll_N (1+\|\mu\|)^N \widetilde{\beta}_S(\mu)^{1/2} a(x)^{ \rho_\Q - N \alpha}.
\]
Summing this over $i$ completes the proof.
\end{proof}

We next use Lemma \ref{lemma:rapid-decay} to show that, when $x \in \Gamma \backslash G$ is above height $(1 + \| \mu \|)^{1 + \epsilon}$ in the cusp, $\phi$ is bounded by a constant independent of $\mu$.

\begin{lemma}
\label{lemma:phi-pointwise}
For any $c, \epsilon > 0$ there is a constant $C_\epsilon > 0$ such that the following holds.  If $\phi \in \mathcal{E}^{\rm cusp}_\mu(\Gamma\backslash G;\tau)$ satisfies $\| \phi \|_2 = 1$, $T > C_\epsilon (1 + \| \mu \|)^{1 + \epsilon}$, and $g \in (\Gamma \backslash G)^{> T}$, then $| \phi(g) | < c$.

\end{lemma}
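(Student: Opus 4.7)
The proof will be a direct deduction from the uniform rapid decay estimate of Lemma~\ref{lemma:rapid-decay}. Given $g \in (\Gamma \backslash G)^{>T}$, I first write $g$ as (the image of) $\xi x$ for some $\xi \in \Xi$ and $x \in \mathfrak{S}_{t,\omega}^{>T}$. By definition of the truncation $\mathfrak{S}_{t,\omega}^{>T}$, at least one simple $\Q$-root $\alpha \in \Psi_\Q$ satisfies $\alpha(\log a(x)) > T$. I shall select $\alpha^\star \in \Psi_\Q$ to be the simple root that \emph{maximizes} $\alpha(\log a(x))$; in particular $\alpha^\star(\log a(x)) > T$.

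With this choice of $\alpha$, and an integer $N \geqslant 2$ to be fixed shortly, Lemma~\ref{lemma:rapid-decay} yields
\[
|\phi(\xi x)| \ll_N \widetilde{\beta}_S(\mu)^{1/2} (1+\|\mu\|)^N\, a(x)^{\rho_\Q - N\alpha^\star}.
\]
The next step is to convert the factor $a(x)^{\rho_\Q - N\alpha^\star}$ into pure $\alpha^\star$-decay. Expanding $\rho_\Q = \sum_i L_i \alpha_i$ as a linear combination of simple $\Q$-roots with strictly positive coefficients $L_i$ (as $\rho_\Q$ is half the sum, with multiplicities, of the positive $\Q$-roots, each of which is itself a non-negative integer combination of simple roots), and using the maximality of $\alpha^\star$, I obtain $\rho_\Q(\log a(x)) \leqslant L\, \alpha^\star(\log a(x))$ with $L = \sum_i L_i$. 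Hence
\[
a(x)^{\rho_\Q - N\alpha^\star} \leqslant \exp\!\big((L-N)\,\alpha^\star(\log a(x))\big) \leqslant e^{-(N-L)T}
\]
whenever $N > L$. Combined with the crude polynomial bound $\widetilde{\beta}_S(\mu)^{1/2} \ll (1+\|\mu\|)^{d^\emptyset(\Sigma)/2}$ supplied by Lemma~\ref{c-bound}, this gives
\[
|\phi(g)| \ll_N (1+\|\mu\|)^{N + d^\emptyset(\Sigma)/2}\, e^{-(N-L)T}.
\]

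Now I fix any integer $N$ with $N > L$ (say $N = \lfloor L \rfloor + 2$), so that the exponent $N - L > 0$ and the constant $N + d^\emptyset(\Sigma)/2$ depends only on $G$. Under the hypothesis $T > C_\epsilon(1+\|\mu\|)^{1+\epsilon}$, we obtain
\[
|\phi(g)| \ll (1+\|\mu\|)^{N+d^\emptyset(\Sigma)/2}\, \exp\!\big(-(N-L)\,C_\epsilon\,(1+\|\mu\|)^{1+\epsilon}\big).
\]
Because $(1+\|\mu\|)^{1+\epsilon}$ grows strictly faster than $\log(1+\|\mu\|)$, for any prescribed $c > 0$ one can choose $C_\epsilon$ large enough (depending only on $c$, $\epsilon$, and the fixed constants $N, L, d^\emptyset(\Sigma)$, and the implicit constant above) to force the right-hand side below $c$ uniformly in $\mu \in i\mathfrak{a}^*$. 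This is the required bound. There is no significant obstacle: the one subtlety is the reduction of the mixed exponent $\rho_\Q - N\alpha^\star$ to a single $\alpha^\star$-exponent, which is handled by selecting $\alpha^\star$ to maximize $\alpha(\log a(x))$.
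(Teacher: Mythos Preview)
The proposal is correct and follows essentially the same approach as the paper: write $g=\xi x$ with $x\in\mathfrak{S}_{t,\omega}^{>T}$, select the simple root $\alpha^\star$ maximizing $\alpha(\log a(x))$, apply Lemma~\ref{lemma:rapid-decay} with this $\alpha^\star$, and use maximality to absorb $a(x)^{\rho_\Q}$ into a power of $a(x)^{\alpha^\star}$. The only cosmetic difference is in the final bookkeeping: you fix $N$ once and exploit the exponential decay $e^{-(N-L)T}$ against the polynomial $(1+\|\mu\|)^{N+d^\emptyset(\Sigma)/2}$, whereas the paper chooses $N=M_1+M_2$ with $M_2$ depending on $\epsilon$ so that the bound reduces to $C_N C_\epsilon^{-M_2}$; both arguments are valid and equivalent in strength.
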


\begin{proof}

We know that $g = \xi x$ for some $\xi \in \Xi$, $x \in \mathfrak{S}_{t,\omega}$.  Because $g \in (\Gamma \backslash G)^{> T}$, we must have $x \in \mathfrak{S}_{t,\omega}^{> T}$, and hence $a(x)^\alpha > T$ for some $\alpha \in \Psi_\Q$. Choose $\alpha \in \Psi_\Q$ so that $a(x)^\alpha$ is maximal.  If we apply Lemma \ref{lemma:rapid-decay} with this $\alpha$, we get that for any $N \geqslant 2$ there is $C_N > 0$ such that
\[
|\phi( \xi x)| \leqslant C_N \widetilde{\beta}_S(\mu)^{1/2} ( 1 + \| \mu \|)^N a(x)^{ \rho_\Q - N \alpha}.
\]
Because $a(x)^\alpha \geqslant a(x)^\beta$ for $\beta \in \Psi_\Q$, there is $M_1 > 0$ such that $a(x)^{M_1 \alpha} \geqslant a(x)^{ \rho_\Q}$.  We then choose $M_2 > 0$ such that $(1 + \| \mu \|)^{M_2 \epsilon} \geqslant \widetilde{\beta}_S(\mu)^{1/2} ( 1 + \| \mu \|)^{M_1}$, and let $N = M_1 + M_2$.  With this choice, we have
\begin{align*}
|\phi( \xi x)| & \leqslant C_N \widetilde{\beta}_S(\mu)^{1/2} ( 1 + \| \mu \|)^N a(x)^{ \rho_\Q - N \alpha} \\
& \leqslant C_N a(x)^{ \rho_\Q - M_1 \alpha} \widetilde{\beta}_S(\mu)^{1/2} ( 1 + \| \mu \|)^{M_1} ( a(x)^\alpha / (1 + \| \mu \|) )^{-M_2} \\
& \leqslant C_N \widetilde{\beta}_S(\mu)^{1/2} ( 1 + \| \mu \|)^{M_1} C_\epsilon^{-M_2} (1 + \| \mu \|)^{-M_2 \epsilon} \\
& \leqslant C_N C_\epsilon^{-M_2}.
\end{align*}
Choosing $C_\epsilon$ sufficiently large completes the proof.
\end{proof}

Finally, we use the bounds we have established for $\phi$ in the cusp to prove the desired control on its $L^2$ mass.

\begin{lemma}\label{L2-mass}

Given $0 < \kappa < 1$ and $\epsilon > 0$, there is $C_\epsilon > 0$ such that the following holds.  If $\phi \in \mathcal{E}^{\rm cusp}_\mu(\Gamma\backslash G;\tau)$ satisfies $\| \phi \|_2 = 1$, and $T > C_\epsilon (1 + \| \mu \|)^{1 + \epsilon}$, then
\[
\int_{(\Gamma\backslash G)^{\leqslant T} } | \phi(g) |^2 dg \geqslant \kappa.
\]
\end{lemma}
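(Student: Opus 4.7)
The plan is to derive Lemma \ref{L2-mass} as a straightforward consequence of Lemma \ref{lemma:phi-pointwise}, using the pointwise control there together with the finite volume of $\Gamma\backslash G$.

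Since $\|\phi\|_2 = 1$, we have
\[
\int_{(\Gamma\backslash G)^{\leqslant T}} |\phi(g)|^2 dg = 1 - \int_{(\Gamma\backslash G)^{> T}} |\phi(g)|^2 dg,
\]
so it suffices to show that the integral over $(\Gamma\backslash G)^{>T}$ is at most $1-\kappa$ once $T$ is large enough. First I would note that, because $\Gamma$ is a congruence arithmetic lattice in the semisimple group $\bG$, the quotient $\Gamma\backslash G$ has finite volume; in particular $V := \operatorname{vol}(\Gamma\backslash G) < \infty$, and $(\Gamma\backslash G)^{>T}$ is a measurable subset of volume at most $V$.

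Next, given $\kappa \in (0,1)$, I would set $c = \sqrt{(1-\kappa)/V}$ and apply Lemma \ref{lemma:phi-pointwise} with this value of $c$ and the given $\epsilon$. That lemma furnishes a constant $C_\epsilon > 0$ (depending on $\kappa$ through $c$, and on $\epsilon$) such that whenever $T > C_\epsilon(1+\|\mu\|)^{1+\epsilon}$ and $g \in (\Gamma\backslash G)^{>T}$, one has $|\phi(g)| < c$. Squaring this pointwise bound and integrating over $(\Gamma\backslash G)^{>T}$ yields
\[
\int_{(\Gamma\backslash G)^{>T}} |\phi(g)|^2 dg \leqslant c^2 \operatorname{vol}\bigl((\Gamma\backslash G)^{>T}\bigr) \leqslant c^2 V = 1-\kappa,
\]
and combining with the identity above gives the claimed lower bound.

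This is essentially routine once Lemma \ref{lemma:phi-pointwise} is in hand, so there is no real obstacle; the only point that requires any care is knowing that $\Gamma\backslash G$ has finite volume, which is guaranteed by the reduction theory recalled in Section \ref{sec:Siegel-sets} (the fundamental set $\Xi\mathfrak{S}_{t,\omega}$ has finite invariant volume). Note in particular that we do not need $(\Gamma\backslash G)^{>T}$ to have small volume for large $T$; the pointwise smallness of $|\phi|$ there, which is uniform in $T$ beyond the threshold, is what does the work.
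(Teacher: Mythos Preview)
Your proof is correct and follows essentially the same approach as the paper: reduce to bounding the cuspidal tail integral using the pointwise control from Lemma \ref{lemma:phi-pointwise}. The only minor difference is that the paper applies Lemma \ref{lemma:phi-pointwise} with $c=1$ and then uses $\mathrm{vol}((\Gamma\backslash G)^{>T})\to 0$ as $T\to\infty$ to make the tail small, whereas you take $c=\sqrt{(1-\kappa)/V}$ and only need the total volume to be finite; both variants are equally valid and equally elementary.
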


\begin{proof}
Since $\|\phi\|_2=1$, we need to show that $\int_{(\Gamma\backslash G)^{> T} } | \phi(g) |^2 dg$ can be made as small as possible.
If we apply Lemma \ref{lemma:phi-pointwise} with the given $\epsilon$ and $c = 1$, we obtain $C_\epsilon > 0$ such that if $T > C_\epsilon (1 + \| \mu \|)^{1 + \epsilon}$ and $g \in (\Gamma \backslash G)^{> T}$, then $| \phi(g) | < 1$.  It follows that
\[
\int_{(\Gamma\backslash G)^{> T} } | \phi(g) |^2 dg \leqslant \text{vol}( (\Gamma\backslash G)^{> T} ).
\]
Because $\text{vol}( (\Gamma\backslash G)^{> T} ) \to 0$ as $T \to \infty$, making $C_\epsilon$ sufficiently large gives the result.
\end{proof}

\subsection{Proof of Proposition \ref{Weyl-upper-bd}}\label{sec:proof-Prop-Weyl}
We now return to Proposition \ref{Weyl-upper-bd} and its proof. 

We apply Proposition \ref{test-fn} to each simple factor $G_j$ of $G^0$, relative to the spectral parameters $-\nu_j$ obtained by restricting $-\nu$ to the $j$-th component of $i\mathfrak{a}$, and with $\tau$ there chosen to be $\tau_j^{-1}$.  For each $j$, we let $k^0_{-\nu_j}$ be the test function appearing in the proof of Proposition \ref{test-fn}, and define $k^0_{-\nu} = \prod k^0_{-\nu_j}$ and $k_{-\nu} = k^0_{-\nu} * ( k^0_{-\nu} )^*$.  We let

\begin{equation}\label{defK}
K_{-\nu} (x,y)=\sum_{\gamma\in\Gamma}k_{-\nu} (x^{-1}\gamma y)\in C^\infty(G \times G,\tau \times\tau^{-1})
\end{equation}
be the associated automorphic kernel function.

Let $\phi_i$ be an orthonormal basis of eigenfunctions for $L^2_\text{cusp}( \Gamma \backslash G, \tau)$, with spectral parameters $\lambda_i \in \Lambda_\text{cusp}( \Gamma; \tau)$.  After possibly scaling $k^0_{-\nu}$, Proposition \ref{test-fn} implies that if $\| \text{Im} \, \lambda_i - \nu \| \leqslant 1$, we have $R( k^0_{-\nu}) \phi_i = c_i \phi_i$ with $|c_i| \geqslant 1$.  We may therefore apply Lemma \ref{pre-trace} to the orthonormal set containing those $\phi_i$ with $\| \text{Im} \, \lambda_i - \nu \| \leqslant 1$ to obtain
\begin{equation}\label{eq:pre-trace-inequality}
\sum_{\substack{\lambda\in \Lambda_{\rm cusp}(\Gamma;\tau)\\ \|{\rm Im}\,\lambda-\nu\| \leqslant 1}} | \phi_i(x) |^2 \leqslant K_{-\nu} (x, x).
\end{equation}
We apply Lemma \ref{L2-mass} with $\kappa = 1/2$ and any $\epsilon > 0$ to obtain $C_\epsilon > 0$ such that if $T = C_\epsilon ( 1 + \| \nu \|)^{1 + \epsilon}$, then
\[
\int_{(\Gamma\backslash G)^{\leqslant T} } | \phi_i(x) |^2 dx \geqslant 1/2
\]
for all $\| \text{Im} \, \lambda_i - \nu \| \leqslant 1$. Integrating the pretrace inequality \eqref{eq:pre-trace-inequality} over the truncated domain $(\Gamma\backslash G)^{\leqslant T}$, we therefore have
\[
\sum_{\substack{\lambda\in \Lambda_{\rm cusp}(\Gamma;\tau)\\ \|{\rm Im}\,\lambda-\nu\| \leqslant 1}} \dim \mathcal{E}_\lambda^{\rm cusp}(\Gamma\backslash G,\tau) \leqslant 2 \int_{(\Gamma\backslash G)^{\leqslant T} } K_{-\nu} (x, x) dx.
\]
Because $(\Gamma\backslash G)^{\leqslant T}$ is the image of $\Xi \mathfrak{S}_{t,\omega}^{\leqslant T}$ in $\Gamma \backslash G$, we have
\[
\int_{(\Gamma\backslash G)^{\leqslant T} } K_{-\nu} (x, x) dx \leqslant \sum_{\xi \in \Xi} \int_{ x \in \mathfrak{S}_{t,\omega}^{\leqslant T} } K_{-\nu} (\xi x, \xi x) dx.
\]

Let $B$ be a fixed compact set that contains the support of $k_{-\nu}$ for all $\nu$.  Because $k_{-\nu} \ll_\tau \widetilde{\beta}_S(\nu) 1_B$ by Lemma \ref{k-nu-e}, we may apply Lemma \ref{cusp-kernel} to obtain
\[
\int_{ x \in \mathfrak{S}_{t,\omega}^{\leqslant T} } K_{-\nu} (\xi x, \xi x) dx \ll_{\Gamma, \tau} \widetilde{\beta}_S(\nu) \int_{ x \in \mathfrak{S}_{t,\omega}^{\leqslant T} } a(x)^{2 \rho_\Q} dx.
\]
Writing out the measure on $\mathfrak{S}_{t,\omega}^{\leqslant T}$ in Iwasawa coordinates, as in \eqref{Q-Iwasawa-measure1}, we see that
\[
\int_{ x \in \mathfrak{S}_{t,\omega}^{\leqslant T} } a(x)^{2 \rho_\Q} dx\asymp \int_{A_t^{\leqslant T}} da\asymp (\log T)^r \ll_\epsilon ( \log ( 3 + \| \nu \| ) )^r,
\]
which completes the proof of Proposition \ref{Weyl-upper-bd}.

\section{Mean square estimates for discrete automorphic periods}\label{sec:points}

Let $G = \Oo(n,m)$, with maximal compact $K = \Oo(n) \times \Oo(m)$.  Let $\Gamma$ be a uniform lattice in $G$, with associated (compact) locally symmetric space $Y = \Gamma \backslash \mathbb{H}^{n,m}$. In this section we examine the asymptotic size of finite sums of point evaluations of Maass forms on $Y$.  Our main result is Proposition \ref{local-Weyl}.  Before proving this, we first recall some basic results on spherical representations and spectral parameters for the Zariski-disconnected group $G$ in Section \ref{sec:O-spherical}.  

While the background material on spectral parameters will be stated for general $n$ and $m$, we shall only prove our results on point evaluations in the case $n > m$.  We do this for simplicity, and because it is the only case we need in this paper.  However, the results could be extended to the case $n = m$ with a small amount of extra work.

\subsection{Notation}
\label{sec:O-spherical-notn}

We let the real Lie algebras $\g$, $\gk$, $\ga$, and $\gn$ have their usual meaning, and we let $\mathfrak{m}$ be the centralizer of $\ga$ in $\gk$.  We let $\Sigma = \Sigma(\ga, \g)$ be the roots of $\ga$ in $\g$.  We let $\Sigma^+$ be a choice of positive roots, with corresponding half-sum $\rho_\ga$.  We let $W_\ga$ be the Weyl group of $\Sigma$, i.e., the group generated by the reflections in the root hyperplanes.  We let $\widetilde{W}_\ga = N_K(\ga) / Z_K(\ga)$ be the extended Weyl group of $\Sigma$ corresponding to the disconnected group $G$.  If $n > m$, then $\widetilde{W}_\ga$ is the same as $W_\ga$.  If $n = m$, and we identify $\ga$ with $\R^m$ in the standard way, then $W_\ga$ is the group of linear transformations that permute the coordinates and change an even number of signs, and $\widetilde{W}_\ga$ is the group that permutes the coordinates and changes an arbitrary number of signs.

Let $\gh_\gk$ be a Cartan subalgebra of $\mathfrak{m}$, so that $\gh = \ga + \gh_\gk$ is a Cartan subalgebra of $\g$.  We let $\Delta = \Delta(\gh, \g)$ be the roots of $\gh$ in $\g$.  We let $\Delta^+$ be a choice of positive roots that is compatible with $\Sigma^+$.  We let $\rho$ be the half-sum of $\Delta^+$, and let $\rho_{\mathfrak{m}}$ be the half-sum of a system of positive roots for $\mathfrak{m}$ that is compatible with $\Delta^+$, so that $\rho = \rho_\ga + \rho_{\mathfrak{m}}$.  We let $W$ be the Weyl group of $\Delta$, which is generated by the reflections in the root hyperplanes, and let $\widetilde{W} = N_{G(\C)}(\ga_\C) / Z_{G(\C)}(\ga_\C)$ be the extended Weyl group corresponding to the disconnected group $G$.  We have $W = \widetilde{W}$ if and only if $n+m$ is odd.

We let $\cU(\g)$ denote the universal enveloping algebra of $\g$, and $\cZ(\g)$ its center.  We shall also need to consider the algebra $\cZ(\g)^G$ of $G$-invariants in $\cZ(\g)$, which may be strictly smaller than $\cZ(\g)$ because $G$ is not Zariski-connected. We have the Harish-Chandra isomorphism $\gamma_{{\rm HC},\gh}: \cZ(\g) \xrightarrow{\,\sim\,} S(\gh)^W$, where $S(\gh)$ is the symmetric algebra of $\gh$ with complex coefficients.

\subsection{Spherical representations of $\Oo(n,m)$.}
\label{sec:O-spherical}

This section contains basic results on spherical representations of $G$.  We will prove these results by passing between $G$ and its Zariski-connected subgroup $G^0 = \SO(n,m)$, which has maximal compact subgroup $K^0 = \text{S}( \Oo(n) \times \Oo(m) )$.  Let $\mathbb{D}(G/K)$ and $\mathbb{D}(G^0/K^0)$ be the rings of differential operators on $\mathbb{H}^{n,m}$ that are invariant under $G$ and $G^0$ respectively.  We have the Harish-Chandra isomorphism $\gamma_{\text{HC},\ga} : \mathbb{D}(G^0/K^0) \to S(\ga)^{W_\ga}$ for $G^0$, see for instance \cite[Theorem 2.6.7]{GaVa}.

Our first result establishes versions of the Harish-Chandra isomorphisms $\gamma_{\textup{HC},\gh}$ and $\gamma_{\textup{HC},\ga}$ for the disconnected group $G$.

\begin{lemma}
\label{O-Harish-Chandra}

The map $\gamma_{\textup{HC},\gh}$ induces an isomorphism $\cZ(\g)^G \simeq S(\gh)^{\widetilde{W}}$, and $\gamma_{\textup{HC},\ga}$ induces an isomorphism $\mathbb{D}(G/K) \simeq S(\ga)^{\widetilde{W}_\ga}$.

\end{lemma}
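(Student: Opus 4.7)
Both isomorphisms will follow from the classical Harish--Chandra isomorphisms on the connected group $G^0$, after identifying the residual action of the finite group $G/G^0$ on each target.

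I would begin with the reduction. Since $\cZ(\g)$ consists of $\g$-invariants under the adjoint action and $G^0$ is connected, $G^0$ acts trivially on $\cZ(\g)$, so $\cZ(\g)^G = \cZ(\g)^{G/G^0}$. Since $G = G^0 K$ and $G^0 \cap K = K^0$, the natural map $G^0/K^0 \to G/K$ is a diffeomorphism, under which $\mathbb{D}(G/K) = \mathbb{D}(G^0/K^0)^{K/K^0}$. Combined with the classical Harish--Chandra isomorphisms $\gamma_{\textup{HC},\gh}\colon \cZ(\g) \xrightarrow{\sim} S(\gh)^W$ and $\gamma_{\textup{HC},\ga}\colon \mathbb{D}(G^0/K^0) \xrightarrow{\sim} S(\ga)^{W_\ga}$ for $G^0$, the problem reduces to identifying the $G/G^0$- and $K/K^0$-actions on these targets with those of $\widetilde{W}/W$ and $\widetilde{W}_\ga/W_\ga$, respectively.

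To identify these actions, let $\sigma$ be a representative of a nontrivial coset in $G/G^0$ (resp.\ $K/K^0$). By the $G^0$-conjugacy (resp.\ $K^0$-conjugacy) of Cartan subalgebras, one may adjust $\sigma$ within its coset so that $\Ad(\sigma)$ normalizes $\gh$ (resp.\ $\ga$). The Harish--Chandra isomorphism is equivariant under such automorphisms of $\g$, with the indeterminacy in the choice of positive system absorbed into the $W$-action (resp.\ $W_\ga$-action) on the target. Hence $\Ad(\sigma)$ acts on $S(\gh)^W$ (resp.\ $S(\ga)^{W_\ga}$) via a well-defined coset in $\widetilde{W}/W$ (resp.\ $\widetilde{W}_\ga/W_\ga$), compatibly with the original action on $\cZ(\g)$ (resp.\ $\mathbb{D}(G^0/K^0)$).

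The main obstacle is checking that the induced maps $G/G^0 \to \widetilde{W}/W$ and $K/K^0 \to \widetilde{W}_\ga/W_\ga$ are surjective; without this, taking invariants would only yield invariants under the image, rather than under all of $\widetilde{W}$ (resp.\ $\widetilde{W}_\ga$). Surjectivity is automatic when the quotient is trivial: for the first map this is the case when $n+m$ is odd (so $\widetilde{W} = W$), and for the second when $n \neq m$ (so $\widetilde{W}_\ga = W_\ga$). In the remaining cases one verifies surjectivity by exhibiting an explicit representative: the block-diagonal reflection $(\mathrm{diag}(-1, I_{n-1}), I_m) \in K \setminus K^0$ induces the nontrivial outer diagram automorphism of the root system of $\mathfrak{so}(n+m, \C)$ on $\gh$, and a similar reflection in the $\Oo(m)$-factor handles the $\ga$-case when $n = m$. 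Once surjectivity is established, taking invariants commutes with the Harish--Chandra isomorphisms, yielding
\[
\cZ(\g)^G \simeq (S(\gh)^W)^{\widetilde{W}/W} = S(\gh)^{\widetilde{W}}, \qquad \mathbb{D}(G/K) \simeq (S(\ga)^{W_\ga})^{\widetilde{W}_\ga/W_\ga} = S(\ga)^{\widetilde{W}_\ga},
\]
which is the assertion.
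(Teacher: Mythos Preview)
Your argument is correct and is essentially the paper's approach: reduce to the classical Harish--Chandra isomorphisms for $G^0$, then identify the residual $G/G^0$- and $K/K^0$-actions on the targets with those of $\widetilde{W}/W$ and $\widetilde{W}_\ga/W_\ga$ (the paper makes the equivariance step a bit more concrete by choosing the coset representative $k \in K \setminus K^0$ to preserve the full Iwasawa decomposition $\gn \oplus \ga \oplus \gk$, not just $\ga$, so that compatibility with the $\rho$-shift in $\gamma_{\textup{HC},\ga}$ is immediate). One small caution: $G^0 = \SO(n,m)$ has two Hausdorff components, so ``since $G^0$ is connected'' should read ``since $G^0$ is Zariski-connected'' (equivalently, $\Ad(G^0) \subset \Inn(\g_\C)$), and likewise $K^0$ acts trivially on $\mathbb{D}(G^0/K^0)$ not by connectedness but because its conjugation action coincides with left translation.
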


\begin{proof}

We shall only prove this for $\gamma_{\textup{HC},\ga}$, as the case of $\gamma_{\textup{HC},\gh}$ is simpler.  It will be helpful to explicate the relation between the algebras $\mathbb{D}(G/K)$ and $\mathbb{D}(G^0/K^0)$ using the commutative diagram
\begin{equation*}
\xymatrix{
\mathbb{D}(G/K) \ar[r]  & \mathbb{D}(G^0/K^0) \\
\cU(\g)^K / \cU(\g)^K \cap \cU(\g) \gk  \ar[r]\ar[u]^{\wr}& \ar[u]^{\wr} \cU(\g)^{K^0} / \cU(\g)^{K^0} \cap \cU(\g) \gk .
}
\end{equation*}
The horizontal maps are the obvious inclusions. That the vertical maps are isomorphisms is well-known; see, for example, \cite[Prop. 1.7.2]{GaVa}. The right-hand vertical map is equivariant for the $K$-action, where $K$ acts on $\mathbb{D}(G^0/K^0)$ through its conjugation action on $G^0/K^0$.
Moreover, the left-hand column is equal to the $K$-invariants in the right-hand column.

Because $\mathbb{D}(G/K)$ is equal to the $K$-invariants in $\mathbb{D}(G^0/K^0)$, we may prove the lemma by understanding how $\gamma_{\textup{HC},\ga}$ interacts with conjugation by $K$, and to this end we recall how $\gamma_{\textup{HC},\ga}$ is defined \cite[Theorem 2.6.7]{GaVa}. If $D \in \mathbb{D}(G^0/K^0)$ is represented by an element $u_D \in \cU(\g)^{K^0}$, then $\gamma_{\textup{HC},\ga}(D)$ is defined by first choosing the unique $a_D \in S(\ga)$ such that
\[
u_D - a_D \in \gn \, \cU(\g) + \cU(\g) \gk,
\]
then setting $\gamma_{\textup{HC},\ga}(D) = \rho^* a_D$, where $\rho^*$ is the automorphism of $S(\ga)$ that sends $X$ to $X + \rho(X)$ for every $X \in \ga$. In particular, every $D \in \mathbb{D}(G^0/K^0)$ is determined by its `$\ga$-part' $a_D$.

We now consider the action of $K$ on $\mathbb{D}(G^0/K^0)$.  To understand this action, it will be useful to choose an element $k \in K - K^0$ that preserves the Iwasawa decomposition of $\g$.  We note that such a $k$ exists, because any $k' \in K - K^0$ transforms the Iwasawa decomposition $\g = \gn \oplus \ga \oplus \gk$ into one of the form $\g = \Ad(k')\gn \oplus \Ad(k')\ga \oplus \gk$, and this new decomposition may be transformed back to the standard one by an element $k_0$ of $K^0$; we then take $k = k_0 k'$.  The action of $\Ad(k)$ on $\ga$ is via an element $w \in \widetilde{W}_\ga$ that preserves the positive roots.  Because $\Ad(k)$ preserves the Iwasawa decomposition, we have $a_{\Ad(k) D} = \Ad(k) a_D = wa_D$, and this implies that $\gamma_{\textup{HC},\ga}(\Ad(k)D) = w\gamma_{\textup{HC},\ga}(D)$.

If $n > m$, then $W_\ga = \widetilde{W}_\ga$, and $w$ is equal to the identity because the Dynkin diagram of $\Sigma$ has no non-trivial automorphisms.  This implies that $S(\ga)^{W_a} = S(\ga)^{\widetilde{W}_\ga}$ and $\mathbb{D}(G/K) = \mathbb{D}(G^0/K^0)$, which gives the lemma.  If $n = m$, then $w \in \widetilde{W}_\ga - W_\ga$ is nontrivial, and the lemma again follows.
\end{proof}

Lemma \ref{O-Harish-Chandra} allows us to parametrize irreducible spherical representations of $G$.  (We shall technically work with $(\g,K)$-modules, but refer to them as representations by slight abuse of terminology.)  If $(\pi, V)$ is such a representation, we note that $\dim V^K = 1$, which follows from the commutativity of $\mathbb{D}(G/K)$ in the usual way. We may then use Lemma \ref{O-Harish-Chandra} to define the spectral parameter of $\pi$ as the element $\lambda \in \ga_\C^* / \widetilde{W}_\ga$ corresponding to the character by which $\mathbb{D}(G/K) \simeq S(\ga)^{\widetilde{W}_\ga}$ acts on $V^K$.  The following result shows that $\pi$ is classified by its spectral parameter.

\begin{prop}
\label{O-spherical}
For any $\lambda \in \ga_\C^* / \widetilde{W}_\ga$, there is a unique irreducible admissible spherical representation $\pi_\lambda$ of $G$ with that spectral parameter.  The infinitesimal character of $\pi_\lambda$ is given by $\lambda + \rho_{\mathfrak{m}} \in \gh^*_\C / \widetilde{W}$.

\end{prop}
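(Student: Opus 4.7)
The plan is to realize $\pi_\lambda$ as the unique $K$-spherical subquotient of a minimal principal series of $G$, read off its parameters from this realization, and derive uniqueness via Casselman's subrepresentation theorem.

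\textbf{Existence.} Let $P = MAN$ be the minimal parabolic subgroup of $G$ with $M = Z_K(A)$, so that $M \subset K$. Fix a representative $\lambda \in \ga_\C^*$ of the given class, and form the normalized parabolic induction $I(\lambda) = \Ind_P^G(1_M \otimes e^\lambda \otimes 1_N)$. Since $P \cap K = M$, restriction to $K$ followed by Frobenius reciprocity gives
\[
\dim I(\lambda)^K = \dim \Hom_M(1_M, 1_M) = 1,
\]
so $I(\lambda)$ has a unique irreducible subquotient $\pi_\lambda$ containing a nonzero $K$-fixed vector.

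\textbf{Parameters.} The infinitesimal character of $I(\lambda)$ is determined by the standard parabolic induction formula: the trivial $M$-representation has infinitesimal character $\rho_\mathfrak{m}$ (as a character of $\cZ(\mathfrak{m})$), while the character $e^\lambda$ contributes $\lambda$; combining via Harish-Chandra for $\gh = \ga + \gh_\gk$ and the isomorphism $\cZ(\g)^G \simeq S(\gh)^{\widetilde{W}}$ of Lemma \ref{O-Harish-Chandra}, one obtains infinitesimal character $\lambda + \rho_\mathfrak{m} \in \gh_\C^*/\widetilde{W}$, which is inherited by $\pi_\lambda$. For the spectral parameter, a direct calculation in the Iwasawa decomposition shows that any $D \in \mathbb{D}(G/K)$ acts on the $K$-fixed line of $I(\lambda)$ by the scalar $\gamma_{\text{HC},\ga}(D)(\lambda)$; using the identification $\mathbb{D}(G/K) \simeq S(\ga)^{\widetilde{W}_\ga}$ of Lemma \ref{O-Harish-Chandra}, the spectral parameter of $\pi_\lambda$ is the class of $\lambda$ in $\ga_\C^*/\widetilde{W}_\ga$.

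\textbf{Uniqueness.} Let $\pi$ be any irreducible admissible $K$-spherical representation of $G$ with spectral parameter $\lambda$. By Casselman's subrepresentation theorem, applied to the real reductive group $G = \Oo(n,m)$, $\pi$ embeds into some minimal principal series $\Ind_P^G(\sigma \otimes e^\mu)$ with $\sigma$ an irreducible $M$-representation and $\mu \in \ga_\C^*$. The Frobenius reciprocity argument above forces $\sigma = 1_M$, and matching spectral parameters yields $\mu \in \widetilde{W}_\ga \lambda$. The delicate step is to verify that $I(\mu)$ and $I(w\mu)$ have the same composition factors for every $w \in \widetilde{W}_\ga$, not just for $w \in W_\ga$; in the case $n = m$ this requires producing intertwining operators between principal series using elements of $K \setminus K^0$ that act on $\ga$ via $\widetilde{W}_\ga \setminus W_\ga$. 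Granting this, $\pi$ coincides with the unique $K$-spherical subquotient $\pi_\lambda$, which in particular depends only on the class of $\lambda$ in $\ga_\C^*/\widetilde{W}_\ga$.
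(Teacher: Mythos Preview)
Your existence argument via parabolic induction matches the paper's. The infinitesimal character computation is similar in spirit, though the paper makes the reduction to $G^0$ more explicit, citing the known result for connected groups rather than invoking a ``standard formula.''

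For uniqueness, however, the paper takes a much more direct route that bypasses the difficulties you encounter. Rather than invoking Casselman's subrepresentation theorem, the paper argues as follows: given two irreducible spherical representations $(\pi_1, V_1)$, $(\pi_2, V_2)$ with the same spectral parameter, choose $v \in V_1^K \oplus V_2^K$ lying in neither summand, and let $V$ be the $(\g,K)$-submodule it generates. Any $w \in V^K$ has the form $w = \sum_i X_i k_i v = (\sum_i X_i) v$ (since $v$ is $K$-fixed); after averaging over $K$ one obtains $w = Yv$ with $Y \in \mathbb{D}(G/K)$, and since $\mathbb{D}(G/K)$ acts on $V_1^K \oplus V_2^K$ by a single scalar (the common spectral parameter), $w \in \C v$. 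Hence $V^K = \C v$, so $V$ is a proper nonzero submodule distinct from $V_1$ and $V_2$, and therefore the graph of an isomorphism $\pi_1 \simeq \pi_2$.

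This argument uses only the commutativity of $\mathbb{D}(G/K)$ and works uniformly for the disconnected group $G = \Oo(n,m)$. It sidesteps both the question of whether Casselman applies directly to $\Oo(n,m)$ and the ``delicate step'' you explicitly leave open concerning $\widetilde{W}_\ga$-equivalence of principal series. Your strategy is the classical one for connected groups and can be completed (the missing intertwiner for $w \in \widetilde{W}_\ga \setminus W_\ga$ does come from conjugation by an element of $K \setminus K^0$ preserving the Iwasawa decomposition), but the paper's argument is both shorter and avoids any appeal to deep structure theory.
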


\begin{proof}

We begin with the uniqueness of $\pi_\lambda$.  Let $(\pi_1, V_1)$ and $(\pi_2, V_2)$ be two irreducible spherical representations with the same spectral parameter.  Let $v \in V_1^K \oplus V_2^K$ be a spherical vector that doesn't lie in either $V_1^K$ or $V_2^K$, and let $V$ be the subrepresentation it generates as a $(\g,K)$-module.  We claim that $V^K = \C v$.  Indeed, if $w \in V^K$, then there are $k_i \in K$ and $X_i \in \cU(\g)$ such that
\[
w = \sum_i X_i k_i v = \Big( \sum_i X_i \Big) v = Yv,
\]
where $Y \in \mathbb{D}(G/K)$.  Moreover, $\mathbb{D}(G/K)$ acts by scalars on $V_1^K \oplus V_2^K$, since, by assumption, both summands share the same spectral parameter.  This implies that $w \in \C v$ as required.  Finally, because $V^K = \C v$, we see that $V$ is a nonzero submodule of $V_1 \oplus V_2$ that is not equal to $V_1$, $V_2$, or $V_1 \oplus V_2$.  $V$ therefore induces an isomorphism $\pi_1 \simeq \pi_2$.

For existence, we construct $\pi_\lambda$ by parabolic induction. Let $P$ be the minimal parabolic subgroup of $G^0$ defined over $\R$.  We let $P = NAM$ be the Langlands decomposition of $P$, so that $A$ is the connected Lie group with Lie algebra $\ga$.  Let $\lambda \in \ga_\C^*$, and let $e^\lambda$ be the corresponding character of $A$, which we extend to $P$.  The unitarily induced representation $\text{Ind}_P^G \, e^\lambda$ has a unique irreducible spherical subquotient $\pi_\lambda$, and it may be checked that $\pi_\lambda$ has spectral parameter $\lambda$ as required.

Finally, let $\gamma_\lambda:\cZ(\g)^G \rightarrow \C$ be the infinitesimal character of $\pi_\lambda$.  Under the Harish-Chandra isomorphism $\gamma_{{\rm HC},\gh}$ of Lemma \ref{O-Harish-Chandra}, $\gamma_\lambda$ determines an element in $\gh_\C^*/\widetilde W$, which we must show can be represented by $\lambda+\rho_{\mathfrak{m}}$.  If we also let $\gamma_{{\rm HC},\ga} : \mathbb{D}(G/K) \xrightarrow{\,\sim\,} S(\ga)^{\widetilde{W}_\ga}$ be as in Lemma \ref{O-Harish-Chandra}, the spectral parameter $\lambda\in\ga_\C^*/\widetilde W_\ga$ of $\pi_\lambda$ satisfies $\chi_\lambda(D)=\lambda(\gamma_{{\rm HC},\ga}(D))$ for a uniquely determined algebra homomorphism $\chi_\lambda: \mathbb{D}(G/K)\rightarrow\C$, and $\gamma_\lambda$ is given by the composition
\[
\gamma_\lambda: \cZ(\g)^G \to \mathbb{D}(G/K) \xrightarrow{\,\chi_\lambda \,} \C.
\]
We may compute this by comparison with $G^0$.  We choose an element $\lambda' \in \ga_\C^* / W_\ga$ that lifts $\lambda$; then the associated algebra homomorphism $\chi_{\lambda'}: \mathbb{D}(G^0/K^0) \to \C$ extends $\chi_\lambda$. It follows that the restriction to $\cZ(\g)^G$ of the character
\[
\cZ(\g) \to \mathbb{D}(G^0/K^0) \xrightarrow{\,\chi_{\lambda'}\,} \C
\]
coincides with $\gamma_\lambda$. Since this character of $\cZ(\g)$ corresponds to $\lambda' + \rho_{\mathfrak{m}} \in \gh^*_\C / W$ under $\gamma_{\textup{HC},\gh}$, see e.g. the proof of Proposition 2.1 in \cite{Helgason2}, it follows that $\gamma_\lambda$ corresponds to $\lambda + \rho_{\mathfrak{m}} \in \gh^*_\C / \widetilde{W}$ under $\gamma_{\textup{HC},\gh}|_{\cZ(\g)^G}$.
\end{proof}

\subsection{A second moment asymptotic for discrete periods}

We now return to the global setting. We continue to let $G = \Oo(n,m)$ and $K = \Oo(n) \times \Oo(m)$. For a uniform lattice $\Gamma<G$ we denote by $Y=\Gamma\backslash \mathbb{H}^{n,m}$ the associated (compact) locally symmetric space.

In the rest of this section we assume that $n > m$. This constraint ensures that objects such as spherical functions, spherical representations, and Maass forms behave in essentially the same way for $G$, $G^0$, and the connected Lie group $G^{00} = \SO(n,m)$ with maximal compact subgroup $K^{00} = \SO(n) \times \SO(m)$. We explain this in the next paragraph.

Recall from \S\ref{sec:O-spherical-notn} that  $W_{\mathfrak{a}}=\widetilde{W_{\mathfrak{a}}}$ whenever $n>m$. Under this condition, a function on $\mathbb{H}^{n,m}=G/K=G^{00}/K^{00}$ is seen to be $K$-invariant if and only if it is $K^{00}$-invariant. Moreover, combining the Weyl group identity with Lemma \ref{O-Harish-Chandra}, we deduce in this case that $\mathbb{D}(G/K) = \mathbb{D}(G^0/K^0)$, and these are both equal to $\mathbb{D}(G^{00}/K^{00})$ because $G^0$ lies in the Harish-Chandra class. It follows that the notion of a joint eigenfunction, and its spectral parameter, on $\mathbb{H}^{n,m}$ is the same regardless of whether the latter is viewed as $G/K$ or $G^{00}/K^{00}$. Likewise, the definitions of spherical functions, and the Harish-Chandra transform, are insensitive to the choice of presentation of $\mathbb{H}^{n,m}$ as a symmetric space for $G$ or $G^{00}$. Finally, we observe that occurrences of $\pi_\lambda$ in $L^2(\Gamma \backslash G)$ are equivalent to Maass forms on $Y=\Gamma \backslash \mathbb{H}^{n,m}$ with spectral parameter $\lambda$, and that if $\pi_\lambda$ is unitarizable, then $\text{Re} \, \lambda$ lies in the convex hull of the set $W_\ga \rho_\ga$.

In preparation for the proof of Proposition \ref{local-Weyl}, we establish the following result.

\begin{lemma}\label{upper-local-Weyl} For any $x\in Y$, $\nu\in i\mathfrak{a}^*$, and orthonormal basis $\{f_\mu\}$ of $L^2(Y)$ consisting of Maass forms $f_\mu$ of spectral parameter $\mu$, we have
\[
\sum_{\|{\rm Im}\,\mu-\nu\|\leqslant 1}|f_\mu(x)|^2\ll \tilde\beta_S(\nu),
\]
the implied constant depending on $\Gamma$. In particular, dropping all but one term, we obtain the local (or trivial) bound $\|f_\nu\|_\infty\ll \tilde\beta_S(\nu)^{1/2}$.
\end{lemma}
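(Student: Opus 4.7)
The plan is to carry out a standard pre-trace inequality at the spectral parameter $\nu$, pairing an approximate spectral projector on the spectral side with the compactness of $Y$ on the geometric side. Since $n>m$, the discussion preceding the statement shows that spherical Maass forms on $Y$ are the same whether regarded as attached to $G$ or to the connected simple Lie group $G^{00}=\SO(n,m)$; I therefore carry out the analysis on $G^{00}$, where the results of Sections \ref{sec:test-function}--\ref{sec:upper-bd-cusp-spec} apply directly.

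Following the usage in Section \ref{sec:proof-Prop-Weyl}, I invoke Proposition \ref{test-fn} with trivial $K$-type and spectral parameter $-\nu$, producing a bi-$K^{00}$-invariant function $k^0_{-\nu}\in C^\infty_R(S,\tau_{\rm triv},\tau_{\rm triv})$ for some fixed $0<R<R_0$. After rescaling by a constant depending only on $G^{00}$, the positivity property of Proposition \ref{test-fn} guarantees that $\widehat{k^0_{-\nu}}(\lambda)\geqslant 1$ for every unitary spherical $\lambda$ with $\|{\rm Im}\,\lambda+\nu\|\leqslant 1$. Set $k_{-\nu}=k^0_{-\nu}\ast (k^0_{-\nu})^\ast$, which is supported in $G_{2R}$. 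The convolution operator $R(k^0_{-\nu})$ acts on any spherical Maass form $f_\mu$ of parameter $\mu\in i\mathfrak{a}^*$ by a scalar of absolute value at least $1$ whenever $\|{\rm Im}\,\mu-\nu\|\leqslant 1$, exactly as in the proof of Proposition \ref{Weyl-upper-bd}. Applying the pre-trace inequality of Lemma \ref{pre-trace} with $\omega=k^0_{-\nu}$ to the finite orthonormal set $\{f_\mu : \|{\rm Im}\,\mu-\nu\|\leqslant 1\}$ then yields
\[
\sum_{\|{\rm Im}\,\mu-\nu\|\leqslant 1}|f_\mu(x)|^2 \leqslant \sum_{\gamma\in\Gamma}k_{-\nu}(x^{-1}\gamma x).
\]

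To estimate the right-hand side uniformly in $x\in Y$: because $k_{-\nu}$ is supported in the fixed compact set $G_{2R}$ and $\Gamma$ is cocompact, there are only $O_\Gamma(1)$ values of $\gamma\in\Gamma$ for which $x^{-1}\gamma x\in G_{2R}$, uniformly as $x$ ranges over $Y$. Each such term is bounded by $\|k_{-\nu}\|_\infty\ll\tilde\beta_S(\nu)$, the bound coming from Lemma \ref{k-nu-e} with $D$ the identity operator. The main inequality follows, and the local bound results from retaining a single term on the left. There is no substantive obstacle here: the argument is a routine deployment of the ingredients already assembled in Sections \ref{sec:test-function}--\ref{sec:upper-bd-cusp-spec}, and the only delicate point is the sign-convention check needed to pass from the positivity of $\widehat{k^0_{-\nu}}$ near $-\nu$ to the eigenvalue lower bound on $R(k^0_{-\nu})f_\mu$ for $\mu$ near $\nu$ --- a convention already in force in the proof of Proposition \ref{Weyl-upper-bd}.
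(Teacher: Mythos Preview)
Your proposal is correct and follows essentially the same route as the paper: build the test function from Proposition~\ref{test-fn} at $-\nu$, apply a pre-trace inequality, and bound the geometric side using Lemma~\ref{k-nu-e}. The only cosmetic difference is that the paper first passes to a torsion-free finite-index subgroup (via Selberg's lemma) and then shrinks the support of $k_{-\nu}$ so that only $\gamma=e$ survives on the geometric side, whereas you instead invoke cocompactness to assert that at most $O_\Gamma(1)$ lattice elements contribute and bound each by $\|k_{-\nu}\|_\infty$; both variants yield the same estimate.
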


\begin{proof}

We may assume without loss of generality that $\Gamma$ is torsion free, by using Selberg's lemma to pass to a torsion-free subgroup of finite index if necessary.

We let $k_{-\nu}$ be the test function on $G^{00} = \SO(n,m)^0$ defined in Proposition \ref{test-fn}, which may be viewed as a $K$-biinvariant function on $G$ by the remarks preceding the statement of the lemma.  We spectrally expand the associated automorphic kernel function $K_{-\nu}$. Since $Y$ is compact, there is no continuous spectrum, and we obtain
\begin{equation}\label{eq:compact-expansion}
\sum_{\mu}\widehat{k}_{-\nu}(-\mu)f_\mu(x)\overline{f_\mu(y)}=K_{-\nu}(x,y)=\sum_{\gamma\in\Gamma}k_{-\nu}(x^{-1}\gamma y).
\end{equation}
We now take $x=y$ so that
\[
\sum_{\mu}\widehat{k}_{-\nu}(-\mu)|f_\mu(x)|^2=\sum_{\gamma\in\Gamma}k_{-\nu}(x^{-1}\gamma x).
\]
Taking the support of $k_{-\nu}$ small enough, only $\gamma = e$ contributes to the sum (since $\Gamma$ is torsion-free), so that
\begin{equation}\label{simple-pre-trace}
\sum_{\mu}\widehat{k}_{-\nu}(-\mu)|f_\mu(x)|^2= k_{-\nu}(e).
\end{equation}
We then apply the estimate $k_{-\nu}(e)\ll \tilde\beta_S(\nu)$ of Proposition \ref{k-nu-e} to bound the right-hand side.

For the left-hand side of \eqref{simple-pre-trace}, we apply Property \eqref{1} of Proposition \ref{test-fn}, which allows us to drop all terms but those for which $\|{\rm Im}\,\mu-\nu\|\leqslant 1$. For the remaining terms, we use Property \eqref{3} of Proposition \ref{test-fn} to find
\[
c\sum_{\|{\rm Im}\,\mu-\nu\|\leqslant 1}|f_\mu(x)|^2\leqslant \sum_{\mu}\widehat{k}_{-\nu}(-\mu)|f_\mu(x)|^2.
\]
Putting these bounds together establishes the lemma.
\end{proof}

We now complement Lemma \ref{upper-local-Weyl} by giving (upper and) lower bounds for the mean square of a finite discrete period on $Y$. We retain the same notations and assumptions as above.

\begin{prop}\label{local-Weyl}
There is $Q\geqslant 1$ such that for any finite collection of points $x_1,\ldots ,x_h\in Y$, together with non-zero weights $c_i\in \C$, any spectral parameter $\nu\in i\ga^*$ of large enough norm depending on $x_i$, and any orthonormal basis $\{f_\mu\}$ of $L^2(Y)$ consisting of Maass forms $f_\mu$ of spectral parameter $\mu$, we have
\begin{equation}\label{cor-cmpt-packet}
\sum_{\|{\rm Im}\,\mu-\nu\|\leqslant Q}\bigg|\sum_{i=1}^h c_i f_\mu(x_i)\bigg|^2 \asymp \widetilde{\beta}_S(\nu).
\end{equation}
The implied constant in the lower bound depends only on $\Gamma$ and the weights $c_i$, while in the upper bound it depends additionally on $Q$.

\end{prop}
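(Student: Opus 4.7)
The plan is to run a pre-trace formula argument with the spherical test function $k_{-\nu} \in C_R^\infty(S,\tau_{\mathrm{triv}},\tau_{\mathrm{triv}})$ produced by Proposition \ref{test-fn}, viewing $\mathbb{H}^{n,m}$ as $G^{00}/K^{00}$ with $G^{00} = \SO(n,m)^0$; this is allowed by the equivalence of notions of spectral parameter for $G$, $G^0$, and $G^{00}$ when $n>m$ recalled just before Lemma \ref{upper-local-Weyl}. I fix lifts $\tilde x_i \in G^{00}$ of the $x_i$ and choose the support radius $R \leqslant R_0$ smaller than both the minimal injectivity radius at the $x_i$'s and half of $\min_{i\neq j} d(x_i,x_j)$. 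The pre-trace identity used in the proof of Lemma \ref{upper-local-Weyl} then reads
\[
\sum_\mu \widehat{k}_{-\nu}(-\mu) \bigg|\sum_i c_i f_\mu(x_i)\bigg|^2 = \sum_{i,j} c_i \overline{c_j} \sum_{\gamma \in \Gamma} k_{-\nu}(\tilde x_i^{-1} \gamma \tilde x_j) = C_0 \, k_{-\nu}(e),
\]
where the choice of $R$ annihilates off-diagonal terms ($i \neq j$) and, by bi-$K$-invariance, reduces the diagonal to stabilizer contributions, giving $C_0 := \sum_i |c_i|^2\,|\mathrm{Stab}_\Gamma(\tilde x_i K)| > 0$.

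I next show $k_{-\nu}(e) \asymp \widetilde\beta_S(\nu)$ for sufficiently regular $\nu$. The upper bound is Lemma \ref{k-nu-e}. For the matching lower bound, I expand $k_{-\nu}(e)$ by the spherical inversion formula \eqref{eq:sph-planch-inv}, restrict the integrand to the ball $\|\lambda + \nu\| \leqslant 1$ where $\widehat{k}_{-\nu}(\lambda) \geqslant c > 0$ by Proposition \ref{test-fn} \eqref{3}, and invoke Lemma \ref{lemma:c-bd} \eqref{c-bd3}; the hypothesis that $\|\nu\|$ is large enough enters at this point. The upper bound in \eqref{cor-cmpt-packet} is then immediate from Cauchy--Schwarz and Lemma \ref{upper-local-Weyl}, summed over $O_Q(1)$ unit balls covering the $Q$-window, with $\widetilde\beta_S$-values compared across centers by Lemma \ref{c-bound}.

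For the lower bound in \eqref{cor-cmpt-packet}, I use that $\widehat{k}_{-\nu}(-\mu) \ll 1$ uniformly on $i\ga^*$ (from Proposition \ref{test-fn} \eqref{4} with $\mathrm{Re}\,\mu = 0$), so it suffices to show that the tail $\mathcal S_{> Q}(\nu) := \sum_{\|\mathrm{Im}\,\mu - \nu\| > Q} \widehat{k}_{-\nu}(-\mu) |\sum_i c_i f_\mu(x_i)|^2$ of the kernel identity is at most $\tfrac12 C_0 k_{-\nu}(e)$ once $Q$ is large. Using the Paley--Wiener decay $\widehat{k}_{-\nu}(-\mu) \ll_A \sum_w (1 + \|w\mu - \nu\|)^{-A}$ from Proposition \ref{test-fn} \eqref{4}, Cauchy--Schwarz in the $c_i$'s, and a dyadic decomposition of $\{\|\mathrm{Im}\,\mu - \nu\| > Q\}$ into annuli on each of which Lemma \ref{upper-local-Weyl} combined with Lemma \ref{c-bound} bounds the eigenfunction sum, yields a bound of the form
\[
\mathcal S_{> Q}(\nu) \ll_\Gamma \Big(\sum_i |c_i|^2\Big)\, Q^{-A + O(1)}\, \widetilde\beta_S(\nu),
\]
with $A$ freely large. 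Since stabilizer orders in the cocompact arithmetic lattice $\Gamma$ are bounded by a $\Gamma$-dependent constant, $C_0 \asymp_\Gamma \sum_i |c_i|^2$, and the tail is dominated by the main term for $A$ and then $Q$ sufficiently large.

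The main obstacle is producing a $Q$ that is independent of the data $x_i, c_i$: while $R$ and $C_0$ do depend on this data, the ratio $\mathcal S_{> Q}(\nu)/(C_0 k_{-\nu}(e))$ depends only on $\Gamma$, on the group-theoretic constants $r$ and $d(\Sigma)$, and on the freely chosen exponent $A$, so the tail estimate closes with a universal $Q$. Beyond this, the sharp matching $k_{-\nu}(e) \asymp \widetilde\beta_S(\nu)$ in the regular regime is the quantitative backbone of the argument and rests on the integrated \textbf{c}-function estimate \eqref{c-bd3} of Lemma \ref{lemma:c-bd}.
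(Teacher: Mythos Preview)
Your overall strategy matches the paper's, but there is a genuine gap in the claim that $Q$ can be chosen independently of the points $x_i$. You make the support radius $R$ depend on the configuration of points (smaller than the injectivity radii and than $\tfrac12\min_{i\neq j} d(x_i,x_j)$). However, the Paley--Wiener bound in Proposition~\ref{test-fn}\,\eqref{4} is $\ll_{A,R}$, not $\ll_A$: the implied constant deteriorates as $R\to 0$ (roughly like $R^{-A}$, by scaling the bump $g_0$). Your tail estimate $\mathcal S_{>Q}(\nu)\ll Q^{-A+O(1)}\tilde\beta_S(\nu)$ therefore carries a hidden $R$-dependent constant, while the lower bound $C_0\,k_{-\nu}(e)\gg \tilde\beta_S(\nu)$ does not. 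The ratio $\mathcal S_{>Q}(\nu)/(C_0 k_{-\nu}(e))$ thus depends on $R$, hence on the $x_i$, and no choice of $A$ removes this (the factor behaves like $R^{-A}Q^{-A+O(1)}$, forcing $Q\gtrsim R^{-1}$). So your argument does not produce a universal $Q$.

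The paper avoids this by fixing $R$ once and for all and instead handling the non-identity and off-diagonal geometric terms via the pointwise decay of the kernel, Lemma~\ref{knu-decay}, which gives $k_{-\nu}(x^{-1}\gamma y)\ll (1+\|\nu\|\,d_Y(x,y))^{-1/2}\tilde\beta_S(\nu)$. These contributions are then $O_{x_i}(\|\nu\|^{-1/2}\tilde\beta_S(\nu))$, and are absorbed not into $Q$ but into the hypothesis that $\|\nu\|$ is large enough depending on the $x_i$ --- exactly the dependence allowed in the statement. With $R$ fixed, the Paley--Wiener constant is fixed and the tail bound yields a universal $Q$. In short, the dependence on $x_i$ must be routed through the ``$\nu$ large enough'' clause, not through $R$; your shortcut of shrinking $R$ trades this for an $x_i$-dependent $Q$, which the proposition does not permit.
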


\begin{proof}

It suffices to prove the lower bound, since the upper bound follows from Lemma \ref{upper-local-Weyl}. We set up the proof as for Lemma \ref{upper-local-Weyl}. We may therefore take as our starting point the pre-trace formula \eqref{eq:compact-expansion}.

We begin with an analysis of the spectral side of \eqref{eq:compact-expansion}, for arbitrary $x,y\in Y$. In contrast to the proof of Lemma \ref{upper-local-Weyl}, we shall need to show that the spectral summation is centered about $\nu$, up to a negligible error. We claim that for any $Q>1$, we have
\begin{equation}\label{2show}
\sum_{\|{\rm Im}\, \mu-\nu\|>Q}\widehat{k}_{-\nu}(-\mu)f_\mu(x)\overline{f_\mu(y)}\ll_A Q^{-A} \widetilde{\beta}_S(\nu).
\end{equation}
For this, we cover the region $\{\lambda\in i\mathfrak{a}^*: \|\lambda-\nu\|>Q\}$ by the union of unit balls centered at points $\lambda_n$. From the rapid decay estimate \eqref{4} from Proposition \ref{test-fn}, the Cauchy--Schwarz inequality, and Lemma \ref{upper-local-Weyl} we get
\begin{align*}
\sum_{\|{\rm Im}\,\mu-\nu\|>Q}\widehat{k}_{-\nu}(-\mu)&f_\mu(x)\overline{f_\mu(y)}\\
&\ll_A \sum_n \|\lambda_n-\nu\|^{-A}\sum_{\|{\rm Im}\,\mu-\lambda_n\|\leqslant 1}|f_\mu(x)f_\mu(y)|\\
&\leqslant \sum_n \|\lambda_n-\nu\|^{-A}\bigg(\sum_{\|{\rm Im}\,\mu-\lambda_n\|\leqslant 1}|f_\mu(x)|^2\bigg)^{1/2}\bigg(\sum_{\|{\rm Im}\,\mu-\lambda_n\|\leqslant 1}|f_\mu(y)|^2\bigg)^{1/2}\\
&\ll \sum_n \|\lambda_n-\nu\|^{-A}\tilde\beta_S(\lambda_n).
\end{align*}
Inserting Lemma \ref{c-bound}, we find
\[
\sum_{\|{\rm Im}\,\mu-\nu\|>Q}\widehat{k}_{-\nu}(-\mu)f_\mu(x)\overline{f_\mu(y)}\ll_A \tilde\beta_S(\nu)\sum_n\|\lambda_n-\nu\|^{-A}\ll_A Q^{-A}\tilde\beta_S(\nu),
\]
which is \eqref{2show}.  By expanding the square, \eqref{2show} implies that
\begin{equation}
\label{spectral-trunc}
\sum_{\|{\rm Im}\,\mu-\nu\|>Q} \widehat{k}_{-\nu}(-\mu) \bigg|\sum_{i=1}^h c_i f_\mu(x_i)\bigg|^2 \ll_A Q^{-A}\tilde\beta_S(\nu).
\end{equation}

\medskip

We now move to the geometric side of \eqref{eq:compact-expansion}. Recall the bi-$K$-invariant distance function $d:G\rightarrow\R$ from \S\ref{sec:gp-decomp}. For two points $x=\Gamma gK$ and $y=\Gamma hK$ on $Y$, we put $d_Y(x,y)=\min_{\gamma\in\Gamma}d(g,\gamma h)$. We begin by showing that for any $x, y \in Y$ we have
\begin{equation}
\label{eq:x-y}
\sum_{\mu}\widehat{k}_{-\nu}(-\mu)f_\mu(x)\overline{f_\mu(y)} \ll \big(1+\|\nu\| d_Y(x,y)\big)^{-1/2} \widetilde{\beta}_S(\nu).
\end{equation}
 We may restrict the geometric side of \eqref{eq:compact-expansion} to those $\gamma \in \Gamma$ with $x^{-1} \gamma y \in \text{supp}(k_{-\nu})$, and this is a finite set whose size depends only on $\Gamma$.  For each of these $\gamma$ we write $x^{-1} \gamma y = k_1 e^H k_2$, where $H \in \ga$ satisfies $\| H \| = d(x, \gamma y)$. Applying the bound of Lemma \ref{knu-decay}, and using $d(x,y)\geqslant d_Y(x,y)$, we find
\[
k_{-\nu}(x^{-1} \gamma y) \ll \big(1+\|\nu\| \| H \| \big)^{-1/2} \widetilde{\beta}_S(\nu) \leqslant \big(1+\|\nu\| d_Y(x,y)\big)^{-1/2} \widetilde{\beta}_S(\nu).
\]
Applying this bound to each term of the geometric side in \eqref{eq:compact-expansion} gives \eqref{eq:x-y}.

We next consider the case when $x=y$.  We split the geometric side of \eqref{eq:compact-expansion} into those $\gamma \in \Gamma$ that fix $x$, and the complement, which gives
\[
\sum_{\mu}\widehat{k}_{-\nu}(-\mu)|f_\mu(x)|^2= |\text{Stab}_\Gamma(x)| k_{-\nu}(e) + \sum_{ \gamma \in \Gamma - \text{Stab}_\Gamma(x)} k_{-\nu}( x^{-1} \gamma x).
\]
As in the case where $x \neq y$, the sum on the right-hand side is over a finite set, and each term may be bounded by $O_x( \| \nu \|^{-1/2} \widetilde{\beta}_S(\nu))$ so that
\[
\sum_{\mu}\widehat{k}_{-\nu}(-\mu)|f_\mu(x)|^2 \asymp k_{-\nu}(e) + O_x( \| \nu \|^{-1/2} \widetilde{\beta}_S(\nu)).
\]
Combining this with \eqref{eq:x-y} gives
\begin{equation}
\label{geo-main}
\sum_{\mu} \widehat{k}_{-\nu}(-\mu) \bigg|\sum_{i=1}^h c_i f_\mu(x_i)\bigg|^2 \asymp k_{-\nu}(e) + O_{x_i}( \| \nu \|^{-1/2} \widetilde{\beta}_S(\nu)).
\end{equation}

Combining the geometric estimate \eqref{geo-main} with the spectral truncation \eqref{spectral-trunc} gives
\[
\sum_{\|{\rm Im}\,\mu-\nu\| \leqslant Q} \widehat{k}_{-\nu}(-\mu) \bigg|\sum_{i=1}^h c_i f_\mu(x_i)\bigg|^2 \asymp k_{-\nu}(e)+ O_{x_i}( \| \nu \|^{-1/2} \widetilde{\beta}_S(\nu)) + O_{A}(Q^{-A}\widetilde{\beta}_S(\nu)).
\]
Since $\widehat{k}_{-\nu}(-\mu)\ll 1$ we obtain
\begin{equation}\label{eq:pretrace-w-main}
k_{-\nu}(e)+ O_{x_i}( \| \nu \|^{-1/2} \widetilde{\beta}_S(\nu)) + O_{A}(Q^{-A}\widetilde{\beta}_S(\nu)) \ll \sum_{\|{\rm Im}\,\mu-\nu\| \leqslant Q} \bigg|\sum_{i=1}^h c_i f_\mu(x_i)\bigg|^2.
\end{equation}
From Properties \eqref{1} and \eqref{3} of Proposition \ref{test-fn} we have
\[
k_{-\nu}(e)\geqslant c \int_{\substack{\mu \in i\mathfrak{a}^*\\\|\mu + \nu\|\leqslant  1}}|{\bf c}(\mu)|^{-2} d\mu.
\]
We may apply Lemma \ref{lemma:c-bd} to show that the last integral is $\asymp \widetilde{\beta}_S(\nu)$. Fixing $A>1$ and taking $Q>1$ large enough, and also taking $\nu$ large enough depending on the $x_i$, we can make the error term in \eqref{eq:pretrace-w-main} smaller by a constant factor than the main term, which gives \eqref{cor-cmpt-packet}.
\end{proof}

\section{Review of the theta correspondence}\label{sec:theta-review}

In this section, we review several properties of the local and global theta correspondence which will be useful in the proofs of our main theorems.

\subsection{Type I algebraic reductive dual pairs}\label{Type1}
Let $k$ be a field of characteristic zero. 

Let $W$ denote a symplectic space over $k$, with symplectic form $\langle\,,\,\rangle_W$. Then $W$ admits a natural left-action by $\GL(W)$ and we denote by $\bG'=\Sp(W)$ the associated symplectic group. Let $V$ be a non-degenerate quadratic space over $k$, and let $\bG=\Oo(V)$. The tensor product $\mathcal{W}=W\otimes V$ is naturally given the structure of a symplectic space by the rule $\langle w_1\otimes v_1,w_2\otimes v_2\rangle_{\mathcal{W}}=\langle w_1,w_2\rangle_W\langle v_1, v_2\rangle_V$. We again let $\GL(\mathcal{W})$ act on the left, writing $\Sp(\mathcal{W})$ for the symplectic group consisting of those automorphisms preserving $\langle \,,\,\rangle_{\mathcal{W}}$.

Let $W=U\oplus U^*$ be a complete polarization of $W$ and put $\mathcal{X}=U\otimes V$ and $\mathcal{Y}=U^*\otimes V$. With respect to the decomposition $\mathcal{W}=\mathcal{X}\oplus\mathcal{Y}$, we may view the elements in $\Sp(\mathcal{W})$ as $\langle\,,\,\rangle_{\mathcal{W}}$-preserving invertible matrices
$\left(\begin{smallmatrix}a & b\\ c & d\end{smallmatrix}\right)$, where $a\in {\rm End}(\mathcal{X})$, $b\in \Hom(\mathcal{Y},\mathcal{X})$,  $c\in \Hom(\mathcal{X},\mathcal{Y})$, and $d\in {\rm End}(\mathcal{Y})$. 

Let $\mathcal{P}_{\rm Siegel}$ be the Siegel parabolic of $\Sp(\mathcal{W})$, the stabilizer of $\mathcal{X}$. Thus $\mathcal{P}_{\rm Siegel}$ corresponds to the condition $c=0$ in the above coordinates. Let $\mathcal{P}_{\rm Siegel}=\mathcal{M}\mathcal{N}$ be the standard Levi decomposition, where $\mathcal{M}$ is the Levi subgroup consisting of elements in $\Sp(\mathcal{W})$ which preserve the decomposition $\mathcal{X}\oplus\mathcal{Y}$ and $\mathcal{N}$ is the unipotent radical of $\mathcal{P}_{\rm Siegel}$. In the above coordinates we have
\[
\mathcal{M}=\left\{m(a)=\begin{pmatrix}a & \\  & a^\vee \end{pmatrix}: a \in  \GL(\mathcal{X})\right\},\qquad \mathcal{N}=\left\{n(b)=\begin{pmatrix}1 & b\\  & 1 \end{pmatrix}: b \in  \Hom^+(\mathcal{Y},\mathcal{X}) \right\}.
\]
Here, $\Hom^+(\mathcal{Y},\mathcal{X})$ denotes the subspace of symmetric homomorphisms in $\Hom(\mathcal{Y},\mathcal{X})$, i.e those satisfying $\langle by_1, y_2 \rangle_{\mathcal{W}} = \langle by_2, y_1 \rangle_{\mathcal{W}}$ for all $y_1, y_2 \in \mathcal{Y}$, and $a^\vee\in\GL(\mathcal{Y})$ is uniquely determined by the relation $\langle ax,a^\vee y\rangle_{\mathcal{W}}=\langle x,y\rangle_{\mathcal{W}}$ for all $x\in\mathcal{X}$, $y\in\mathcal{Y}$.  If we use $\langle\,,\,\rangle_{\mathcal{W}}$ to identify $\mathcal{Y}$ with $\mathcal{X}^*$, then $\Hom^+(\mathcal{Y},\mathcal{X})$ corresponds to the symmetric maps in $\Hom(\mathcal{X}^*,\mathcal{X})$, and $a^\vee$ corresponds to ${}^t a^{-1} \in \GL(\mathcal{X}^*)$.

We now recall the embeddings of $\bG=\Oo(V)$ and $\bG'=\Sp(W)$ in $\Sp(\mathcal{W})$. The group $\bG$ acts on $\mathcal{W}$, sending $w\otimes v$ to $w\otimes (gv)$ and preserving the symplectic form $\langle\,,\,\rangle_{\mathcal{W}}$. This induces an embedding of $\bG$ into $\Sp(\mathcal{W})$, given by $g\mapsto m({\rm Id}_U\otimes g)$. Similarly, $\bG'$ acts on $\mathcal{W}$, via $w\otimes v\mapsto (g'w)\otimes v$, which again clearly preserves $\langle\,,\,\rangle_{\mathcal{W}}$. To describe the resulting embedding into $\Sp(\mathcal{W})$ explicitly, we first use the polarization $W=U\oplus U^*$, to write matrices in $\Sp(W)$ in the form $\left(\begin{smallmatrix}A&B\\C&D\end{smallmatrix}\right)$, where $A\in {\rm End}(U)$, $B\in\Hom(U^*,U)$, $C\in\Hom(U,U^*)$, $D\in {\rm End}(U^*)$. Thus, $\bG'\hookrightarrow\Sp(\mathcal{W})$ is given by
\[
\begin{pmatrix}A & B\\ C & D \end{pmatrix}\mapsto \begin{pmatrix}  A \otimes {\rm Id}_V&  B\otimes {\rm Id}_V \\  C\otimes {\rm Id}_V  &  D\otimes {\rm Id}_V \end{pmatrix}.
\]
Then $\bG$ and $\bG'$ form a Type I algebraic reductive dual pair in $\Sp(\mathcal{W})$: for any extension $K/k$ the groups $\bG(K)$ and $\bG'(K)$ are mutual commutants in $\Sp(\mathcal{W})(K)$, in the sense that
\[
{\rm Cent}_{\Sp(\mathcal{W})(K)}(\bG(K))=\bG'(K)\quad\textrm{and}\quad {\rm Cent}_{\Sp(\mathcal{W})(K)}(\bG'(K))=\bG(K).
\]

\subsection{Local theta correspondence}\label{sec:Schroedinger}

In this section, we shall take the field $k$ to be either $\R$ or a finite extension of $\Q_p$. Throughout we shall write $G=\bG(k)={\rm O}(V)(k)$ and $G'=\bG'(k)=\Sp(W)(k)$. 

Let $\widetilde{\Sp}(\mathcal{W})_k$ denote the metaplectic group\footnote{As is well-known, we may write
\[
\widetilde{\Sp}(\mathcal{W})_k=\widehat{\Sp}(\mathcal{W})_k\times_{\mu_2} S^1,
\]
where $\widehat{\Sp}(\mathcal{W})_k$ is the unique non-split 2-fold central extension of $\Sp(\mathcal{W})_k$ and $\mu_2=\{\pm 1\}$. We prefer to work with $\widetilde{\Sp}(\mathcal{W})_k$ rather than $\widehat{\Sp}(\mathcal{W})_k$, since we shall work with subgroups of $\Sp(\mathcal{W})_k$ which split over the former without splitting over the latter; see \cite[II.9, Remarque]{MVW}. For our purposes, we could also use the degree $8$ cover $\widehat{\Sp}(\mathcal{W})_k\times_{\mu_2} \mu_8$.}, defined as the unique non-split central extension of $\Sp(\mathcal{W})(k)$ by $S^1$; thus
\[
1\rightarrow S^1\rightarrow \widetilde{\Sp}(\mathcal{W})_k\rightarrow\Sp(\mathcal{W})(k)\rightarrow 1.
\]
We may write $\widetilde{\Sp}(\mathcal{W})_k$ as the product $\Sp(\mathcal{W})(k)\times  S^1$ with the group multiplication given by a certain cocycle in $H^2(\Sp(\mathcal{W})(k), S^1)$, explicated by means of the Leray invariant by Perrin \cite{Perrin} and Rao \cite{RR}. This cocycle (but not its class) depends on the choice of Lagrangian subspace $\mathcal{X}$ of $\mathcal{W}$, as well as the choice of a non-trivial additive character of $k$, that we fix below.

Fix a non-trivial additive character $\psi$ of $k$ as follows. If $k$ is an extension of $\Q_p$, let $\psi_0$ denote the standard unramified character $x\mapsto e^{-2\pi ix}$ of $\Q_p$, and let $\psi$ be the pullback of $\psi_0$ by the trace map. If $k=\R$, we shall take $\psi(x)=e^{2\pi ix}$. 

Let ${\rm Heis}(\mathcal{W},k)$ denote the Heisenberg group of $\mathcal{W}$. When $k$ is non-archimedean, we let $\rho=\rho_\psi$ denote the unique, up to isomorphism, irreducible smooth representation of ${\rm Heis}(\mathcal{W},k)$ with central character $\psi$. Let $\omega=\omega_\psi$ be the corresponding Weil representation of $\widetilde{\Sp}(\mathcal{W})_k$, acting on the space of $\rho$. The latter can be taken to be the space $\mathscr{S}(\mathcal{Y})$ of Schwartz--Bruhat functions on $\mathcal{Y}$ (the isomorphism class of $\rho$ and of $\omega$ is independent of the choice of the Langrangian subspace $\mathcal{Y}$). Similarly, for $k=\R$, we let $\rho^{\rm Hilb}=\rho_\psi^{\rm Hilb}$ denote the unique, up to isomorphism, irreducible unitary representation of ${\rm Heis}(\mathcal{W},\R)$ with central character $\psi$. Then the corresponding Weil representation $\omega^{\rm Hilb}=\omega_\psi^{\rm Hilb}$ of $\widetilde{\Sp}(\mathcal{W})_\R$ acts on $L^2(\mathcal{Y})$. We shall often write simply $\omega^{\rm Hilb}$ for the space on which $\omega^{\rm Hilb}$ acts.

For the archimedean theory, we prefer to work with the \textit{algebrization} of $\omega^{\rm Hilb}$.  To describe this, we first specify a maximal compact subgroup of $\Sp(\mathcal{W})(\R)$.  Let $J_W \in G'$ be a positive complex structure, that is, an element satisfying $J_W^2= - \text{Id}_W$ and $\langle J_W w, w \rangle > 0$ for all nonzero $w \in W$.  We further assume that $J_W$ preserves the polarization $W = U \oplus U^*$, in the sense that $J_W U = U^*$ and $J_W U^* = U$.  Choose a decomposition $V = V_+ \oplus V_-$ of $V$ into positive and negative definite subspaces.  If we define $J \in \Sp(\mathcal{W})(\R)$ by
\[
J = J_W \otimes \left(\begin{smallmatrix} \text{Id}_{V_+} & 0 \\ 0 & -\text{Id}_{V_-} \end{smallmatrix}\right),
\]
then $J$ is a positive complex structure on $\mathcal{W}$ that preserves the polarization $\mathcal{W}=\mathcal{X}\oplus\mathcal{Y}$.  If we define $\mathcal{K}$ to be the subgroup of $\Sp(\mathcal{W})(\R)$ commuting with $J$, then $\mathcal{K}$ is a maximal compact subgroup.  In fact, if we view $\mathcal{W}$ as a complex vector space via $J$, then $\mathcal{K}$ is equal to the unitary group of the positive definite Hermitian form $(w_1, w_2) \mapsto \langle Jw_1, w_2 \rangle + i \langle w_1, w_2 \rangle$.  $\mathcal{K}$ has the property that $\mathcal{K} \cap G$ and $\mathcal{K} \cap G'$ are maximal compact subgroups of $G$ and $G'$, equal to ${\rm O}(V_+) \times {\rm O}(V_-)$, and the centralizer of $J_W$, respectively.

If we let $\widetilde{\mathcal{K}}$ denote the preimage of $\mathcal{K}$ in $\widetilde{\Sp}(\mathcal{W})_\R$, then $\widetilde{\mathcal{K}}$ is a maximal compact subgroup of $\widetilde{\Sp}(\mathcal{W})_\R$.  We let $\mathfrak{sp}_{\dim\mathcal{W}}$ denote the (real) Lie algebra of $\Sp(\mathcal{W})(\R)$, so that the Lie algebra of $\widetilde{\Sp}(\mathcal{W})_\R$ is $\mathfrak{sp}_{\dim\mathcal{W}} \oplus \R$.  We write $\omega$ for the Harish-Chandra module associated with $\omega^{\rm Hilb}$, which is the admissible $(\mathfrak{sp}_{\dim\mathcal{W}} \oplus \R,\widetilde{\mathcal{K}})$-module consisting of the $\widetilde{\mathcal{K}}$-finite vectors in $\omega^{\rm Hilb}$. The space of $\omega$ can be identified with the space of all products of complex valued polynomials on $\mathcal{Y}$ with the Gaussian $\mathcal{G}(y)=e^{-(\pi/2) \langle Jy, y \rangle}$ (see e.g. \cite[p. 388]{HST} for the constant appearing in front of $\langle Jy, y \rangle$), which we denote $\mathscr{S}_{\rm alg}(\mathcal{Y})$, and is dense in $\omega^{\rm Hilb}$.

Besides $\mathscr{S}_{\rm alg}(\mathcal{Y})$, there is another realization of $\omega$, called the Fock model, which we denote by $\mathcal{F}$.  To describe it, we consider $\mathcal{W}$ as a complex vector space using the complex structure $J$, and denote this space by $\mathcal{W}_J$.  The space of $\mathcal{F}$ is then the holomorphic polynomials on $\mathcal{W}_J^*$.  We may describe the action of $\widetilde{\mathcal{K}}$ in $\mathcal{F}$ by choosing a function $\det^{1/2}$ on $\widetilde{\mathcal{K}}$ whose square is the pullback of the determinant on $\mathcal{K} = {\rm U}( \mathcal{W}_J)$.  Then, if $\widetilde{u} \in \widetilde{\mathcal{K}}$ projects to $u \in \mathcal{K}$, the action of $\widetilde{u}$ in $\mathcal{F}$ is given by $\widetilde{u} f(z) = \det^{1/2}( \widetilde{u}) f( {}^t u z)$.

\begin{remark}\label{rem:dense}
We shall also need to consider the subspace $\omega^{\rm sm}$ of smooth vectors in $\omega^{\rm Hilb}$, realized as the space of Schwartz functions $\mathscr{S}(\mathcal{Y})$, even in the archimedean setting  (see the proof of Corollary \ref{distinction}). When endowed with its smooth (Fr\'echet) topology, $\omega^{\rm sm}$ is an irreducible admissible representation of $\widetilde{\Sp}(\mathcal{W})_\R$. It is a classical fact, proved using the G\"arding subspace, that $\omega^{\rm sm}$ is dense in $\omega^{\rm Hilb}$.

Now, from the chain of inclusions $\omega\subset\omega^{\rm sm}\subset\omega^{\rm Hilb}$, and the density of $\omega$ in $\omega^{\rm Hilb}$, it follows that $\mathscr{S}_{\rm alg}(\mathcal{Y})$ is dense in $\mathscr{S}(\mathcal{Y})$, \textit{when the latter is given the subset topology induced by that of $L^2(\mathcal{Y})$}. We will prove in Section \ref{sec:lemma-schwartz-space} that $\mathscr{S}_{\rm alg}(\mathcal{Y})$ is in fact dense in $\mathscr{S}(\mathcal{Y})$, with its natural Schwartz topology.
\end{remark}

Let $\widetilde{G}$ and $\widetilde{G}'$ denote the respective inverse images in $\widetilde{\Sp}(\mathcal{W})_k$ of $G$ and $G'$ under the surjective homomorphism $\widetilde{\Sp}(\mathcal{W})_k\rightarrow\Sp(\mathcal{W})(k)$. It follows from \cite[Lemme II.5]{MVW} that $\widetilde{G}$ and $\widetilde{G}'$ are mutual commutants inside $\widetilde{\Sp}(\mathcal{W})_k$. For simplicity, 
\begin{equation}\label{parity-assumption}
\textit{we shall assume that $\dim V$ is even}, 
\end{equation}
in which case $\widetilde{\Sp}(\mathcal{W})_k$ splits over $GG'$ \cite[Chapitre 3]{MVW}. We shall take the explicit section $GG'\rightarrow \widetilde{GG'}=\widetilde{G}\widetilde{G}'$ of Kudla \cite[Theorem 3.1, case 1]{Kudla}, which trivializes the restriction of the Perrin--Rao cocycle to $GG'$. Composing with the surjective multiplication map $G\times G'\rightarrow GG'$ we obtain
\[
\iota: G\times G'\rightarrow \widetilde{G}\widetilde{G}'\subset\widetilde{\Sp}(\mathcal{W})_k.
\]
For $k$ non-archimedean, we shall examine the restriction of $\omega$ to the product $\widetilde{G}\widetilde{G}'$, and then pull it back to $G\times G'$ via $\iota$.

When $k=\R$, we shall rather look at the infinitesimal version of this restriction. For this, we let $\mathfrak{g},\mathfrak{g}'\subset\mathfrak{sp}_{\dim\mathcal{W}}$ denote the (real) Lie algebras of $G$ and $G'$, respectively.  Pullback by $\iota$ then gives $\omega$ the structure of a $(\g \oplus \g', K \times K')$-module.

We are now in a position to recall the local theta correspondence. In the next two paragraphs, in order to put the real and non-archimedean theory on equal footing, we shall make the following conventions: by a \textit{representation} we mean a smooth  admissible representation in the archimedean setting and an admissible $(\mathfrak{g},K)$-module in the real setting. Here, $K$ is a choice of a maximal compact subgroup. In both cases we shall write ${\rm Hom}_G(\pi,\sigma)$ for the space of $G$ or $(\mathfrak{g},K)$-module homomorphisms. Finally, tensor products are algebraic tensor products for $(\mathfrak{g},K)$-modules.

With these conventions in place, let $\pi$ be an irreducible representation of $G$. We say that $\pi$ \textit{occurs in $\omega$} if ${\rm Hom}_G(\omega,\pi)\neq 0$. Given such a $\pi$, we set $\Gamma(\pi)=\omega/\omega[\pi]$, where
\[
\omega[\pi]=\bigcap_{f\in {\rm Hom}_G(\omega, \pi)} \ker (f).
\] 
Then $\Gamma[\pi]$ is a representation of $G \times G'$, equal to the maximal quotient of $\omega$ on which $G$ acts as a multiple of $\pi$ (the maximal $\pi$-isotypic quotient of $\omega$). In fact, $\Gamma[\pi]$ is of the form $\pi\otimes\Theta(\pi,W)$, where $\Theta(\pi,W)$ is a non-zero representation of $G'$; see \cite[Ch. 2, Lemma III.4]{MVW} for $k$ archimedean and \cite[(2.5)]{Howe1989} for $k=\R$.

By theorems of Howe \cite{Howe1989} (for $k=\R$), Waldspurger \cite{Walds} (for $k$ non-archimedian of odd residual characteristic), and Gan--Takeda \cite{GanTakeda} (for arbitrary $k$ non-archimedean), $\Theta(\pi,W)$ admits a unique irreducible quotient -- a representation of $G'$ which we shall denote by $\theta(\pi,W)$. We call $\theta(\pi,W)$ the \textit{theta lift} of $\pi$. For $\pi$ not appearing in $\omega$ we define its theta lift $\theta(\pi,W)$ to be zero.

The above facts apply equally well with the roles of $G$ and $G'$, and $\pi$ and $\pi'$, reversed; this sets up a bijection between irreducible $\pi$ on $G$ and $\pi'$ on $G'$ appearing in $\omega$. In that case, we shall write $\theta(\pi',V)$ for the theta lift of $\pi'$ to $G=\Oo(V)(k)$. For example, the statement of Lemma \ref{lemma:Theta-map} uses the theta lifting in this direction (from $G'$ to $G$).

\subsection{Local unramified theta correspondence}\label{sec:unramified}

We recall the local unramified theta correspondence.

Let $k$ be a finite extension of $\Q_p$.  We assume that $k/\Q_p$ is unramified, so that the character $\psi$ is also unramified.  Let $V$ be a non-degenerate orthogonal space over $k$ of even dimension $d$. We say that $V$ is \textit{unramified} if it admits a self-dual lattice.  There are two equivalence classes of such $V$:

\begin{enumerate}
\item $V=V_{r,r}$ is the split orthogonal space, or
\item $V=V_0+V_{r,r}$, where $V_0$ is an unramified quadratic extension $K/k$ equipped with the norm form.
\end{enumerate}
We extend the definition of $V_0$ to case (1), by setting $V_0 = 0$.  We define the character $\chi_V$ of $k^\times$ to be trivial in case (1) and the character of $K/k$ in case (2).  We assume in this paragraph that $V$ unramified and again write $G=\bG(k)=\Oo(V)(k)$. Let $L\subset V$ be a self-dual lattice and denote by $K_0$ its stabilizer in $G$. Then $K_0$ is a maximal compact hyperspecial subgroup of $G$. We say a smooth irreducible representation $\pi$ of $G$ is \textit{unramified} if $\pi^{K_0}\neq 0$.

Now let $W$ be a symplectic space over $k$. Then $W$ is always unramified. As before we put $G'=\bG'(k)=\Sp(W)(k)$. Let $L'$ be a self-dual lattice in $W$ and write $K_0'$ for its stabilizer in $G'$. Then $K_0'$ is a maximal compact hyperspecial subgroup of $G'$. We call a smooth irreducible representation $\pi'$ of $G'$ \textit{unramified} if $\pi'^{K_0'}\neq 0$.


\begin{theorem}[Howe \cite{Howe}, Theorem 7.1.(b)]\label{Howe-unram}
With hypotheses and notations as above, let $\pi$ be a smooth irreducible representation of $G$ and let $\theta(\pi,W)$ be its theta lift to $G'$. If $\theta(\pi,W) \neq 0$, then $\pi$ is unramified if and only if $\theta(\pi,W)$ is unramified.  The same is true for the lift from $G'$ to $G$.
\end{theorem}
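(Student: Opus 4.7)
The plan is to follow Howe's original strategy, combining the existence of a distinguished $K_0 \times K_0'$-bispherical vector in the Weil representation $\omega$ with a Hecke-bimodule cyclicity property for the bispherical subspace $\omega^{K_0 \times K_0'}$.

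First, I would construct the canonical bispherical vector explicitly. Choose self-dual lattices $L \subset V$ and $L' \subset W$ whose stabilizers are $K_0$ and $K_0'$, and arrange the polarization $W = U \oplus U^*$ so that $L'$ is adapted to it, i.e., $L' = (L' \cap U) \oplus (L' \cap U^*)$. Then $L_{\mathcal{Y}} := (L' \cap U^*) \otimes L$ is a self-dual lattice in $\mathcal{Y}$. Working in the Schrödinger model $\omega = \mathscr{S}(\mathcal{Y})$, set $\phi_0 := \mathbf{1}_{L_{\mathcal{Y}}}$. Using the Bruhat decomposition of $\Sp(\mathcal{W})(k)$, the explicit formulas for the action of Kudla's splitting $\iota$ on the Siegel parabolic and a Weyl element, and the unramifiedness of $\psi$ (which makes the additive characters in those formulas trivial on the relevant sublattices), one checks that $\iota(K_0 \times K_0')$ fixes $\phi_0$.

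Second, let $\mathcal{H}_G := \mathcal{H}(G, K_0)$ and $\mathcal{H}_{G'} := \mathcal{H}(G', K_0')$ denote the spherical Hecke algebras. Since $G$ and $G'$ commute inside $\Sp(\mathcal{W})(k)$, these algebras commute in their action on $\omega^{K_0 \times K_0'}$. The substantive technical input, which I would cite as a black box from Howe's paper, is that $\omega^{K_0 \times K_0'}$ is cyclic as an $\mathcal{H}_G \otimes \mathcal{H}_{G'}$-module, generated by $\phi_0$. This is the representation-theoretic heart of the theorem.

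Third, I deduce the statement. Given irreducible $\pi$ with $\theta(\pi, W) \neq 0$, consider the maximal $\pi$-isotypic quotient $q : \omega \twoheadrightarrow \pi \otimes \Theta(\pi, W)$. Assume $\pi$ is unramified, so $\dim \pi^{K_0} = 1$; taking $K_0$-invariants of $\phi_0$, we have $q(\phi_0) = v_0 \otimes \widetilde{\theta}$ with $v_0 \in \pi^{K_0} \setminus \{0\}$ and $\widetilde{\theta} \in \Theta(\pi, W)^{K_0'}$. Cyclicity forces $\widetilde{\theta} \neq 0$: otherwise $q$ would annihilate the entire $\mathcal{H}_G \otimes \mathcal{H}_{G'}$-orbit of $\phi_0$, i.e., all of $\omega^{K_0 \times K_0'}$, contradicting the fact that $\pi^{K_0} \otimes \Theta(\pi, W)$ is nonzero. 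Hence $\Theta(\pi, W)^{K_0'} \neq 0$. Combining the commuting Hecke actions from the second step with uniqueness of the unramified representation having a given Satake parameter, the unramified quotient of $\Theta(\pi, W)$ generated by $\widetilde{\theta}$ is forced to coincide with the unique irreducible quotient $\theta(\pi, W)$, so $\theta(\pi, W)$ is unramified. The reverse implication follows from the same argument after swapping the roles of $(G, \pi)$ and $(G', \theta(\pi, W))$, using Howe duality $\theta(\theta(\pi, W), V) = \pi$; the parallel statement for lifts from $G'$ to $G$ is entirely symmetric. The main obstacle is the cyclicity in the second step, which contains all the nontrivial representation theory; its proof requires an explicit Cartan-decomposition analysis of the Weil representation together with an orbital calculation, and for our applications is best invoked directly from Howe's Theorem 7.1.
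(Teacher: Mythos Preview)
The paper does not give its own proof of this theorem; it is simply quoted from Howe with a citation and no argument. So there is nothing in the paper to compare against, and your sketch is an attempt to reconstruct Howe's proof rather than to reproduce anything the authors wrote.

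That said, your third step contains a genuine gap. From the cyclicity of $\omega^{K_0\times K_0'}$ as an $\mathcal{H}_G\otimes\mathcal{H}_{G'}$-module you conclude that if $q(\phi_0)=0$ then $q$ kills all of $\omega^{K_0\times K_0'}$, and you say this ``contradicts the fact that $\pi^{K_0}\otimes\Theta(\pi,W)$ is nonzero.'' But $q$ restricted to $\omega^{K_0\times K_0'}$ lands in $\pi^{K_0}\otimes\Theta(\pi,W)^{K_0'}$, not in $\pi^{K_0}\otimes\Theta(\pi,W)$; its vanishing only tells you $\Theta(\pi,W)^{K_0'}=0$, which is exactly what you are trying to disprove. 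The argument is circular as written. The same issue recurs when you try to show that $\widetilde{\theta}$ maps nontrivially to the irreducible quotient $\theta(\pi,W)$: bispherical cyclicity does not by itself force $\widetilde{\theta}$ to generate $\Theta(\pi,W)$ as a $G'$-module.

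The fix is to invoke the stronger statement that $\phi_0$ generates $\omega^{K_0}$ (invariants under $K_0$ only) as a $G'$-module; this is what actually comes out of Howe's lattice-model analysis. Then $q(\omega^{K_0})=\pi^{K_0}\otimes\Theta(\pi,W)$, which is nonzero since both factors are, so $q(\phi_0)=v_0\otimes\widetilde{\theta}\neq 0$; moreover $\widetilde{\theta}$ then generates $\Theta(\pi,W)$ over $G'$, so its image in any irreducible quotient, in particular $\theta(\pi,W)$, is a nonzero $K_0'$-fixed vector. With this correction your outline is sound.
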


In fact we may be more precise, by recalling the theta correspondence for unramified representations from \cite{Rallis}\footnote{This is commonly quoted from Kudla's notes \cite[Proposition 3.2]{KudlaCastle}.  We take this opportunity to correct a typographical error in the statement of this proposition.  Namely, in part (ii), the equation $\theta(\lambda) = (\chi_V\lambda_1,\ldots ,\chi_V\lambda_n, |\cdot |^{\frac{m}{2}-n},\ldots ,|\cdot |^{\frac12 \dim V_0})$ should read $\theta(\lambda) = (\chi_V\lambda_1,\ldots ,\chi_V\lambda_n, |\cdot |^{\frac{m}{2}-n-1},\ldots ,|\cdot |^{\frac12 \dim V_0})$.}. For this, we introduce the following notation. Let $P$ be a minimal parabolic subgroup of $G$, which is defined to be the stabilizer in $G$ of a full isotropic flag in $V$; see \cite[Remark 4.1]{Rallis} for the precise definition.  We assume that this flag is compatible with the lattice $L$ as in \cite[VI.3]{KudlaCastle}.  Similarly, we denote by $P'$ a minimal parabolic for $G'$ compatible with $L'$.

The Levi of $P$ is given by $M = {\rm GL}_1^r \times \Oo(V_0)$.  If $\lambda = (\lambda_1,\ldots ,\lambda_r)$ is an $r$-tuple of unramified characters of $k^\times$, we may think of $\lambda$ as a character of $M$ by extending it to be trivial on $\Oo(V_0)$.  We may then define $\pi(\lambda)$ to be the unique smooth irreducible unramified representation of $G$ appearing as a subquotient of ${\rm Ind}_P^G \lambda$.  Likewise, if $\lambda' = (\lambda_1',\ldots ,\lambda_m')$ is an $m$-tuple of unramified characters of $k^\times$, we may define $\pi'(\lambda')$ to be the unique smooth irreducible unramified representation of $G'$ appearing as a subquotient of ${\rm Ind}_{P'}^{G'} \lambda'$.

\begin{theorem}[Rallis \cite{Rallis}, Remark 4.4]
\label{thm-Rallis} Let $r$ and $d$ be the rank and dimension of the unramified quadratic space $V$. Let $\chi_V$ and $V_0$ be defined as above. Let $2m$ be the dimension of the symplectic space $W$. Assume $r\geqslant m$. 

Let $\lambda' = (\lambda_1',\ldots ,\lambda_m')$ be an $m$-tuple of unramified characters of $k^\times$. Then $\theta(\pi'(\lambda'),V)$ is non-zero, and equal to $\pi( \lambda)$ where
\[
\lambda = (\chi_V\lambda_1,\ldots ,\chi_V\lambda_m, |\cdot |^{\frac{d}{2}-m-1},\ldots ,|\cdot |^{\frac12 \dim V_0}).
\]
In particular, if $d>2m+2$ then $\theta(\pi'(\lambda'),V)$ is not tempered.
\end{theorem}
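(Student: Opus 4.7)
The plan is to treat the three assertions of the theorem---non-vanishing of $\theta(\pi'(\lambda'),V)$, the explicit Satake parameters of the lift, and the failure of temperedness when $d > 2m+2$---in that order, since each part feeds into the next. Because the theorem is attributed to Rallis, the first two claims should be invoked as quoted results and not reproved; only the non-temperedness needs a genuine (if very short) argument.

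First I would cite Howe's unramified theta correspondence (Theorem \ref{Howe-unram}) to reduce the problem to a computation of Satake parameters: if $\theta(\pi'(\lambda'),V)$ is non-zero, it is automatically unramified, so it must have the form $\pi(\lambda)$ for some tuple $\lambda$ of unramified characters. The hypothesis $r \geqslant m$ places the pair $(\Sp(W),\Oo(V))$ in the "going-up" direction, where Rallis' analysis applies. I would then quote Rallis' computation of $\lambda$ in terms of $\lambda'$, noting the sign conventions and the typographical correction (from $|\cdot|^{\frac{d}{2}-m}$ to $|\cdot|^{\frac{d}{2}-m-1}$ in the first of the padding characters) relative to the statement in \cite[Proposition 3.2]{KudlaCastle}. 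The padding characters arise from the modulus character of the Siegel parabolic of $\bG$ used to induce up from the $\Oo(V_0)$-factor of the Levi, explaining why the exponents step down from $\tfrac{d}{2}-m-1$ to $\tfrac12 \dim V_0$ in unit increments; there are exactly $r-m$ such factors, which matches $r - m$ slots remaining after the first $m$ entries $\chi_V \lambda_i$.

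Once the formula for $\lambda$ is in hand, the non-temperedness statement is immediate. The representation $\pi(\lambda)$ is tempered if and only if every component of $\lambda$ is a unitary character of $k^\times$. Among the padding entries, the largest exponent is $\tfrac{d}{2}-m-1$, and the hypothesis $d > 2m + 2$ is exactly the condition $\tfrac{d}{2} - m - 1 > 0$. Hence the character $|\cdot|^{\frac{d}{2}-m-1}$ is non-unitary, which prevents $\pi(\lambda)$ from being tempered.

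The only genuine obstacle in a self-contained treatment would be Rallis' explicit parameter computation: one must evaluate the action of a spherical vector of the Weil representation $\omega$ against the spherical function of $\pi'(\lambda')$, Iwasawa-decompose against the Siegel parabolic of $\bG$, and extract the Satake parameters of the resulting spherical vector on $G$. Since Rallis' computation is classical and explicit, and since the paper only needs the formula for $\lambda$ in order to rule out temperedness (and later to identify a non-tempered factor at almost every place), it suffices to cite \cite{Rallis} and to verify the arithmetic inequality $\tfrac{d}{2}-m-1 > 0$ in the claimed range.
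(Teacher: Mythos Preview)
Your proposal is correct and matches the paper's treatment: the paper gives no proof of this theorem at all, simply citing it as a result of Rallis \cite{Rallis} (with the typographical correction to \cite{KudlaCastle} noted in a footnote), and the non-temperedness clause is left as an ``in particular'' without further comment. Your brief verification that $d>2m+2$ forces the exponent $\tfrac{d}{2}-m-1$ to be positive (and hence some Satake parameter to be non-unitary) is exactly the implicit reasoning, and you might add for completeness that $d>2m+2$ together with $d\in\{2r,2r+2\}$ forces $r>m$, so that at least one padding character is actually present.
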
 

While we do not use this result in the proof of Theorem \ref{sup-thm}, it provides the justification for the remark about non-temperedness made after the statement of the theorem in Section \ref{sec:exceptional}.

\subsection{Tempered spherical transfer from archimedean $\Oo(n,m)$ to $\Sp_{2m}(\R)$}
\label{sec:arch-lift}

We now let $k = \R$.  Let $n\geqslant m\geqslant 1$ be integers with $n+m$ even, and let $1\leqslant m'\leqslant m$.  We let $V$ have signature $(n,m)$, and let $W = W_{2m'}$ have dimension $2m'$.  We make the standard choices of coordinates on $V$ and $W_{2m'}$, so that $G$ and $G'$ are identified with ${\rm O}(n,m)$ and $\Sp_{2m'}(\R)$ respectively, with standard maximal compacts $\Oo(n) \times \Oo(m)$ and ${\rm U}(m')$.

Let $\pi$ be an irreducible spherical representation of ${\rm O}(n,m)$, and let $\theta(\pi,W_{2m'})$ be its theta lift to $\Sp_{2m'}(\R)$.  We cannot deduce from Theorem \ref{Howe-unram} (which applies to non-archimedean local fields, and unramified local data) that $\theta(\pi,W_{2m'})$ is itself spherical. Nevertheless, $\theta(\pi,W_{2m'})$, when non-zero, is not far from spherical, in that it admits an abelian ${\rm U}(m')$-type. 

\begin{lemma}\label{lem-K-type} Let $\pi$ be an irreducible spherical representation of ${\rm O}(n,m)$, where $n\geqslant m\geqslant 1$. Let $1\leqslant m'\leqslant m$ and suppose that $\theta(\pi,W_{2m'})\neq 0$. Then $\theta(\pi,W_{2m'})$ admits $\det^{(n-m)/2}$ as a ${\rm U}(m')$-type.
\end{lemma}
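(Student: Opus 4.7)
The plan is to work in the Fock model $\mathcal{F}$ of $\omega$ and exhibit the vacuum vector as carrying the desired ${\rm U}(m')$-character. Recall that $\widetilde{u}\in\widetilde{\mathcal{K}}$ acts on $f\in\mathcal{F}$ by $\widetilde{u}.f(z)=\det^{1/2}(\widetilde{u})f({}^tuz)$, so the constant polynomial $v_0:=1\in\mathcal{F}$ is a joint eigenvector for $\widetilde{\mathcal{K}}$ with eigencharacter $\det^{1/2}$. The strategy is to compute the restriction of $\det^{1/2}$ to $K$ and to $K'={\rm U}(m')$ via Kudla's splitting, and then to push $v_0$ forward to the theta lift.

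The key structural input is the decomposition of $\mathcal{W}_J$ under the compact dual pair. Using the orthogonal splitting $V=V_+\oplus V_-$ compatible with $J=J_W\otimes(\textrm{Id}_{V_+}\oplus -\textrm{Id}_{V_-})$, one identifies
\[
\mathcal{W}_J\cong (W_J\otimes_\R V_+)\oplus(\overline{W_J}\otimes_\R V_-),
\]
where $W_J$ is $W$ with its complex structure and $\overline{W_J}$ its conjugate. The group $K'$ acts standardly on $W_J$ and trivially on $V_\pm$, giving
\[
\det(k'\mid\mathcal{W}_J)=\det(k')^n\cdot\overline{\det(k')}^m=\det(k')^{n-m},
\]
since $\det(k')\in S^1$. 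Consequently $\det^{1/2}$ restricts to ${\rm U}(m')$ as $\det^{(n-m)/2}$, well-defined because $n-m$ is even. A parallel calculation yields $\det(k\mid\mathcal{W}_J)=(\det k_1\,\det k_2)^{m'}$ for $k=(k_1,k_2)\in K=\Oo(n)\times\Oo(m)$, and combining this with the explicit formulas for Kudla's splitting over $K\times K'$ from \cite[Thm. 3.1]{Kudla} verifies that $v_0$ is $K$-invariant. Thus $v_0\in\omega^K$ is a non-zero $K$-fixed vector transforming under ${\rm U}(m')$ by $\det^{(n-m)/2}$.

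To conclude, observe that since $\pi$ occurs in $\omega$, Howe duality furnishes a $G\times G'$-equivariant surjection $\omega\twoheadrightarrow \pi\otimes\Theta(\pi,W_{2m'})$. Taking $K$-invariants (exact, as $K$ is compact) yields a $K'$-equivariant surjection $\omega^K\twoheadrightarrow \pi^K\otimes\Theta(\pi,W_{2m'})\simeq\Theta(\pi,W_{2m'})$, under which the image $\overline{v_0}$ of $v_0$ is either zero or a vector transforming under ${\rm U}(m')$ by $\det^{(n-m)/2}$. It then remains to descend to the irreducible quotient $\theta(\pi,W_{2m'})$.

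The main obstacle is showing $\overline{v_0}$ is non-zero in $\theta(\pi,W_{2m'})$. Non-vanishing in $\Theta(\pi,W_{2m'})$ can be obtained through the see-saw dual pair $(\Oo(n)\times\Oo(m),\,\Sp_{2m'}\times\Sp_{2m'})$ arising from the decomposition $\mathcal{W}=(W\otimes V_+)\oplus(W\otimes V_-)$, which factors $v_0$ as the tensor product of two ``compact'' vacuums and identifies its projection to the $\pi$-isotypic component with the spherical matrix coefficient, which is non-zero on a spherical $\pi$. The survival of this vector in the passage from $\Theta(\pi,W_{2m'})$ to its unique irreducible quotient $\theta(\pi,W_{2m'})$ is the more delicate point: it will follow from the fact that $\det^{(n-m)/2}$ is the lowest ${\rm U}(m')$-type appearing in $\mathcal{F}$ (all other types being obtained by tensoring the vacuum with non-trivial symmetric powers of $W_J$ and $\overline{W_J}$), together with the general principle that a minimal $K'$-type persists in every non-zero $(\g',K')$-quotient.
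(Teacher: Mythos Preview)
Your approach shares its starting point with the paper's proof: both work in the Fock model and identify the vacuum vector as carrying the trivial $K$-type and the $\det^{(n-m)/2}$ character of ${\rm U}(m')$. Your determinant computation on $\mathcal{W}_J$ is correct and gives a nice direct verification of the pairing that the paper obtains instead by quoting Paul \cite[Prop.~2.10(i)]{Paul}.

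The genuine gap is in the final paragraph. Your ``general principle that a minimal $K'$-type persists in every non-zero $(\g',K')$-quotient'' is not a valid principle as stated: minimal $K$-types (in the Vogan sense, or in any naive sense) can and do disappear in quotients of reducible modules. What \emph{is} true, and what the paper invokes, is Howe's theorem from \cite{Howe1989}: the $K'$-types which correspond via the joint harmonics $\mathcal{H}\subset\mathcal{F}$ to the $K$-types of \emph{minimal degree} in $\pi$ must appear in the irreducible quotient $\theta(\pi,W_{2m'})$. This is a non-trivial structural result about the Howe correspondence, not a general fact about Harish-Chandra modules. Since the trivial $K$-type has degree~$0$ (it lives on the vacuum), it is automatically of minimal degree in any spherical $\pi$, and Howe's theorem then delivers $\det^{(n-m)/2}$ in $\theta(\pi,W_{2m'})$ directly---bypassing both your see-saw argument for non-vanishing in $\Theta(\pi,W_{2m'})$ and the passage to the irreducible quotient.

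A secondary issue: your assertion that $v_0$ is $K$-invariant ``via Kudla's splitting'' is correct but is not quite as immediate as you suggest, since one must track the cocycle carefully on the disconnected group $\Oo(n)\times\Oo(m)$. The paper sidesteps this by working entirely within the joint-harmonic correspondence rather than with an explicit vector.

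In summary: your computation of the $K\times K'$-type of the vacuum is sound and illuminating, but to close the argument you should replace the appeal to a non-existent general principle by the citation of Howe's theorem on minimal-degree $K$-types, exactly as the paper does.
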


\begin{proof}
For brevity, during the proof we will denote the maximal compact subgroups of ${\rm O}(n,m)$ and $\Sp_{2m'}(\R)$ by $K$ and $K'$.  We shall work in the Fock model $\mathcal{F}$ of the oscillator representation.  For an integer $d\geq 0$, let $\mathcal{F}^d$ denote the subspace of polynomials of degree equal to $d$, a stable subspace under the commuting action of $K$ and $K'$.  Let $\mathcal{H}\subset\mathcal{F}$ denote the space of joint harmonics for both $K$ and $K'$. Irreducible representations $\rho$ of $K$ and $\rho'$ of $K'$ are said to \textit{correspond} if $\rho\otimes\rho'$ is a direct summand of the representation of $K\times K'$ on $\mathcal{H}$. This correspondence is a bijection between those irreducible representations of $K$ and $K'$ which appear in $\mathcal{H}$ \cite{Howe1989}. For a finite dimensional representation $\sigma$ of $K$ or $K'$, which occurs in the Fock space, its \textit{degree} is the minimal $d$ for which the $\sigma$-isotypic subspace of $\mathcal{F}^d$ is non-zero.

The trivial representation of $K$ and the character $\det^{(n-m)/2}$ of $K'$ correspond to each other, in the above sense. Indeed, this can be extracted from \cite[Proposition 2.10 (i)]{Paul}, upon recalling that, under the standard parametrization of the irreducible finite dimensional representations of $K$ and $K'$ by their highest weight, the trivial representation of ${\rm O}(n)\times {\rm O}(m)$ has parameter
\[
(\underbrace{0,\ldots ,0}_{n\; \text{times}};1)\otimes (\underbrace{0,\ldots ,0}_{m\; \text{times}};1).
\]
and the character $\det^{(n-m)/2}$ of ${\rm U}(m')$ has parameter  
\[
\underbrace{\left((n-m)/2,\ldots ,(n-m)/2\right)}_{m'\; \text{times}}.
\]



Now the spherical representation $\pi$ of $\Oo(n,m)$ has the trivial representation as a $K$-type; as the degree of the latter is 0 (see \cite[(2.13)]{Paul}), it is necessarily a $K$-type of minimal degree for $\pi$. 
Now a theorem of Howe \cite{Howe1989} states that the $K'$-types which correspond with the $K$-types of minimal degree in $\pi$ must themselves appear in $\theta(\pi,W_{2m'})$. 
\end{proof}

Motivated by Lemma \ref{lem-K-type}, we now recall the classification of irreducible representations of $\Sp_{2m'}(\R)$ that admit abelian ${\rm U}(m')$-types.  Let $\gh'_\C = \ga'_\C$ be the Cartan subalgebra of $\mathfrak{sp}_{2m'}$ defined in Section \ref{2examples}, and let $W'$ be the Weyl group.  Let $P'$ be a Borel subgroup of ${\rm Sp}_{2m'}(\R)$, with Langlands decomposition $P' = N' A' M'$.  We note that $M'$ is finite, and isomorphic to $(\Z / 2)^{m'}$.  If $(\pi',V')$ is a representation of $\Sp_{2m'}(\R)$, and $v_\tau \in V'$ is a nonzero vector of abelian ${\rm U}(m')$-type $\tau^{-1}$, then the algebra $\mathbb{D}(\tau)$ defined in Section \ref{sec:tau-spherical} acts on $v_\tau$ by scalars, and this defines an element $\lambda \in \ga'^*_\C / W'$ that we call the spectral parameter of $\pi'$.

\begin{lemma}
\label{tau-spherical-class}

For any character $\tau$ of ${\rm U}(m')$ and $\lambda \in \ga'^*_\C / W'$, there is a unique irreducible representation $\pi'_{\lambda, \tau}$ of ${\rm Sp}_{2m'}(\R)$ with spectral parameter $\lambda$ and containing $\tau^{-1}$ as a ${\rm U}(m')$-type.  Moreover, the infinitesimal character of $\pi'_{\lambda, \tau}$ is also equal to $\lambda$.

\end{lemma}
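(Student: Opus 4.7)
The argument closely parallels the proof of Proposition \ref{O-spherical}, simplified by three features of $\Sp_{2m'}(\R)$: the group is connected, so the extended Weyl group coincides with the usual Weyl group; the Cartan subalgebra $\ga'$ is split, so $\gh' = \ga'$ and $\rho_{\mathfrak{m}} = 0$; and $\tau$ being one-dimensional, combined with the commutativity of the $\tau$-spherical Hecke algebra $C_c^\infty(S, \tau, \tau)$ shown in Section \ref{sec:tau-spherical} via Gelfand's trick, implies multiplicity one: $\dim V'^{\tau^{-1}} = 1$ for any irreducible $V'$ containing $\tau^{-1}$ as a ${\rm U}(m')$-type. Indeed, $C_c^\infty(S, \tau, \tau)$ acts irreducibly on $V'^{\tau^{-1}}$ (otherwise a proper decomposition would generate proper $G'$-subrepresentations of $V'$), and being commutative, its irreducible modules are one-dimensional.

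For uniqueness, let $(\pi_1, V_1)$ and $(\pi_2, V_2)$ be two irreducibles satisfying the hypotheses of the lemma, and pick $v \in V_1^{\tau^{-1}} \oplus V_2^{\tau^{-1}}$ not lying in either summand. Multiplicity one implies that $\mathbb{D}(\tau)$ acts on each $V_i^{\tau^{-1}}$ by the scalar $\chi_\lambda$, so the cyclic $(\g', K')$-submodule $V \subset V_1 \oplus V_2$ generated by $v$ has $V^{\tau^{-1}} = \C v$, exactly as in the proof of Proposition \ref{O-spherical}. This submodule projects nontrivially onto both factors, yielding $\pi_1 \simeq \pi_2$.

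For existence, we induce from the minimal parabolic $P' = N'A'M'$. Since $M' \simeq (\Z/2)^{m'}$ is finite abelian, $\chi := \tau^{-1}|_{M'}$ is a character of $M'$, and we form $\Ind_{P'}^{G'}(\chi \otimes e^\lambda)$. By Frobenius reciprocity,
\[
\Hom_{K'}\bigl(\tau^{-1},\; \Ind_{P'}^{G'}(\chi \otimes e^\lambda)|_{K'}\bigr) \;=\; \Hom_{M'}\bigl(\tau^{-1}|_{M'},\, \chi\bigr)
\]
is one-dimensional, so $\tau^{-1}$ occurs as a $K'$-type; we let $\pi'_{\lambda, \tau}$ be any irreducible subquotient containing it. The infinitesimal character of this principal series is $\lambda \in \gh'^*_\C / W'$ by the standard computation, with no $\rho_{\mathfrak{m}}$-shift since $\gh' = \ga'$, and the compatibility between the classical Harish-Chandra isomorphism $\cZ(\g') \xrightarrow{\sim} S(\gh')^{W'}$ and Shimeno's $\tau$-spherical isomorphism $\mathbb{D}(\tau) \xrightarrow{\sim} S(\ga')^{W'}$ used in the closing step of Proposition \ref{O-spherical} simultaneously identifies the spectral parameter as $\lambda$.

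The main obstacle is verifying the compatibility of the two Harish-Chandra isomorphisms in the last step: while both land in $S(\ga')^{W'} = S(\gh')^{W'}$, matching them on the image of $\cZ(\g')$ in $\mathbb{D}(\tau)$ requires tracing through the $\gn'$-restriction construction in Shimeno's definition, along the lines of the closing argument of Proposition \ref{O-spherical}.
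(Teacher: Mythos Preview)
Your argument for uniqueness and existence is essentially identical to the paper's: both defer to the proof of Proposition~\ref{O-spherical} for uniqueness, and both realize $\pi'_{\lambda,\tau}$ as the $\tau^{-1}$-containing irreducible subquotient of ${\rm Ind}_{P'}^{G'}(e^\lambda \otimes \tau^{-1}|_{M'})$.

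The difference is in the infinitesimal character step, and the paper's route sidesteps precisely the obstacle you flag. Rather than invoking compatibility of the two Harish-Chandra isomorphisms abstractly, the paper works directly with the $\tau$-spherical function: writing $\varphi_{\lambda,\tau}(g) = \int_{{\rm U}(m')} \tau(k) f(gk)\,dk$ for $f(g) = e^{(\lambda-\rho)H(g)}\tau^{-1}(\kappa(g))$ (using the $KAN$ Iwasawa projections), one applies the argument of \cite[Prop.~2.1]{Helgason2} to compute $Zf = \gamma_{\rm HC}(Z)(\lambda)f$ for $Z \in \cZ(\g')$, hence the same for $\varphi_{\lambda,\tau}$. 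Since $\varphi_{\lambda,\tau}$ is a matrix coefficient of $\pi'_{\lambda,\tau}$ by \eqref{defn-sph-fn}, and has spectral parameter $\lambda$ by construction, this gives the infinitesimal character directly. The splitness of $\Sp_{2m'}(\R)$ simplifies Helgason's computation, just as you observe it would simplify the compatibility statement.

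So your approach is correct in outline, and the gap you identify is real but fillable; the paper simply fills it by a concrete calculation on $\varphi_{\lambda,\tau}$ rather than by tracing through Shimeno's definition of $\gamma_{\rm HC}:\mathbb{D}(\tau)\to S(\ga')^{W'}$.
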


\begin{proof}

The uniqueness of $\pi'_{\lambda, \tau}$ may be proved as in Proposition \ref{O-spherical}.  For existence, $\pi'_{\lambda, \tau}$ may be realized as the unique irreducible subquotient of
\[
{\rm Ind}_{P'}^{{\rm Sp}_{2m'}(\R)}( e^\lambda \otimes \tau^{-1}|_{M'} )
\]
containing $\tau^{-1}$ as a ${\rm U}(m')$-type.

The assertion about the infinitesimal character follows as in the proof of \cite[Prop 2.1]{Helgason2}.  We briefly recall this argument, with the modifications needed to handle the $\tau$-spherical case considered here.  For the duration of this proof only, we will work with the $KAN$ form of the Iwasawa decomposition, and let $\kappa(g) \in {\rm U}(m')$ and $H(g) \in \ga'$ be uniquely determined by $g \in \kappa(g) e^{H(g)} N'$.  If we consider the function
\[
f(g) = e^{ (\lambda - \rho) H(g)} \tau^{-1}( \kappa(gk) ),
\]
then it follows from the formula for $\varphi_{\lambda, \tau}$ given on p. 332 of \cite{Shimeno}, and Proposition \ref{prop:sph-fn-2-vars}, that $\varphi_{\lambda, \tau}$ is obtained by averaging $f$ as
\[
\varphi_{\lambda, \tau}(g) = \int_{{\rm U}(m')} \tau(k) f(gk) dk.
\]
We let $\cZ(\g')$ be the center of the universal enveloping algebra of $\g'$, and let $\gamma_{\rm HC} : \cZ(\g') \to S( \ga')^{W'}$ be the Harish-Chandra homomorphism.  We may now apply the argument from \cite[Prop 2.1]{Helgason2} to show that $f$, and hence also $\varphi_{\lambda, \tau}$, behaves under $\cZ(\g')$ as $Z \varphi_{\lambda, \tau} = \gamma_{\rm HC}(Z)( \lambda) \varphi_{\lambda, \tau}$.  (This calculation is simplified by the fact that ${\rm Sp}_{2m'}(\R)$ is split.)  Equation \eqref{defn-sph-fn} implies that $\varphi_{\lambda, \tau}$ is a matrix coefficient of $\pi'_{\lambda, \tau}$, which completes the proof.
\end{proof}

To continue, we recall a result of Przebinda \cite{Prz}, which gives a relation between the infinitesimal characters of the archimedean theta correspondence.  Let $\gh$ be the Cartan subalgebra of $\mathfrak{o}(n,m)$ defined in Section \ref{sec:O-spherical-notn}, with subspace $\ga$, and let the Weyl groups $\widetilde{W}_\ga$ and $\widetilde{W}$ be as in that section.  We let $e_1, \ldots, e_{(n+m)/2}$ be the standard basis of $\gh^*_\C$, and $e_1', \ldots, e_{m'}'$ be the standard basis of $\gh'^*_\C$.  Since $m' \leqslant m$, we have an inclusion $\iota: \gh'^*_\C \to \gh^*_\C$ defined by $\iota(e_j')  = e_j$.  We define the element $\tau \in \gh^*_\C$ by
\be
\label{taudef}
\tau = \sum_{j = m'+1}^{(n+m)/2} ( \tfrac{n+m}{2} - j)e_j = ( \overbrace{0, \ldots, 0}^{m'},  \tfrac{n+m}{2} - m'-1, \ldots, 0 ).
\ee
Let $\pi'=\theta(\pi,W_{2m'})$.  We let $\gamma_\pi \in \gh^*_\C / \widetilde{W}$ be the infinitesimal character of $\pi$, and if $\pi' \neq 0$ then we let $\gamma_{\pi'} \in \gh'^*_\C / W'$ be its infinitesimal character.  If $\pi' \neq 0$, then \cite[Theorem 1.19]{Prz} states that
\begin{equation}\label{eq:inf-char}
\gamma_\pi=\iota(\gamma_{\pi'})+ \tau.
\end{equation}
We next show that the theta lift of a spherical representation $\pi$ vanishes if $m' < m$, under the assumption that $\pi$ is tempered.

\begin{lemma}\label{first-occ-sph}
Let $\pi$ be an irreducible spherical {\rm tempered} representation of ${\rm O}(n,m)$, where $n\geqslant m\geqslant 1$. For $m'<m$ we have $\theta(\pi,W_{2m'})=0$. 
\end{lemma}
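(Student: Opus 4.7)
The plan is to derive a contradiction by comparing the infinitesimal character of $\pi$ with that of the putative nonzero theta lift $\pi' := \theta(\pi, W_{2m'})$ using Przebinda's identity \eqref{eq:inf-char}.

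First I would recall that, by Proposition \ref{O-spherical}, the infinitesimal character of $\pi$ is $\gamma_\pi = \lambda + \rho_\gm \in \gh_\C^*/\widetilde W$, where $\lambda \in \ga^*_\C/\widetilde W_\ga$ is the spectral parameter of $\pi$. By the temperedness hypothesis we may take $\lambda \in i\ga^*$, so a lift of $\gamma_\pi$ to $\gh^*_\C$ has real part equal to $\rho_\gm$. Using the decomposition $\gh = \ga \oplus \gh_\gk$, together with the convention (matching Przebinda's) that the first $m$ coordinates of $\gh^*$ in the basis $e_1,\ldots,e_{(n+m)/2}$ correspond to $\ga^*$, one has $\gm \simeq \mathfrak{so}(n-m)$, which is of type $D_{(n-m)/2}$ since $n-m$ is even. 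A direct computation of the half-sum of positive roots in this copy of $\mathfrak{so}(n-m)$ then gives the representative
\[
\rho_\gm = \sum_{j=m+1}^{(n+m)/2} \Big(\tfrac{n+m}{2}-j\Big)\, e_j,
\]
whose coordinates have maximum absolute value $\tfrac{n-m}{2}-1$.

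Next, assuming $\pi' \neq 0$, with infinitesimal character $\gamma_{\pi'}$, \eqref{eq:inf-char} yields $\gamma_\pi = \iota(\gamma_{\pi'}) + \tau$ in $\gh^*_\C/\widetilde W$. I would then pass to real parts. Because $\widetilde W$ acts on $\gh^*_\R$ by signed permutations of coordinates and $\tau$ is real, the ``real part'' descends to a well-defined map on $\gh^*_\C/\widetilde W \to \gh^*_\R/\widetilde W$, and the multiset of absolute values of coordinates is a $\widetilde W$-invariant on $\gh^*_\R$. Since $\iota(\mathrm{Re}\,\gamma_{\pi'})$ is supported in the first $m'$ coordinates while $\tau$ vanishes there, the $(m'+1)$-th coordinate of $\iota(\mathrm{Re}\,\gamma_{\pi'}) + \tau$ equals $\tau_{m'+1} = \tfrac{n+m}{2} - m' - 1$. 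Consequently, the value $\tfrac{n+m}{2} - m' - 1$ must appear in the multiset of absolute values of coordinates of $\rho_\gm$.

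The final step is a comparison of extremes: the hypothesis $m' < m$ gives $\tfrac{n+m}{2} - m' - 1 > \tfrac{n-m}{2} - 1$, so this value exceeds the maximum appearing among the coordinates of $\rho_\gm$. It is moreover positive, since $m' < m \leqslant \tfrac{n+m}{2} - 1$ when $n > m$. This contradicts the multiset identification, forcing $\theta(\pi, W_{2m'}) = 0$. The main (mild) obstacle is to verify that the coordinate conventions in Przebinda's embedding $\iota$ and in the formula \eqref{taudef} for $\tau$ align with those used to write $\rho_\gm$, so that the image of $\iota$ lands in the first $m' \leqslant m$ coordinates, where $\rho_\gm$ vanishes and the $\tau$-contribution is fully exposed.
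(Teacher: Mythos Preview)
Your argument is correct and follows essentially the same route as the paper: both proofs invoke Przebinda's relation \eqref{eq:inf-char}, compute $\rho_{\mathfrak{m}}$ explicitly as in \eqref{O-inf-char}, and derive a contradiction by observing that the real part $\tfrac{n+m}{2}-m'-1$ forced into the multiset of coordinate absolute values of $\gamma_\pi$ exceeds the maximum $\tfrac{n-m}{2}-1$ permitted by $\lambda\in i\ga^*$. Your version is slightly more explicit about why the comparison is $\widetilde W$-invariant and about the positivity step (which, as you note, uses $n>m$), but the substance is identical to the paper's proof.
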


\begin{proof}

Let $\lambda \in i \ga^* / \widetilde{W}_\ga$ be the spectral parameter of $\pi$, as defined in Section \ref{sec:O-spherical}.  By Proposition \ref{O-spherical}, we have
\be
\label{O-inf-char}
\gamma_\pi = \lambda + \rho_{\mathfrak{m}} = \lambda + ( \overbrace{0, \ldots, 0}^{m},  \tfrac{n-m}{2} -1, \ldots, 0 ).
\ee
If we had $\gamma_\pi=\iota(\gamma_{\pi'})+ \tau$ for some $\gamma_{\pi'} \in \gh'^*_\C$, then $\gamma_\pi$ would have an entry with real part equal to $\tfrac{n+m}{2} - m'-1$ in absolute value, but if $m' < m$ this is incompatible with \eqref{O-inf-char} and the condition $\lambda \in i \ga^*$.
\end{proof}

We now remove the tempered assumption on $\pi$, and examine $\theta(\pi,W_{2m})$ for $\pi$ spherical on ${\rm O}(n,m)$.  In the case under consideration when  $m' = m$, we recall that there is a natural identification of $\ga^*_\C / \widetilde{W}_\ga$ and $\ga'^*_\C / W'$.  Moreover, if we identify $\ga^*$ with a subspace of $\gh^*$ via the splitting $\gh = \ga + \gh_\gk$, we may choose our bases of $\gh^*_\C$ and $\gh'^*_\C$ so that $\iota( \ga'^*) = \ga^*$, and this induces the identification of $\ga^*_\C / \widetilde{W}_\ga$ and $\ga'^*_\C / W'$.

\begin{lemma}\label{lemma:1st-occ}
Let $n\geqslant m\geqslant 1$.  Let $\pi$ be the irreducible spherical representation of ${\rm O}(n,m)$ with spectral parameter $\lambda \in \ga^*_\C / \widetilde{W}_\ga$, as in Proposition \ref{O-spherical}.  We view $\lambda$ as an element of $\ga'^*_\C / W'$.  Then if $\theta(\pi,W_{2m}) \neq 0$, it is the unique irreducible subquotient of
\[
{\rm Ind}_{P'}^{{\rm Sp}_{2m}(\R)}( e^\lambda \otimes \det\! {}^{(n-m)/2}|_{M'} )
\]
containing $\det^{(n-m)/2}$ as a ${\rm U}(m)$-type.
\end{lemma}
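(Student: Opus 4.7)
The plan is to identify $\theta(\pi,W_{2m})$ using the classification from Lemma \ref{tau-spherical-class}. That classification states that an irreducible representation of $\Sp_{2m}(\R)$ is determined by its spectral parameter together with a choice of abelian $\U(m)$-type, and realizes it as the unique such subquotient of a principal series. So it suffices to verify two things: (a) $\theta(\pi,W_{2m})$ contains $\det^{(n-m)/2}$ as a $\U(m)$-type, and (b) the spectral parameter of $\theta(\pi,W_{2m})$, viewed in $\ga'^*_\C/W'$, coincides with $\lambda$. Indeed, applying Lemma \ref{tau-spherical-class} with $\tau=\det^{-(n-m)/2}$ (so that $\tau^{-1}=\det^{(n-m)/2}$) then identifies $\theta(\pi,W_{2m})$ with $\pi'_{\lambda,\tau}$, which by that lemma is the unique irreducible subquotient of ${\rm Ind}_{P'}^{\Sp_{2m}(\R)}(e^\lambda\otimes\det^{(n-m)/2}|_{M'})$ containing $\det^{(n-m)/2}$ as a $\U(m)$-type.

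Point (a) is immediate from Lemma \ref{lem-K-type} specialized to $m'=m$, since $\pi$ is spherical by hypothesis and $\theta(\pi,W_{2m})$ is nonzero. For point (b), I would use Przebinda's identity \eqref{eq:inf-char}, which in the case $m'=m$ reads $\gamma_\pi=\iota(\gamma_{\pi'})+\tau$, with $\tau$ as in \eqref{taudef}. Specializing \eqref{taudef} to $m'=m$ gives
\[
\tau=\bigl(\,\underbrace{0,\ldots,0}_{m},\,\tfrac{n-m}{2}-1,\,\tfrac{n-m}{2}-2,\ldots,0\bigr),
\]
which is precisely the expression for $\rho_{\mathfrak{m}}$ appearing in \eqref{O-inf-char}. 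On the other hand, by Proposition \ref{O-spherical} we have $\gamma_\pi=\lambda+\rho_{\mathfrak{m}}$. Combining these yields $\iota(\gamma_{\pi'})=\lambda$; under the identification of $\ga'^*_\C/W'$ with $\ga^*_\C/\widetilde W_\ga$ recalled just before the statement of the lemma, this says $\gamma_{\pi'}=\lambda$. Since Lemma \ref{tau-spherical-class} asserts that for representations with abelian $\U(m)$-type the infinitesimal character equals the spectral parameter, (b) follows.

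The only mild delicacy in the argument is bookkeeping: one must check the compatibility between $\iota$ (the embedding of $\gh'^*_\C$ into $\gh^*_\C$ defined in Section \ref{sec:arch-lift}) and the identification of $\ga^*_\C/\widetilde W_\ga$ with $\ga'^*_\C/W'$ used in the statement of the lemma, as well as keep straight the sign convention for $\tau$ versus $\tau^{-1}$ in Lemma \ref{tau-spherical-class}. Once these conventions are tracked, the argument is essentially a one-line verification combining Lemma \ref{lem-K-type}, Przebinda's formula, and Lemma \ref{tau-spherical-class}. There is no substantial obstacle, since the heavy lifting (existence of the abelian $\U(m)$-type, and the infinitesimal character transfer) has been recorded earlier in the section.
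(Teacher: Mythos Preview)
Your proof is correct and follows essentially the same route as the paper: Lemma~\ref{lem-K-type} gives the ${\rm U}(m)$-type, the observation $\tau=\rho_{\mathfrak{m}}$ when $m'=m$ combined with Przebinda's relation \eqref{eq:inf-char} and \eqref{O-inf-char} pins down the infinitesimal character as $\lambda$, and then uniqueness finishes the job. The only difference is in how uniqueness is invoked: you route through Lemma~\ref{tau-spherical-class} (spectral parameter equals infinitesimal character, plus uniqueness by spectral parameter and abelian $K$-type), whereas the paper cites Zhu's theorem \cite{Zhu} directly (an irreducible admissible representation of $\Sp_{2m}(\R)$ admitting an abelian $K$-type is uniquely determined by its infinitesimal character). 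These are equivalent, and your choice is arguably more self-contained given that Lemma~\ref{tau-spherical-class} is already in the paper.
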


\begin{proof}
We know from Lemma \ref{lem-K-type} that $\theta(\pi,W_{2m})$ admits the character $\det^{(n-m)/2}$ as a ${\rm U}(m)$-type.  Moreover, because $m' = m$, the definition \eqref{taudef} of $\tau$ implies that $\tau = \rho_{\mathfrak{m}}$.  It then follows from the relations \eqref{eq:inf-char} and \eqref{O-inf-char} that $\gamma_{\pi'} = \lambda \in \gh'^*_\C / W'$.  A result by Zhu \cite{Zhu} states that an irreducible admissible representation of a classical real group such as $\Sp_{2m}(\R)$, which admits an abelian $K$-type (where $K$ is a maximal compact subgroup), is uniquely determined by its infinitesimal character.
\end{proof}

We finish this section by describing the lift of the trivial representation $1_n$ from ${\rm O}(n)$ to ${\rm Sp}_{2m'}(\R)$, where $n$ is even and $n \geqslant m'$.

\begin{lemma}
\label{compact-Arch-lift}
Let $n$ be even, with $n \geqslant m' \geqslant 1$.  Let $1_n$ be the trivial representation of ${\rm O}(n)$. Then if $\theta(1_n,W_{2m'}) \neq 0$, it is the unique irreducible subquotient of
\[
{\rm Ind}_{P'}^{{\rm Sp}_{2m'}(\R)}( e^\lambda \otimes \det\! {}^{n/2}|_{M'} )
\]
containing $\det^{n/2}$ as a ${\rm U}(m')$-type, where $\lambda = (\tfrac{n}{2} - 1, \ldots, \tfrac{n}{2} - m') \in \ga'^*_\C$.
\end{lemma}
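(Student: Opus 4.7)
\medskip

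The proof will parallel that of Lemma \ref{lemma:1st-occ}, adapted to the compact orthogonal group on the dual side. Throughout, write $\pi' = \theta(1_n, W_{2m'})$ and assume $\pi' \neq 0$.

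\textbf{Step 1 (U$(m')$-type).} First I would establish the analog of Lemma \ref{lem-K-type}: that $\pi'$ contains $\det^{n/2}$ as a ${\rm U}(m')$-type. The argument from Lemma \ref{lem-K-type} applies verbatim, working in the Fock model of the oscillator representation for the dual pair $({\rm O}(n), {\rm Sp}_{2m'}(\R))$. Using \cite[Proposition 2.10(i)]{Paul} in the case where the second orthogonal factor is trivial, the trivial representation of ${\rm O}(n)$, of highest weight $(0,\ldots,0;1)$, corresponds under the Howe duality of joint harmonics to the character of ${\rm U}(m')$ with highest weight $(n/2,\ldots,n/2)$, namely $\det^{n/2}$. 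Since the trivial representation of ${\rm O}(n)$ has minimal degree $0$, Howe's theorem on correspondence of minimal $K$-types \cite{Howe1989} then shows that the corresponding ${\rm U}(m')$-type $\det^{n/2}$ occurs in $\pi'$.

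\textbf{Step 2 (Infinitesimal character).} Next I would compute the infinitesimal character of $\pi'$ via Przebinda's formula \eqref{eq:inf-char}, applied with $m = 0$. The trivial representation of ${\rm O}(n)$ has infinitesimal character
\[
\gamma_{1_n} = \rho_{\mathfrak{o}(n)} = \bigl(\tfrac{n}{2}-1,\, \tfrac{n}{2}-2,\, \ldots,\, 1,\, 0\bigr) \in \gh_\C^*.
\]
With $m = 0$, the element $\tau$ of \eqref{taudef} becomes
\[
\tau = \bigl(\underbrace{0,\ldots,0}_{m'},\, \tfrac{n}{2}-m'-1,\, \tfrac{n}{2}-m'-2,\, \ldots,\, 1,\, 0\bigr).
\]
The relation $\gamma_{1_n} = \iota(\gamma_{\pi'}) + \tau$ then forces
\[
\gamma_{\pi'} = \bigl(\tfrac{n}{2}-1,\, \tfrac{n}{2}-2,\, \ldots,\, \tfrac{n}{2}-m'\bigr) = \lambda,
\]
by comparing the first $m'$ coordinates (where $\tau$ vanishes and $\iota(\gamma_{\pi'})$ is supported).

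\textbf{Step 3 (Uniqueness).} Finally, combining Steps 1 and 2 with the classification of irreducible admissible representations of classical real groups admitting an abelian $K$-type, due to Zhu (used in the proof of Lemma \ref{lemma:1st-occ}): such a representation is determined by its infinitesimal character. Hence $\pi'$ coincides with the representation $\pi'_{\lambda, \det^{n/2}}$ produced by Lemma \ref{tau-spherical-class}, which is by construction the unique irreducible subquotient of ${\rm Ind}_{P'}^{{\rm Sp}_{2m'}(\R)}(e^\lambda \otimes \det^{n/2}|_{M'})$ containing $\det^{n/2}$ as a ${\rm U}(m')$-type.

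The only mild obstacle is checking that the inputs cited for Lemma \ref{lemma:1st-occ}, notably Przebinda's formula \eqref{eq:inf-char}, the joint $K$-type correspondence of \cite{Paul}, and Zhu's uniqueness theorem, all remain valid in this slightly degenerate setting where one member of the dual pair is compact; these are, however, covered by the original references without essential modification.
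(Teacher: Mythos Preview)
Your proposal is correct and follows essentially the same approach as the paper's proof: establish the $\det^{n/2}$ ${\rm U}(m')$-type by the method of Lemma \ref{lem-K-type}, compute the infinitesimal character via Przebinda's relation \eqref{eq:inf-char} (with the signature $(n,0)$ so that $\tau$ takes the form you wrote), and conclude via Zhu's uniqueness as in Lemma \ref{lemma:1st-occ}. The paper's proof is simply a terser version of yours; your explicit computation of $\tau$ and $\gamma_{\pi'}$ in Step 2 is a helpful elaboration of what the paper leaves implicit.
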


\begin{proof}

Assume that $\theta(1_n,W_{2m'}) \neq 0$.  We may show as in Lemma \ref{lem-K-type} that $\theta(1_n,W_{2m'})$ admits $\det^{n/2}$ as a ${\rm U}(m')$-type.  Moreover, the infinitesimal character of $1_n$ is equal to $(\tfrac{n}{2} - 1, \tfrac{n}{2} - 2, \ldots, 0)$, and combining this with \eqref{eq:inf-char} shows that the infinitesimal character of $\theta(1_n,W_{2m'})$ is $(\tfrac{n}{2} - 1, \ldots, \tfrac{n}{2} - m')$.  The result now follows as in Lemma \ref{lemma:1st-occ}.

\end{proof}

\subsection{Review of the global theta correspondence}\label{sec:global-theta}

We return to the setting of Section \ref{Type1} and now let $k=F$ be a totally real number field with ring of integers $\cO_F$.

\subsubsection{Adelic structures}\label{sec:adelic-structures}
We begin by fixing the restricted tensor product structure of the adelic points of $\Sp(\mathcal{W})$, where $\mathcal{W}=W\otimes V$. Let $S_V$ denote the set of finite places $v$ of $F$ at which $V$ is ramified together with all archimedean places. Fix a lattice $L$ in $V$ which is self-dual at all $v\notin S_V$. Similarly, let $L'$ be a lattice in $W$ which is self-dual at all finite places. Then the tensor product $\mathcal{L}=L'\otimes L$ is a lattice in $\mathcal{W}$. For each $v\notin S_V$ let $L_{v}=L\otimes \cO_{F_v}$ and $L_{v}'=L'\otimes \cO_{F_v}$ be the localizations at $v$ and write $\mathcal{K}_{0,v}$ for the stabilizer of $\mathcal{L}_{v}=L_{v}'\otimes L_{v}$ in $\Sp(\mathcal{W})(F_v)$. Then $\mathcal{K}_{0,v}$ is a hyperspecial subgroup of $\Sp(\mathcal{W})(F_v)$. We define $\Sp(\mathcal{W})(\A)$ as the restricted tensor product with respect to $\{\mathcal{K}_{0,v}\}_{v\notin S_V}$. 

For $v\notin S_V$, the local metaplectic group splits over the maximal compact subgroup $\mathcal{K}_{0,v}$ (this fact is independent of the parity assumption \eqref{parity-assumption} on $V$). We may therefore identify the latter with a subgroup of $\widetilde{\Sp}(\mathcal{W})_{F_v}$ and form the restricted tensor product $\prod_v  \widetilde{\Sp}(\mathcal{W})_{F_v}$ with respect to the system $\{\mathcal{K}_{0,v}\}_{v\notin S_V}$. Unlike its symplectic counterpart, the adelic metaplectic group is \textit{not} equal to $\prod_v  \widetilde{\Sp}(\mathcal{W})_{F_v}$ \cite[\S 3]{Howe}. There is, however, a unique non-split $S^1$-central extension of $\Sp(\mathcal{W})(\A)$ which is a surjective image of $\prod_v  \widetilde{\Sp}(\mathcal{W})_{F_v}$; we shall denote this extension by $\widetilde{\Sp}(\mathcal{W})_\A$.  Similarly to the local case in \S \ref{sec:Schroedinger}, we may write $\widetilde{\Sp}(\mathcal{W})_\A = \Sp(\mathcal{W}) \times S^1$, with the multiplication given by the Perrin--Rao cocycle associated with the Lagrangian subspace $\mathcal{X}(\A)$ of $\mathcal{W}(\A)$ and the additive character $\psi$.

The choice of $\mathcal{K}_{0,v}$ was made to facilitate the description of restricted tensor product structure of the adelic points of the subgroups $\bG={\rm O}(V)$ and $\bG'=\Sp(W)$. Indeed, for every $v\notin S_V$ let $K_{0,v}$ and $K_{0,v}'$ denote the stabilizers of $L_{v}$ and $L_{v}'$, respectively; both are subgroups of $\mathcal{K}_{0,v}$. Then $\bG(\A)$ and $\bG'(\A)$ are the restricted tensor products taken with respect to the system of hyperspecial subgroups $\{K_{0,v}\}_{v\notin S_V}$ and $\{K_{0,v}'\}_{v\notin S_V}$. Let $\widetilde{\bG}_\A$ and $\widetilde{\bG}'_\A$ denote the inverse images of $\bG(\A)$ and $\bG'(\A)$ in $\widetilde{\Sp}(\mathcal{W})_\A$. It follows from the above description of $\widetilde{\Sp}(\mathcal{W})_\A$, as well as the corresponding local properties, that $\widetilde{\bG}_\A$ and $\widetilde{\bG}'_\A$ commute with each other. Furthermore, recalling our parity assumption \eqref{parity-assumption}, the covering $\widetilde{\Sp}(\mathcal{W})_\A\rightarrow\Sp(\mathcal{W})(\A)$ splits over $\bG(\A)\bG'(\A)$, and we may view both $\bG(\A)$ and $\bG'(\A)$ as subgroups of $\widetilde{\Sp}(\mathcal{W})_\A$.

\subsubsection{Real structures}

For each real place $v$, we choose a positive complex structure $J_{W_v}$ on $W_v$.  As in Section \ref{sec:Schroedinger}, when combined with a choice of decomposition of $V_v$ into positive and negative subspaces, this determines a positive complex structure $J_v$ on $\mathcal{W}_v$.  We use these data to define maximal compact subgroups of $\bG$, $\bG'$, and $\Sp(\mathcal{W})$ at the real places.

\subsubsection{The Weil representation and theta lifts}

Let $\psi = \otimes \psi_v$ be the product of the local characters defined in Section \ref{sec:Schroedinger}, which gives a character of $\A / F$.  For almost all non-archimedean places $v$ the local additive character $\psi_v$ is unramified, in the sense of Section \ref{sec:Schroedinger}.  The choice of $\psi$ defines a Weil representation $\omega_v=\omega_{v,\psi_v}$ for each local metaplectic group $\widetilde{\Sp}_{F_v}$.  There is a global Weil representation $\omega$ of $\widetilde{\Sp}(\mathcal{W})_\A$, with the property that the pullback of $\omega$ to $\prod_v  \widetilde{\Sp}(\mathcal{W})_{F_v}$ is the restricted tensor product of the $\omega_v$.  As before, we have corresponding smooth and unitary representations $\omega^\text{sm}$ and $\omega^\text{Hilb}$.

Let $\mathcal{X}\oplus\mathcal{Y}$ be a complete polarization of $\mathcal{W}$. An explicit model for $\omega^\text{sm}$ (the \textit{Schroedinger model}) is realized on $\mathscr{S}(\mathcal{Y}(\A))$, the space of Schwartz--Bruhat functions on $\mathcal{Y}(\A)$.  We shall henceforth fix this model for $\omega^\text{sm}$, describing it in some detail shortly.  We likewise have a model for $\omega$ on $\mathscr{S}(\mathcal{Y}(\A_f))\otimes \mathscr{S}_{\rm alg}(\mathcal{Y}(F_\infty))$, where the algebraic Schwartz space $\mathscr{S}_{\rm alg}(\mathcal{Y}(F_\infty))$ is defined in \S\ref{sec:Schroedinger}.

A theorem of Weil \cite[\S 40]{Weil} states that there is a unique injective homomorphism $\iota_F: \Sp(\mathcal{W})(F)\rightarrow \widetilde{\Sp}(\mathcal{W})_\A$ lifting the diagonal embedding $\Sp(\mathcal{W})(F)\rightarrow\Sp(\mathcal{W})(\A)$. We will often identify $\Sp(\mathcal{W})(F)$ with its image in $\widetilde{\Sp}(\mathcal{W})_\A$ under the map $\iota_F$. Weil furthermore showed \cite[\S 41]{Weil} that the distribution 
\[
\Theta: \alpha\mapsto \sum_{y\in \mathcal{Y}(F)}\alpha(y),\qquad  \alpha\in\mathscr{S}(\mathcal{Y}(\A)),
\]
 is $\Sp(\mathcal{W})(F)$-invariant: $\Theta(\omega(\gamma)\alpha)=\Theta(\alpha)$ for all $\gamma\in \Sp(\mathcal{W})(F)$.  For $\alpha\in\mathscr{S}(\mathcal{Y}(\A))$ the function
$\Theta(\omega(\cdot)\alpha)$ is an $\Sp(\mathcal{W})(F)$-invariant function of moderate growth on $\widetilde{\Sp}(\mathcal{W})_\A$; see \cite[Th\'eor\`eme 6]{Weil} and \cite[\S 4]{Howe}. 

We shall be interested in the restriction of $\Theta(\omega(\cdot)\alpha)$ to $\bG(\A)\bG'(\A)$, when the latter is viewed as a subgroup of $\widetilde{\Sp}(\mathcal{W})_\A$ through the splitting $\iota_\A: \bG(\A)\bG'(\A)\rightarrow\widetilde{\bG}(\A)\widetilde{\bG}'(\A)=\widetilde{\bG\bG'}(\A)$ obtained by the restricted tensor product of all local splittings defined in \S\ref{sec:Schroedinger}. It is known (see the discussion in \cite[\S 2.2]{CM}) that the restrictions of $\iota_\A$ and $\iota_F$ to $\bG(F)\bG'(F)$ coincide. We denote
\[
\Theta(g,g';\alpha)=\sum_{y\in \mathcal{Y}(F)}(\omega(gg')\alpha)(y)
\]
for $(g,g')\in\bG(\A)\times\bG'(\A)$. For a function $\phi\in C^\infty([\bG])$ of rapid decrease we let
\begin{equation}\label{eq:Theta-fn}
\Theta (\phi,\alpha;W)(g')=\int_{[\bG]} \overline{\phi(g)} \Theta(g,g';\alpha)dg
\end{equation}
be the theta lift of $\phi$ to $C^\infty([\bG'])$.

\subsubsection{Explicating the theta kernel}\label{sec:thetakernel}
We now give a more explicit form of the theta kernel $\Theta(g,g';\alpha)$, for elements $(g,g')\in\bG(\A)\times\bP_{\rm Siegel}(\A)$. Here, similarly to the definition of $\mathcal{P}_{\rm Siegel}$ in Section \ref{Type1}, the Siegel parabolic $\bP_{\rm Siegel}$ in $\bG'=\Sp(W)$ is the stabilizer of $U\subset W$.

We begin by describing the action of the Siegel parabolic $\cP_\text{Siegel}$ of $\Sp(\mathcal{W})$ in the Schroedinger model.  With the notation from Section \ref{Type1}, $\mathcal{P}_{\rm Siegel}(\A)$ acts via the Weil representation $\omega^\text{sm}$ on functions $\alpha\in\mathscr{S}(\mathcal{Y}(\A))$ through the formulae
\begin{equation}\label{action0}
\begin{aligned}
\omega^\text{sm}(m(a),1)\alpha(y)&=|\det a|_\A^{1/2}\alpha({}^t a y),\;\;\qquad m(a)\in \mathcal{M}(\A),\\
\omega^\text{sm}(n(b),1)\alpha(y)&=\psi\big(\frac12\langle by, y\rangle_{\mathcal{W}}\big)\alpha(y),\quad n(b)\in \mathcal{N}(\A).
\end{aligned}
\end{equation}
Note that we are using the identification $\widetilde{\Sp}(\mathcal{W})_\A = \Sp(\mathcal{W}) \times S^1$ here.  

Restricting the first equation of \eqref{action0} to $\bG(\A)=\Oo(V)(\A)$, viewed as a subgroup of $\mathcal{M}(\A)$ via the embedding $g\mapsto m({\rm Id}_U\otimes g)$ from Section \ref{Type1}, we obtain
\[
\omega^\text{sm}(g,1)\alpha(x)=\alpha(g^{-1}x),\qquad g\in\bG(\A).
\]
Moreover, an arbitrary element of $\bP_{\rm Siegel}(\A)$ can be written as $m(a\otimes {\rm Id}_V)n(b\otimes {\rm Id}_V)$, where $a\in\GL(U)$ and $b\in {\rm sym}(U^*,U)$. We deduce from the second equation in \eqref{action0} that
\begin{equation}\label{eq:explicit-theta}
\Theta(g,m(a)n(b);\alpha)=|\det a|_\A^{1/2}\sum_{y\in \mathcal{Y}(F)}\psi\bigg(\frac12\langle (b\otimes {\rm Id}_V)y, y\rangle_{\mathcal{W}}\bigg) \alpha(g^{-1}\, {}^t a y),
\end{equation}
as a function on $\bG(\A)\times\bP_{\rm Siegel}(\A)$.

\subsection{Global representation theoretic properties}\label{sec:global-theta-properties}

Henceforth, all decompositions of an automorphic representation of $\bG={\rm O}(V)(\A)$ or $\bG'(\A)=\Sp(W)(\A)$ as a restricted tensor product will be taken with respect to the systems $\{K_{0,v}\}_{v\notin S_V}$ or $\{K_{0,v}'\}_{v<\infty}$ from Section \ref{sec:global-theta}, respectively.

As with the local archimedean theory, we shall define the global theta lift in the context of $K$-finite automorphic forms.  Given a cuspidal automorphic representation $\pi$ of $\bG(\A)$, the space $\Theta(\pi,W)$ generated by $\Theta(\phi,\alpha;W)$, for $\phi\in \pi$ and $\alpha\in\mathscr{S}(\mathcal{Y}(\A_f))\otimes \mathscr{S}_{\rm alg}(\mathcal{Y}(F_\infty))$, is an invariant subspace, possibly zero, of the space of automorphic forms on $\bG'(\A)$.

The automorphic representation $\Theta(\pi,W)$ is of finite length. Its relation to the local theta correspondence is as follows.  Suppose that $\pi'\simeq\otimes_v\pi'_v$ is an irreducible quotient of $\Theta(\pi,W)$.  Because the space $\{ \overline{\phi} : \phi \in \pi \}$ is isomorphic to the contragredient $\pi^\vee$ of $\pi$, we have a non-trivial intertwining operator of $\bG(\A) \times \bG'(\A)$-modules $\omega \to \pi' \times \pi$, so that $\pi_v' \simeq \theta( \pi_v, W)$; see \cite[Proposition 7.1.2]{KR94}. In fact, we have the following result of Kudla--Rallis \cite[Corollary 7.1.3]{KR94}:

\begin{theorem}[Kudla--Rallis]\label{global-irred}
Let $\pi$ be an irreducible cuspidal representation of $\bG(\A)$. If $\Theta(\pi,W)$ is square-integrable, then $\Theta(\pi,W)$ is irreducible and
\[
\Theta(\pi, W)\simeq\otimes_v \theta(\pi_v, W).
\]
\end{theorem}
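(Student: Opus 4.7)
The plan is to combine square-integrability (which yields a semisimple decomposition of $\Theta(\pi,W)$) with the local Howe duality conjecture, now known at every place of $F$ thanks to Howe (archimedean), Waldspurger (odd residue characteristic), and Gan--Takeda (all finite places, including the dyadic ones). First, because $\Theta(\pi,W) \subset L^2([\bG'])$ by hypothesis, I will decompose it as a direct sum of irreducible subrepresentations, $\Theta(\pi,W) \simeq \bigoplus_i \pi_i'$, each of which is in particular an irreducible quotient.

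Next, as the paper recalls in the discussion preceding the statement, the defining formula \eqref{eq:Theta-fn} yields a surjective $(\bG\times\bG')(\A)$-intertwiner $\omega \twoheadrightarrow \pi \otimes \Theta(\pi,W)$. Projecting onto the $i$-th summand produces a nonzero map $\omega \to \pi \otimes \pi_i'$. Writing $\omega$, $\pi$, and $\pi_i'$ as restricted tensor products over all places $v$ and applying local Howe duality place by place, I conclude that $\pi_{i,v}' \simeq \theta(\pi_v,W)$ for every $v$, and hence $\pi_i' \simeq \pi^\Theta := \otimes_v \theta(\pi_v,W)$ for every $i$.

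To upgrade this to irreducibility, I will show that the multiplicity $m_\pi$ of $\pi^\Theta$ in $\Theta(\pi,W)$ equals $1$. The $m_\pi$ distinct embeddings $\pi^\Theta \hookrightarrow \Theta(\pi,W)$ give, after composition with the surjection above and projection onto the $\pi \otimes \pi^\Theta$ isotype, $m_\pi$ linearly independent elements of $\mathrm{Hom}_{(\bG\times\bG')(\A)}(\omega, \pi \otimes \pi^\Theta)$. But this global $\mathrm{Hom}$-space factors as a restricted tensor product of local $\mathrm{Hom}$-spaces $\mathrm{Hom}(\omega_v, \pi_v \otimes \theta(\pi_v,W))$, each of dimension at most one by local Howe duality. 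Hence $m_\pi \leqslant 1$; since $\Theta(\pi,W) \neq 0$ forces $m_\pi \geqslant 1$, it follows that $\Theta(\pi,W)$ is irreducible and isomorphic to $\pi^\Theta$, as claimed.

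The hard part is not the global argument itself -- which is a clean adjunction powered by semisimplicity of the discrete $L^2$ spectrum -- but the local Howe duality input, especially at dyadic finite places, where it relies on the substantial recent work of Gan--Takeda. A secondary subtlety is matching conventions so that the $(\bG \times \bG')(\A)$-intertwiner really lands in $\pi \otimes \Theta(\pi,W)$ and not in a twisted variant; this is handled by using $\{\bar\phi : \phi \in \pi\} \simeq \pi^\vee$ together with the self-duality of representations of the orthogonal group, exactly as in the passage preceding the theorem.
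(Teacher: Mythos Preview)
The paper does not prove this theorem; it is quoted as \cite[Corollary 7.1.3]{KR94} and used as a black box. Your argument is essentially the Kudla--Rallis proof and is correct in outline.

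Two small points of hygiene. First, your sentence about ``the $m_\pi$ distinct embeddings $\pi^\Theta \hookrightarrow \Theta(\pi,W)$'' being composed with the surjection $\omega \twoheadrightarrow \pi \otimes \Theta(\pi,W)$ is phrased backwards: embeddings go the wrong way for that composition. What you want are the $m_\pi$ orthogonal \emph{projections} $\Theta(\pi,W) \to \pi^\Theta$ coming from the semisimple decomposition; composed with the surjection, these give $m_\pi$ surjections $\omega \to \pi \otimes \pi^\Theta$, and linear independence follows because a nontrivial linear relation among the projections would contradict surjectivity onto $(\pi^\Theta)^{\oplus m_\pi}$.

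Second, the assertion that the global $\Hom$-space factors as a restricted tensor product of the local ones deserves a word, since $\omega$ is not admissible. The clean way is to pass through the abstract big theta lift: $\Hom_{\bG(\A)\times\bG'(\A)}(\omega,\pi\otimes\pi^\Theta) = \Hom_{\bG'(\A)}(\Theta^{\rm big}(\pi,W),\pi^\Theta)$, and the right-hand side involves only admissible representations of finite length, where factorization over places is standard. Alternatively, since each local $\Hom$ is one-dimensional by Howe duality (the cosocle of $\Theta(\pi_v,W)$ is $\theta(\pi_v,W)$, so $\Hom(\Theta(\pi_v,W),\theta(\pi_v,W))=\C$), any global intertwiner is a scalar multiple of the product of local ones.
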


Regarding the cuspidality of global theta lifts, we have the following theorem of Rallis, \cite[Theorem I.1.1]{Rallis84}, establishing the cuspidality of the ``first occurence'' of $\Theta(\pi,W)$. To formulate this principle properly, we need to add in the dimension $2m$ of the symplectic space $W$ to the notation, writing $W_{2m}$ in place of $W$ and $\Sp_{2m}=\Sp(W_{2m})$ in place of $\bG'$.

\begin{theorem}[Rallis]\label{thm:cuspidality} Let $\pi$ be an irreducible cuspidal automophic representation of $\bG(\A)$. Let $m\geqslant 2$. If $\Theta(\pi,W_{2m})$ is not contained in the space of cusp forms $L^2_{\rm cusp}({\rm Sp}_{2m}(F)\backslash {\rm Sp}_{2m}(\A))$ then $\Theta(\pi,W_{2(m-1)})\neq 0$.
\end{theorem}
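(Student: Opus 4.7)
The plan is to prove this by computing constant terms of the theta lift along unipotent radicals of maximal parabolic subgroups of $\Sp_{2m}$ (the Rallis constant term identity), and then invoking a standard going-up argument in the Witt tower. Recall that the maximal parabolics of $\Sp_{2m}$ are conjugate to subgroups $\bQ_k = \bL_k\bN_k$ with $\bL_k \simeq \GL_k\times\Sp_{2(m-k)}$, for $1\leqslant k\leqslant m$. Since any parabolic sits in some maximal parabolic, the hypothesis that $\Theta(\pi,W_{2m})$ is not contained in the space of cusp forms produces $\phi\in\pi$, $\alpha\in\mathscr{S}(\mathcal{Y}(\A))$ and some index $k\in\{1,\ldots,m\}$ for which the constant term
\[
\Theta(\phi,\alpha;W_{2m})_{\bN_k}(g') \;=\; \int_{[\bN_k]}\Theta(\phi,\alpha;W_{2m})(n g')\,dn
\]
does not vanish identically on $\bG'(\A)$.

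Next I would compute this constant term explicitly. Unfolding the integral using the Schroedinger-model formula \eqref{eq:explicit-theta} (and its analogue when $\bN_k$ is not the Siegel unipotent), the integration of the character of $\bN_k$ against the values $\psi\bigl(\tfrac12\langle by,y\rangle_{\mathcal{W}}\bigr)$ of the Weil representation on $\bN_k(\A)$ forces the sum over $y\in \mathcal{Y}(F)$ to be supported on the locus where a certain partial Gram matrix of $y$ vanishes. After collecting terms according to the $F$-rational subspace of $V$ spanned by the first $k$ components of $y$, a classical manipulation identifies the result as
\[
\Theta(\phi,\alpha;W_{2m})_{\bN_k}(g') \;=\; \sum_{U\subset V \text{ isotropic},\ \dim U = k}\!\! \int_{[\bG_U]}\phi(h)\cdot\Theta_{U,\alpha}(h,g')\,dh,
\]
where $\bG_U$ is the stabilizer of $U$ in $\bG$ and $\Theta_{U,\alpha}$ is (after restriction to the $\Sp_{2(m-k)}$-factor of $\bL_k$) the theta kernel attached to the reductive dual pair $(\Oo(U^\perp/U),\,\Sp_{2(m-k)})$ with a suitably transformed Schwartz function. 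Non-vanishing of this constant term and cuspidality of $\pi$ then imply that some inner theta period is non-zero, i.e.\ that $\Theta(\pi,W_{2(m-k)})\neq 0$ for some $k\geqslant 1$.

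Finally, I would deduce $\Theta(\pi,W_{2(m-1)})\neq 0$ from $\Theta(\pi,W_{2(m-k)})\neq 0$ by the going-up principle. Write $W_{2(m-1)}=W_{2(m-k)}\oplus W_{2(k-1)}$ as an orthogonal sum of symplectic spaces (extending the Witt decomposition by $k-1$ hyperbolic planes). The Weil representation factors as $\omega_{W_{2(m-1)}}\simeq \omega_{W_{2(m-k)}}\otimes \omega_{W_{2(k-1)}}$ under the embedding $\Sp(W_{2(m-k)})\times\Sp(W_{2(k-1)})\hookrightarrow \Sp(W_{2(m-1)})$. Choosing $\alpha = \alpha_1\otimes\alpha_2$ with $\alpha_1$ producing a nonzero element of $\Theta(\pi,W_{2(m-k)})$ and $\alpha_2$ any non-zero Schwartz--Bruhat function whose theta integral against the trivial representation does not vanish, the restriction of $\Theta(\phi,\alpha;W_{2(m-1)})$ to the above subgroup is the product of two non-zero automorphic forms, and in particular $\Theta(\phi,\alpha;W_{2(m-1)})$ is itself non-zero on $\Sp_{2(m-1)}(\A)$.

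The main obstacle is the constant term computation of the middle paragraph: one must carefully track the action of the Weil representation on $\bN_k$ — using the formulas in Section \ref{sec:thetakernel} but for non-Siegel unipotents — and verify that the Fourier/Poisson-type collapse of the $y$-sum yields precisely a theta kernel for the pair $(\Oo(U^\perp/U),\Sp_{2(m-k)})$ rather than merely an abstractly related object. Once this identity is established, the rest of the argument is formal.
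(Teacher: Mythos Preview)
The paper does not prove this result; it is quoted from \cite[Theorem~I.1.1]{Rallis84}.  Your outline is the standard Rallis argument, but two of the steps are not correct as you have written them.

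\medskip
\textbf{The constant term identity.}  Integrating the theta kernel over $[\bN_k]$ forces the Gram matrix of the first $k$ components $y_1,\ldots,y_k\in V(F)$ of $y$ to vanish, so $U=\mathrm{span}(y_1,\ldots,y_k)$ is totally isotropic of dimension $j$ with $0\leqslant j\leqslant\min(k,\mathrm{Witt}(V))$ --- not $j=k$ as you wrote.  After grouping by $j$ and unfolding the $\bG(F)$-orbit of $U$, each $j\geqslant 1$ term carries an inner integral of $\phi$ over the unipotent radical of the parabolic of $\bG=\Oo(V)$ stabilizing $U$, and this vanishes by cuspidality of $\phi$.  The surviving term is $j=0$, i.e.\ $y_1=\cdots=y_k=0$, and that term, restricted to the $\Sp_{2(m-k)}$-factor of $\bL_k$, is precisely $\Theta(\phi,\alpha|_{y_1=\cdots=y_k=0};W_{2(m-k)})$.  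With your formula as displayed (only $\dim U=k$), cuspidality would kill every term and the constant term would be identically zero.

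\medskip
\textbf{Going up.}  The restriction of $\Theta(\phi,\alpha_1\otimes\alpha_2;W_{2(m-1)})$ to the product $\Sp_{2(m-k)}\times\Sp_{2(k-1)}$ is \emph{not} a product of two automorphic forms: both theta kernels are integrated against the same variable $g\in[\bG]$, so the integral does not factor.  A correct argument runs the constant term identity in reverse.  If $\Theta(\phi,\alpha_1;W_{2m'})\neq 0$, extend $\alpha_1\in\mathscr{S}(V^{m'}(\A))$ to any $\alpha\in\mathscr{S}(V^{m'+1}(\A))$ with $\alpha|_{y_1=0}=\alpha_1$; then the constant term of $\Theta(\phi,\alpha;W_{2(m'+1)})$ along $\bQ_1$, restricted to $\Sp_{2m'}$, equals $\Theta(\phi,\alpha_1;W_{2m'})\neq 0$, so $\Theta(\phi,\alpha;W_{2(m'+1)})$ is nonzero.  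Iterating gives $\Theta(\pi,W_{2(m-1)})\neq 0$.
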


Finally, the following lemma will be of use in the proof of Theorem \ref{sup-thm}.

\begin{lemma}\label{cuspidal-theta}
Let $v_0$ be a fixed archimedean place, and assume $V$ is of signature $n\geqslant m\geqslant 1$ at $v_0$. Let $\pi$ be an irreducible cuspidal automophic representation of $\bG(\A)$, whose local component at $v_0$ is spherical and tempered. Then the global lift $\Theta(\pi,W_{2m})$ is cuspidal (or zero) and $\Theta(\pi,W_{2m})\simeq\otimes_v \theta(\pi_v,W_{2m,v})$.
\end{lemma}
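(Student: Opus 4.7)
The plan is to combine Rallis' tower property (Theorem \ref{thm:cuspidality}) with the archimedean non-occurrence result (Lemma \ref{first-occ-sph}) to establish that $\Theta(\pi, W_{2m})$ is either zero or cuspidal, after which the Kudla--Rallis theorem (Theorem \ref{global-irred}) immediately yields the asserted factorization.

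I would argue by contradiction: suppose $\Theta(\pi, W_{2m})$ is non-zero but not contained in $L^2_{\rm cusp}$. For $m \geqslant 2$, Theorem \ref{thm:cuspidality} allows us to iteratively descend the tower, producing a minimal index $m_0 \in \{0, 1, \ldots, m-1\}$ with $\Theta(\pi, W_{2m_0}) \neq 0$. By this minimality and the contrapositive of the tower property, the lift $\Theta(\pi, W_{2m_0})$ is itself cuspidal, and in particular contains some irreducible cuspidal automorphic representation $\pi' = \bigotimes_v \pi'_v$ of $\Sp_{2m_0}(\A)$.

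The decisive step is then to identify the local component $\pi'_{v_0}$. As recalled immediately before Theorem \ref{global-irred}, any irreducible quotient $\pi'$ of $\Theta(\pi, W_{2m_0})$ must satisfy $\pi'_v \simeq \theta(\pi_v, W_{2m_0, v})$ at every place $v$. Since $\pi_{v_0}$ is spherical and tempered by hypothesis, and since $m_0 < m$, Lemma \ref{first-occ-sph} forces $\theta(\pi_{v_0}, W_{2m_0, v_0}) = 0$, contradicting the non-triviality of the irreducible factor $\pi'_{v_0}$. This establishes that $\Theta(\pi, W_{2m})$ is zero or cuspidal; in the latter case, square-integrability is automatic, and Theorem \ref{global-irred} delivers the irreducibility and factorization $\Theta(\pi, W_{2m}) \simeq \bigotimes_v \theta(\pi_v, W_{2m, v})$.

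The main obstacle is conceptual rather than technical: one must correctly align the global tower descent with the local archimedean obstruction at $v_0$, but with all the necessary ingredients assembled in the preceding sections, the verification itself is short. A minor subtlety is the boundary case $m = 1$, where Theorem \ref{thm:cuspidality} is not directly available; this can be handled separately by analyzing the constant term of $\Theta(\pi, W_2)$ along the Borel of $\SL_2$, where the temperedness of $\pi_{v_0}$ again serves to rule out any non-cuspidal contribution.
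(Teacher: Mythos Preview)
Your proposal is correct and uses the same ingredients as the paper (Lemma \ref{first-occ-sph}, the local-global compatibility stated before Theorem \ref{global-irred}, Theorem \ref{thm:cuspidality}, and Theorem \ref{global-irred}). The paper's argument is marginally more direct: rather than descending the tower by contradiction to a minimal nonzero level, it simply observes that the vanishing of the local lift $\theta(\pi_{v_0},W_{2m',v_0})$ for every $m'<m$ forces the global lifts $\Theta(\pi,W_{2m'})$ to vanish for all $m'<m$ at once, after which a single application of the contrapositive of Theorem \ref{thm:cuspidality} gives cuspidality.
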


\begin{proof}
Since $\pi_{v_0}$ is spherical and tempered, it follows from Lemma \ref{first-occ-sph} that $\Theta(\pi_{v_0},W_{2m',v_0})$ vanishes for $m'<m$. The global lift $\Theta(\pi,W_{2m'})$ therefore vanishes for $m'<m$. The first statement of the lemma then follows from Theorem \ref{thm:cuspidality}. The factorization statement is a consequence of Theorem \ref{global-irred}. 
\end{proof}

\section{Period relation}\label{sec:Period-relation}

The goal of this section is to prove the adelic period relation Proposition \ref{period-relation} and to interpret it classically in Section \ref{adelic2classical}.

\subsection{The Maass period relation}\label{Maass-period-relation}

We begin by recalling a result of Rudnick--Sarnak \cite[\S 3.1]{RS} in which certain discrete orthogonal periods of a Hecke--Maass form on $\SO(3,1)$ are identified with the negative Fourier coefficients of a corresponding theta lift, a weight one Maass form on $\SL_2(\R)$. This extends the classical result of Maass \cite{Maass59} concerning weight zero CM Maass forms, which are theta lifts of characters $\chi$ on $\SO(1,1)$ (defined relative to a real quadratic extension of $\Q$ with norm form ${\rm N}_{E/\Q}$), whose Fourier coefficients are given by $\sum_{{\rm N}_{E/\Q}(\mathfrak{a})=n}\chi(\mathfrak{a})$. These results, along with the basic method of proof, will form the prototype of the period relation we establish in the next paragraph.

Let $(V,Q)$ be an anisotropic quadratic space over $\Q$, of dimension $4$ and signature $(3,1)$. Denote $\bG=\Oo(V)$. Let $L\subset V$ be an integral lattice and let $\Gamma$ be the subgroup of $\bG(\R)$ preserving $L$.  Let $D$ be the discriminant of $L$. As a model for hyperbolic $3$-space, we take $\mathbb{H}^3$ to be one of the sheets of the two-sheeted hyperboloid $\{x\in V(\R): Q(x)=-1 \}$, or equivalently the space of negative definite lines in $V(\R)$. Let $\bG(\R)^+$ be the neutral component of $\bG(\R)$ for the Hausdorff topology; then $\bG(\R)^+$ preserves $\mathbb{H}^3$.  Letting $K_\infty^+<\bG(\R)^+$ denote the stabilizer of a point in $\mathbb{H}^3$, we may isometrically identify the Riemannian symmetric space $\mathbb{H}^3$ with $\bG(\R)^+/K_\infty^+$. Let $f_\lambda$ be a weight zero Maass form of spectral parameter $\lambda$ on $\Gamma^+\backslash\mathbb{H}^3$, where $\Gamma^+=\Gamma\cap\bG(\R)^+$. Then $f_\lambda$ is the restriction to $\bG(\R)^+$ of an adelic automorphic form $\phi_\lambda$ on $\bG(\A)$, which is invariant under $K_\infty^+$ and an open compact subgroup $K_f\subset\bG(\A_f)$ satisfying $\Gamma=\bG(\Q)\cap K_f$, and which vanishes on all but one connected component of the double quotient $\bG(\Q)\backslash\bG(\A)/K_f$.

Let $\mathcal{G}\in\mathscr{S}_{\rm alg}(\R^4)$ be the fixed Gaussian in the algebraic Schwartz space model of the Weil representation from \S\ref{sec:Schroedinger}. Then $F_\lambda=\Theta (\phi_\lambda,\mathcal{G})$ on $\Sp_2=\SL_2$ can be viewed classically as a weight one Maass form on $\mathbb{H}^2$ for the congruence subgroup $\Gamma_0(4D)$, with spectral parameter $\lambda$, and Nebentypus $\chi_D(n) = \big( \frac{D}{|n|} \big)$ where $\big( \frac{D}{|n|} \big)$ is the Kronecker symbol. The Fourier expansion of $F_\lambda$ can be written as
\[
F_\lambda(u+iv)=\sum_{m\in\Z}a_m(F_\lambda;v)e^{2 \pi i mu}\qquad (v>0).
\]
The important point that Rudnick and Sarnak observe, following Maass \cite{Maass59}, is that the Fourier coefficients $a_m(F_\lambda;v)$ \textit{for negative $m$} can be expressed as a discrete automorphic period of $f_\lambda$.

To define this period, let $Y_m$ ($m$ a negative integer) denote the quadric in $V$ given by $Q(x)=m$. Let $Y_m(L)=Y_m(\Q)\cap L$ denote the set of ``$L$-integral points" in $Y_m(\Q)$. Then $Y_m(L)$ admits an action by $\Gamma^+$, and the set $\Gamma^+ \backslash Y_m(L)$ of $\Gamma^+$-orbits is finite; we denote by $y_1,\ldots ,y_h$ their projections to $\Gamma^+\backslash\mathbb{H}^3$. Then the period relation of Rudnick--Sarnak states that
\begin{equation}\label{RS-relation}
a_m(F_\lambda;v)=\widehat{\mathcal{G}}(\lambda;v)\sum_{j=1}^h\frac1{w_j}f_\lambda(y_j),
\end{equation}
where $w_j$ is the order of the stabilizer of $y_j$ in $\Gamma^+$ and
\begin{equation}\label{I-lambda}
\widehat{\mathcal{G}}(\lambda;v) = v \int_{\bG(\R)^+}\mathcal{G}(\sqrt{v}g^{-1}y_0)\varphi_\lambda(g)dg
\end{equation}
is (up to the factor of $v$) the spherical transform of $\mathcal{G}(\sqrt{v}g^{-1}y_0)$, viewed as a function on $g\in\bG(\R)^+$. This transform is non vanishing at $\lambda$ for $v$ large enough.

A crucial ingredient in the proof of the relation \eqref{RS-relation} is a uniqueness result: the Harish-Chandra spherical function $\varphi_\lambda$ on $\bG(\R)^+$ is the unique up to scalar bi-$K_\infty^+$-invariant eigenfunction of the Laplacian of frequency $\lambda$.

\subsection{Notation}\label{sec:periodnotation}

We now establish the notation that will remain in effect through to the end of the paper.  We return to the setting of Section \ref{sec:global-theta}, and continue to let $F$ be a totally real number field with ring of integers $\cO_F$.  We continue to use the notation established in Section \ref{sec:theta-review}, with the exception that the Lagrangian subspace $U$ of $W$ is now denoted $U_W$.

We assume that $V$ is anisotropic over $F$.  Let $2m$ be the dimension of $W$, and let $d$ be the dimension of $V$.  We continue to assume that $d \geqslant 4$ is even, and that $d = n + m$, where $n > m \geqslant 1$.  We fix a real embedding $v_0$ of the number field $F$.  We suppose that $V$ is positive definite at all real places distinct from $v_0$ and of signature $(n,m)$ at $v_0$.

Pick an injective linear map $y_0 : U_W \to V$, and let $U_V = y_0(U_W)$ (we will later identify the spaces $U_V$ and $U_W$, hence the choice of notation).  We assume that $U_V$ is negative definite at $v_0$.  We let $q$ be the quadratic form on $U_W$ obtained by pulling back $Q$ under $y_0$; then $q$ is negative definite at $v_0$ and positive definite at all other real places.

\subsection{Orthogonal and Bessel periods}\label{sec:2periods}

We now generalize the periods featured in the Rudnick--Sarnak period relation to the groups $\bG$ and $\bG'$, using the adelic language.  We shall express an \textit{orthogonal period} of an automorphic form on $\bG$ in terms of a \textit{Bessel period} of its theta lift to $\bG'$.  This period relation was also considered more recently by Gan in \cite{Gan}.

\subsubsection{Orthogonal periods on orthogonal groups}\label{sec:orth-orth}  Let $\bH=\Oo(U_V^\perp)\times\Oo(U_V)$.  Let $\pi$ be an irreducible unitary automorphic representation of $\bG(\A)$. Define an adelic automorphic period map by
\[
\mathscr{P}_\bH (\phi)=\int_{[\bH]} \phi(h) dh\qquad (\phi\in V_\pi^\infty),
\]
where $dh$ is the $\bH(\A)$-invariant probability measure on $[\bH]$. The integral converges absolutely, since $[\bH]$ is compact ($V$ being anisotropic). The association $\phi\mapsto \mathscr{P}_\bH (\phi)$ then yields an element in $\Hom_{\bH(\A)}(\pi,1)$.

\subsubsection{Bessel periods on the symplectic group}
\label{sec:Bessel-period}

As in Section \ref{sec:thetakernel}, we let $\bP_{\rm Siegel}=\bM\bN$ be the Siegel parabolic of $\bG'$ associated to $U_W$. Recall from Section \ref{Type1} that $\bM$ is isomorphic to $\GL(U_W)$, through the map $\GL(U_W)\rightarrow\bM$ given by $a\mapsto m(a)={\rm diag}(a,a^\vee)$. We let $\Oo(U_W)$ be the isometry group of $q$, viewed as a subgroup of $\bM$. Then $\bR=\Oo(U_W)\ltimes\bN$ is the associated {\it Bessel subgroup} $\bR$ of $\bG$.


We shall define Bessel periods by integrating against a character of $\bR$. We begin by recalling from Section \ref{Type1} that $\bN$ can be identified with the space of symmetric morphisms $\Hom^+( U_W^*, U_W)$. Because $q$ may be viewed as an element of $\Hom( U_W, U_W^*)$, we can define a map $\bN(F)\backslash\bN(\A) \to F \backslash \A$ that sends $n(b)$ to $\tr( qb)$. We then define a character $\psi_N$ of $\bN(F)\backslash\bN(\A)$ by $\psi_N(n(b)) = \psi(\tfrac{1}{2} \tr(q b) )$.  Then $\psi_N$ is invariant under conjugation by ${\rm O}(U_W)$, and therefore extends to a character of $\bR(\A)$ that we will denote $1 \times \psi_N$.

Let $\pi'$ be an irreducible unitary automorphic representation of $\bG'(\A)=\Sp(W)(\A)$.  Define an adelic automorphic period map by
\[
\mathscr{B}_\bR^{\, \psi_N}(\phi')=\int_{[\Oo(U_W)]}\int_{[\bN]} \phi'(m(t)n)\psi_N^{-1}(n)dtdn\qquad (\phi'\in V_{\pi'}^\infty),
\]
where $dt$ and $dn$ are the invariant probability measures on $[\Oo(U_W)]$ and $[\bN]$, respectively. The integral converges absolutely, since $[\Oo(U_W)]\times [\bN]$ is compact, yielding an element in $\Hom_{\bR(\A)}(\pi', 1 \times \psi_N)$.

\subsection{An affine variety}\label{sec:affine}
To establish a relation between the Bessel period associated with $U_W$ and the orthogonal period on $\bG$, we shall need to consider the set of embeddings of $(U_W,q)$ into $(V,Q)$ as a quadratic subspace. This motivates the following definition.

Recall that $\mathcal{Y}=U_W^*\otimes V={\rm Hom}(U_W,V)$. We let
\begin{equation}\label{eq:defn-YU}
\mathcal{Y}_q=\{y\in {\rm Hom}(U_W,V): Q\circ y=q\}\subset\mathcal{Y}
\end{equation}
denote the space of all isometric embeddings of $U_W$ into $V$, as quadratic spaces. In other words, $\mathcal{Y}_q$ is the variety of representations of the quadratic form $q$ by $Q$.  Then the natural action of $\bG\times \Oo(U_W)$ on $\mathcal{Y}$ preserves $\mathcal{Y}_q$, which makes $\mathcal{Y}_q$ into an affine $\bG\times \Oo(U_W)$-variety.  Concretely, $(g,t)\in\bG\times\Oo(U_W)$ sends $y\in\mathcal{Y}_q$ to $g\circ y\circ t^{-1}\in\mathcal{Y}_q$. The set of rational points $\mathcal{Y}_q(F)$ is non-empty, as it contains the embedding $y_0$ defined above, and it therefore forms a homogeneous space under the action of $\bG(F)\times\Oo(U_W)(F)$ by Witt's theorem.

For any $F$-algebra $R$, we let $Y_q(R)$ denote the quotient set $\Oo(U_W)(R)\backslash \mathcal{Y}_q(R)$. We denote the points in $Y_q(R)$ by $[y]$, where $y\in \mathcal{Y}_q(R)$. Then the stabilizer of the base point $[y_0]\in Y_q(F)$ in $\bG(F)$ is $\bH(F)=\Oo(U_V^\perp)(F)\times\Oo(U_V)(F)$. Indeed, this is clear for the subgroup $\Oo(U_V^\perp)$, as the latter is already the stabilizer in $\bG$ of $y_0\in\mathcal{Y}_q$ (before quotienting). In the quotient $Y_q$, observe that for every $g\in\Oo(U_V)\subset\bG$ there is $t\in\Oo(U_W)$ such that $g \circ y_0= y_0\circ t^{-1}$. We deduce that
\begin{equation}\label{XS}
\bG(F)/\bH(F)\simeq Y_q(F),
\end{equation}
the isomorphism being induced by the orbit map.

\subsection{Auxiliary structures}\label{sec:level-signature}

We keep the notations of the paragraph Section \ref{sec:2periods}, and now make some choices of auxiliary data to be used throughout the rest of this section.

\subsubsection{Compact subgroups}\label{sec:lattice} Recall that in Section \ref{sec:global-theta} we fixed a lattice $L$ in $V$ which is self-dual at all $v\notin S_V$, and we used $L$ to define, at every finite place $v$, a compact open subgroup $K_{0,v}$ of $\Oo(V)(F_v)$.  For every finite $v$, let $K_v$ be an open subgroup of $K_{0,v}$ such that $K_v = K_{0,v}$ for almost all $v$, and let $K_f = \prod_{v < \infty} K_v$.

We assume that the decomposition of $V_{v_0}$ into positive and negative subspaces chosen in Section \ref{sec:global-theta} is given by $V_{v_0} = U_{V,v_0}^\perp \oplus U_{V,v_0}$.  We let $K_{v_0}$ be the corresponding maximal compact subgroup of $G_{v_0}$, which satisfies $H_{v_0} = K_{v_0}$.  At other real places $v$ we let $K_v = G_v$, and define $K_\infty = \prod_{v \mid \infty} K_v$.  Put $K=K_fK_\infty$.

In a similar way, we fixed a globally self-dual lattice $L'$ in $W$, which we used to define compact open subgroups $K_{0,v}'$ of $\Sp(W)(F_v)$.  For every finite $v$, let $K_v'$ be an open subgroup of $K_{0,v}'$ such that $K_v' = K_{0,v}'$ for almost all $v$, and let $K_f' = \prod_{v < \infty} K_v'$.

For real $v$, we let $K_v'$ be the maximal compact subgroup of $G_v'$ corresponding to the positive complex structure $J_{W_v}$ chosen in Section \ref{sec:global-theta}.  We assume that $J_{W_v}$ is compatible with $q$, in the sense that the quadratic form $\langle J_{W_v} u_1, u_2 \rangle$ on $U_{W,v}$ is equal to $-q$ if $v = v_0$, and equal to $q$ at other real places.  This implies that $\Oo(U_W)(F_v)$ is contained in $K_v'$.  Put $K_\infty' = \prod_{v \mid \infty} K_v'$, and $K'=K_f'K_\infty'$.

\subsubsection{Adelic Schwartz function}\label{sec:adelic-schwartz}

We let $L'_{U_W}$ be a lattice in $U_W$, with dual lattice $L'_{U^*_W}$ in $U_W^*$. We may assume that $L' = L'_{U_W} \oplus L'_{U_W^*}$.  We put $\mathcal{L}_\mathcal{Y} = L'_{U_W^*} \otimes L$.  Then $\mathcal{L}_\mathcal{Y}$ is a lattice in $\mathcal{Y}=U_W^*\otimes V$.  Let $\widehat{\mathcal{L}}_\mathcal{Y}$ be the closure of $\mathcal{L}_\mathcal{Y}$ in $\mathcal{Y}(\A_f)$ and write $\alpha_f$ for the characteristic function of $\widehat{\mathcal{L}}_\mathcal{Y}$.  We may alternately define $\widehat{\mathcal{L}}_\mathcal{Y}$ as the set of maps in $\Hom(U_W,V)(\A_f)$ that map $\widehat{L}'_{U_W}$ into $\widehat{L}$.  Choose any function $\alpha_\infty = \prod_{v | \infty} \alpha_v \in \mathscr{S}_\text{alg}(\mathcal{Y}(F_\infty))$ that is invariant under $\Oo(U_W)(F_\infty) \times K_\infty$. Finally, let
\[
\alpha=\alpha_f\otimes\alpha_\infty\in\mathscr{S}(\mathcal{Y}(\A_f))\otimes \mathscr{S}_\text{alg}(\mathcal{Y}(F_\infty)).
\]

\subsubsection{Integral sets}\label{sec:integral-sets}

Having fixed a base point $y_0\in\mathcal{Y}_q(F)$, we obtain a right $1 \times K_f$-invariant function on $\Oo(U_W)(\A_f)\times \bG(\A_f)$ given by $(t,g)\mapsto \alpha_f ((t^{-1},g^{-1}).y_0)$.  In view of ${\rm Stab}_\bG(y_0)=\Oo(U_V^\perp)$, the function $\alpha((t^{-1},g^{-1})y_0)$ is well-defined on
\[
\Oo(U_W)(\A) \times \left(\Oo(U_V^\perp)(\A)\backslash \bG(\A)/K\right).
\]
Note that $\alpha_f((t^{-1},g^{-1})y_0)$ is the characteristic function of the set  
\[
\{(t,g)\in\Oo(U_W)(\A_f)\times\bG(\A_f):(t^{-1},g^{-1})y_0 \in \widehat{\mathcal{L}}_\mathcal{Y} \}.
\]

For $[y]\in Y_q(\A)=\Oo(U_W)(\A)\backslash\mathcal{Y}_q(\A)$ we put
\begin{equation}\label{defn:alphaU}
\beta([y])=\int_{\Oo(U_W)(\A)}\alpha(t^{-1}y)\, dt.
\end{equation}
Since ${\rm Stab}_\bG([y_0])=\bH$, the function $g\mapsto \beta(g^{-1}[y_0])$ is well-defined on $\bH(\A)\backslash\bG(\A)/K$. The integral in \eqref{defn:alphaU} is factorizable, so that $\beta=\beta_f\otimes\beta_\infty$, with the obvious definitions. Since $\alpha_\infty$ is assumed to be invariant under $\Oo(U_W)(F_\infty)$, we in fact have $\beta_\infty([y])=\alpha_\infty(y)$ under an appropriate measure normalization.

Moreover, we may describe the support of $\beta_f$ as follows.  If $\widehat\Lambda = \Oo(U_W)(\A_f)\widehat{\mathcal{L}}_\mathcal{Y} \subset \mathcal{Y}(\A_f)$, then the support of $\beta_f(g_f^{-1}[y_0])$ is the set
\begin{equation}\label{eq:defn-G-int}
\bG(\A_f)^{\rm int}=\{g_f\in\bG(\A_f):g_f^{-1}[y_0]\in\widehat\Lambda\},
\end{equation}
which is left $\bH(\A_f)$-invariant and right $K_f$-invariant.  For any finite place $v$ we define $G_v^\text{int}$ in the natural way, so that $\bG(\A_f)^{\rm int} = \prod_{v < \infty} G_v^\text{int}$.  Note that $G_v^\text{int}$ may be characterized as the set of $g \in G_v$ such that $g^{-1} U_{V,v}$ contains a free $\cO_v$-submodule $M$ such that $(M,Q)$ is isometric to $(L_{U_W,v}',q)$.

\begin{lemma}\label{lemma:finite-double-quotient}

The set $\bH(\A_f) \backslash \bG(\A_f)^{\rm int} / K_f$ is finite.

\end{lemma}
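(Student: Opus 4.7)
The plan is to rephrase the problem in terms of $K_f$-orbits on a compact subset of $Y_q(\A_f)$, and then conclude by a compactness-openness argument. First, I would consider the continuous orbit map $\Phi : \bG(\A_f) \to Y_q(\A_f)$, $g \mapsto g^{-1}\cdot [y_0]$: by the stabilizer identification \eqref{XS}, $\bH(\A_f) = \mathrm{Stab}_{\bG(\A_f)}([y_0])$, so $\Phi$ is left $\bH(\A_f)$-invariant with fibers equal to the left cosets of $\bH(\A_f)$, and it is right $K_f$-equivariant for the post-composition action of $K_f$ on $Y_q(\A_f)$. Setting $\mathcal{I} := \Phi(\bG(\A_f)^{\rm int}) = \bG(\A_f)\cdot[y_0] \cap Y_q(\A_f)^{\rm int}$, the map $\Phi$ descends to a $K_f$-equivariant bijection $\bH(\A_f)\backslash\bG(\A_f)^{\rm int} \simeq \mathcal{I}$, so it suffices to show that $\mathcal{I}/K_f$ is finite.

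Second, I would establish that $\mathcal{I}$ is compact. The set $\widehat{\mathcal{L}}_\mathcal{Y} \cap \mathcal{Y}_q(\A_f)$ is compact, as the intersection of the compact open subgroup $\widehat{\mathcal{L}}_\mathcal{Y} \subset \mathcal{Y}(\A_f)$ with the closed subvariety $\mathcal{Y}_q(\A_f)$, and so its image $Y_q(\A_f)^{\rm int}$ under the continuous quotient $\mathcal{Y}_q(\A_f) \to Y_q(\A_f)$ is compact. Now $\mathcal{Y}_q$ is a smooth homogeneous $F$-variety for $\bG \times \Oo(U_W)$, so the local orbit map $\bG(F_v) \to \mathcal{Y}_q(F_v)$, $g \mapsto g\cdot y_0$, is an open surjection onto the $\bG(F_v)$-orbit of $y_0$ at each place; composing with the quotient by $\Oo(U_W)$, the adelic orbit $\bG(\A_f)\cdot[y_0]$ is open in $Y_q(\A_f)$. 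Since its complement is a union of other $\bG(\A_f)$-orbits, each open by the same reasoning, $\bG(\A_f)\cdot[y_0]$ is also closed. Hence $\mathcal{I}$, the intersection of the closed set $\bG(\A_f)\cdot[y_0]$ with the compact $Y_q(\A_f)^{\rm int}$, is compact.

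Third, I would show that the $K_f$-orbits on $\mathcal{I}$ are open. For any $g \in \bG(\A_f)^{\rm int}$ the coset $gK_f$ is an open subset of $\bG(\A_f)^{\rm int}$, and by the openness of $\Phi$ onto $\bG(\A_f)\cdot[y_0]$ established above, the image $\Phi(gK_f) \subseteq \mathcal{I}$ is open in $Y_q(\A_f)$, and a fortiori in $\mathcal{I}$. A direct computation gives $\Phi(gK_f) = K_f\cdot(g^{-1}\cdot[y_0])$, the $K_f$-orbit through $\Phi(g)$. The compact $\mathcal{I}$ is therefore a disjoint union of open $K_f$-orbits, hence of finitely many of them, which yields the proposition.

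The main technical obstacle is the topological bookkeeping in the second step --- specifically, verifying that $\bG(\A_f)\cdot[y_0]$ is closed in $Y_q(\A_f)$; once the smoothness of $\mathcal{Y}_q$ is leveraged to give openness of all $\bG(\A_f)$-orbits, closedness follows automatically, and the remainder of the argument is routine.
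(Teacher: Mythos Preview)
Your strategy—identify $\bH(\A_f)\backslash\bG(\A_f)^{\rm int}$ with a subset $\mathcal{I}$ of $Y_q(\A_f)$, show it is compact, and observe that $K_f$-orbits are open—is sound in outline, but there is a genuine gap at the point you yourself flag as the ``main technical obstacle.'' You assert that local openness of the orbit map $\bG(F_v)\to\mathcal{Y}_q(F_v)$ at each place implies that the adelic orbit $\bG(\A_f)\cdot[y_0]$ is open in $Y_q(\A_f)$. This implication is not automatic: in a restricted product $\prod'_v X_v$ (with respect to compact opens $O_v$), a basic open set has the form $\prod_{v\in T}U_v\times\prod_{v\notin T}O_v$, so for the adelic orbit to be open you need that for almost all $v$ the integral orbit $K_{0,v}\cdot y_0$ coincides with $\mathcal{Y}_q(\cO_v)$—equivalently, that $K_{0,v}$ acts transitively on $\mathcal{Y}_q(\cO_v)$. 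Your Step~3 (openness of $K_f$-orbits in $\mathcal{I}$) relies on the same adelic openness, so the gap propagates.

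This missing ``almost all $v$'' statement is exactly the content of the second half of the paper's proof: there it is shown, via an explicit argument with self-dual lattices, that for almost all $v$ the local double coset $H_v\backslash G_v^{\rm int}/K_v$ is a singleton. The paper then handles the remaining places by a local compactness argument (its set $\cM$ of integral sublattices is compact with open $K_v$-orbits). So the paper's proof is place-by-place, whereas you attempt a global topological packaging; but the global packaging does not bypass the key local input. One could fill your gap by invoking smoothness of the orbit map $\bG\to\mathcal{Y}_q=\bG/\Oo(U_V^\perp)$ together with Hensel's lemma and Witt's theorem over finite fields to get surjectivity of $\bG(\cO_v)\to\mathcal{Y}_q(\cO_v)$ for almost all $v$, but this argument needs to be made, not assumed from ``smoothness of $\mathcal{Y}_q$.''
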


\begin{proof}

We need to show that $H_v \backslash G_v^\text{int} / K_v$ is finite for all $v$, and has cardinality one for almost all $v$. We do this by defining $\cM$ to be the set of free $\cO_v$-submodules of $L_v$ that are isometric to $L_{U_W,v}'$.  We define a map $\tau : \cM \to H_v \backslash G_v^\text{int}$ by sending $M \in \cM$ to the unique coset $H_v g \in H_v \backslash G_v$ such that $g^{-1} U_{V,v} = M \otimes F_v$, which clearly lies in $H_v \backslash G_v^\text{int}$.  Moreover, $\tau$ is surjective: if $g \in G_v^\text{int}$, then $g^{-1} U_{V,v}$ contains some $M \in \cM$, and hence $\tau(M) = H_vg$.  It may also be checked that $\tau$ is $K_v$-equivariant.

For a general place $v$, the set $\cM$ is compact, and $K_v$ acts with open orbits.  (The second assertion is perhaps not entirely trivial, but we leave the details to the reader.)  It follows that the quotient $K_v \backslash \cM$ is finite, which implies the same for $H_v \backslash G_v^\text{int} / K_v$, by the surjectivity and $K_v$-equivariance of $\tau$.

We next assume that $L_v$ and $L_{U_W,v}'$ are self-dual, and that $K_v$ is equal to the stabilizer of $L_v$, which happens for almost all $v$.  In this case, any $M \in \cM$ must also be self-dual.  By \cite[I Prop 3.2]{Baeza}, this implies that the orthogonal complement of $M$ in $L_v$, which we denote by $M^\perp$, must satisfy $L_v = M \oplus M^\perp$, which implies that $M^\perp$ is also self-dual.  Now, let $M, M' \in \cM$.  Because $M$ and $M'$ have the same discriminant, it follows that $M^\perp$ and $M'^\perp$ also have the same discriminant, and combined with their self-duality this means they are isometric.  As a result, there is $g \in G_v$ that maps $M$ isometrically to $M'$ and $M^\perp$ isometrically to $M'^\perp$.  We see from this that $g \in K_v$.  Therefore $K_v$ acts transitively on $\cM$, and hence on $H_v \backslash G_v^\text{int}$.
\end{proof}

%
%

\subsection{An archimedean integral}

Once again we let $v_0$ be the fixed real place of Section \ref{sec:periodnotation}.  In this section only, we let $\alpha_{v_0} \in \mathscr{S}(\mathcal{Y}(F_{v_0}))$ be an arbitrary Schwartz function, with no assumption of invariance under $\Oo(U_W)(F_\infty) \times K_\infty$.  For $\lambda\in\ga^*_\C$ we let
\begin{equation}\label{eq:I-lambda}
\widehat{\alpha}_{v_0}(\lambda)=\int_{G_{v_0}}  \alpha_{v_0}(g^{-1}y_0)\varphi_{-\lambda}(g)dg,
\end{equation}
where $\varphi_{-\lambda}$ is the Harish-Chandra spherical function. For simplicity, we have suppressed the dependency on $y_0$ in the notation.

\begin{lemma}
\label{tempered-dist}

For any $\alpha_{v_0} \in \mathscr{S}(\mathcal{Y}(F_{v_0}))$ and any $\lambda\in\ga^*_\C$, the integral \eqref{I-lambda} converges absolutely, and $\alpha_{v_0} \mapsto \widehat{\alpha}_{v_0}(\lambda)$ defines a tempered distribution.

\end{lemma}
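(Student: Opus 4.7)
The plan is to use the Cartan decomposition $G_{v_0} = K_{v_0} A_{v_0}^+ K_{v_0}$ to reduce the integral to one on $A_{v_0}^+$, and then to combine the Schwartz decay of $\alpha_{v_0}$ with standard upper bounds for $\varphi_{-\lambda}$ and the Haar Jacobian. Writing $g = k_1 a k_2$ with $a = \exp H$, we have $dg = \delta(a)\,dk_1\,da\,dk_2$ with $\delta(a) \ll e^{2\rho(H)}$, and by bi-$K_{v_0}$-invariance of $\varphi_{-\lambda}$ together with the classical polynomial-exponential estimate $|\varphi_{-\lambda}(a)| \ll_\lambda (1+\|H\|)^N e^{\|\mathrm{Re}\,\lambda\|\cdot\|H\|}$, these two factors contribute at most $e^{C_\lambda \|H\|}$ times a polynomial in $\|H\|$.

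The main ingredient is therefore a uniform lower bound on $\|g^{-1} y_0\|_{\mathcal{Y}}$, with respect to a $K_{v_0}$-invariant Euclidean norm on $\mathcal{Y}(F_{v_0}) = V_{v_0}^{\,m}$. I would prove, by direct computation in the coordinates of Section~\ref{2examples}, that
\[
\sum_{i=1}^{m} \|a^{-1} k_1^{-1} y_0(e_i)\|^2 \;\geqslant\; c_0 \sum_{j=1}^{m}\big(e^{2h_j} + e^{-2h_j}\big) \;\geqslant\; c_0\, e^{2h_1},
\]
uniformly in $k_1 \in K_{v_0}$. The key observation is that if we expand $k_1^{-1} y_0(e_i)$ in a suitable basis of $U_{V,v_0}$, the coordinate matrix $C(k_1)$ has the form $DR^T$ with $D$ fixed of full rank and $R = R(k_1) \in \Oo(m)$, whence $\sum_i \|a^{-1}k_1^{-1}y_0(e_i)\|^2 = \mathrm{tr}(D^T D \cdot R^T T R)$ with $T = \mathrm{diag}(e^{2h_j} + e^{-2h_j})$. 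The trace inequality $\mathrm{tr}(AB) \geqslant \lambda_{\min}(A)\,\mathrm{tr}(B)$ for symmetric positive semidefinite matrices gives the bound with $c_0 = \lambda_{\min}(D^T D) > 0$, and $K_{v_0}$-invariance of the norm removes the $k_2$ dependence.

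Combining with the Schwartz seminorm bound $|\alpha_{v_0}(y)| \leqslant \|\alpha_{v_0}\|_N (1+\|y\|)^{-N}$, we obtain $|\alpha_{v_0}(g^{-1} y_0)| \ll_N \|\alpha_{v_0}\|_N \,(1+e^{h_1})^{-N}$, and the full integral is bounded by a constant multiple of
\[
\|\alpha_{v_0}\|_N \int_{A_{v_0}^+} (1+e^{h_1})^{-N} (1+\|H\|)^{N'} e^{C_\lambda \|H\|}\, dH.
\]
Since $\|H\| \ll h_1$ in the positive chamber, and the remaining $(m-1)$-dimensional integration in $h_2,\ldots,h_m$ contributes only a factor $\ll h_1^{m-1}$, this converges for $N$ sufficiently large in terms of $\lambda$. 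This simultaneously establishes absolute convergence of \eqref{eq:I-lambda} and continuity of $\alpha_{v_0} \mapsto \widehat{\alpha}_{v_0}(\lambda)$ in the Schwartz topology; the distribution is in fact bounded by a single Schwartz seminorm, and is thus tempered.

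There is no real obstacle here — everything reduces to the uniform lower bound above, which follows from a short matrix computation. The only point requiring a little care is confirming uniformity in the compact parameters $k_1, k_2$, which is handled by the trace inequality and by the $K_{v_0}$-invariance of the chosen norm on $V_{v_0}$.
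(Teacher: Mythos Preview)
Your proof is correct and follows essentially the same approach as the paper: both arguments use the Cartan decomposition and rest on the key exponential lower bound $\|g^{-1}y_0\|_{\mathcal{Y}} \gg e^{c\|H\|}$ (the paper phrases this contrapositively as $\gamma^{-1}(B(r)) \subset G_{c\log r}$ and then bounds the pushforward measure $\gamma_*(\varphi_{-\lambda}\,dg)$ on balls, whereas you bound the integral directly against a Schwartz seminorm). Your trace-inequality handling of the $k_1$-dependence is equivalent to the paper's observation that $\gamma^{-1}(B(r))$ is bi-$K_{v_0}$-invariant, which holds because $k_1^{-1}\circ y_0 = y_0 \circ R$ for some $R \in \Oo(U_W)$.
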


\begin{proof}

As the place $v_0$ shall be fixed throughout this proof, we omit it from the notation, and write $\mathcal{Y}$ for $\mathcal{Y}(F_{v_0})$, $G$ and $K$ for $G_{v_0}$ and $K_{v_0}$, etc.  We choose a basis $(e_1, \ldots, e_m)$ for $U_W$ so that $q$ is the standard negative definite quadratic form, and a basis $(v_1, \ldots, v_d)$ for $V$ so that $Q$ is given by $z_1^2 + \ldots + z_n^2 - z_{n+1}^2 - \ldots -z_d^2$.  We may also assume that $y_0(e_i) = v_{n+i}$ for $1 \leqslant i \leqslant m$.  We define $\| \cdot \|_V$ to be the norm on $V$ corresponding to the standard positive definite quadratic form.  We define $\| \cdot \|_\mathcal{Y}$ to be the mapping norm on $\mathcal{Y}= \Hom(U_W,V)$ with respect to the norms $-q$ and $\| \cdot \|_V$.  We let $B(r)$ be the open ball about zero of radius $r$ in $\mathcal{Y}$ with respect to $\| \cdot \|_\mathcal{Y}$.

We define the subspaces $\gk$ and $\p$ of $\g$ in the usual way.  We let $X_i \in \g$, $1 \leqslant i \leqslant m$, be the vectors which act on $V$ by fixing all basis vectors except for $v_i$ and $v_{n+i}$, and whose action on $\text{span}\{ v_i, v_{n+i} \}$ is given by the matrix $\big(\begin{smallmatrix} 0 & 1\\ 1 & 0 \end{smallmatrix}\big)$.  Then the $X_i$ span a maximal commuting subalgebra $\ga$ of $\p$, with corresponding subgroup $A = \exp(\ga)$.  For $H \in \ga$, we let $H_i$ be its coordinates with respect to the basis $\{ X_i \}$, and we equip $\ga$ with the standard norm with respect to these coordinates.  As in Section \ref{sec:gp-decomp}, we define $B_\ga(0,R)$ to be the ball in $\ga$ centered at 0 and of radius $R > 0$ relative to this norm. (Note that the norm we are using here is different to the one used in Section \ref{sec:gp-decomp}.)  We define $A_R = \exp(B_\ga(R))$, and $G_R = K A_R K$.

We now turn to bounding the integral \eqref{I-lambda}.  We may assume without loss of generality that $\lambda$ is real, so that $\varphi_{-\lambda} > 0$.  If we let $\gamma : G \to \mathcal{Y}$ be the orbit map $\gamma(g) = g^{-1} y_0$, then the integral defining $\widehat{\alpha}(\lambda)$ may be thought of as the integral of $\alpha$ against the measure $\mu = \gamma_*(\varphi_{-\lambda}(g) dg)$.  To show that $\mu$ is a tempered distribution, it suffices to show that $\mu( B(r))$ grows polynomially in $r$. 

We do this by first bounding the set $\gamma^{-1}(B(r)) \subset G$, and in particular showing that $\gamma^{-1}(B(r)) \subset G_{c \log r}$ for some $c > 0$ and $r$ large. Because $\gamma^{-1}(B(r))$ is bi-invariant under $K$, it suffices to show that $A \cap \gamma^{-1}(B(r)) \subset A_{c \log r}$.  In order to have $e^H \in \gamma^{-1}(B(r))$, we must have $e^{-H} y_0 \in B(r)$.  This implies that $\| e^{-H} y_0(e_i) \|_V \leqslant r$ for all $i$, which gives
\begin{align*}
\| e^{-H} v_{n+i} \|_V & \leqslant r \\
\| -\sinh(H_i) v_i + \cosh(H_i) v_{n+i} \|_V & \leqslant r \\
\cosh(H_i) & \leqslant r \\
|H_i| & \leqslant \log (2r).
\end{align*}
This implies that $\| H \| \ll \log r$, and hence that $\gamma^{-1}(B(r)) \subset G_{c \log r}$ for some $c > 0$ as required.

The quantity $\mu( B(r))$ is equal to the integral of $\varphi_{-\lambda}(g) dg$ over $\gamma^{-1}(B(r))$, and this is bounded above by the volume of $G_{c \log r}$ times the maximum of $\varphi_{-\lambda}(g)$ on $G_{c \log r}$.  The usual formula for the volume form on $G$ in Cartan coordinates implies that $\text{vol}(G_{c \log r}) \ll r^a$ for some $a$, and the bound $| \varphi_{-\lambda}(e^H) | \ll e^{ \beta \| H \| \| \lambda \|}$ for some $\beta > 0$ gives a polynomial upper bound for $\varphi_{-\lambda}$.  This completes the proof.
\end{proof}

\subsection{Period relation}

We are now ready to state and prove the fundamental period relation which lies at the heart of Theorem \ref{sup-thm}. We shall state it here in its adelic form. A more classical formulation will be given in Section \ref{adelic2classical}.

We retain the notation of Sections \ref{sec:2periods}--\ref{sec:level-signature}, including the spaces $V$ and $W$, the quadratic space $(U_W,q)$, the base point $y_0 \in \mathcal{Y}_q(F)$, and the function $\alpha \in \mathscr{S}(\mathcal{Y}(\A_f))\otimes\mathscr{S}_{\rm alg}(\mathcal{Y}(F_\infty))$.

\begin{prop}\label{period-relation}
%
%

Let $\phi_\lambda$ be a $K$-invariant automorphic form on $\bG(\A)$ that is an eigenfunction of the ring of invariant differential operators on $G_{v_0} / K_{v_0}$ with spectral parameter $\lambda\in\ga_\C^*$.  Let $\phi'_\lambda=\Theta(\phi_\lambda,\alpha;W)$ be its theta lift to $\bG'(\A)$. Then
\begin{multline*}
\mathscr{B}_\bR^{\, \psi_N}(\phi_\lambda')= \widehat{\alpha}_{v_0}( -\overline\lambda) \prod_{v\mid\infty, v\neq v_0}\alpha_v(y_0) \\
\sum_{[b]\in \bH(\A_f)\backslash \bG(\A_f)^{\rm int}/K_f}  \textup{vol}(\bH(\A_f)bK_f)\beta_f( b^{-1} [y_0] ) \mathscr{P}_\bH (R(b) \overline{\phi_\lambda} ),
\end{multline*}
where $\textup{vol}(\bH(\A_f)bK_f)$ is the volume of $\bH(\A_f)bK_f$ considered as a subset of $\bH(\A_f)\backslash\bG(\A_f)$. \end{prop}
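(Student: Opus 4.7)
The plan is to start from definition \eqref{eq:Theta-fn} of the theta lift and the explicit form \eqref{eq:explicit-theta} of the theta kernel on $\bG(\A)\times\bP_{\rm Siegel}(\A)$, and then perform a sequence of unfoldings that converts the Bessel period into a sum of orthogonal periods indexed by orbits. Substituting \eqref{eq:explicit-theta} into $\mathscr{B}_\bR^{\psi_N}(\phi'_\lambda)$ and integrating first over $[\bN]$, the character $n(b)\mapsto\psi(\tfrac12\langle(b\otimes{\rm Id}_V)y,y\rangle_{\mathcal W})=\psi(\tfrac12\tr(bQ(y)))$ built into the kernel cancels $\psi_N(n(b))=\psi(\tfrac12\tr(qb))$ exactly when $Q(y)=q$. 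Orthogonality of characters of $[\bN]$ therefore restricts the sum over $\mathcal Y(F)$ to $\mathcal Y_q(F)$, and absorbing $|\det t|_\A=1$ for $t\in\Oo(U_W)(\A)$ yields
\[
\mathscr{B}_\bR^{\psi_N}(\phi'_\lambda)=\int_{[\bG]}\overline{\phi_\lambda(g)}\int_{[\Oo(U_W)]}\sum_{y\in\mathcal Y_q(F)}\alpha\bigl((g,t)^{-1}y\bigr)\,dt\,dg.
\]

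Second, since the stabilizer of $y_0$ in $\Oo(U_W)(F)$ is trivial, the $\Oo(U_W)(F)$-orbit decomposition unfolds the $[\Oo(U_W)]$-integral into an $\Oo(U_W)(\A)$-integral; using \eqref{defn:alphaU} the inner expression becomes $\sum_{[y]\in Y_q(F)}\beta(g^{-1}[y])$. Witt's theorem \eqref{XS} identifies $Y_q(F)\simeq\bG(F)/\bH(F)$, and folding the resulting sum with the integral over $[\bG]$ yields
\[
\mathscr{B}_\bR^{\psi_N}(\phi'_\lambda)=\int_{\bH(\A)\backslash\bG(\A)}\beta\bigl(g^{-1}[y_0]\bigr)\,\mathscr{P}_\bH(R(g)\overline{\phi_\lambda})\,dg,
\]
where I have used that $\bH$ stabilizes $[y_0]\in Y_q$ to slot in the $\bH$-period.

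Third, I would decompose this integral placewise. At each real $v\neq v_0$, $G_v=K_v$ by definiteness; combined with the $K_v$-invariance of $\alpha_v$ and $\beta_v([y])=\alpha_v(y)$ (from $\Oo(U_W)(F_v)$-invariance of $\alpha_v$), this contributes the factor $\prod_{v\mid\infty,v\neq v_0}\alpha_v(y_0)$. At $v_0$, one has $H_{v_0}=K_{v_0}=\Oo(n)\times\Oo(m)$, so the map $g_{v_0}\mapsto\mathscr{P}_\bH(R(g_{v_0}g_f)\overline{\phi_\lambda})$ is $K_{v_0}$-bi-invariant and is a joint eigenfunction of $\mathbb{D}(G_{v_0}/K_{v_0})$, inheriting the spectral parameter $-\overline\lambda$ of $\overline{\phi_\lambda}$. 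By the one-dimensionality of the space of $K_{v_0}$-biinvariant eigenfunctions with a prescribed spectral parameter, it equals $\mathscr{P}_\bH(R(g_f)\overline{\phi_\lambda})\cdot\varphi_{-\overline\lambda}(g_{v_0})$; substituting and recognizing the definition \eqref{eq:I-lambda} of $\widehat\alpha_{v_0}$ (and using that $-1\in W$ for $\Oo(n,m)$, so $\varphi_\mu=\varphi_{-\mu}$) produces the factor $\widehat\alpha_{v_0}(-\overline\lambda)$.

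Finally, for the remaining $\bH(\A_f)\backslash\bG(\A_f)$-integral, the support of $g_f\mapsto\beta_f(g_f^{-1}[y_0])$ is precisely $\bG(\A_f)^{\rm int}$ from \eqref{eq:defn-G-int}, this set is $\bH(\A_f)$-left and $K_f$-right invariant, and the integrand is bi-invariant under the same groups. Lemma \ref{lemma:finite-double-quotient} then discretizes the integral as a finite weighted sum over $\bH(\A_f)\backslash\bG(\A_f)^{\rm int}/K_f$, each coset $b$ contributing $\textup{vol}(\bH(\A_f)bK_f)\beta_f(b^{-1}[y_0])\mathscr{P}_\bH(R(b)\overline{\phi_\lambda})$. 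The main obstacle I anticipate is the combined archimedean uniqueness and complex-conjugation bookkeeping in the third step: one must correctly track the spectral parameter through complex conjugation of $\phi_\lambda$ and confirm that $\widehat\alpha_{v_0}(-\overline\lambda)$ is the correct object, which relies on the $-1\in W$ symmetry for the orthogonal group. A secondary delicate point is justifying the interchange of sum and integral when integrating the theta kernel over $[\bN]$, using the rapid decay of $\alpha$ together with the moderate growth of $\phi_\lambda$, to legitimize the orthogonality-of-characters step.
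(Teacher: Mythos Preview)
Your proposal is correct and follows essentially the same approach as the paper's own proof: integrate over $[\bN]$ to cut down to $\mathcal{Y}_q(F)$ via orthogonality, unfold over $\Oo(U_W)(F)$ to produce $\beta$, use Witt's theorem \eqref{XS} to unfold the $\bG(F)$-sum into an $\bH(\A)\backslash\bG(\A)$-integral, then invoke uniqueness of spherical functions at $v_0$ and discretize the finite-adelic integral. One small point: the paper records the spectral parameter of $\overline{\phi_\lambda}$ as $\overline{\lambda}$ directly (so that $\varphi_{\overline\lambda}$ appears and $\widehat{\alpha}_{v_0}(-\overline\lambda)$ follows from the definition \eqref{eq:I-lambda}), whereas you pass through $-\overline\lambda$ and then appeal to $-1\in W_\ga$; both routes are fine since $-1\in W_\ga$ for $n>m$, but the extra symmetry is not actually needed.
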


\begin{proof}
Inserting the definition of the theta lift in \eqref{eq:Theta-fn}, we have
\[
\mathscr{B}_\bR^{\, \psi_N}(\phi_\lambda')=\int_{[\Oo(U_W)]}\int_{[\bN]}\int_{[\bG]} \overline{\phi_\lambda}(g)\Theta(g,m(t)n;\alpha) \psi_N^{-1}(n)dg dn dt.
\]
Using the explicit formulae \eqref{eq:explicit-theta} for the action of the Weil representation (and recalling that ${}^t t=t^{-1}$), we have
\[
\Theta(g,m(t)n(b);\alpha)=\sum_{y\in \mathcal{Y}(F)}\psi\left(\frac12\langle (b\otimes {\rm Id}_V)y, y\rangle_{\mathcal{W}}\right) \alpha((g^{-1},t^{-1})y).
\]
It may be shown that $\langle (b\otimes {\rm Id}_V)y, y\rangle_{\mathcal{W}} = \tr( b\, {}^t y Q y)$, where we are viewing $Q$ as an element of $\text{Hom}(V, V^*)$. Recall the definition of the variety $\mathcal{Y}_q$ given in \eqref{eq:defn-YU} along with the choice of additive character $\psi_N$ in \S\ref{sec:Bessel-period}. From orthogonality, we obtain
\begin{align*}
\int_{[\bN]}\Theta(g,m(t)n;\alpha) \psi_N^{-1}(n)dn&=\sum_{y\in \mathcal{Y}(F)}\alpha((g^{-1},t^{-1})y)\int_{ [ \mathbf{Hom}^+]}\psi\left(\frac12 \tr( b({}^t y Q y - q) ) \right)db\\
&=\sum_{y\in \mathcal{Y}_q(F)}\alpha((g^{-1},t^{-1})y),
\end{align*}
where $[\mathbf{Hom}^+]$ denotes the adelic quotient associated to the vector space $\text{Hom}^+(U_W^*, U_W)$.  Inserting this into the expression for the Bessel period, we obtain
\[
\mathscr{B}_\bR^{\, \psi_N}(\phi_\lambda')=\int_{[\bG]} \overline{\phi_\lambda}(g)\int_{[\Oo(U_W)]}\sum_{y\in \mathcal{Y}_q(F)}\alpha((g^{-1},t^{-1})y) dtdg.
\]
Unfolding the integral over $[\Oo(U_W)]$ with the quotient $Y_q(F)=\Oo(U_W)(F)\backslash\mathcal{Y}_q(F)$, and recalling the definition of $\beta$ in \eqref{defn:alphaU}, we find
\begin{align*}
\mathscr{B}_\bR^{\, \psi_N}(\phi_\lambda')&=\int_{[\bG]} \overline{\phi_\lambda}(g)\int_{[\Oo(U_W)]}\sum_{\gamma\in \Oo(U_W)(F)}\sum_{[y]\in Y_q(F)}\alpha((g^{-1},t^{-1}\gamma^{-1}) y) dt dg\\
&=\int_{[\bG]} \overline{\phi_\lambda}(g)\sum_{[y]\in Y_q(F)}\int_{\Oo(U_W)(\A)}\alpha((g^{-1},t^{-1})y) dt dg\\
&=\int_{[\bG]} \overline{\phi_\lambda}(g)\sum_{[y]\in Y_q(F)}\beta(g^{-1}[y]) dg.
\end{align*}

Recall from \eqref{XS} that the orbit map $\gamma\mapsto \gamma^{-1}[y_0]$ (note the inverse) identifies $Y_q(F)$ with $\bH(F)\backslash\bG(F)$. From this, another unfolding yields
\begin{align*}
\mathscr{B}_\bR^{\, \psi_N}(\phi_\lambda')&=\int_{[\bG]} \overline{\phi_\lambda}(g)\sum_{\gamma\in\bH(F)\backslash\bG(F)}\beta(g^{-1}\gamma^{-1}[y_0] )dg\\
&=\int_{\bH(F)\backslash\bG(\A)} \overline{\phi_\lambda}(g)\beta(g^{-1}[y_0])dg\\
&=\int_{\bH(\A)\backslash\bG(\A)}\int_{\bH(F)\backslash\bH(\A)} \overline{\phi_\lambda}(hg) \beta(g^{-1}h^{-1}[y_0]) dhdg.
\end{align*}
From the left $\bH(\A)$-invariance of $g\mapsto \beta(g^{-1}[y_0])$ we get
\[
\mathscr{B}_\bR^{\, \psi_N}(\phi_\lambda')=\int_{\bH(\A)\backslash\bG(\A)}\beta(g^{-1}[y_0]) \mathscr{P}_{\bH}(R(g)\overline{\phi_\lambda}) dg.
\]
We write this as
\be
\label{period-expr}
\mathscr{B}_\bR^{\, \psi_N}(\phi_\lambda')=\int_{\bH(\A_f)\backslash\bG(\A_f)} \beta_f(g_f^{-1}[y_0]) \int_{H_\infty \backslash G_\infty} \beta_\infty(g_\infty^{-1}[y_0]) \mathscr{P}_{\bH}(R(g_f g_\infty)\overline{\phi_\lambda}) dg_\infty dg_f,
\ee
and consider the inner archimedean integral.  Since $\overline{\phi_\lambda}$ is right $K_\infty$-invariant, and $H_\infty = K_\infty$, $\mathscr{P}_\bH(R(g_f g_\infty) \overline{\phi_\lambda})$ is a bi-$K_\infty$-invariant function on $G_\infty$. Moreover, since $\overline{\phi_\lambda}$ was assumed to be an eigenfunction on $G_{v_0}$ with spectral parameter $\overline\lambda$, the same is true for $\mathscr{P}_\bH(R(g_f g_\infty) \overline{\phi_\lambda})$. From the uniqueness of spherical functions, it follows that
\[
\mathscr{P}_\bH(R(g_f g_\infty) \overline{\phi_\lambda})=\varphi_{\overline\lambda}(g_{v_0})\mathscr{P}_\bH(R(g_f) \overline{\phi_\lambda}).
\]
Inserting this into \eqref{period-expr} gives
\[
\mathscr{B}_\bR^{\, \psi_N}(\phi_\lambda')=\int_{\bH(\A_f)\backslash\bG(\A_f)} \beta_f(g_f^{-1}[y_0]) \mathscr{P}_\bH(R(g_f) \overline{\phi_\lambda}) dg_f \int_{H_\infty \backslash G_\infty} \beta_\infty(g_\infty^{-1}[y_0]) \varphi_{\overline\lambda}(g_{v_0}) dg_\infty.
\]
Recalling from \S\ref{sec:adelic-schwartz} that $\beta_\infty([y])=\alpha_\infty(y)$, the archimedean integral unfolds to
\[
\int_{G_\infty} \alpha_\infty( g_\infty^{-1} y_0) \varphi_{\overline\lambda}(g_{v_0}) dg_\infty = \widehat{\alpha}_{v_0}( -\overline\lambda) \prod_{v\mid\infty, v\neq v_0}\alpha_v(y_0).
\]
From the right $K_f$-invariance of $\beta_f( g_f^{-1}[y_0])$, and the description of its support in \eqref{eq:defn-G-int}, the integral over all finite places becomes
\[
\sum_{[b]\in\bH(\A_f)\backslash \bG(\A_f)^{\rm int}/K_f}  \textup{vol}(\bH(\A_f)bK_f)\beta_f( b^{-1} [y_0] ) \mathscr{P}_{\bH}(R(b) \overline{\phi_\lambda}).
\]
Combining these gives the proposition.
\end{proof}

\subsection{Distinction relation}\label{sec:lemma-schwartz-space}
Using Proposition \ref{period-relation}, as well as density statement in functional analysis that we prove in Proposition \ref{prop:density-in-Schwartz}, we deduce the following corollary.

\begin{cor}\label{distinction}
Let notations and assumptions be as in Proposition \ref{period-relation}. If
\[
\sum_{[b]\in\bH(\A_f)\backslash \bG(\A_f)^{\rm int}/K_f} \textup{vol}(\bH(\A_f)bK_f)\beta_f( b^{-1} [y_0]) \mathscr{P}_\bH (R(b)\phi_\lambda)\neq 0
\]
then $\Theta(\phi_\lambda, \alpha;W)$ is non-zero for some choice of $\alpha_\infty$.
\end{cor}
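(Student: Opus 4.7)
The plan is to apply Proposition \ref{period-relation}. Since $\beta_f$ and $\textup{vol}(\bH(\A_f)bK_f)$ are real-valued and $\mathscr{P}_\bH(R(b)\overline{\phi_\lambda})=\overline{\mathscr{P}_\bH(R(b)\phi_\lambda)}$, the hypothesis of the corollary is equivalent to the non-vanishing of the non-archimedean sum appearing in the period relation for this choice of finite-adelic data. Thus by Proposition \ref{period-relation}, it suffices to construct $\alpha_\infty=\prod_{v\mid\infty}\alpha_v\in\mathscr{S}_{\rm alg}(\mathcal{Y}(F_\infty))$, invariant under $\Oo(U_W)(F_\infty)\times K_\infty$, such that $\widehat{\alpha}_{v_0}(-\overline\lambda)\neq 0$ and $\alpha_v(y_0)\neq 0$ for every real $v\neq v_0$; then $\mathscr{B}_\bR^{\,\psi_N}(\Theta(\phi_\lambda,\alpha;W))\neq 0$, whence in particular $\Theta(\phi_\lambda,\alpha;W)\neq 0$.

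At each real place $v\neq v_0$, the form $V_v$ is positive definite, $K_v=G_v$ and $\Oo(U_W)(F_v)$ are compact, and one may take $\alpha_v$ to be the standard Gaussian associated to the positive complex structure $J_v$. This lies in $\mathscr{S}_{\rm alg}(\mathcal{Y}(F_v))$, is invariant under the maximal compact $\widetilde{\mathcal{K}}_v$ (and hence \emph{a fortiori} under $\Oo(U_W)(F_v)\times K_v$), and is strictly positive, so $\alpha_v(y_0)>0$. The core of the argument is the construction of $\alpha_{v_0}$.

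At $v_0$, Lemma \ref{tempered-dist} tells us that $\alpha\mapsto\widehat{\alpha}(-\overline\lambda)$ is a continuous tempered distribution on $\mathscr{S}(\mathcal{Y}(F_{v_0}))$. I would first verify that it is non-zero on $\mathscr{S}(\mathcal{Y}(F_{v_0}))$: for any non-negative $\alpha$ supported in a sufficiently small neighborhood of $y_0\in\mathcal{Y}_q(F_{v_0})$, the function $\alpha(g^{-1}y_0)$ is supported where $g$ lies near the stabilizer $\Oo(U_V^\perp)(F_{v_0})\subset K_{v_0}$, on which $\varphi_{-\lambda}$ is close to $1$ by the normalization $\varphi_{-\lambda}|_{K_{v_0}}=1$, yielding a strictly positive integral. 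Next, I would show that this functional is constant on $\Oo(U_W)(F_{v_0})\times K_{v_0}$-orbits, so that averaging over this compact group preserves its non-vanishing: the $K_{v_0}$-action is absorbed by right $K_{v_0}$-invariance of $\varphi_{-\lambda}$, and for each $t\in\Oo(U_W)(F_{v_0})$, Witt's theorem produces $h_t\in\Oo(U_V)(F_{v_0})\subset K_{v_0}$ with $y_0 t^{-1}=h_t y_0$, so the substitution $g\mapsto h_t g$ together with left $K_{v_0}$-invariance of $\varphi_{-\lambda}$ reduces the $t$-twisted integral to the untwisted one. This produces an invariant $\alpha_0\in\mathscr{S}(\mathcal{Y}(F_{v_0}))$ with $\widehat{\alpha}_0(-\overline\lambda)\neq 0$. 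Finally I would invoke Proposition \ref{prop:density-in-Schwartz}, the density of $\mathscr{S}_{\rm alg}(\mathcal{Y}(F_{v_0}))$ in $\mathscr{S}(\mathcal{Y}(F_{v_0}))$ for the Schwartz topology: since $\Oo(U_W)(F_{v_0})\times K_{v_0}\subset\mathcal{K}_{v_0}$ and $\mathscr{S}_{\rm alg}$ is the space of $\widetilde{\mathcal{K}}_{v_0}$-finite vectors in the Hilbert model of the Weil representation, averaging a sequence $\alpha_n\to\alpha_0$ of algebraic approximants yields a sequence of invariant elements of $\mathscr{S}_{\rm alg}$ still converging to $\alpha_0$, and continuity of the functional delivers the desired $\alpha_{v_0}$. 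The main technical input is Proposition \ref{prop:density-in-Schwartz}; it is used here as a black box, but the interplay between the restricted class $\mathscr{S}_{\rm alg}$ and the required joint invariance is what makes this density statement indispensable rather than cosmetic.
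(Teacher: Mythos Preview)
Your proposal is correct and follows essentially the same route as the paper: reduce via Proposition \ref{period-relation} to producing an $\Oo(U_W)(F_{v_0})\times K_{v_0}$-invariant $\alpha_{v_0}\in\mathscr{S}_{\rm alg}(\mathcal{Y}(F_{v_0}))$ with $\widehat{\alpha}_{v_0}(-\overline\lambda)\neq 0$, establish non-vanishing of the tempered distribution on $\mathscr{S}(\mathcal{Y}(F_{v_0}))$, use its invariance to pass to an invariant Schwartz function, and invoke the density of $\mathscr{S}_{\rm alg}$ (Proposition \ref{prop:density-in-Schwartz}). The only cosmetic difference is that the paper approximates first and averages afterward, whereas you average first and then approximate (noting that averaging over the compact subgroup of $\mathcal{K}_{v_0}$ preserves $\mathscr{S}_{\rm alg}$); both orderings are valid, and your Witt-theorem verification of invariance spells out what the paper leaves as ``it may be checked.''
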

\begin{proof}

By Corollary \ref{alphahat-nonvanish} below, we may find  $\alpha\in \mathscr{S}_{\rm alg}(\mathcal{Y}(F_{v_0}))$, invariant under $\textup{O}(U_W)(F_{v_0}) \times K_{v_0}$, such that $\widehat{\alpha}_{v_0}(-\overline\lambda)\neq 0$. In particular, $\alpha_{v_0}$ satisfies the conditions of \S\ref{sec:adelic-schwartz}. We may furthermore assume that $\alpha_v(y_0)\neq 0$ for all archimedean places $v$ distinct from $v_0$. Proposition \ref{period-relation} then implies that $\phi'_\lambda = \Theta(\phi_\lambda, \alpha; W)$ is non-zero, because it satisfies $\mathscr{B}_\bR^{\, \psi_N}(\phi_\lambda') \neq 0$.
\end{proof}

We now prove that $\mathscr{S}_{\rm alg}(\mathcal{Y}(F_{v_0}))$ is dense in $\mathscr{S}(\mathcal{Y}(F_{v_0}))$ with the Schwartz topology, as promised in Remark \ref{rem:dense}. In order to simplify notation in the proof, and because the result may be of independent interest, we shall state it in a way that does not involve our previously established notation.  Instead, until the end of the proof of Proposition \ref{prop:density-in-Schwartz}, $d$ will denote an arbitrary positive integer.  We let $\mathscr{S}(\R^d)$ denote the space of Schwartz functions on $\R^d$, with the topology coming from the seminorms
\[
p_{ab}(f) = \| x^a f^{(b)} \|_\infty\qquad (a,b\in\N).
\]
We let the algebraic Schwartz space $\mathscr{S}_{\rm alg}(\R^d)$ be the space of all products of polynomials on $\R^d$ with the Gaussian $e^{-\| x \|^2/2}$.

\begin{prop}\label{prop:density-in-Schwartz}

$\mathscr{S}_{\rm alg}( \R^d)$ is dense in $\mathscr{S}( \R^d)$ with the Schwartz topology.

\end{prop}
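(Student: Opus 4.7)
The strategy I would follow is to realize the density via Hermite expansions. Let $h_\alpha$, indexed by $\alpha \in \N^d$, denote the normalized multi-dimensional Hermite functions
\[
h_\alpha(x) = c_\alpha \prod_{i=1}^d H_{\alpha_i}(x_i) \, e^{-\|x\|^2/2},
\]
where $H_n$ is the $n$-th Hermite polynomial and $c_\alpha$ is an $L^2$-normalization constant. By construction each $h_\alpha$ lies in $\mathscr{S}_{\rm alg}(\R^d)$, and the family $\{h_\alpha\}$ is an orthonormal basis of $L^2(\R^d)$. Given $f \in \mathscr{S}(\R^d)$, I would form the Hermite expansion $f = \sum_\alpha c_\alpha(f) h_\alpha$ with coefficients $c_\alpha(f) = \langle f, h_\alpha\rangle_{L^2}$, and show that the partial sums $S_N f = \sum_{|\alpha| \leqslant N} c_\alpha(f) h_\alpha$ — each an element of $\mathscr{S}_{\rm alg}(\R^d)$ — converge to $f$ in the Schwartz topology.

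The crucial tool is the harmonic oscillator $H = -\Delta + \|x\|^2$ acting on $\mathscr{S}(\R^d)$, whose eigenfunctions are precisely the $h_\alpha$ with eigenvalues $2|\alpha| + d$. The main technical input is the classical fact that the Schwartz topology on $\mathscr{S}(\R^d)$ is equivalent to the Fr\'echet topology defined by the seminorms $f \mapsto \|H^k f\|_2$ for $k \geqslant 0$. This is standard (see for instance Reed–Simon, \textit{Methods of Modern Mathematical Physics} I, Appendix to V.3): one direction follows because $H^k$ is a differential operator of order $2k$ with polynomial coefficients of degree $\leqslant 2k$, so $\|H^k f\|_2$ is bounded by a finite sum of seminorms $p_{ab}(f)$; the reverse direction follows because each operator $x^a \partial^b$ can be written as a polynomial in the creation/annihilation operators, hence bounded in $L^2$-operator norm by a sufficiently high power of $H$, after which one passes from $L^2$ to $L^\infty$ control via the standard Sobolev embedding $W^{k,2} \hookrightarrow L^\infty$ for $k > d/2$, at the cost of taking slightly larger $k$.

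Granted this equivalence, the convergence of the Hermite series in the Schwartz topology is immediate. For any $f \in \mathscr{S}(\R^d)$ and any $k \geqslant 0$, Parseval gives
\[
\|H^k f\|_2^2 = \sum_\alpha |c_\alpha(f)|^2 (2|\alpha|+d)^{2k} < \infty,
\]
and consequently
\[
\|H^k(f - S_N f)\|_2^2 = \sum_{|\alpha| > N} |c_\alpha(f)|^2 (2|\alpha|+d)^{2k} \longrightarrow 0
\]
as $N \to \infty$. Since this holds for every $k$, $S_N f \to f$ in every seminorm of the $\|H^k \cdot\|_2$-topology, hence in the Schwartz topology by the equivalence above.

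The only real obstacle is justifying the equivalence of topologies in the previous paragraph; the rest of the argument is a routine Hilbert-space convergence statement. Since this equivalence is a well-documented result in functional analysis, I would either cite it directly from a standard reference or include a short self-contained verification of the two-sided bounds between the $p_{ab}$ seminorms and the $\|H^k \cdot\|_2$ seminorms, the latter reducing to an induction using the commutation relations $[H, x_i] = -2\partial_i$ and $[H, \partial_i] = -2x_i$.
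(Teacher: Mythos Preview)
Your proposal is correct and shares the paper's starting point: expand $f$ in the Hermite basis $\{\phi_\alpha\}$, observe that the partial sums lie in $\mathscr{S}_{\rm alg}(\R^d)$, and show they converge in the Schwartz topology. The difference lies in how the convergence is established. The paper works directly with the seminorms $p_{ab}$: it shows that the coefficients $\langle f,\phi_\alpha\rangle$ decay faster than any polynomial in $|\alpha|$, and then proves the polynomial bound $p_{ab}(\phi_\alpha)\ll_{a,b}|\alpha|^{l(a,b)}$ by reducing to the one-variable estimate $\|x^a h_n\|_\infty\ll_a n^{l(a)}$, which in turn is obtained from a sup-norm bound of Koch--Tataru on $h_n$ (any polynomial bound would do) together with an elementary contour-integral lemma controlling $h_n(x)$ for $|x|>10\sqrt{n}$.

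Your route replaces this hands-on estimate by invoking the equivalence of the Schwartz topology with the Fr\'echet topology given by the seminorms $\|H^k\cdot\|_2$, after which Parseval finishes the argument in one line. This is genuinely more efficient: the equivalence can be proved purely from the commutation relations among $x_i$, $\partial_i$, and $H$ plus Sobolev embedding, with no need for pointwise asymptotics of Hermite functions. In effect, the equivalence of topologies already encodes the bound $p_{ab}(\phi_\alpha)\ll |\alpha|^{l}$ that the paper labors to prove. The trade-off is that you import a functional-analytic black box (albeit a well-documented one, e.g.\ Reed--Simon), whereas the paper's argument is self-contained modulo the Koch--Tataru citation. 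Either is acceptable; yours is the cleaner packaging.
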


\begin{proof}

We shall use the theory of Hermite functions, for which we refer to \cite[Ch. II, Section 9.4, p. 91]{CH} for basic definitions.  We let $H_0 = -\Delta + x^2$ be the Hermite operator on $\R$, and $H = -\Delta + \|x \|^2$ be the Hermite operator on $\R^d$.  The Hermite functions of one variable,
\be
\label{Hermite-formula}
h_n(x) = (2^n n! \sqrt{\pi} )^{-1/2} (-1)^n e^{x^2/2} \frac{d^n}{dx^n} e^{-x^2}
\ee
for $n \geqslant 0$, have the following properties:

\begin{enumerate}

\item $h_n(x) = p_n(x) e^{-x^2/2}$, where $p_n(x)$ is a polynomial of degree $n$.

\item\label{Hermite-eqn} $H_0 h_n = (2n+1) h_n$.

\item The functions $h_n$ form an orthonormal basis for $L^2(\R)$.

\end{enumerate}
Let $\alpha = (\alpha_1, \ldots, \alpha_d) \in \N^d$ be a multi-index, and let $|\alpha| = \alpha_1 + \ldots + \alpha_d$.  If we define $\phi_\alpha(x) = h_{\alpha_1}(x_1) \cdots h_{\alpha_d}(x_d)$ for all $\alpha$, then $\{ \phi_\alpha \}$ is an orthonormal basis for $L^2(\R^d)$ satisfying $H \phi_\alpha = (2|\alpha| + d)\phi_\alpha$.

If $f \in \mathscr{S}( \R^d)$, we may expand $f$ (viewed as a vector in $L^2(\R^d)$) in the orthonormal basis $\{ \phi_\alpha \}$ as $f = \sum_\alpha \langle f, \phi_\alpha \rangle \phi_\alpha$.  Because the partial sums of this expansion lie in $\mathscr{S}_{\rm alg}( \R^d)$, it suffices to show that this sum converges in $\mathscr{S}( \R^d)$.  We have
\[
\langle f, \phi_\alpha \rangle = (2|\alpha| + d)^{-k} \langle f, H^k \phi_\alpha \rangle = (2|\alpha| + d)^{-k} \langle H^k f, \phi_\alpha \rangle \ll_k (2|\alpha| + d)^{-k}
\]
for any $k$. As a result, the convergence of $\sum_\alpha \langle f, \phi_\alpha \rangle \phi_\alpha$ will follow if we can show that for any seminorm $p_{ab}$ as above, there is an $l(a,b) > 0$ such that
\begin{equation}
\label{seminormbd}
p_{ab}( \phi_\alpha) \ll_{a,b} |\alpha|^{l(a,b)}.
\end{equation}

We claim that it suffices to establish the bound \eqref{seminormbd} in the special case when $d = 1$ and $b = 0$.  To demonstrate this, suppose we know the functions $h_n$ satisfy $p_{a0}(h_n) \ll_{a} n^{l(a,0)}$ for all $a$.  When combined with the relation $h_n' = -x h_n + 2n h_{n-1}$, this implies that we also have $p_{a1}(h_n) \ll_{a} n^{l(a,1)}$.  We may use the differential equation \eqref{Hermite-eqn} to show that for any $b$ we have
\[
h_n^{(b)}(x) = P_b(x,n) h_n(x) + Q_b(x,n) h'_n(x),
\]
where $P_b$ and $Q_b$ are polynomials depending only on $b$, and this implies that $p_{ab}(h_n) \ll_{a,b} n^{l(a,b)}$ for any $a$ and $b$, and hence gives \eqref{seminormbd}.

Having reduced to proving that $p_{a0}( h_n) \ll_a n^{l(a)}$ for any $a$, we now address this bound.  One could probably derive this from classical asymptotic formulas for $h_n$ such as \cite{PR} or \cite[Theorem 8.22.9]{Sz}, but we have found it simpler to use a bound of Koch--Tataru in the case where $x \ll \sqrt{n}$, combined with an elementary bound when $x \gg \sqrt{n}$. Koch--Tataru \cite[Corollary 3.2]{KT} prove that $\| h_n \|_\infty \ll n^{-1/12}$ (in fact, any polynomial bound will suffice for our purposes), which implies that
\[
\underset{ |x| \leqslant 10 \sqrt{n} }{\sup} | x^a h_n(x) | \ll_a n^{a/2-1/12}.
\]
The remaining range is treated in the following lemma.
\end{proof}

\begin{lemma}

There is a constant $C$ such that for $|x| > 10 \sqrt{n}$, we have $|h_n(x)| \leqslant C e^{-x^2/4}$.

\end{lemma}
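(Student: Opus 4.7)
The plan is to apply Cauchy's integral formula to the factor $\frac{d^n}{dx^n}e^{-x^2}$ appearing in the explicit formula \eqref{Hermite-formula}, and to balance the radius of the contour against the hypothesis $|x|>10\sqrt n$ so that the resulting bound is dominated by $e^{-x^2/4}$.

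First, by the parity identity $h_n(-x)=(-1)^n h_n(x)$ I may assume $x>10\sqrt n$. Writing
\[
\frac{d^n}{dx^n} e^{-x^2} = \frac{n!}{2\pi i} \oint_{|z-x|=r} \frac{e^{-z^2}}{(z-x)^{n+1}}\,dz,
\]
and parametrising $z=x+re^{i\theta}$, the elementary inequality $\operatorname{Re}(z^2)\geqslant x^2-2xr-r^2$ yields $|e^{-z^2}|\leqslant e^{-x^2+2xr+r^2}$ on the contour, and hence $\bigl|\tfrac{d^n}{dx^n} e^{-x^2}\bigr|\leqslant (n!/r^n)\,e^{-x^2+2xr+r^2}$. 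I would then take $r=\sqrt n$ and apply Stirling's formula to the prefactor in \eqref{Hermite-formula}, obtaining $(2^n n!\sqrt\pi)^{-1/2}\,n!/r^n\ll 2^{-n/2}e^{-n/2}\,n^{1/4}$; combined with the factor $e^{x^2/2}$ in \eqref{Hermite-formula} this gives
\[
|h_n(x)|\ll n^{1/4}(e/2)^{n/2}\,e^{-x^2/2+2x\sqrt n}.
\]

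The final step is a numerical check: writing $x=t\sqrt n$ with $t\geqslant 10$, the exponent above becomes $\tfrac{n}{2}(1-\log 2-t^2+4t)$, which I would compare to the target exponent $-x^2/4=-t^2 n/4$. The resulting inequality $t^2/2-4t\geqslant 1-\log 2$ is clear for $t\geqslant 10$ (the left-hand side is at least $10$, while the right is $\approx 0.31$), with more than enough slack to absorb the polynomial factor $n^{1/4}$ into the exponential. The only real delicacy is the choice of $r$, since the Cauchy step imposes competing requirements: $r$ must be small compared to $|x|$ so that $r^2$ does not overwhelm $x^2/2$, and yet large on the scale of the zeros of the Hermite polynomial so that $n!/r^n$ is tamed by Stirling. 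The choice $r=\sqrt n$ threads this needle precisely under the assumption $|x|>10\sqrt n$; any sufficiently large constant would work in place of $10$.
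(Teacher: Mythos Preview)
Your proof is correct and follows essentially the same approach as the paper: apply Cauchy's integral formula to $\tfrac{d^n}{dx^n}e^{-x^2}$ on a circle of radius $\sqrt{n}$, bound $|e^{-z^2}|$ on the contour, and control the prefactor $(2^n n!\sqrt\pi)^{-1/2}\,n!/n^{n/2}$ by Stirling. The only cosmetic difference is that the paper exploits the hypothesis $|x|>10\sqrt n$ immediately to obtain $\operatorname{Re}(-(z+x)^2)<-3x^2/4$ on the contour, so that after multiplying by $e^{x^2/2}$ one lands directly on $e^{-x^2/4}$; you instead carry all terms to the end and verify the inequality $t^2/2-4t\geqslant 1-\log 2$ for $t\geqslant 10$, which amounts to the same computation.
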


\begin{proof}

We use the formula \eqref{Hermite-formula} for $h_n$, together with the contour integral formula
\[
\frac{d^n}{dx^n} e^{-x^2} = \frac{n!}{2\pi i} \int_\cC e^{-(z+x)^2} \frac{dz}{z^{n+1}},
\]
where $\cC$ is an anticlockwise circle of radius $\sqrt{n}$ centered at 0.

We have
\[
\text{Re}(-(z+x)^2) = -x^2 + \text{Re}( -2zx - z^2) \leqslant -x^2 + 2|x| \sqrt{n} + n.
\]
If we assume that $10 \sqrt{n} < |x|$, this becomes
\[
\text{Re}(-(z+x)^2) \leqslant -x^2 + \frac{1}{5} x^2 + \frac{1}{100} x^2 < -3x^2/4.
\]
We therefore have
\begin{align*}
\left| \int_\cC e^{-(z+x)^2} \frac{dz}{z^{n+1}} \right| & \leqslant \int_\cC | e^{-(z+x)^2} z^{-n-1} | dz \\
& < (2\pi \sqrt{n}) n^{-(n+1)/2} e^{-3x^2/4} \\
& = 2 \pi n^{-n/2} e^{-3x^2/4},
\end{align*}
which in turn yields the bound
\[
\left| \frac{d^n}{dx^n} e^{-x^2} \right| < n! n^{-n/2} e^{-3x^2/4}.
\]
Substituting this into \eqref{Hermite-formula} gives
\begin{align*}
| h_n(x) | & < (2^n n! \sqrt{\pi} )^{-1/2} e^{x^2/2} \left[ n! n^{-n/2} e^{-3x^2/4} \right] \\
& = \pi^{-1/4} 2^{-n/2} (n!)^{1/2} n^{-n/2} e^{-x^2/4}.
\end{align*}
We must therefore prove that $2^{-n/2} (n!)^{1/2} n^{-n/2}$ is bounded independently of $n$.  Using Stirling's formula, the logarithm of this is given by
\begin{align*}
\log 2^{-n/2} (n!)^{1/2} n^{-n/2} & = - \frac{n \log 2}{2} + \frac{n \log n}{2}  - \frac{n}{2} - \frac{n \log n}{2} + O( \log n) \\
& =  - \frac{n \log 2}{2} - \frac{n}{2} + O( \log n),
\end{align*}
which is bounded above.  This completes the proof.
\end{proof}

\begin{cor}
\label{alphahat-nonvanish}

For any $\lambda \in \ga^*_\C$, there is a function $\alpha_{v_0} \in \mathscr{S}_{\rm alg}(\mathcal{Y}(F_{v_0}))$ that is invariant under $\textup{O}(U_W)(F_{v_0}) \times K_{v_0}$, and such that $\widehat{\alpha}_{v_0}(\lambda) \neq 0$.

\end{cor}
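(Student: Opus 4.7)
The strategy is to reduce to Proposition \ref{prop:density-in-Schwartz} via averaging over the compact group $K_{v_0} \times \Oo(U_W)(F_{v_0})$. I would first establish that the linear functional $\alpha_{v_0} \mapsto \widehat{\alpha}_{v_0}(\lambda)$ on $\mathscr{S}(\mathcal{Y}(F_{v_0}))$ is continuous, nonzero, and invariant under the natural action of $K_{v_0} \times \Oo(U_W)(F_{v_0})$. Density plus continuity will then produce some $\alpha' \in \mathscr{S}_{\rm alg}(\mathcal{Y}(F_{v_0}))$ with $\widehat{\alpha'}(\lambda) \neq 0$, and averaging $\alpha'$ over the compact group will yield the sought-after $\alpha_{v_0}$.

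Continuity of the functional is Lemma \ref{tempered-dist}. For the invariance under $k \in K_{v_0}$, the substitution $g \mapsto gk^{-1}$ in \eqref{eq:I-lambda}, combined with the right $K_{v_0}$-invariance of $\varphi_{-\lambda}$, gives the desired identity. For the invariance under $t \in \Oo(U_W)(F_{v_0})$, I use the observation that there exists $g_t \in \Oo(U_{V, v_0}) \subset K_{v_0}$ with $g_t \circ y_0 = y_0 \circ t$, namely the element corresponding to $t$ under the isometry $y_0 : U_W \to U_V$; the identity $(g^{-1} y_0) \circ t = (g_t^{-1} g)^{-1} y_0$, together with the substitution $g \mapsto g_t g$ and the left $K_{v_0}$-invariance of $\varphi_{-\lambda}$, yields the invariance. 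For non-vanishing, since $\varphi_{-\lambda}(e) = 1$ and $\varphi_{-\lambda}$ is continuous, testing against any nonnegative Schwartz bump function concentrated on a sufficiently small neighborhood of $y_0$ in $\mathcal{Y}(F_{v_0})$ gives a nonzero value, the integral being approximately $\int_{G_{v_0}} \alpha(g^{-1} y_0)\, dg > 0$.

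Proposition \ref{prop:density-in-Schwartz}, applied in linear coordinates on $\mathcal{Y}(F_{v_0})$ making $\tfrac{\pi}{2} \langle J \cdot, \cdot \rangle$ equal to $\tfrac{1}{2} \| \cdot \|^2$, shows that $\mathscr{S}_{\rm alg}(\mathcal{Y}(F_{v_0}))$ is dense in $\mathscr{S}(\mathcal{Y}(F_{v_0}))$. Continuity and non-vanishing then furnish $\alpha' \in \mathscr{S}_{\rm alg}$ with $\widehat{\alpha'}(\lambda) \neq 0$. Setting $\alpha_{v_0}$ to be the average of $\alpha'$ over $K_{v_0} \times \Oo(U_W)(F_{v_0})$, one has $\widehat{\alpha}_{v_0}(\lambda) = \widehat{\alpha'}(\lambda) \neq 0$ by functional invariance, and $\alpha_{v_0}$ is invariant under the compact group by construction. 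The main point to check, and the principal technical obstacle, is that averaging preserves membership in $\mathscr{S}_{\rm alg}$. Since the action of $K_{v_0} \times \Oo(U_W)(F_{v_0})$ on $\mathcal{Y}(F_{v_0})$ is linear, it preserves polynomials, so it suffices to verify that the Gaussian $\mathcal{G}(y) = e^{-(\pi/2) \langle Jy, y \rangle}$ is fixed. This in turn follows from the compatibility conditions on $J$ imposed in Section \ref{sec:level-signature}: $J = J_W \otimes \text{diag}(\text{Id}_{V_+}, -\text{Id}_{V_-})$ is built from $J_{W_{v_0}}$ compatible with $q$ (hence $\Oo(U_W)(F_{v_0})$-stable) and from the decomposition $V_{v_0} = U_{V, v_0}^\perp \oplus U_{V, v_0}$ preserved by $K_{v_0}$, so that the positive-definite form $\langle J \cdot, \cdot \rangle$ on $\mathcal{Y}(F_{v_0})$ is $K_{v_0} \times \Oo(U_W)(F_{v_0})$-invariant.
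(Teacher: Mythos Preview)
Your proposal is correct and follows essentially the same approach as the paper: both use Lemma \ref{tempered-dist} for continuity, Proposition \ref{prop:density-in-Schwartz} for density, verify non-vanishing of the functional on $\mathscr{S}(\mathcal{Y}(F_{v_0}))$, and then average over $\Oo(U_W)(F_{v_0}) \times K_{v_0}$ using invariance of the distribution. You are in fact more explicit than the paper on two points it leaves implicit: the verification of invariance (the paper simply says ``it may be checked''), and the crucial fact that averaging preserves $\mathscr{S}_{\rm alg}$ because the compact group acts linearly and fixes the Gaussian. Your non-vanishing argument via a bump function near $y_0$ differs cosmetically from the paper's phrasing in terms of the orbit embedding $\Oo(n)\backslash\Oo(n,m)\hookrightarrow\mathcal{Y}(F_{v_0})$, but the content is the same.
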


\begin{proof}

We first construct a (possibly non-invariant) function satisfying $\widehat{\alpha}_{v_0}(\lambda) \neq 0$.  The map $\alpha_{v_0} \to \widehat{\alpha}_{v_0}(\lambda)$ is a tempered distribution, and we proved in Proposition \ref{prop:density-in-Schwartz} that $\mathscr{S}_{\rm alg}(\mathcal{Y}(F_{v_0}))$ is dense in $\mathscr{S}(\mathcal{Y}(F_{v_0}))$.  It therefore suffices to find $\alpha_{v_0} \in \mathscr{S}(\mathcal{Y}(F_{v_0}))$ for which $\widehat{\alpha}_{v_0}(\lambda) \neq 0$.  To do this, we observe that the orbit map $g \to g^{-1} y_0$ is an embedding of $\Oo(n) \backslash \Oo(n,m)$ into $\mathcal{Y}(F_{v_0})$, because $\Oo(n)$ is the stabilizer of $y_0$. The distribution $\widehat{\alpha}_{v_0}(\lambda)$ is given by integrating $\alpha_{v_0}$ against the function $\varphi_{-\lambda}$ pushed forward to this orbit, and this is clearly nonzero for some $\alpha_{v_0} \in \mathscr{S}(\mathcal{Y}(F_{v_0}))$.

It may be checked that the distribution $\alpha_{v_0} \to \widehat{\alpha}_{v_0}(\lambda)$ is invariant under $\textup{O}(U_W)(F_{v_0}) \times K_{v_0}$, and so we may assume that $\alpha_{v_0}$ has the required invariance by averaging.
\end{proof}

\subsection{Classical reformulation}\label{adelic2classical}

We would now like to write the period relation in Proposition \ref{period-relation} more classically, by writing the double quotient space $\bG(F)\backslash\bG(\A)/K$ as a union of locally symmetric spaces, and the adelic periods $\mathscr{P}_\bH$ as a collection of classical periods on these spaces. We retain all notation from previous sections.

By a theorem of Borel \cite[Theorem 5.1]{Borel}, the set of genus classes for $V$, as parametrized by $\bG(F)\backslash\bG(\A_f)/K_f$, is finite. We fix a complete set of representatives $\{g_i\}_{i\in I}$ for this double coset space. As usual, we let $S=G_\infty/K_\infty$. The map
\[
\coprod_{i\in I}  S\longrightarrow\bG(F)\backslash (\bG(\A_f)/K_f\times G_\infty/K_\infty),
\]
sending $(i,g_\infty K_\infty)\in I\times S$ to the orbit $\bG(F).(g_i K_f,g_\infty K_\infty)$, is clearly surjective, and the fiber over $\bG(F).(g_iK_f, K_\infty)$ is the $\Gamma_i$-orbit of $(i, K_\infty)$, where $\Gamma_i=\bG(F)\cap g_iK_fg_i^{-1}$. We therefore obtain an isomorphism 
\[
\bG(F)\backslash\bG(\A)/K=\coprod_{i\in I} \Gamma_i\backslash  S.
\]
As in the statement of Theorem \ref{sup-thm}, we let $Y$ denote the latter disjoint union. For any continuous function $\phi\in L^2([\bG])$, right-invariant under $K$, we use the above decomposition to define a collection of functions $\phi^{(i)}\in L^2(\Gamma_i\backslash S)$ via the rule $\phi^{(i)}(g_\infty)=\phi(g_ig_\infty)$.

We would now like to write the $\bH$-period of $\phi$ classically. Let $K_f^{\bH}=K_f\cap \bH(\A_f)$, and ${\rm Gen}_{\bH}=\bH(F)\backslash\bH(\A_f)/K_f^\bH$. Since $K_\infty^\bH=\bH(F_\infty)$ is compact, we have a well-defined injective map
\[
{\rm Gen}_{\bH}=\bH(F)\backslash\bH(\A)/K^\bH\rightarrow \bG(F)\backslash\bG(\A)/K= Y.
\]
The image in $Y$ is a finite collection of points, which (abusing notation) we continue to denote by ${\rm Gen}_{\bH}$. Thus
\begin{equation}\label{classical-geod-period}
\mathscr{P}_\bH(\phi)=\sum_{x\in {\rm Gen}_{\bH}} w_x \phi(x),
\end{equation}
where $w_x$ is the weight given to $x$ by the natural probability measure on ${\rm Gen}_{\bH}$.

We have just written the $\bH$-period of $\phi$ classically, but in the period relation of Proposition \ref{period-relation}, we encountered a certain \textit{finite union} (cf. Lemma \ref{lemma:finite-double-quotient}) of weighted $\bH$-periods, of the form
\begin{equation}\label{eq:sum-of-periods}
\sum_{[b]\in\bH(\A_f)\backslash \bG(\A_f)^{\rm int}/K_f} \textup{vol}(\bH(\A_f)bK_f) \beta_f( b^{-1} [y_0] )\mathscr{P}_\bH(R(b)\overline{\phi_\lambda}).
\end{equation}
To write this sum classically, we begin by noting that
\[
\mathscr{P}_\bH(R(b)\overline{\phi_\lambda}) = \int_{[\bH] b} \overline{\phi_\lambda}(x) dx.
\]
We let $K_f^{\bH}(b) = b K_f b^{-1} \cap \bH(\A_f)$, and $K^{\bH}(b) = K_\infty^\bH K_f^{\bH}(b)$.  If we define ${\rm Gen}_{\bH}^b = \bH(F)\backslash\bH(\A_f)/ K^{\bH}(b)$, then as above we have an injection
\[
{\rm Gen}_{\bH}^b = \bH(F)\backslash\bH(\A)/ K^{\bH}(b) \rightarrow \bG(F)\backslash\bG(\A)/K= Y
\]
sending $x \in {\rm Gen}_{\bH}^b$ to $xb$.  It follows that
\[
\mathscr{P}_\bH(R(b)\overline{\phi_\lambda}) = \sum_{x\in {\rm Gen}_{\bH}^b} w_x^b \overline{\phi_\lambda}(xb)
\]
for positive weights $w_x^b$, and this implies that \eqref{eq:sum-of-periods} can be written as
\begin{equation}\label{eq:H-period-double-sum}
\sum_{[b]\in\bH(\A_f)\backslash \bG(\A_f)^{\rm int}/K_f}\beta(b) \sum_{x\in {\rm Gen}_{\bH}^b} w_x^b \overline{\phi_\lambda}(xb),
\end{equation}
where we have put $\beta(b)= \textup{vol}(\bH(\A_f)bK_f) \beta_f(b^{-1} [y_0])$.

Finally, let let $\mathscr{X}\subset Y$ denote the finite collection of points
\begin{equation}\label{eq:defn-shifted-pts}
\mathscr{X}=\left\{xb\in Y : \; [b]\in\bH(\A_f)\backslash\bG(\A_f)^{\rm int}/K_f, \; x\in {\rm Gen}_{\bH}^b \right\}
\end{equation}
appearing in \eqref{eq:H-period-double-sum}. For each $p=xb\in\mathscr{X}$ let $c_p= \textup{vol}(\bH(\A_f)bK_f) \beta_f(b^{-1}[y_0])w_x^b$  -- a positive real number -- denote the weight appearing in that same formula. Then \eqref{eq:sum-of-periods} becomes
\begin{equation}\label{eq:final-classical-period}
\sum_{p\in\mathscr{X}} c_p\overline{\phi_\lambda}(p),
\end{equation}
a weighted sum of point evaluations.

\begin{remark}
Recall from \S\ref{sec:affine} the set-theoretic quotient $Y_q(R)=\Oo(U_W)(R)\backslash \mathcal{Y}_q(R)$, where $\mathcal{Y}_q$ is the algebraic variety defined in \eqref{eq:defn-YU} and $R$ is any $F$-algebra. It seems plausible that the work of Ellenberg--Venkatesh \cite[\S 3]{EV} should provide for a parametrization of the collection of points $\mathscr{X}$ in \eqref{eq:defn-shifted-pts} in terms of $\Gamma$-equivalence classes of ``integral points on $Y_q$". Such an interpretation would recover the Rudnick--Sarnak example of Section \ref{Maass-period-relation}. As it is not strictly necessary for our purposes, we have not pursued this argument.
\end{remark}

\section{Proof of Theorem \ref{sup-thm}}\label{ProofsOfThms}

We now return to the proof of our main result, Theorem \ref{sup-thm}, putting all ingredients together. We retain the notation from Sections \ref{sec:periodnotation}, \ref{sec:2periods}, and \ref{sec:level-signature}.

\subsection{Archimedean representations.}

We begin by introducing the archimedean representations of $\bG$ and $\bG'$ that we shall work with.  For $\bG$, these are simply the spherical representations.  For $\lambda \in \ga^*_\C$, let $\pi_{\lambda, v_0}$ be the irreducible spherical representation of $G_{v_0}$ with spectral parameter $\lambda$ defined in Proposition \ref{O-spherical}, and let $\pi_\lambda$ be the representation of $G_\infty$ obtained as the tensor product of $\pi_{\lambda, v_0}$ with the trivial representation of $G_v$ at the other real places.

For $\bG'$, the representations we shall consider are those admitting a one-dimensional $K'_\infty$-type.  We define the one-dimensional representation $\tau$ of $K'_\infty$ by setting
\[
\tau^{-1} = \det{}^{(n-m)/2}\bigotimes_{v\mid\infty, v\neq v_0}\det{}^{d/2}.
\]
We choose $\tau$ this way so that we may consider forms that are $\tau^{-1}$-isotypic, and this allows us to write them in a way consistent with our earlier notation in e.g. Section \ref{sec:upper-bd-notation}.  For $\lambda \in \ga'^*_\C$, we let $\pi'_{\lambda, \tau, v_0}$ be the representation of $G'_{v_0}$ with $K'_{v_0}$-type $\tau_{v_0}^{-1} = \det{}^{(n-m)/2}$ and spectral parameter $\lambda$ defined in Lemma \ref{tau-spherical-class}. For other real places $v$, we likewise use Lemma \ref{tau-spherical-class} to define a representation $\pi'_{\tau,v}$ of $G'_v$ with $K'_v$-type $\tau_{v}^{-1} = \det{}^{d/2}$ and spectral parameter $(\tfrac{n}{2} - 1, \ldots, \tfrac{n}{2} - m') \in \ga'^*_\C$. We then define the representation $\pi'_{\lambda, \tau}$ of $G'_\infty$ to be the tensor product of $\pi'_{\lambda, \tau, v_0}$ and the $\pi'_{\tau,v}$ for the other real $v$.

The results of Section \ref{sec:arch-lift} show that the local factors of $\pi_\lambda$ and $\pi'_{\lambda, \tau}$ are matched by the theta correspondence, assuming they actually occur in it.

\subsection{Automorphic forms}

We introduce notation for sets of automorphic forms and representations on $\bG$ and $\bG'$.  Let $Q \geqslant 1$ be a constant, to be fixed in Lemma \ref{cmpt-period-asymp} below.  $\mathscr{A}^{\bG}(K,\nu,Q)$ is the set of isomorphism classes of automorphic representations $\pi$ of $\bG$ with the property that $\pi_f^{K_f} \neq 0$, and $\pi_\infty \simeq \pi_\lambda$ with $\|{\rm Im}\,\lambda-\nu\| \leqslant Q$.  We define $\widetilde{\mathscr{A}}^{\bG}(K,\nu,Q)$ to be the set of pairs $(\pi, E)$, where $\pi \in \mathscr{A}^{\bG}(K,\nu,Q)$ and $E$ is a homomorphism from $\pi$ into the space of automorphic forms on $\bG$.  Then there is a map $\widetilde{\mathscr{A}}^{\bG}(K,\nu,Q) \to \mathscr{A}^{\bG}(K,\nu,Q)$.  The fiber of this map over a representation $\pi$ is the multiplicity space of $\pi$, which we denote $m(\pi, \bG)$.

We introduce similar notation for $\tau$-spherical representations of $\bG'$.  We let $\mathscr{A}^{\bG'}_{\rm cusp}(K',\nu,Q, \tau)$ be the set of isomorphism classes of cuspidal automorphic representations $\pi'$ of $\bG'$ such that $(\pi_f')^{K_f} \neq 0$, and $\pi_\infty' \simeq \pi_{\lambda, \tau}$ with $\|{\rm Im}\,\lambda-\nu\| \leqslant Q$.  We define $\widetilde{\mathscr{A}}^{\bG'}_{\rm cusp}(K',\nu,Q, \tau)$, and the multiplicity spaces $m(\pi', \bG')$, in the same way as for $\bG$, noting that we now only consider homomorphisms from $\pi'$ into the space of cusp forms.

We next introduce spaces of automorphic forms $\mathcal{E}^{\bG}(K,\nu,Q)$ and $\mathcal{E}^{\bG'}_{\rm cusp}(K',\nu,Q, \tau)$ corresponding to $\mathscr{A}^{\bG}(K,\nu,Q)$ and $\mathscr{A}^{\bG'}_{\rm cusp}(K',\nu,Q, \tau)$. The space $\mathcal{E}^{\bG}(K,\nu,Q)$ is spanned by automorphic forms on $\bG$ that are right invariant under $K$, are eigenfunctions for the ring of differential operators $\mathbb{D}(G_{v_0}/K_{v_0})$ defined in Section \ref{sec:O-spherical}, and whose spectral parameter $\lambda$ satisfies $\|{\rm Im}\,\lambda-\nu\| \leqslant Q$.  Such automorphic forms can be viewed as functions on 
\[
Y=\bG(F)\backslash\bG(\A)/K.
\]
We then have
\[
\mathcal{E}^{\bG}(K,\nu,Q) = \bigoplus_{\pi \in \mathscr{A}^{\bG}(K,\nu,Q)} m(\pi, \bG) \otimes \pi_f^{K_f}.
\]
We define $\mathcal{E}^{\bG'}_{\rm cusp}(K',\nu,Q, \tau)$ in the analogous way, and likewise have
\[
\mathcal{E}^{\bG'}_{\rm cusp}(K',\nu,Q, \tau) = \bigoplus_{\pi' \in \mathscr{A}^{\bG'}_{\rm cusp}(K',\nu,Q, \tau) } m(\pi', \bG') \otimes (\pi_f')^{K_f'}.
\]

We may write these spaces classically as follows. We begin with the case of $\mathcal{E}^{\bG}(K,\nu,Q)$.  Recall from Section \ref{adelic2classical} the identification
\[
Y=\bigcup_{ i \in I}\Gamma_i\backslash\mathbb{H}^{n,m}.
\]
Using the notation from \eqref{eq:E-lambda-cusp-tau} we have
\be
\label{EGclassical}
\mathcal{E}^{\bG}(K,\nu,Q) = \sum_{ i \in I}\sum_{\substack{\mu\in\Lambda(\Gamma_i)\\ \|{\rm Im}\,\mu-\nu\|\leqslant Q}}\mathcal{E}_\mu(\Gamma_i\backslash\mathbb{H}^{n,m}),
\ee
where we have omitted the cuspidality condition from the notation as it is empty for anisotropic $\bG$.  To describe the case of $\mathcal{E}^{\bG'}_{\rm cusp}(K',\nu,Q, \tau)$, let $\mathsf{S}'=(\mathcal{H}^m)^{[F:\Q]}$, and let $\Gamma'=\bG'(\A_f)\cap K_f'$. Then, since $\bG'$ is split and simply connected, the genus of $\Gamma'$ is $1$, and we have
\be
\label{EG'classical}
\Gamma'\backslash \bG'(F_\infty)=\bG'(F)\backslash \bG'(\A)/K_f',\qquad \Gamma'\backslash \mathsf{S}'=\bG'(F)\backslash \bG'(\A)/K'.
\ee
We now adapt the notation from \eqref{eq:E-lambda-cusp-tau} and \eqref{eq:L2-cusp-tau} to write
\[
\mathcal{E}^{\bG'}_{\rm cusp}(K',\nu,Q, \tau) = \bigoplus_{\substack{\mu \in\Lambda_{\rm cusp}(\Gamma';\tau)\\ \|{\rm Im}\,\mu-\nu\|\leqslant Q}}\mathcal{E}^{\rm cusp}_\mu(\Gamma'\backslash\mathsf{S}',\tau).
\]

\subsection{Lifted forms}

For any $\pi \in \mathscr{A}^{\bG}(K,\nu,Q)$, we wish to define the subspace $m(\pi,\bG)^\Theta \subset m(\pi,\bG)$ of representations that arise as theta lifts of cusp forms on $\bG'$.  If any local factor of $\pi$ does not occur in the local theta correspondence, we set $m(\pi,\bG)^\Theta = 0$.  Otherwise, we know that $\pi' = \otimes \theta(\pi_v, W)$ satisfies the local conditions to lie in $\mathscr{A}^{\bG'}_{\rm cusp}(K',\nu,Q, \tau)$.  At archimedean places this follows from Section \ref{sec:arch-lift}, while at finite places this follows from results of Cossutta \cite[Proposition 2.11]{Cossutta} after choosing $K_f'$ small enough.  The span of the global lifts $\Theta(\widetilde{\pi}', V)$, where $\widetilde{\pi}'$ runs over occurrences of $\pi'$ in the space of cusp forms on $\bG'$, is a subspace of the $\pi$-isotypic space on $\bG$.  As such, it has the form $m(\pi,\bG)^\Theta \otimes \pi$ for a subspace $m(\pi,\bG)^\Theta \subset m(\pi, \bG)$ with $\dim m(\pi,\bG)^\Theta \leqslant \dim m(\pi', \bG')$.  In the same way, the orthogonal compliment of the space of lifts in the $\pi$-isotypic subspace has the form $m(\pi,\bG)^{\Theta^\perp} \otimes \pi$ for some $m(\pi,\bG)^{\Theta^\perp} \subset m(\pi, \bG)$.

We define $\widetilde{\mathscr{A}}^{\bG,\Theta}(K,\nu,Q)$ to be the subset of $\widetilde{\mathscr{A}}^{\bG}(K,\nu,Q)$ corresponding to the spaces $m(\pi,\bG)^\Theta$, and likewise for $\widetilde{\mathscr{A}}^{\bG,\Theta^\perp}(K,\nu,Q)$.  We define
\[
\mathcal{E}^{\bG,\Theta}(K,\nu,Q) = \bigoplus_{\pi \in \mathscr{A}^{\bG}(K,\nu,Q)} m(\pi, \bG)^\Theta \otimes \pi_f^{K_f}
\]
and
\[
\mathcal{E}^{\bG,\Theta^\perp}(K,\nu,Q) = \bigoplus_{\pi \in \mathscr{A}^{\bG}(K,\nu,Q)} m(\pi, \bG)^{\Theta^\perp} \otimes \pi_f^{K_f}
\]
to be the corresponding spaces of automorphic forms.

\subsection{The distinction argument}

For a real parameter $T>0$, recall the notion of $T$-regular from Section \ref{sec:gp-decomp}.
\begin{prop}

Suppose that $\nu$ is $Q$-regular.  If $\phi_\lambda \in \mathcal{E}^{\bG,\Theta^\perp}(K,\nu,Q)$, then 
\be
\label{periodvanish}
\sum_{[b]\in\bH(\A_f)\backslash \bG(\A_f)^{\rm int}/K_f} \textup{vol}(\bH(\A_f)bK_f) \beta_f( b^{-1} [y_0]) \mathscr{P}_\bH (R(b)\phi_\lambda) = 0.
\ee

\end{prop}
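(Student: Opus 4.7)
The proof will go by contradiction. Suppose the sum in \eqref{periodvanish} is nonzero for some $\phi_\lambda \in \mathcal{E}^{\bG,\Theta^\perp}(K,\nu,Q)$. Then Corollary \ref{distinction} supplies an adelic Schwartz function $\alpha \in \mathscr{S}(\mathcal{Y}(\A_f)) \otimes \mathscr{S}_{\rm alg}(\mathcal{Y}(F_\infty))$, with $\alpha_f$ as in Section \ref{sec:adelic-schwartz} and with an appropriately chosen archimedean component, such that the global theta lift $\phi' := \Theta(\phi_\lambda, \alpha; W)$ is a nonzero automorphic form on $\bG'(\A)$.

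The first step is to check that $\phi'$ is cuspidal. Since $\phi_\lambda$ is $L^2$ on the compact space $Y$, the global representation it generates is unitary, and in particular the local factor $\pi_{\lambda,v_0}$ is a unitary spherical representation of $G_{v_0}$. By Lemma \ref{lemma:unitary} (applied with $\tau$ trivial) the real part $\textup{Re}\,\lambda$ is uniformly bounded; combining this with the $Q$-regularity of $\nu$ and taking $Q$ large enough --- which we are free to do in the sufficiently regular regime --- forces $\pi_{\lambda,v_0}$ to be tempered, since no non-tempered unitary spherical representation can satisfy $\|\textup{Im}\,\lambda - \nu\| \leqslant Q$ once $\nu$ is sufficiently far from the walls and $\textup{Re}\,\lambda$ is bounded. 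Lemma \ref{cuspidal-theta} then ensures that the global lift representation generated by $\phi'$, which we denote $\pi'$, is cuspidal, and by Theorem \ref{global-irred} it is in fact irreducible and isomorphic to $\otimes_v \theta(\pi_{\lambda,v}, W_v)$.

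The second step produces the contradiction via adjointness. Define the reverse theta lift by
\[
\Theta'(\psi', \alpha; V)(g) \;=\; \int_{[\bG']} \overline{\psi'(g')}\, \Theta(g, g'; \alpha)\, dg', \qquad g \in \bG(\A).
\]
Starting from $\|\phi'\|_{\bG'}^2 = \int_{[\bG']} \phi'(g') \overline{\phi'(g')}\, dg'$, substituting the definition of $\phi'$ as a theta integral against $\overline{\phi_\lambda}$, and applying Fubini (which is justified by the rapid decay of $\phi_\lambda$ on the compact quotient and the cuspidal decay of $\phi'$), we obtain the adjointness identity
\[
\|\phi'\|_{\bG'}^2 \;=\; \int_{[\bG]} \overline{\phi_\lambda(g)}\, \Theta'(\phi', \alpha; V)(g)\, dg \;=\; \langle \Theta'(\phi', \alpha; V),\, \phi_\lambda \rangle_{\bG}.
\]
Since $\phi'$ is cuspidal and generates a copy $\widetilde{\pi}'$ of the irreducible cuspidal representation $\pi'$ inside $L^2_{\rm cusp}([\bG'])$, the form $\Theta'(\phi', \alpha; V)$ lies in the space $\Theta(\widetilde{\pi}', V)$ of theta lifts from cuspidal realizations of $\pi'$ on $\bG$. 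By definition this places it inside $m(\pi,\bG)^\Theta \otimes \pi_f^{K_f} \subset \mathcal{E}^{\bG,\Theta}(K,\nu,Q)$, which is orthogonal to the subspace $\mathcal{E}^{\bG,\Theta^\perp}(K,\nu,Q)$ containing $\phi_\lambda$. Hence $\|\phi'\|_{\bG'}^2 = 0$, contradicting the choice of $\alpha$ from Corollary \ref{distinction} and completing the proof.

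The main obstacle lies in ensuring the cuspidality of the global theta lift $\phi'$, which depends on the temperedness of the archimedean factor $\pi_{\lambda,v_0}$ through Lemma \ref{cuspidal-theta} (itself built on Lemma \ref{first-occ-sph} and Rallis's tower principle Theorem \ref{thm:cuspidality}). Without this temperedness, $\Theta(\pi_\lambda, W)$ could fail to be square integrable, the irreducibility and factorization conclusion of Theorem \ref{global-irred} would no longer apply, and the interpretation of $\Theta'(\phi', \alpha; V)$ as a theta lift from a cuspidal realization of an irreducible $\pi'$ --- which is what places it in $\mathcal{E}^{\bG,\Theta}$ --- would collapse. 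The role of the $Q$-regular hypothesis on $\nu$ is precisely to rule out complementary series contamination in the spectral window, thereby securing this temperedness.
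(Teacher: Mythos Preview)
Your proof is correct and follows essentially the same route as the paper's: argue by contrapositive, invoke Corollary \ref{distinction} to produce a nonzero lift $\phi'$, use temperedness at $v_0$ together with Lemma \ref{cuspidal-theta} to get cuspidality, and then use the seesaw identity $\langle \phi', \phi' \rangle = \langle \phi_\lambda, \Theta(\phi', \alpha; V) \rangle$ to show $\phi_\lambda$ pairs nontrivially with a form in $\mathcal{E}^{\bG,\Theta}(K,\nu,Q)$.

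One point where your presentation is less sharp than the paper's is the temperedness step. You invoke Lemma \ref{lemma:unitary} to bound $\|{\rm Re}\,\lambda\|$ and then speak of ``taking $Q$ large enough''.  Neither is the right lever. The bound on ${\rm Re}\,\lambda$ is irrelevant here; what matters is that for a \emph{non-tempered} unitary spherical parameter one has $w\lambda=-\bar\lambda$ for some nontrivial $w\in W$ (Lemma \ref{lem:unitary-hermitian}), and hence ${\rm Im}\,\lambda$ is fixed by $w$ and therefore lies on a root hyperplane. This is what forces a contradiction with $\|{\rm Im}\,\lambda-\nu\|\leqslant Q$ and the $Q$-regularity of $\nu$. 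Also, $Q$ is already fixed in the statement (it is the constant from Lemma \ref{cmpt-period-asymp}); you should use the hypothesis ``$\nu$ is $Q$-regular'' directly rather than adjusting $Q$. The paper's one-line version --- ``$\nu$ $Q$-regular implies $\lambda\in i\ga^*$'' --- encodes exactly this argument.
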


\begin{proof}

We may assume that $\phi_\lambda \in \widetilde{\pi}$ for some $\widetilde{\pi} \in \widetilde{\mathscr{A}}^{\bG,\Theta^\perp}(K,\nu,Q)$.  We prove the contrapositive, by assuming that the left-hand side of \eqref{periodvanish} is nonzero and deducing that $\phi_\lambda \notin \mathcal{E}^{\bG,\Theta^\perp}(K,\nu,Q)$.  Under this assumption, Corollary \ref{distinction} shows that $\phi'_\lambda = \Theta(\phi_\lambda, \alpha; W)$ is nonzero for some choice of $\alpha_\infty$, and hence that $\widetilde{\pi}' = \Theta( \widetilde{\pi}; W)$ is also nonzero.  Our assumption that $\nu$ is $Q$-regular implies that $\lambda \in i \ga^*$, and hence that $\widetilde{\pi}_{v_0}$ is tempered.  Lemma \ref{cuspidal-theta} then implies that $\widetilde{\pi}' \in \widetilde{\mathscr{A}}^{\bG'}_{\rm cusp}(K',\nu,Q, \tau)$.  We now have
\[
0 \neq \langle \phi'_\lambda, \phi'_\lambda \rangle = \langle \Theta(\phi_\lambda, \alpha; W), \phi'_\lambda \rangle = \langle \phi_\lambda, \Theta( \phi'_\lambda, \alpha; V) \rangle,
\]
which shows that $\phi_\lambda \notin \mathcal{E}^{\bG,\Theta^\perp}(K,\nu,Q)$, as required.
\end{proof}

\begin{lemma}
\label{lemma:Theta-map}

We have $\dim \mathcal{E}^{\bG,\Theta}(K,\nu,Q) \ll \left(\log (3+\|\nu\|)\right)^{[F:\Q]m}\beta_{\mathcal{H}^m}(\nu)$.

\end{lemma}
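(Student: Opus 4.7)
The plan is to reduce the counting of theta-lifted forms on $\bG$ to a counting of $\tau$-spherical cuspidal forms on $\bG'$ via the global theta correspondence, and then to invoke Proposition \ref{Weyl-upper-bd} applied to the latter, evaluating the resulting density function in the product structure $\mathsf{S}' = (\mathcal{H}^m)^{[F:\Q]}$.

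\textbf{Reduction to $\bG'$.} I first aim to prove the multiplicity-preserving bound
\[
\dim \mathcal{E}^{\bG, \Theta}(K, \nu, Q) \ll \dim \mathcal{E}^{\bG'}_{\rm cusp}(K', \nu, Q, \tau).
\]
For each $\pi \in \mathscr{A}^{\bG}(K, \nu, Q)$ with $m(\pi, \bG)^\Theta \neq 0$, I associate the irreducible representation $\pi' = \otimes_v \theta(\pi_v, W)$ of $\bG'(\A)$. The analysis of Section \ref{sec:arch-lift}, together with Lemma \ref{compact-Arch-lift} applied at the real places $v \neq v_0$, shows that $\pi'_\infty \simeq \pi'_{\lambda, \tau}$ for some $\lambda$ with $\|\mathrm{Im}\,\lambda - \nu\| \leqslant Q$; Cossutta's results (already invoked in the definition of $m(\pi,\bG)^\Theta$) yield $(\pi'_f)^{K'_f} \neq 0$. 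Hence $\pi' \in \mathscr{A}^{\bG'}_{\rm cusp}(K', \nu, Q, \tau)$. The map $\pi \mapsto \pi'$ is injective by the bijectivity of the local theta correspondence, and by construction $\dim m(\pi, \bG)^\Theta \leqslant \dim m(\pi', \bG')$. Combining these facts with the uniform bound $\dim \pi_f^{K_f} \ll_{K_f} 1$ for admissible irreducible representations (a consequence of Bernstein's theory) and $\dim (\pi'_f)^{K'_f} \geqslant 1$, I obtain
\begin{align*}
\dim \mathcal{E}^{\bG, \Theta}(K, \nu, Q) &= \sum_{\pi} \dim m(\pi, \bG)^\Theta \cdot \dim \pi_f^{K_f} \\
&\ll \sum_{\pi'} \dim m(\pi', \bG') \cdot \dim (\pi'_f)^{K'_f} = \dim \mathcal{E}^{\bG'}_{\rm cusp}(K', \nu, Q, \tau).
\end{align*}

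\textbf{Application of Proposition \ref{Weyl-upper-bd}.} I next apply that proposition to the semisimple Lie group $\bG'(F \otimes_\Q \R) \simeq \Sp_{2m}(\R)^{[F:\Q]}$ equipped with the character $\tau$ of $K'_\infty$. Each simple factor $\Sp_{2m}(\R)$ is Hermitian with reduced root system of type $C_m$, so hypothesis \eqref{eq:assumption2} holds. With symmetric space $\mathsf{S}' = (\mathcal{H}^m)^{[F:\Q]}$ of real rank $r = m[F:\Q]$, and parameter $\tilde\nu = (\tilde\nu_v)_{v \mid \infty}$ having $\tilde\nu_{v_0} = \nu$ and $\tilde\nu_v$ equal to the fixed value $(\tfrac{n}{2}-1, \ldots, \tfrac{n}{2}-m)$ prescribed by Section \ref{sec:arch-lift} at the other real places, the proposition yields
\[
\dim \mathcal{E}^{\bG'}_{\rm cusp}(K', \nu, Q, \tau) \ll (\log(3 + \|\nu\|))^{m[F:\Q]} \, \tilde\beta_{\mathsf{S}'}(\tilde\nu).
\]

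\textbf{Evaluation of the density.} The product structure of $\mathsf{S}'$ gives $\tilde\beta_{\mathsf{S}'}(\tilde\nu) = \prod_{v \mid \infty} \tilde\beta_{\mathcal{H}^m}(\tilde\nu_v)$. At each $v \neq v_0$ the parameter $\tilde\nu_v$ is fixed, so the corresponding factor is $O(1)$; at $v_0$, the identity \eqref{second-beta-eq} and the assumed sufficient regularity of $\nu$ give $\tilde\beta_{\mathcal{H}^m}(\nu) \asymp \beta_{\mathcal{H}^m}(\nu)$. Combining all three steps produces the stated bound.

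\textbf{Main obstacle.} The delicate point lies in the reduction step. Beyond the straightforward comparison $\dim m(\pi, \bG)^\Theta \leqslant \dim m(\pi', \bG')$, one must justify the uniform bound on $\dim \pi_f^{K_f}$ independently of $\pi$, and one must ensure that the local lifts $\pi'_v$ at finite $v$ admit nonzero $K'_v$-fixed vectors, which is only guaranteed after shrinking $K'_f$ appropriately as in Cossutta's analysis. Also requiring attention is the verification that the archimedean component $\pi'_v$ at $v \neq v_0$ matches the form prescribed in Lemma \ref{compact-Arch-lift}, since this is what makes $\tilde\nu_v$ a fixed (hence harmless) parameter in the final density calculation.
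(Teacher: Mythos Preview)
Your proposal is correct and follows essentially the same approach as the paper: bound $\dim \pi_f^{K_f}$ uniformly via Bernstein, use injectivity of the local theta correspondence together with $\dim m(\pi,\bG)^\Theta \leqslant \dim m(\pi',\bG')$ to reduce to $\dim \mathcal{E}^{\bG'}_{\rm cusp}(K',\nu,Q,\tau)$, and then invoke Proposition~\ref{Weyl-upper-bd}. The paper's proof is terser and leaves the product-structure evaluation of $\tilde\beta_{\mathsf{S}'}$ implicit; one small technical point is that since Proposition~\ref{Weyl-upper-bd} requires the center $\nu \in i\mathfrak{a}^*$, at the places $v \neq v_0$ you should take $\tilde\nu_v = 0$ (the imaginary part of the fixed real spectral parameter) rather than the parameter itself, but this does not affect the $O(1)$ contribution of those factors.
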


\begin{proof}

Bernstein's uniform admissibility theorem implies that $\dim \pi_f^{K_f} \ll 1$ for any $\pi \in \mathscr{A}^{\bG}(K,\nu,Q)$.  We therefore have
\begin{align*}
\dim \mathcal{E}^{\bG,\Theta}(K,\nu,Q) & = \sum_{\pi \in \mathscr{A}^{\bG}(K,\nu,Q)} \dim m(\pi, \bG)^\Theta \dim \pi_f^{K_f} \\
& \ll \sum_{\pi \in \mathscr{A}^{\bG}(K,\nu,Q)} \dim m(\pi, \bG)^\Theta \\
& \leqslant \sum_{\pi' \in \mathscr{A}^{\bG'}_{\rm cusp}(K',\nu,Q, \tau)} \dim m(\pi', \bG') \\
& \leqslant \mathcal{E}^{\bG'}_{\rm cusp}(K',\nu,Q, \tau).
\end{align*}
The bound now follows from the classical description \eqref{EG'classical} of $\mathcal{E}^{\bG'}_{\rm cusp}(K',\nu,Q, \tau)$ and Proposition \ref{Weyl-upper-bd}.
\end{proof}

We let $\mathcal{B}(\nu)$ be a choice of orthonormal basis for $\mathcal{E}^{\bG}(K,\nu,Q)$, and define
\[
\mathcal{M}_\bH(\nu) = \sum_{\phi_\mu\in\mathcal{B}(\nu)} \bigg|\sum_{[b]\in\bH(\A_f)\backslash\bG(\A_f)^{\rm int}/K_f} \textup{vol}(\bH(\A_f)bK_f) \beta_f( b^{-1} [y_0])\mathscr{P}_\bH(R(b)\phi_\mu)\bigg|^2.
\]
Note that $\mathcal{M}_\bH(\nu)$ is independent of the choice of basis $\mathcal{B}(\nu)$.

\begin{lemma}\label{cmpt-period-asymp}
If $Q$ is chosen large enough depending on $Y$, we have $\mathcal{M}_\bH(\nu)\gg\beta_{\mathbb{H}^{n,m}}(\nu)$.
\end{lemma}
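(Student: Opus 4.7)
The plan is to identify $\mathcal{M}_\bH(\nu)$ with a mean square of weighted point evaluations on a single connected component of $Y$, and then invoke the lower bound of Proposition \ref{local-Weyl}.

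By the classical reformulation in Section \ref{adelic2classical}, the identity
\[
\sum_{[b]\in\bH(\A_f)\backslash\bG(\A_f)^{\rm int}/K_f} \textup{vol}(\bH(\A_f)bK_f) \beta_f( b^{-1} [y_0])\mathscr{P}_\bH(R(b)\phi_\mu) = \sum_{p\in\mathscr{X}} c_p \phi_\mu(p)
\]
holds for every $\phi_\mu \in \mathcal{B}(\nu)$, where $\mathscr{X} \subset Y$ is the finite set \eqref{eq:defn-shifted-pts} and the weights $c_p$ are the strictly positive real numbers defined immediately after (the formula \eqref{eq:final-classical-period} gives the analogous assertion with $\overline{\phi_\lambda}$, but taking complex conjugates does not affect the absolute value). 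Therefore
\[
\mathcal{M}_\bH(\nu) = \sum_{\phi_\mu \in \mathcal{B}(\nu)}\bigg|\sum_{p \in \mathscr{X}} c_p \phi_\mu(p)\bigg|^2.
\]

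Next I would decompose $Y$ into its connected components $Y_i = \Gamma_i \backslash \mathbb{H}^{n,m}$, $i \in I$, inducing a partition $\mathscr{X} = \bigsqcup_i \mathscr{X}_i$ with $\mathscr{X}_i = \mathscr{X} \cap Y_i$. Since the algebra $\mathbb{D}(G_{v_0}/K_{v_0})$ acts diagonally on the components, the space $\mathcal{E}^{\bG}(K,\nu,Q)$ splits accordingly, and the basis $\mathcal{B}(\nu)$ can be taken to be a disjoint union of orthonormal bases $\mathcal{B}_i(\nu)$ of the eigensections on each $Y_i$ with spectral parameter in the window $\|\mathrm{Im}\,\mu - \nu\| \leqslant Q$; see \eqref{EGclassical}. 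This gives
\[
\mathcal{M}_\bH(\nu) = \sum_{i \in I} \sum_{\phi_\mu \in \mathcal{B}_i(\nu)}\bigg|\sum_{p \in \mathscr{X}_i} c_p \phi_\mu(p)\bigg|^2.
\]
Choose $i_0 \in I$ so that $\mathscr{X}_{i_0} \neq \emptyset$; such an index exists because the trivial coset $b = 1$ together with the neutral element $x = e \in \mathrm{Gen}_\bH^1$ produces the base point $[y_0]$ in $\mathscr{X}$ (up to adjusting $L'_{U_W}$ by a scalar so that $y_0$ maps $L'_{U_W}$ into $L$ at every finite place, which may be assumed without loss of generality). By positivity of the remaining terms,
\[
\mathcal{M}_\bH(\nu) \geqslant \sum_{\phi_\mu \in \mathcal{B}_{i_0}(\nu)}\bigg|\sum_{p \in \mathscr{X}_{i_0}} c_p \phi_\mu(p)\bigg|^2.
\]

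Finally, I would apply Proposition \ref{local-Weyl} to the compact locally symmetric space $\Gamma_{i_0}\backslash\mathbb{H}^{n,m}$, with the finite set of points $\mathscr{X}_{i_0}$ and nonzero (in fact positive) weights $c_p$. The proposition furnishes a constant $Q \geqslant 1$ depending only on $\Gamma_{i_0}$, hence on $Y$, such that for all $\nu \in i\mathfrak{a}^*$ of sufficiently large norm---which is guaranteed by the sufficient regularity hypothesis inherited from Theorem \ref{sup-thm}---we have
\[
\sum_{\phi_\mu \in \mathcal{B}_{i_0}(\nu)}\bigg|\sum_{p \in \mathscr{X}_{i_0}} c_p \phi_\mu(p)\bigg|^2 \asymp \widetilde{\beta}_{\mathbb{H}^{n,m}}(\nu) \asymp \beta_{\mathbb{H}^{n,m}}(\nu),
\]
the second equivalence being \eqref{first-beta-eq}. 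The one point requiring care is the reconciliation of spectral parameters between the disconnected group $\bG(F_{v_0}) = \Oo(n,m)$ used to define $\mathcal{E}^{\bG}(K,\nu,Q)$ and the connected group $\mathrm{SO}(n,m)^0$ underlying Proposition \ref{local-Weyl}, but this is exactly the content of Section \ref{sec:points} under the hypothesis $n > m$, which is in force. I expect this step to be the main technical hurdle, but it is entirely resolved by the material already developed, so in the end the proof amounts to a direct citation of Proposition \ref{local-Weyl} applied to a single well-chosen component of $Y$.
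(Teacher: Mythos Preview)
Your proof is correct and follows essentially the same route as the paper: rewrite the period sum classically as $\sum_{p\in\mathscr{X}} c_p\phi_\mu(p)$, decompose $Y$ and $\mathscr{X}$ componentwise, and invoke Proposition \ref{local-Weyl} on a component carrying a nonempty $\mathscr{X}_i$. The paper phrases the last step as ``the contribution of each $\mathcal{B}_i(\nu)$ is $\gg\beta_{\mathbb{H}^{n,m}}(\nu)$'', whereas you more carefully isolate a single component $i_0$ with $\mathscr{X}_{i_0}\neq\emptyset$; this is a harmless (and arguably cleaner) variant, since one component already suffices for the lower bound.
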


\begin{proof}

We apply the classical description \eqref{EGclassical} of $\mathcal{E}^{\bG}(K,\nu,Q)$, and choose the basis $\mathcal{B}(\nu)$ to be a union of bases $\mathcal{B}_i(\nu)$ for the corresponding spaces
\[
\sum_{\substack{\mu\in\Lambda(\Gamma_i)\\ \|{\rm Im}\,\mu-\nu\|\leqslant Q}}\mathcal{E}_\mu(\Gamma_i\backslash\mathbb{H}^{n,m})
\]
for each classical quotient $\Gamma_i\backslash\mathbb{H}^{n,m}$. Thus, each member of $\mathcal{B}(\nu)$ is supported on a single connected component of $Y$.

Recall the finite set of points $\mathscr{X}$ in $Y$ described in \eqref{eq:defn-shifted-pts}. For each $i$ let $\mathscr{X}_i$ denote the subset of $\mathscr{X}$ consisting of those points lying in the $i$-th connected component $\Gamma_i\backslash\mathbb{H}^{n,m}$. We deduce from \eqref{eq:final-classical-period} that the contribution to $\mathcal{M}_\bH(\nu)$ coming from the lattice $\Gamma_i$ is
\[
\sum_{\phi_\mu\in\mathcal{B}_i(\nu)}\big|\sum_{p \in \mathscr{X}_i} c_p \phi_\mu(p) \big|^2,
\]
where $c_p$ is as in \eqref{eq:final-classical-period}.  After choosing $Q$ large enough, we may therefore apply Proposition \ref{local-Weyl} to show that the contribution of each $\mathcal{B}_i(\nu)$ to $\mathcal{M}_\bH(\nu)$ is $\gg\beta_{\mathbb{H}^{n,m}}(\nu)$, as required.
\end{proof}

We may now combine these ingredients to prove Theorem \ref{sup-thm}.  Let $Q$ be as in Lemma \ref{cmpt-period-asymp}, and assume that $\nu$ is $Q$-regular.  Choose a basis $\mathcal{B}(\nu)$ such that $\mathcal{B}(\nu) = \mathcal{B}(\nu)^\Theta \cup \mathcal{B}(\nu)^{\Theta^\perp}$, where $\mathcal{B}(\nu)^\Theta$ and $\mathcal{B}(\nu)^{\Theta^\perp}$ are bases for $\mathcal{E}^{\bG,\Theta}(K,\nu,Q)$ and $\mathcal{E}^{\bG,\Theta^\perp}(K,\nu,Q)$ respectively.  Because the forms in $\mathcal{B}(\nu)^{\Theta^\perp}$ make no contribution to $\mathcal{M}_\bH(\nu)$, Lemma \ref{cmpt-period-asymp} gives
\[
\sum_{\phi_\mu\in\mathcal{B}(\nu)^\Theta }\bigg|\sum_{[b]\in\bH(\A_f)\backslash\bG(\A_f)^{\rm int}/K_f} \textup{vol}(\bH(\A_f)bK_f) \beta_f( b^{-1} [y_0]) \mathscr{P}_\bH(R(b)\phi_\mu)\bigg|^2 \gg \beta_{\mathbb{H}^{n,m}}(\nu).
\]
Moreover, Lemma \ref{lemma:Theta-map} implies that there is some $\phi_\mu\in\mathcal{B}(\nu)^\Theta$ such that
\begin{multline*}
\bigg|\sum_{[b]\in\bH(\A_f)\backslash\bG(\A_f)^{\rm int}/K_f} \textup{vol}(\bH(\A_f)bK_f) \beta_f( b^{-1} [y_0]) \mathscr{P}_\bH(R(b)\phi_\mu)\bigg|^2 \\
\gg \left(\log (3+\|\nu\|)\right)^{-[F:\Q]m} \beta_{\mathbb{H}^{n,m}}(\nu) / \beta_{\mathcal{H}^m}(\nu).
\end{multline*}
This implies the same lower bound for $\| \phi_\mu \|_\infty^2$, which completes the proof.

\end{document}